\def\mc#1{\mathcal{#1}}
\def\mb#1{\mathbb{#1}}
\def\tx#1{\textrm{#1}}
\def\tb#1{\textbf{#1}}
\def\R{\mathbb{R}}
\def\C{\mathbb{C}}
\def\Q{\mathbb{Q}}
\def\Z{\mathbb{Z}}
\def\lmod{\setminus}
\def\hat{\widehat}
\def\rw{\rightarrow}
\def\from{\leftarrow}
\def\lrw{\longrightarrow}
\def\llw{\longleftarrow}
\def\sm{\smallsetminus}
\def\<{\langle}
\def\>{\rangle}
\newenvironment{mytitle}
{\begin{center}\large\sc}
{\end{center}}
\newcommand{\qedwhite}{\hfill\ensuremath{\Box}}
\theoremstyle{definition}
\newtheorem{thm}{Theorem}[section]
\newtheorem{lem}[thm]{Lemma}
\newtheorem{pro}[thm]{Proposition}
\newtheorem{cor}[thm]{Corollary}
\newtheorem{fct}[thm]{Fact}
\newtheorem{cnj}[thm]{Conjecture}
\newtheorem{dfn}[thm]{Definition}
\newtheorem{rem}[thm]{Remark}
\newtheorem{exa}[thm]{Example}
\newtheorem{cns}[thm]{Construction}
\newtheorem{asm}[thm]{Assumption}
\numberwithin{equation}{section}
\newlength{\sumcorr}
\def\sprod#1{\setlength{\sumcorr}{(\widthof{$\displaystyle\prod_{#1}$}-\widthof{$\displaystyle\prod$})/2} \hspace{-\sumcorr}\prod_{#1}\hspace{-\sumcorr} }
\begin{document}

\begin{mytitle}
On $L$-embeddings and double covers of tori over local fields
\end{mytitle}

\begin{center}
Tasho Kaletha
\end{center}

\begin{abstract}
To a torus $T$ over a local field $F$ and a subset of its character module subject to certain properties, we associate a canonical double cover $T(F)_\pm$ of the topological group $T(F)$ of $F$-rational points of $T$. We further associate an $L$-group $^LT_\pm$ to this double cover and establish a natural bijection between $L$-parameters valued in $^LT_\pm$ and genuine characters of $T(F)_\pm$. When $T$ is a maximal torus of a connected reductive group $G$, we show that there is a canonical $L$-embedding $^LT_\pm \to {^LG}$. This leads to a canonical factorization of Langlands parameters. We associate to a genuine character of $T(F)_\pm$ subject to certain conditions a Harish-Chandra character formula and use it to give a conjectural characterization of the supercuspidal local Langlands correspondence for $G$, subject to a certain condition on $p$. This generalizes previous work of Adams-Vogan \cite{AV92} for $F=\R$, and reinterprets computations of Langlands-Shelstad \cite{LS87}.
\end{abstract}

{\let\thefootnote\relax\footnotetext{This research is supported in part by NSF grant DMS-1801687 and a Sloan Fellowship.}}

\tableofcontents

\section{Introduction}

Let $G$ be a connected reductive group defined over a local field $F$ and let $T \subset G$ be a maximal torus. Let $\hat T$ and $\hat G$ be the corresponding dual groups, defined over $\C$, and equipped with algebraic actions of the absolute Galois group $\Gamma$ of $F$. There is a canonical $\hat G$-conjugacy class of embeddings $\hat T \to \hat G$. This conjugacy class is $\Gamma$-invariant, but need not contain a $\Gamma$-invariant member. A natural question is whether a member of this conjugacy class can be extended to an $L$-embedding $\hat T \rtimes \Gamma \to \hat G \rtimes \Gamma$ between the Galois forms of the corresponding $L$-groups, and whether a canonical such extension exists. Interest in this question comes from the local Langlands correspondence, which describes representations of $G(F)$ in terms of objects associated to $\hat G \rtimes \Gamma$.

In some cases the answer to the existence question is negative. For example, when $F=\R$, $G=\tx{PGL}_2$, and $T$ is anisotropic, such an embedding would have to take image in the normalizer of a maximal torus of $\tx{SL}_2(\C)$, which is a non-split extension of $\C^\times$ by $\Z/2\Z$, while $\hat T \rtimes \Gamma$ is the split such extension. In other cases the answer to the existence question is positive, but there is no natural choice.

Langlands observed that if one replaces $\hat T \rtimes \Gamma$ by $\hat T \rtimes W_F$, where $W_F$ is the absolute Weil group of $F$, then the answer to the existence question is always positive \cite[Lemma 4]{Lan79}. This existence result was later strenghtened by Langlands-Shelstad, who gave an explicit construction of an $L$-embedding $\hat T \rtimes W_F \to \hat G \rtimes W_F$ based on their concept of $\chi$-data \cite[\S2.5,\S2.6]{LS87}. This construction was initially motivated by the study of endoscopic transfer factors \cite{LS87}, but has since found other uses as well, for example in explicit constructions of the local Langlands correspondence and in the description of the Harish-Chandra characters of supercuspidal representations \cite{KalRSP}. The group $\hat T \rtimes W_F$ is called the Weil-form of the $L$-group of $T$, and the usage of the Weil form of the $L$-group has now become commonplace due to this phenomenon, since replacing $\hat G \rtimes \Gamma$ by $\hat G \rtimes W_F$ has no deterimental effects on other parts of the theory.

While $\chi$-data played a merely technical (albeit rather important) role in \cite{LS87}, its applications in \cite{KalRSP} raise the question of finding a better understanding of its nature. Indeed, an $L$-embedding $^LT \to {^LG}$ (now between the Weil forms of the $L$-groups) embodies functorial transfer of representations from $T(F)$ to $G(F)$. All discrete series representations of real groups and most supercuspidal representations of $p$-adic groups arise from such transfer. Since the transfer depends on the particular choice of $L$-embedding, this makes understanding the nature of such embeddings important.

Over the real numbers Shelstad studied in \cite{She81} $L$-embeddings $^LT \to {^LG}$, and more generally $^LH \to {^LG}$ for endoscopic groups $H$. She observed that to specify an $L$-embedding $^LT \to {^LG}$ it is enough to choose a Borel subgroup defined over $\C$ containing $T$. From the point of view of $\chi$-data this is expressed by the existence of a natural choice, which Shelstad calls ``based'' in \cite[\S9]{SheTE1}, associated to such a Borel subgroup. Unfortunately, over non-archimedean fields such a simple choice does not exist.

A different point of view emerges from the work of Adams and Vogan \cite{AV92}, \cite{Adm91}, \cite{AV16}, which introduces the concept of the $\rho$-cover of a maximal torus of a real reductive group. There is an associated $L$-group, which is a (usually) non-split extension of the dual torus by the Galois group, and therefore not isomorphic to the $L$-group of the torus itself. Emphasizing this fact, Adams and Vogan call it an ``$E$-group''. The $\rho$-cover and its $L$-group make the statement of the Harish-Chandara character formula for discrete series representations, as well as the construction of the local Langlands correspondence, more transparent. More recently, a slight variation of the $\rho$-cover has emerged, the so called $\rho_i$-cover, which makes constructions even more natural and has become the preferred device \cite{ALTV}. Its relationship to the $\rho$-cover is simple and explored in \cite[\S3]{Adm11}.

In this paper we introduce a double cover of the group of rational points of any algebraic torus defined over a local field and equipped with a certain finite subset of its character module (an essential example being the root system of an ambient reductive group). Unlike the $\rho_i$-cover of Adams-Vogan, which is of algebraic nature, the cover introduced here is of Galois-theoretic nature. Nevertheless, over the real numbers it recovers the $\rho_i$-cover.

We further define an $L$-group for our double cover -- it is a (usually) non-split extension of the absolute Galois group of $F$ (in fact already of the Galois group of the splitting extension) by the dual torus -- and prove a local Langlands correspondence, i.e. a bijection between the set of $L$-parameters valued in this $L$-group and the set of genuine characters of the double cover. The computations involved in the proof are all contained in \cite{LS87}. Therefore, this part of the paper can be seen as a reinterpretation of computations made in \cite{LS87}. We hope that the point of view introduced here illuminates the conceptual significance of these computations. 

Let us return to the setting of a maximal torus $T$ in a connected reductive group $G$ over $F$. The root system of $G$ leads by our general construction to a double cover $T(F)_\pm$ of $T(F)$ with a corresponding $L$-group $^LT_\pm$. It turns out that there is a canonical $\hat G$-conjugacy class of $L$-embeddings $^LT_\pm \to {^LG}$. Here one can take either Galois forms or Weil forms for the $L$-groups; it makes no difference. For varying elliptic maximal tori $T$ in $G$ these embeddings can be used to canonically factor $L$-parameters for $G$ that are discrete when $F$ is archimedean or supercuspidal and torally wild when $F$ is non-archimedean (the torally wild condition is automatic when $p$ does not divide the order of the Weyl group) to $L$-parameters for $T(F)_\pm$. In turn, one can associate to the resulting genuine character of $T(F)_\pm$ a natural stable distribution on $G(F)$. This gives a spectral characterization of the Langlands correspondence for discrete parameters for $F$ is archimedean and supercuspidal torally wild parameters when $F$ is non-archimedean. In this paper we state it only for non-archimedean $F$, cf. Conjecture \ref{cnj:stabchar}, since the archimedean case is discussed in \cite{AV16}. In the non-archimedean case, one needs to assume that $p$ is not too small, since the character formula uses the exponential map. This approach provides a different point of view on the explicit constructions of the local Langlands correspondence of \cite{KalRSP} and \cite{KalSLP}. 

How does $\chi$-data and the Langlands-Shelstad construction of an $L$\-embedding $^LT \to {^LG}$ fit into this point of view? First, $\chi$-data is used as a technical device in the proof of the local Langlands correspondence for the double cover $T(F)_\pm$. 
Second, while the Galois form of $^LT_\pm$ can be a non-split extension of $\Gamma$ by $\hat T$, its pull-back along $W_F \to \Gamma$, i.e. the Weil form of $^LT_\pm$, is always split, albeit non-canonically. A choice of $\chi$-data provides a splitting, and hence an $L$-isomorphism $^LT \to {^LT}_\pm$. Composed with the canonical $L$-embedding $^LT_\pm \to {^LG}$ this recovers the Langlands-Shelstad $L$-embedding associated to this $\chi$-data.

We generalize our investigation from maximal tori to twisted Levi subgroups. These are connected reductive subgroups of $G$ defined over $F$ that are Levi components of parabolic subgroups defined over some field extension of $F$. In particular, any maximal torus is a twisted Levi subgroup. We show that the Langlands\-Shelstad construction can be used to obtain an $L$-embedding $^LM \to {^LG}$ for any twisted Levi subgroup in terms of $\chi$-data for a suitable finite set derived from the root system of $G$. We then show that $M(F)$ possesses a canonical double cover $M(F)_\pm$, construct an $L$-group $^LM_\pm$ that is again a (usually) non-split extension of $\Gamma$ by $\hat M$, show how an assumed local Langlands correspondence for $M(F)$ easily implies such a correspondence for genuine representations of $M(F)_\pm$, and show that $^LM_\pm$ has a canonical embedding into $^LG$. These considerations are motivated by the study of supercuspidal representations of $p$-adic groups, which naturally arise in an inductive fashion from twisted Levi subgroups \cite{Yu01}. We hope they lay the groundwork for the study of general functorial transfer from twisted Levi subgroups.

The local Langlands correspondence for tori is functorial. It is therefore natural to ask if the correspondence for our double covers is also functorial. Even formulating this question appropriately is not completely straightforward, since the double cover depends on a subset of the character module that satisfies certain properties. In this paper we formulate and prove one instance of such functoriality. It is needed in the study of how an $L$-embedding $^LT \to {^LG}$ factors through the $L$-group of a twisted Levi subgroup. 

In 2008 Benedict Gross circulated privately a set of hand-written notes titled ``Groups of type $L$'', in which he made some observations about covers of tori over local fields and their $L$-groups. These observations work well for tori $S$ for which the norm map $S(E) \to S(F)$ is surjective, where $E/F$ is the splitting extension of $S$. While this condition is quite restrictive, it is fulfilled in a very important special case -- one-dimensional anisotropic tori. In Appendix \ref{app:glt} we have included some material from these notes and some further discussion, in the hope of shedding new light on some of the formulas appearing in \cite{LS87} and on our definition of the $L$-group of the double cover and the corresponding Langlands parameters.\\

\textbf{Acknowledgements:} The author thanks Jeff Adams for drawing attention to the problem of embedding the $L$-group of a maximal torus into the $L$-group of the ambient reductive group and to its elegant solution in the real case via the $\rho$-cover.

\section{Notation}

Throughout the paper $F$ will denote a local field of arbitrary characteristic, $F^s$ a separable closure, $\Gamma$ or $\Gamma_F$ the absolute Galois group of $F^s/F$, $W$ or $W_F$ the Weil group, and $\Sigma = \Gamma \times \{\pm 1\}$. We let $\Sigma$ act on $F^s$ with $\{\pm 1\}$ acting trivially.

Consider a set $R$ endowed with an action of $\Sigma$, which we will call a $\Sigma$-set for short. Following \cite{LS87}, a $\Sigma$-orbit $O \subset R$ is called symmetric if it is a single $\Gamma$-orbit, and asymmetric if it is the disjoint union of two $\Gamma$-orbits. For any $\alpha \in R$ we define $\Gamma_\alpha=\tx{Stab}(\alpha,\Gamma)$, $\Gamma_{\pm\alpha}=\tx{Stab}(\{\pm\alpha\},\Gamma)$, $F_\alpha=(F^s)^{\Gamma_\alpha}$, $F_{\pm\alpha}=(F^s)^{\Gamma_{\pm\alpha}}$. We introduce $\Sigma_\alpha$ and $\Sigma_{\pm\alpha}$ analogously. Thus $O$ is symmetric if and only if for one, hence any, $\alpha \in O$, one has $[F_\alpha:F_{\pm\alpha}]=2$. We may then call each $\alpha \in O$ symmetric. When $F$ is archimedean a symmetric $\alpha$ is traditionally called ``imaginary''. When $F$ is non-archimedean a symmetric $\alpha$ is called ramified or unramified depending on whether the quadratic extension $F_\alpha/F_{\pm\alpha}$ is ramified or unramified. We have the $\Sigma$-invariant decomposition $R=R_\tx{sym} \cup R_\tx{asym}$ into the subsets of symmetric and asymmetric elements, respectively. In the non-archimedean case we further have the $\Sigma$-invariant decomposition $R_\tx{sym}=R_\tx{sym,ram} \cup R_\tx{sym,unram}$.

For $\alpha \in R_\tx{sym}$ we denote by $\kappa_\alpha$ the non-trivial homomorphisms $\Gamma_{\pm\alpha}/\Gamma_\alpha \to \{\pm 1\}$ and $F_{\pm\alpha}^\times/N_{F_\alpha/F_{\pm\alpha}}(F_\alpha^\times) \to \{\pm 1\}$. We denote by $\tau_\alpha$ the non-trivial element of $\Gamma_{\pm\alpha}/\Gamma_\alpha$. We denote by $\Z_{(-1)}$ the abelian group $\Z$ on which $\Gamma_{\pm\alpha}/\Gamma_\alpha$ acts by multiplication by $\kappa_\alpha$. We denote by $\C_{(-1)}^\times$ the abelian group $\C^\times$ with $\Gamma_{\pm\alpha}/\Gamma_\alpha$-action where $\tau_\alpha$ acts by inversion. We denote by $F_\alpha^1$ the kernel of the norm homomorphism $N_{F_\alpha/F_{\pm\alpha}} : F_\alpha^\times \to F_{\pm\alpha}^\times$.

We will say that $R$ is \emph{admissible} if the action of $-1 \in \Sigma$ does not fix any element of $R$. Given an algebraic torus $S$ defined over $F$ we shall denote by $X^*(S)$ and $X_*(S)$ its character and co-character modules. These are $\Sigma$-modules, with $\Gamma$ acting in accordance with the $F$-structure of $S$, and $\{\pm 1\}$ acting by multiplication. A $\Sigma$-invariant subset $R \subset X^*(S)$ is admissible if and only if it does not contain $0$.

If $R$ is a $\Sigma$-set equipped with a $\Sigma$-invariant map $R \to X^*(S)$, the image of an element $\alpha \in R$ in $X^*(S)$ will be denoted by $\bar\alpha$.

\section{The double cover}

\subsection{Construction} \label{sub:const}

Let $S$ be an algebraic torus defined over $F$. Assume given a finite admissible $\Sigma$-set $R$ and a $\Sigma$-equivariant map $R \to X^*(S)$. The most important case is when $R$ is a $\Sigma$-invariant subset of $X^*(S)$ and the map is just the inclusion, but the slightly more general setting will become useful in \S\ref{sub:func} and \S\ref{sec:tl}, where it will give us more flexibility and will lead to a more natural statement of functoriality. Given this data we will define a double cover
\begin{equation} \label{eq:dc}
1 \to \{\pm 1\} \to S(F)_\pm \to S(F) \to 1. 	
\end{equation} 
When we want to emphasize the datum $R$ we shall write $S(F)_R$ in place of $S(F)_\pm$.

We begin by defining for any $\Sigma$-orbit $O \subset R$ an algebraic torus $J_O$ and a morphism $S \to J_O$, both defined over $F$. In fact, we are simply recalling a construction present in the proofs of Lemmas 3.2.D and 3.3.C of \cite{LS87}. 

First, let $O$ be asymmetric. Choose $\alpha \in O$ and let $J_\alpha=\tx{Res}_{F_\alpha/F}\mb{G}_m$, so that $X^*(J_\alpha)=\tx{Ind}_{\Gamma_\alpha}^\Gamma \Z$. We have $J_\alpha(F)=F_\alpha^\times$. The $\Gamma_\alpha$-equivariant morphism $\Z \to X^*(S)$ given by $n \mapsto n\bar\alpha$ specifies, by Frobenius reciprocity as reviewed in Appendix \ref{app:frob}, a $\Gamma$-equivariant morphism $X^*(J_\alpha) \to X^*(S)$, given explicitly by $f \mapsto \sum_{\tau \in \Gamma_\alpha\lmod\Gamma} f(\tau)\tau^{-1}\bar\alpha$. Another $\beta \in O$ is of the form $\beta=\epsilon\tau\alpha$ for some $\tau\in\Gamma$, whose coset in $\Gamma/\Gamma_\alpha$ is unique, and a unique $\epsilon \in \{\pm 1\}$. The map $f_\alpha \mapsto f_\beta$ given by $f_\beta(x)=\epsilon f_\alpha(\tau^{-1}x)$ is an isomorphism $X^*(J_\alpha) \to X^*(J_\beta)$, depending only on $\alpha$ and $\beta$, and identifying the two morphisms to $X^*(S)$. Taking the inverse limit over all $\alpha \in O$ we obtain the torus $J_O$ and the morphism $S \to J_O$. An element of $J_O(F)$ is a collection $(\gamma_\alpha)_{\alpha \in O}$, with $\gamma_\alpha \in F_\alpha^\times$, subject to $\gamma_{\epsilon\tau\alpha}=\tau(\gamma_\alpha)^\epsilon$.

Now let $O$ by symmetric. Choose $\alpha \in O$. Let $\tx{Res}^1_{F_\alpha/F_{\pm\alpha}}\mb{G}_m$ be the unique one-dimensional anisotropic torus defined over $F_{\pm\alpha}$ and split over $F_\alpha$. Its character module is the $\Gamma_{\pm\alpha}$-module $\Z_{(-1)}$. Let $J_\alpha=\tx{Res}_{F_{\pm\alpha}/F} \tx{Res}^1_{F_\alpha/F_{\pm\alpha}}\mb{G}_m$, so that
\[ X^*(J_\alpha)=\tx{Ind}_{\Gamma_{\pm\alpha}}^\Gamma \Z_{(-1)} = \{f : \Gamma \to \Z| \forall \sigma \in \Gamma_{\pm\alpha}: f(\sigma\tau)=\kappa_\alpha(\sigma)f(\tau)\}. \]
In particular, an element $f$ is constant on $\Gamma_\alpha$-cosets. We have $J_\alpha(F)=F_\alpha^1$.

The $\Gamma_{\pm\alpha}$-equivariant morphism $\Z_{(-1)} \to X^*(S)$ given by $n \mapsto n\bar\alpha$ specifies, again by Frobenius reciprocity, a $\Gamma$-equivariant morphism $X^*(J_\alpha) \to X^*(S)$, given again explicitly by $f \mapsto \sum_{\tau \in \Gamma_{\pm\alpha} \lmod \Gamma} f(\tau)\tau^{-1}\bar\alpha$. Another $\beta \in O$ is of the form $\beta=\tau\alpha$ for $\tau \in \Gamma$ whose coset in $\Gamma/\Gamma_\alpha$ is unique. Then $\Gamma_{\pm\beta}=\tau\Gamma_{\pm\alpha}\tau^{-1}$ and $\Gamma_{\beta}=\tau\Gamma_{\alpha}\tau^{-1}$, hence $\kappa_\beta=\kappa_\alpha\circ\tx{Ad}(\tau)$ and thus $f_\alpha \mapsto f_\beta$ with $f_\beta(x)=f_\alpha(\tau^{-1}x)$ provides again an isomorphism $X^*(J_\alpha) \to X^*(J_\beta)$ that depends only on $\alpha$ and $\beta$ and identifies the two morphisms to $X^*(S)$. Taking the inverse limit over all $\alpha \in O$ we again obtain a torus $J_O$ together with a morphism $S \to J_O$ defined over $F$. An element of $J_O(F)$ is a collection $(\gamma_\alpha)_{\alpha \in O}$ of $\gamma_\alpha \in F_\alpha^1$ satisfying $\gamma_{\sigma\alpha}=\sigma(\gamma_\alpha)$.

We have thus defined, for each $\Sigma$-orbit $O \subset R$, a torus $J_O$ and a morphism $S \to J_O$, both defined over $F$. Next we define a double cover $J_O(F)_\pm$ of the topological group $J_O(F)$. When $O$ is asymmetric we take the canonical split cover $J_O(F)_\pm = J_O(F) \times \{\pm 1\}$. When $O$ is symmetric we choose again $\alpha \in O$. The map $F_\alpha^\times \to F_\alpha^1$ sending $x$ to $x/\tau_\alpha(x)$ leads to the double cover
\[ 1 \to \frac{F_{\pm\alpha}^\times}{N_{F_\alpha^\times/F_{\pm\alpha}^\times}(F_\alpha^\times)} \to \frac{F_\alpha^\times}{N_{F_\alpha^\times/F_{\pm\alpha}^\times}(F_\alpha^\times)} \to F_\alpha^1 \to 1.  \]
Using the identification $J_\alpha(F)=F_\alpha^1$ and pusing out by the sign character $\kappa_\alpha : F_{\pm\alpha}^\times/N_{F_\alpha^\times/F_{\pm\alpha}^\times}(F_\alpha^\times) \to \{\pm 1\}$ we obtain a double cover
\[ 1 \to \{\pm 1\} \to J_\alpha(F)_\pm \to J_\alpha(F) \to 1. \]
It is easy to see that the transition isomorphism $J_\alpha \to J_\beta$ for any pair $\alpha,\beta \in O$ lifts naturally to a transition isomorphism $J_\alpha(F)_\pm \to J_\beta(F)_\pm$; it is simply given by $\tau : F_\alpha^\times \to F_\beta^\times$ for the unique $\tau \in \Gamma/\Gamma_\alpha$ satisfying $\tau\alpha=\beta$. Taking the limit we therefore obtain a double cover
\[ 1 \to \{\pm 1\} \to J_O(F)_\pm \to J_O(F) \to 1. \]

\begin{rem} \label{rem:sosplit}
Assume that $O$ is symmetric. The double cover $J_O(F)_\pm$ is split if and only if the canonical character $\kappa_\alpha : F_{\pm\alpha}^\times/N_{F_\alpha^\times/F_{\pm\alpha}^\times}(F_\alpha^\times) \to \{\pm 1\}$ can be extended to a character $F_{\alpha}^\times/N_{F_\alpha^\times/F_{\pm\alpha}^\times}(F_\alpha^\times) \to \{\pm 1\}$. When $F$ is non-archimedean and $F_\alpha/F_{\pm\alpha}$ is unramified there is a canonical such extension, so the cover $J_O(F)_\pm$ splits canonically. When $F$ is non-archimedean, $F_\alpha/F_{\pm\alpha}$ is ramified, and the size $q_\alpha$ of the residue field of $F_\alpha$ is congruent to $1$ modulo $4$, then such an extension does exist, but there is no canonical choice, so the cover $J_O(F)_\pm$ splits non-canonically. When $F$ is non-archimedean, $F_\alpha/F_{\pm\alpha}$ is ramified, and $q_\alpha$ is congruent to $3$ modulo $4$, or when $F=\R$, then there is no such extension, so the cover $J_O(F)_\pm$ does not split.
\end{rem}

\begin{rem} \label{rem:so2}
The pull-back of the double cover $J_O(F)_\pm \to J_O(F)$ along the squaring map $J_O(F) \to J_O(F)$ is canonically split. Indeed, it is enough to treat the case of symmetric $O$, in which case we choose $\alpha \in O$ and present this pull-back as
\[ \{(x,y)|x \in F_\alpha^1,y \in F_\alpha^\times/N_{F_\alpha/F_{\pm\alpha}}(F_\alpha^\times), x^2=y/\tau(y)\}. \]
The splitting of the map $(x,y) \mapsto x$ is given by $x \mapsto (x,x)$.
\end{rem}

We define an extension of $S(F)$ by $\prod\{\pm 1\}$, where the product runs over the set of $\Sigma$-orbits in $R$, as the pull-back of $S(F) \to \prod J_O(F) \from \prod J_O(F)_\pm$. We then push out this extension along the multiplication morphism $\prod \{\pm 1\} \to \{\pm 1\}$ to obtain the double cover \eqref{eq:dc}.

\begin{rem} \label{rem:split}
There are some situations in which the cover $S(F)_\pm$ splits canonically, i.e. there is a canonical identification $S(F)_\pm = S(F) \times \{\pm 1\}$. 
\begin{enumerate}
	\item If all elements of $R$ are asymmetric, then $S(F)_\pm$ splits canonically. This is because each $J_O(F)_\pm$ is by definition the canonical split extension.
	\item More generally, if the image in $X^*(S)$ of every symmetric element of $R$ is divisible by $2$, then $S(F)_\pm$ splits canonically. This follows from Remark \ref{rem:so2} and the fact that the morphism $S \to J_O$ factors through the squaring map $J_O \to J_O$ for each symmetric $O$.
	\item If $F$ is non-archimedean and all symmetric elements of $R$ are unramified, then $S(F)_\pm$ splits canonically. This follows from Remark \ref{rem:sosplit}.
\end{enumerate}
\end{rem}

\begin{rem} \label{rem:elements}
An element of $S(F)_\pm$ can be represented non\-uniquely by a tuple $(\gamma,\epsilon,(\delta_\alpha)_{\alpha \in R_\tx{sym}})$, where $\gamma \in S(F)$, $\epsilon \in \{\pm 1\}$, and $\delta_\alpha \in F_\alpha^\times$ satisfy $\delta_{\sigma\alpha}=\sigma(\delta_\alpha)$ and $\delta_\alpha/\delta_{-\alpha}=\bar\alpha(\gamma)$ for all $\sigma \in \Gamma$ and $\alpha \in R_\tx{sym}$. Note that $\delta_{-\alpha}=\tau_\alpha(\delta_\alpha)$. Another such collection representing the same element is of the form $(\gamma,\eta\cdot\epsilon,(\eta_\alpha \cdot\delta_\alpha)_{\alpha \in R_\tx{sym}})$, for $\eta_\alpha \in F_{\pm\alpha}^\times$ with $\eta_{\sigma\alpha}=\sigma(\eta_\alpha)$, and $\eta=\prod_{\alpha \in R_\tx{sym}/\Gamma} \kappa_\alpha(\eta_\alpha)$. The unique non-trivial element $\epsilon=-1$ in the kernel of $S(F)_\pm \to S(F)$ is represented by $(1,-1,(1))$.

The above presentation is valid without assuming that $R$ has symmetric elements. If we do make this assumption, we can make the more economical presentation of elements of $S(F)_\pm$ by tuples $(\gamma,(\delta_\alpha)_{\alpha \in R_\tx{sym}})$, where $\gamma \in S(F)$ and $\delta_\alpha \in F_\alpha^\times$ satisfy $\delta_{\sigma\alpha}=\sigma(\delta_\alpha)$ and $\delta_\alpha/\delta_{-\alpha}=\bar\alpha(\gamma)$ for all $\sigma \in \Gamma$ and $\alpha \in R_\tx{sym}$.  Another such collection represents the same element if and only if it is of the form $(\gamma,(\eta_\alpha\cdot\delta_\alpha)_{\alpha \in R_\tx{sym}})$ for $\eta_\alpha \in F_{\pm\alpha}^\times$ with $\eta_{\sigma\alpha}=\sigma(\eta_\alpha)$ and $\prod_{\alpha \in R_\tx{sym}/\Gamma} \kappa_\alpha(\eta_\alpha)=1$. The unique non-trivial element $\epsilon = -1$ in the kernel of $S(F)_\pm \to S(F)$ is now represented by $(1,(\eta_\alpha)_{\alpha \in R_\tx{sym}})$, where $\eta_\alpha \in F_{\pm\alpha}^\times$ satisfies $\eta_{\sigma\alpha}=\sigma(\eta_\alpha)$ and $\prod_{\alpha\in R_\tx{sym}/\Gamma}\kappa_\alpha(\eta_\alpha)=-1$.

Given an economical presentation $(\gamma,(\delta_\alpha)_{\alpha \in R_\tx{sym}})$ we obtain the non\-economical presentation $(\gamma,1,(\delta_\alpha)_{\alpha \in R_\tx{sym}})$.
\end{rem}

\begin{rem}
In what follows, we will quote some arguments and computations from \cite{LS87}. Strictly speaking, the arguments of \cite{LS87} apply to a finite $\Sigma$-invariant subset $0 \notin R \subset X^*(S)$, which appears slightly less general than having a $\Sigma$-equivariant map $R \to X^*(S)$ from a finite admissible $\Sigma$-set $R$. But for all arguments that we will quote, one can reduce to the more special case by the following simple construction. Let $\tilde M$ be the finitely generated free abelian group generated by the set $R$. Thus $\tilde M=\bigoplus_{\alpha \in R} \Z$. The group $\Sigma$ operates on $\tilde M$ linearly. On the other hand, $\Z$ operates on $\tilde M$ via the abelian group structure. We let $M$ be the quotient of $\tilde M$ under the action of the group $\{\pm 1\}$ embedded diagonally in $\Sigma \times \Z$. The admissibility of $\Sigma$ implies that $M$ is a finitely generated free abelian group. By construction it carries a $\Sigma$-action where $-1 \in \Sigma$ acts by multiplication by $-1 \in \Z$. The natural map $R \to \tilde M \to M$ is injective. The $\Sigma$-equivariant map $R \to X^*(S)$ extends canonically to a $\Sigma$-equivariant map $M \to X^*(S)$. If $T$ is the $F$-torus with $X^*(T)=M$, we obtain a natural map $S \to T$. The arguments of \cite{LS87} can be applied to the torus $T$ and then deduced for the torus $S$.
\end{rem}

\subsection{$\chi$-data and genuine characters} \label{sub:chichar}

We continue with a torus $S$ defined over $F$, a finite admissible $\Sigma$-set $R$, and a $\Sigma$-equivariant map $R \to X^*(S)$. We recall here the notions of $\chi$-data and $\zeta$-data introduced in \cite{LS87}; see \cite[\S2.5]{LS87} and \cite[Definition 4.6.4]{KalRSP}. A set of $\chi$-data consists of a character $\chi_\alpha : F_\alpha^\times \to \C^\times$ for each $\alpha \in R$. This set satisfies 
\begin{enumerate}
	\item $\chi_{\sigma\alpha}=\chi_\alpha\circ\sigma^{-1}$ for all $\sigma \in \Gamma$;
	\item $\chi_{-\alpha}=\chi_\alpha^{-1}$;
	\item $\chi_\alpha|_{F_{\pm\alpha}^\times}=\kappa_\alpha$ when $\alpha$ is symmetric.
\end{enumerate}
A set of $\zeta$-data consists of a character $\zeta_\alpha : F_\alpha^\times \to \C^\times$ for each $\alpha$ in $R$ and this set satisfies conditions 1 and 2 above, but in place of $3$ it satisfieds $\zeta_\alpha|_{F_{\pm\alpha}^\times}=1$ when $\alpha$ is symmetric.

We now recall from the proof of \cite[Lemma 3.5.A]{LS87} how a set of $\zeta$-data leads to a character $\zeta_S : S(F) \to \C^\times$ and use this as a motivation to show how a set of $\chi$-data leads to a genuine character of the double cover $S(F)_\pm$. Let $(\zeta_\alpha)_\alpha$ be a set of $\zeta$-data. For each $\Sigma$-orbit $O$ in $R$ define a character $\zeta_O : J_O(F) \rw \C^\times$ as follows. Choosing $\alpha \in O$ we obtain an identification $J_O = J_\alpha$. If $O$ is asymmetric let $\zeta_O = \zeta_\alpha$. If $O$ is symmetric $\zeta_O$ be the composition of $\zeta_\alpha$ with the isomorphism $F_\alpha^\times/F_{\pm\alpha}^\times \cong F_\alpha^1=J_O(F)$ sending $x \in F_\alpha^\times$ to $x/\tau_\alpha(x)$. In both cases it is straightforward to check that $\zeta_O$ depends only on $O$ and not on the choice of $\alpha$. Define $\zeta_S$ to be the pull-back of $\prod_O \zeta_O$ under the map $S(F) \to \prod_O J_O(F)$, where the products run over the set of $\Sigma$-orbits in $R$. This is the character of \cite[Definition 4.6.5]{KalRSP}, where it was defined in the slightly less general setting $R \subset X^*(S)$.

Let now $(\chi_\alpha)_\alpha$ be a set of $\chi$-data. We construct a genuine character $\chi_S : S(F)_\pm \to \C^\times$ as follows. Let $O \subset R$ be a $\Sigma$-orbit. Choose $\alpha \in O$ and obtain an identification $J_O = J_\alpha$. If $O$ is asymmetric then define $\chi_O = \chi_\alpha$ as a character of $J_O(F)=F_\alpha^\times$ and extend it to a genuine character of $J_O(F)_\pm$ in the unique way. If $O$ is symmetric then we have the commutative diagram
\[ \xymatrix{
1\ar[d]&1\ar[d]&&1\ar[d]\\
\{\pm 1\}\ar@{=}[r]\ar[d]&\{\pm 1\}\ar[d]&&F_{\pm\alpha}^\times/N_{F_\alpha/F_{\pm\alpha}}(F_\alpha^\times)\ar[d]\ar[ll]^{\kappa_\alpha}\\
J_O(F)_\pm\ar@{=}[r]\ar[d]&J_\alpha(F)_\pm\ar[d]&&F_\alpha^\times/N_{F_\alpha/F_{\pm\alpha}}(F_{\alpha}^\times)\ar[ll]\ar[d]\\
J_O(F)\ar@{=}[r]\ar[d]&J_\alpha(F)\ar@{=}[r]\ar[d]&F_\alpha^1&F_\alpha^\times/F_{\pm\alpha}^\times\ar[l]\ar[d]\\
1&1&&1
}
\]
in which the columns are exact and the horizontal maps are isomorphisms. We let $\chi_O$ be the character of $J_O(F)_\pm$ that is the composition of $\chi_\alpha$ with the middle horizontal isomorphism. It is a genuine character of the double cover $J_O(F)_\pm$ of $J_O(F)$. The product $\prod_O \chi_O$ is a character of $\prod_O J_O(F)_\pm$ whose restriction to $\prod_O \{\pm 1\}$ kills the kernel of the multiplication map $\prod_O \{\pm 1\} \to \{\pm1\}$ and therefore pulls back to a genunine character of $S(F)_\pm$.

\begin{dfn} \label{dfn:chis}
Let $\chi_S : S(F)_\pm \to \C^\times$ denote the character just constructed.
\end{dfn}

\begin{fct} \label{fct:charmult}
If $(\chi_\alpha)$ and $(\zeta_\alpha)$ are sets of $\chi$-data and $\zeta$-data for $R$, respectively, and $\chi_S : S(F)_\pm \to \C^\times$ and $\zeta_S : S(F) \to \C^\times$ are the corresponding characters, then $(\chi_\alpha \cdot \zeta_\alpha)$ is another set of $\chi$-data and we have
\[ (\chi\cdot\zeta)_S = \chi_S \cdot \zeta_S. \]
\end{fct}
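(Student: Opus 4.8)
The plan is to settle the first assertion directly from the definitions and then reduce the character identity to a computation one $\Sigma$-orbit at a time. That $(\chi_\alpha\cdot\zeta_\alpha)_\alpha$ is a set of $\chi$-data will be immediate: conditions $1$ and $2$ in the definition of $\chi$-data are preserved under forming products of families, since they hold separately for $(\chi_\alpha)$ and $(\zeta_\alpha)$; and for symmetric $\alpha$ we get $(\chi_\alpha\cdot\zeta_\alpha)|_{F_{\pm\alpha}^\times}=\kappa_\alpha\cdot 1=\kappa_\alpha$ from condition $3$ for $\chi$-data together with its analogue for $\zeta$-data. Thus the genuine character $(\chi\cdot\zeta)_S:S(F)_\pm\to\C^\times$ is defined, and it remains to identify it with $\chi_S\cdot\zeta_S$.

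The crucial observation is that the double cover $S(F)_\pm$ of \S\ref{sub:const} is manufactured from $R$ and the map $R\to X^*(S)$ alone, independently of any choice of $\chi$- or $\zeta$-data: it is the pull-back to $S(F)$ of $\prod_O J_O(F)_\pm$, pushed out along the multiplication map $\prod_O\{\pm 1\}\to\{\pm 1\}$, the products running over the $\Sigma$-orbits $O\subset R$. Moreover $\chi_S$ and $(\chi\cdot\zeta)_S$ are obtained, by this same pull-back and push-out, from the orbitwise products $\prod_O\chi_O$ and $\prod_O(\chi\cdot\zeta)_O$ of genuine characters of $\prod_O J_O(F)_\pm$, while $\zeta_S$, viewed as a character of $S(F)_\pm$ through the covering map $S(F)_\pm\to S(F)$, is obtained in the same way from $\prod_O(\zeta_O\circ p_O)$, where $p_O:J_O(F)_\pm\to J_O(F)$ is the covering map. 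The well-definedness of each $\chi_O$ and $\zeta_O$ (independence of the auxiliary $\alpha\in O$) and the compatibility of such products with the pull-back and push-out are already established in \S\ref{sub:const} and in the paragraphs preceding Definition~\ref{dfn:chis}; granting these, the asserted equality will follow from the orbitwise identity
\[ (\chi\cdot\zeta)_O \;=\; \chi_O\cdot(\zeta_O\circ p_O) \]
of genuine characters of $J_O(F)_\pm$, for every $\Sigma$-orbit $O\subset R$.

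The final step is to verify this orbitwise identity in the two cases. For asymmetric $O$ the cover is the canonical split one $J_O(F)_\pm=J_O(F)\times\{\pm 1\}$, and after choosing $\alpha\in O$ and identifying $J_O(F)=F_\alpha^\times$ one has $\chi_O(\gamma,\epsilon)=\chi_\alpha(\gamma)\epsilon$, $(\zeta_O\circ p_O)(\gamma,\epsilon)=\zeta_\alpha(\gamma)$, and $(\chi\cdot\zeta)_O(\gamma,\epsilon)=(\chi_\alpha\zeta_\alpha)(\gamma)\epsilon$, so the identity holds at once. For symmetric $O$ I would use the commutative diagram preceding Definition~\ref{dfn:chis}: choosing $\alpha\in O$, it provides isomorphisms $J_O(F)_\pm\cong F_\alpha^\times/N_{F_\alpha/F_{\pm\alpha}}(F_\alpha^\times)$ and $J_O(F)\cong F_\alpha^\times/F_{\pm\alpha}^\times$ that intertwine $p_O$ with the obvious surjection $F_\alpha^\times/N_{F_\alpha/F_{\pm\alpha}}(F_\alpha^\times)\to F_\alpha^\times/F_{\pm\alpha}^\times$. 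Under them $\chi_O$ becomes $\chi_\alpha$, which descends to $F_\alpha^\times/N_{F_\alpha/F_{\pm\alpha}}(F_\alpha^\times)$ because $\chi_\alpha$ kills norms: for $x\in F_\alpha^\times$ one has $\chi_\alpha(N_{F_\alpha/F_{\pm\alpha}}(x))=\chi_\alpha(x)\cdot\chi_\alpha(\tau_\alpha x)=\chi_\alpha(x)\cdot\chi_{-\alpha}(x)=1$ by conditions $1$ and $2$ and the relation $\tau_\alpha\alpha=-\alpha$; $\zeta_O\circ p_O$ becomes $\zeta_\alpha$, which is already trivial on $F_{\pm\alpha}^\times$ and hence on $N_{F_\alpha/F_{\pm\alpha}}(F_\alpha^\times)$; and $(\chi\cdot\zeta)_O$ becomes $\chi_\alpha\zeta_\alpha$. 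The identity then reads $\chi_\alpha\zeta_\alpha=\chi_\alpha\cdot\zeta_\alpha$.

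I do not anticipate a genuine obstacle; the argument is bookkeeping. The one point requiring care --- and the conceptual heart of the matter --- is the observation that $S(F)_\pm$ does not depend on the $\chi$-data, so that $\chi_S$, $\zeta_S$, and $(\chi\cdot\zeta)_S$ genuinely live on one and the same group and the three constructions differ only in their orbitwise inputs. Once that is in place, matching the three characters orbit by orbit closes the proof.
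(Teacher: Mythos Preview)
Your proof is correct. The paper simply says ``Left to the reader,'' so there is no approach to compare against; your orbit-by-orbit verification is exactly the routine bookkeeping the paper omits, and all the details you supply (including the check that $\chi_\alpha$ kills norms in the symmetric case) are accurate.
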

\begin{proof}
Left to the reader.
\end{proof}

\subsection{The $L$-group} \label{sub:lgrp}

We continue with a torus $S$ defined over $F$, a finite admissible $\Sigma$-set, and a $\Sigma$-equivariant map $R \to X^*(S)$. Let $E/F$ be a finite Galois extension such that $\Gamma_E$ acts trivially on $R$ and $X^*(S)$. In the case when the map $R \to X^*(S)$ is injective, we can take $E/F$ to be the splitting extension of $S$. We shall define the $L$-group of the double cover $S(F)_\pm$, first in its Galois form. This will be an extension of $\Gamma_{E/F}$ by $\hat S$, usually not split, and unique up to isomorphism that is unique up to conjugation by $\hat S$. 

Recall the concept of a gauge from \cite{LS87}. It is a function $p : R \to \{\pm 1\}$ having the property $p(-\alpha)=-p(\alpha)$. 

\begin{rem}
A gauge need not be $\Gamma$-invariant. In fact, a $\Gamma$-invariant gauge exists if and only if all elements of $R$ are asymmetric.	
\end{rem}

\begin{dfn} \label{dfn:tp}
The \emph{Tits cocycle} corresponding to $p$ is $t_p \in Z^2(\Gamma_{E/F},\hat S)$ defined by $t_p(\sigma,\tau)=(-1)^{\lambda_p(\sigma,\tau)}$, where $\lambda_p(\sigma,\tau) \in X^*(S)=X_*(\hat S)$ is the sum of $\bar\alpha$ for $\alpha \in \Lambda_p(\sigma,\tau) \subset R$ defined as
\[  \{ \alpha \in R|p(\alpha)=+1,p(\sigma^{-1}\alpha)=-1,p((\sigma\tau)^{-1}\alpha)=+1 \}. \]
\end{dfn}
It is shown in \cite[Lemmas 2.1.B]{LS87} that $t_p$ is indeed a $2$-cocycle. According to \cite[Lemma 2.1.C]{LS87} the cohomology class of $t_p$ is independent of the choice of gauge $p$. In fact, according to \cite[Lemma 2.4.A]{LS87} there is a canonical co-chain $s_{p/q} \in C^1(\Gamma_{E/F},\hat S)$ for two gauges $p,q : R \to \{\pm1\}$ with the property $t_p/t_q = \partial s_{p/q}$. We recall its definition: $s_{p/q}(\sigma)=(-1)^{\mu_{p/q}(\sigma)}$, where $\mu_{p/q}(\sigma) \in X^*(S)$ is the sum of $\bar\alpha$ for $\alpha \in M_{p/q}(\sigma) \subset R$ defined as the disjoint union of
\[ \{\alpha \in R|p(\alpha)=+1,p(\sigma^{-1}\alpha)=-1,q(\alpha)=q(\sigma^{-1}\alpha)=+1\} \]
and
\[ \{\alpha \in R|p(\alpha)=p(\sigma^{-1}\alpha)=+1,q(\alpha)=-1,q(\sigma^{-1}\alpha)=+1\}. \]
Finally, \cite[Lemma 2.4.B]{LS87} shows that for three gauges $p,q,r$ the 1-cochains $s_{p/q}$ and $s_{q/p}$ are cohomologous, and the 1-cochains $s_{p/q}\cdot s_{q/r}$ and $s_{p/r}$ are cohomologous.

After these preparations we now define the $L$-group of $S(F)_\pm$ as the twisted product $\hat S \boxtimes_{t_p} \Gamma_{E/F}$ with respect to an arbitrary choice of gauge $p : R \to \{\pm 1\}$ and note that modulo conjugation by $\hat S$ this is independent of the choice of $p$. More formally we note that to specify an extension of $\Gamma_{E/F}$ by $\hat S$ that is unique up to isomorphism that in turn is unique up to conjugation by $\hat S$ is the same as to specify an object, unique up to unique isomorphism, in the category whose objects are extensions of $\Gamma_{E/F}$ by complex tori, and the set of morphisms from the object $1 \to \hat T_1 \to \mc{T}_1 \to \Gamma_{E/F} \to 1$ to the object $1 \to \hat T_2 \to \mc{T}_2 \to \Gamma_{E/F} \to 1$ is the set of $\hat T_2$-conjugacy classes of morphisms of extensions.

\begin{dfn} \label{dfn:lgrp}
The \emph{Galois form of the $L$-group} $^LS_\pm$ is the inverse limit, in the above category, of the system of extensions $^LS_\pm^{(p)}= \hat S \boxtimes_{t_p} \Gamma_{E/F}$, one for each gauge $p : R \to \{\pm 1\}$, with transition isomorphisms defined for any pair of gauges $p,q$ by
	\[ \xi_{p,q} : {^LS_\pm^{(p)}} \to {^LS_\pm^{(q)}},\quad s \boxtimes \sigma \mapsto s \cdot s_{p/q} \boxtimes \sigma. \]
The \emph{Weil form of the $L$-group} is the pull-back along $W_{E/F} \to \Gamma_{E/F}$ of the Galois form.
\end{dfn}

\begin{rem} We can also consider the absolute Galois and Weil forms of the $L$-group, by pulling back along the projections $\Gamma \to \Gamma_{E/F}$ and $W \to W_{E/F}$.

The Galois form of the $L$-group can often be a non-split extension of $\Gamma$ by $\hat S$. The Weil form, on the other hand, is always split, but not canonically. In the non-archimedean case, the absolute Galois form is also split, but in general not canonically. We will discuss this in the next subsection.
\end{rem}

\begin{rem}
The formula for $s_{p/q}$, and the definition of $^LS_\pm$, may seem a bit ad-hoc. We provide more motivation in Appendix \ref{app:glt}, especially Remark \ref{rem:spq} and \S\ref{sub:gentori}.
\end{rem}

\begin{rem} \label{rem:dsplit}
Assume that all elements of $R$ are asymmetric, so that a $\Gamma$-invariant gauge $p : R \to \{\pm 1\}$ exists. Then $t_p=1$ and hence we have the canonical identification $^LS_\pm = {^LS}$. More generally, the same argument works when all symmetric elements of $R$ are divisible by $2$ in $X^*(S)$. In that case we choose the gauge $p$ to have $\Gamma$-invariant restriction to all asymmetric orbits, so that the only elements of $\Lambda_p(\sigma,\tau)$ are symmetric, and again see that $t_p=1$. This is the dual incarnation of the canonical isomorphism $S(F)_\pm=S(F) \times \{\pm1\}$ of Remark \ref{rem:split}.

When $F$ is non-archimedean and all symmetric elements are unramified, the situation is slightly more subtle. While the cover $S(F)_\pm$ is still canonically split, its dual group $^LS_\pm$ need not be a split extension of $\Gamma_{E/F}$ by $\hat S$. However, its inflation to $\Gamma$ does split canonically, see Remark \ref{rem:dsplituf} below.
\end{rem}

\subsection{The local Langlands correspondence}

We continue with a torus $S$ defined over $F$ a finite admissible $\Sigma$-set, and a $\Sigma$-equivariant map $R \to X^*(S)$. We can apply the usual definition of a Langlands parameter to the $L$-group $^LS_\pm$:

\begin{dfn} \label{dfn:lp}
A \emph{Langlands parameter} for $S(F)_\pm$ (equivalently for $^LS_\pm$) is a continuous homomorphism $W_F \to {^LS}_\pm$ that commutes with the maps of both sides to $\Gamma$. Two such parameters are considered \emph{equivalent} if they are conjugate under $\hat S$.
\end{dfn}

\begin{rem} \label{rem:lp}
Since $^LS$ is a split extension endowed with a splitting, a Langlands parameter for $S$ is the same as a continuous 1-cocycle $W_F \to \hat S$, and two parameters are equivalent if the corresponding 1-cocycles are cohomologous.

We can describe Langlands parameters for $^LS_\pm$ in a similar manner. If we fix a gauge $p : R \to \{\pm 1\}$ then a Langlands parameter for $^LS_\pm$ is the same as a 1-cochain $W_F \to \hat S$ whose differential equals the Tits cocycle $t_p$. If we prefer not to fix a gauge, then we can describe an equivalence class of Langlands parameters for $^LS_\pm$ as a collection of cohomology classes of 1-cochains $u_p$, one for each gauge $p$, s.t. $u_p \cdot s_{q/p} = u_q$ as cohomology classes, and the differential of $u_p$ equals $t_p$ for each $p$.

Note that if $(u_p)$ is such a collection and $z : W_F \to \hat S$ is a 1-cocycle, then $(z \cdot u_p)$ is another such collection. Conversely if $(u_p)$ and $(v_p)$ are two such collections, then $u_p \cdot v_p$ is a 1-cocycle and independent of $p$.
\end{rem}

\begin{thm} \label{thm:lldc}
\begin{enumerate}
	\item There is a natural bijection between the set of equivalence classes of Langlands parameters for $S(F)_\pm$ and the set of continuous genuine characters of $S(F)_\pm$.
	\item Given a genuine character of $S(F)_\pm$ and a character of $S(F)$, the parameter of the product of these characters is the product of their parameters.
	\item Given two genuine characters of $S(F)_\pm$, the product of their parameters is the 1-cocycle $W_F \to \hat S$ that is the parameter of their product.
\end{enumerate}
\end{thm}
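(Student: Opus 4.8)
The strategy is to reduce the theorem to the known local Langlands correspondence for tori, using the fact (Remark~\ref{rem:so2}) that the cover $S(F)_\pm$ pulled back along the squaring map is canonically split, together with $\chi$-data as a device to kill the Tits cocycle. Concretely: given a set of $\chi$-data $(\chi_\alpha)$, Definition~\ref{dfn:chis} produces a genuine character $\chi_S$ of $S(F)_\pm$; dually, I expect a set of $\chi$-data to produce (via the Langlands--Shelstad splitting recalled in \S\ref{sub:lgrp}, or rather its analogue here) a splitting of the Weil form of $^LS_\pm$, hence an $L$-isomorphism $^LS \to {^LS}_\pm$. Transporting the standard bijection for $S$ (Remark~\ref{rem:lp}) across this isomorphism gives a candidate bijection for each choice of $\chi$-data, and the content of part~(1) is that, after twisting by $\chi_S$ on the character side, the result is independent of the choice of $\chi$-data.

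First I would set up the character side. Any two genuine characters of $S(F)_\pm$ differ by an honest character of $S(F)$: their ratio kills $\{\pm1\}$, so descends to $S(F)$. Hence the set of genuine characters is a torsor under $\widehat{S(F)} = H^1(W_F,\hat S)$ (using the standard LLC for tori to identify $\mathrm{Hom}(S(F),\C^\times)$ with $H^1(W_F,\hat S)$). Symmetrically, by Remark~\ref{rem:lp}, the set of equivalence classes of Langlands parameters for $^LS_\pm$ is also a torsor under $H^1(W_F,\hat S)$: given one parameter (one compatible collection $(u_p)$), any other is $(z\cdot u_p)$ for a unique cocycle class $z$. So both sides are torsors under the same group, and to give a natural bijection it suffices to match up one distinguished element on each side in a canonical way — or, since there is no canonical element on either side, to match the two torsor structures via a single explicit correspondence that I then check is well-defined.

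The concrete construction I would carry out: fix $\chi$-data $(\chi_\alpha)$. On the parameter side, mimic \cite[\S2.5--2.6]{LS87}: for each $\Sigma$-orbit $O$ the character $\chi_O$ of $J_O(F)_\pm$ (symmetric case) or $J_O(F)$ (asymmetric case) has, by the LLC for the induced/norm-one tori $J_O$, an associated cocycle $W_F \to \hat J_O$; assembling these and composing with $\hat S \to \prod \hat J_O$ dual to $S \to \prod J_O$ produces, I expect, exactly a $1$-cochain $u_p^{\chi}$ with differential $t_p$ — this is precisely the Langlands--Shelstad computation, and the symmetric-orbit contribution is where the $\kappa_\alpha$ in condition~(3) of $\chi$-data enters to produce the sign $(-1)^{\lambda_p(\sigma,\tau)}$. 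Then I would \emph{define} the parameter of an arbitrary genuine character $\theta$ of $S(F)_\pm$ by writing $\theta = \chi_S \cdot \mu$ for the character $\mu$ of $S(F)$ with $\chi$-data-independent parameter $z_\mu \in H^1(W_F,\hat S)$, and setting its parameter to be $z_\mu \cdot u^{\chi}$. The well-definedness — independence of the choice of $\chi$-data — follows from Fact~\ref{fct:charmult}: changing $\chi$-data by $\zeta$-data $(\zeta_\alpha)$ changes $\chi_S$ by the honest character $\zeta_S$ of $S(F)$ and, correspondingly, changes $u^{\chi}$ by the cocycle parameter of $\zeta_S$, so the product $z_\mu \cdot u^{\chi}$ is unchanged; and changing $\chi$-data within the same $\Gamma$-orbit structure by a genuine twist that is trivial on $\zeta$-data reduces to the $\zeta$-data case since the difference of two $\chi$-data is always $\zeta$-data. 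Parts~(2) and~(3) are then essentially formal: (2) is the statement that this construction intertwines the two $H^1(W_F,\hat S)$-torsor structures, which is immediate from the definition $\theta \mapsto z_\mu \cdot u^\chi$; and (3) follows because the ``difference'' of two genuine characters is an honest character of $S(F)$, whose parameter (an honest cocycle) is, by part~(2) applied twice, the ratio of the two parameters.

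\textbf{Main obstacle.} The crux is the explicit identification, for symmetric orbits $O$, of the parameter attached to the genuine character $\chi_O$ of the norm-one-torus cover $J_O(F)_\pm$ with a $1$-cochain whose coboundary is the orbit-$O$ part of the Tits cocycle $t_p$. This is exactly the content of the Langlands--Shelstad computations in \cite[\S2.5,\S2.6]{LS87}, so the work is to transcribe those computations into the double-cover language and verify that the sign bookkeeping (the role of $\kappa_\alpha$, the gauge $p$, and the cochains $s_{p/q}$) matches Definitions~\ref{dfn:tp} and~\ref{dfn:lgrp} on the nose. A secondary subtlety is checking that the bijection is genuinely canonical, i.e.\ that the apparent dependence on $E/F$ and on the presentation of $^LS_\pm$ as an inverse limit over gauges is harmless; this is handled by Remark~\ref{rem:lp}, which already packages a parameter as a gauge-independent collection $(u_p)$, so one just needs the construction above to produce a compatible such collection, which the relation $t_p/t_q = \partial s_{p/q}$ guarantees.
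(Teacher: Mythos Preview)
Your proposal is correct and follows essentially the same approach as the paper. The paper's proof is organized around the explicit $1$-cochain $r_{p,\chi}$ of Definition~\ref{dfn:rp}, whose differential equals $t_p$ (Lemma~\ref{lem:lparpm}, citing \cite[Lemma~2.5.A]{LS87}) and which transforms under change of $\chi$-data by the cocycle parameter of $\zeta_S$ (Lemma~\ref{lem:rpz}); these are precisely the two ingredients you identify as the ``main obstacle,'' and your torsor framing together with Fact~\ref{fct:charmult} is exactly how the paper concludes. One small imprecision: the map you want is $\prod \hat J_O \to \hat S$ (dual to $S \to \prod J_O$), not the reverse, and for symmetric $O$ the character $\chi_O$ lives on the cover $J_O(F)_\pm$ rather than $J_O(F)$, so invoking ``the LLC for $J_O$'' is shorthand for the explicit Langlands--Shelstad cochain rather than the standard correspondence---but you correctly flag this as the crux.
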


The word ``natural'' in the first part of this theorem has the naive meaning of naturally occurring and independent of choices, and does not (yet) signify that it is functorial. We do prove certain cases of functoriality in \S\ref{sec:func}.

The proof of this theorem is implicit in the computations of \cite[\S2.5]{LS87}. Before we explain that we offer some motivation for the resulting formulas. In \S\ref{sub:chichar} we recalled the construction of a character $\zeta_S : S(F) \to \C^\times$ associated to a set $(\zeta_\alpha)$ of $\zeta$-data. 

\begin{lem} \label{lem:zsp}
The element of $H^1(W_F,\hat S)$ corresponding to the character $\zeta_S$ can be represented by the 1-cocycle
\[ z(w)=z_\zeta(w)= \prod_{\alpha \in R/\Sigma} \prod_{i=1}^n \zeta_\alpha(v_0(u_i(w)))^{w_i^{-1}\bar\alpha}. \]
Here for each $\alpha \in R$ we have fixed representatives $w_1,\dots,w_n \in W$ for $W_{\pm\alpha} \lmod W$, $v_0 \in W_\alpha$ is arbitrary, and $v_1 \in W_{\pm\alpha} \sm W_\alpha$ is arbitrary when $\alpha$ is symmetric. For $w \in W$ and $1 \leq i \leq n$ the element $u_i(w) \in W_{\pm\alpha}$ is defined by $w_i \cdot w = u_i(w) \cdot w_j$, where $1 \leq j \leq n$ is also uniquely determined. For $u \in W_{\pm\alpha}$ and $0 \leq i \leq 1$ the element $v_i(u) \in W_\alpha$ is defined by $v_i \cdot u = v_i(u) \cdot v_j$, where $0 \leq j \leq 1$ is also uniquely determined.
\end{lem}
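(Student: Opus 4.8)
The plan is to reduce the claim to a purely cohomological statement about the image of the character $\zeta_S$ under the isomorphism $\mathrm{Hom}(S(F),\C^\times) \cong H^1(W_F,\hat S)$ furnished by local class field theory for tori (the Langlands correspondence for $S$). First I would recall that $\zeta_S$ was constructed as the pull-back of $\prod_O \zeta_O$ along $S(F) \to \prod_O J_O(F)$, and that the parameter of a pull-back of a character along a morphism $S \to J$ of $F$-tori is the push-forward of the parameter along the dual morphism $\hat J \to \hat S$. Hence it suffices to compute, for each $\Sigma$-orbit $O$, the parameter of the character $\zeta_O$ of $J_O(F)$, and then sum the images in $H^1(W_F,\hat S)$ over $O$ (equivalently, over $\alpha \in R/\Sigma$, picking one $\alpha$ per orbit). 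This is exactly the structure of the asserted formula, where the outer product $\prod_{\alpha \in R/\Sigma}$ ranges over orbit representatives and the exponent $w_i^{-1}\bar\alpha$ records the map $X_*(\hat J_\alpha) = X^*(J_\alpha) \to X^*(S)$, which by Frobenius reciprocity is $f \mapsto \sum_{\tau \in \Gamma_{\pm\alpha}\backslash\Gamma} f(\tau)\tau^{-1}\bar\alpha$ — that is, the basis of $X^*(J_\alpha)$ indexed by the cosets $w_i$ maps to the collection $w_i^{-1}\bar\alpha$.

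Next I would treat the two cases separately. For $O$ asymmetric, $J_\alpha = \mathrm{Res}_{F_\alpha/F}\mb{G}_m$, so $J_\alpha(F) = F_\alpha^\times$ and $\zeta_O = \zeta_\alpha$; the local Langlands correspondence for $\mathrm{Res}_{F_\alpha/F}\mb{G}_m$ identifies $\mathrm{Hom}(F_\alpha^\times,\C^\times)$ with $H^1(W_{F_\alpha},\C^\times) = H^1(W_{F_\alpha},\hat{\mb{G}}_m)$, and the dual of $S\to J_\alpha$ together with Shapiro's lemma turns this into an element of $H^1(W_F,\hat S)$. Writing out the Shapiro isomorphism explicitly in terms of the coset representatives $w_1,\dots,w_n$ of $W_{\pm\alpha}\backslash W$ (note $W_{\pm\alpha} = W_\alpha$ here) and the reciprocity map $W_{F_\alpha}^{\mathrm{ab}} \to F_\alpha^\times$, whose restriction to $W_\alpha$ is $u \mapsto$ the idele-class image, one obtains the factor $\prod_i \zeta_\alpha(v_0(u_i(w)))^{w_i^{-1}\bar\alpha}$; the auxiliary elements $u_i(w)$ defined by $w_i w = u_i(w) w_j$ are precisely the cocycle data produced by the Shapiro/corestriction formula, and $v_0 \in W_\alpha$ being arbitrary reflects that on $W_\alpha$ the relevant reduction is canonical. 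For $O$ symmetric, $J_\alpha = \mathrm{Res}_{F_{\pm\alpha}/F}\mathrm{Res}^1_{F_\alpha/F_{\pm\alpha}}\mb{G}_m$ and $\zeta_O$ is $\zeta_\alpha$ composed with $F_\alpha^\times/F_{\pm\alpha}^\times \cong F_\alpha^1 = J_\alpha(F)$, $x \mapsto x/\tau_\alpha(x)$; condition 2 on $\zeta$-data ($\zeta_{-\alpha} = \zeta_\alpha^{-1}$) plus condition on symmetric elements ($\zeta_\alpha|_{F_{\pm\alpha}^\times} = 1$) ensures this composition is well-defined, and I would again unwind the Langlands correspondence for $\mathrm{Res}_{F_{\pm\alpha}/F}\mathrm{Res}^1_{F_\alpha/F_{\pm\alpha}}\mb{G}_m$ via the further representatives $v_0, v_1$ of $W_\alpha\backslash W_{\pm\alpha}$, obtaining the nested product $\prod_{i=1}^n \zeta_\alpha(v_0(u_i(w)))^{w_i^{-1}\bar\alpha}$ — the inner index $v_i(u)$ is exactly the analogue of $u_i(w)$ one level down, tracking the action of $W_{\pm\alpha}$ on $W_\alpha\backslash W_{\pm\alpha}$. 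The key point is that in both cases the uniformity of the formula comes from the same two-step coset bookkeeping; only the normalization of the reciprocity map on $W_\alpha$ differs, and on $W_\alpha$ the value $\zeta_\alpha(v_0(u_i(w)))$ is insensitive to the choice of $v_0$ by condition 2 and the norm-triviality.

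Finally I would verify that the resulting function $z_\zeta : W_F \to \hat S$ is indeed a 1-cocycle and that its class is independent of all the choices ($w_i$, $v_0$, $v_1$), which is a routine check once the identification with the Shapiro-induced class is in place — in fact this is already guaranteed abstractly by the construction, so the explicit verification is a sanity check rather than a logical necessity. The main obstacle I anticipate is purely bookkeeping: making the explicit Shapiro/corestriction isomorphism for a two-fold induction $\mathrm{Ind}_{\Gamma_{\pm\alpha}}^\Gamma \mathrm{Ind}_{\Gamma_\alpha}^{\Gamma_{\pm\alpha}}$ match the double-coset combinatorics ($w_i$ outer, $v_i$ inner) and the normalization of local class field theory precisely, being careful with left- versus right-coset conventions and with the inversion $\tau_\alpha$ in the symmetric case. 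Since \cite[\S2.5]{LS87} carries out essentially this computation, I would lean on those formulas and present the argument as an identification of our $\zeta_O$-construction with theirs, rather than redoing the class field theory from scratch.
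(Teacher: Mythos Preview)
Your proposal is correct and follows essentially the same approach as the paper: reduce to a per-orbit computation via functoriality of the Langlands correspondence under $S \to \prod_O J_O$, then compute the parameter of each $\zeta_O$ via explicit Shapiro maps (one step for asymmetric orbits, two nested steps for symmetric orbits), and finally push forward along the explicit dual map $\hat J_O \to \hat S$ given by $f \mapsto \prod_\tau f(\tau)^{\tau^{-1}\bar\alpha}$. The paper is slightly more explicit in the symmetric case about how the condition $\zeta_\alpha|_{F_{\pm\alpha}^\times}=1$ forces the intermediate cocycle to land in the kernel of the norm $\tx{Ind}_{\Gamma_\alpha}^{\Gamma_{\pm\alpha}}\C^\times \to \C^\times$, i.e.\ in the image of the anti-diagonal embedding of $\C^\times_{(-1)}$, but your sketch captures this under ``norm-triviality'' and the outline is the same.
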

\begin{proof}
The arguments for this are contained in the proof of \cite[Lemma 3.5.A]{LS87}. We repeat them here, in a slightly different form, for the sake of illustration.

Since $\zeta_S$ was the pull-back to $S(F)$ of the product of characters $\zeta_O : J_O(F) \to \C^\times$ over the set of $\Sigma$-orbits $O$ in $R$, it will be enough to compute the corresponding 1-cocycle for each $O$. Choose $\alpha \in O$ and obtain identification $J_O=J_\alpha$. When $O$ is asymmetric the character $\zeta_O$ is the composition of $\zeta_\alpha : F_\alpha^\times \to \C^\times$ with the identification $J_\alpha(F)=F_\alpha^\times$. Dually we have $\hat J_O=X^*(J_O) \otimes_\Z \C^\times = \tx{Ind}_{\Gamma_\alpha}^\Gamma \C^\times$ and the parameter of $\zeta_O$ is the image of $\zeta_\alpha \in Z^1(W_{F_\alpha},\C^\times)$ under the Shapiro map $Z^1(W_{F_\alpha},\C^\times) \to Z^1(W_F,\tx{Ind}_{\Gamma_\alpha}^\Gamma \C^\times)$. Tracing through the computation we see that the value at $w \in W$ of this cocycle as an element of $\tx{Ind}_{W_\alpha}^W \C^\times$ whose value at $w_i$ is $\zeta_\alpha(u_i(w))$. For the sake of uniformity with the symmetric case we note that this equals $\zeta_\alpha(v_0(u_i(w)))$, since $v_0(u_i(w))=v_0 \cdot u_i(w) \cdot v_0^{-1}$.

When $O$ is symmetric the character $\zeta_O$ is the composition of $\zeta_\alpha : F_\alpha^\times/F_{\pm\alpha}^\times \to \C^\times$, the isomorphism $F_\alpha^\times/F_{\pm\alpha}^\times \cong F_\alpha^1$, and the identification $J_\alpha(F)=F_\alpha^1$. When dualizing this procedure we interpret $F_\alpha^1$ as the $F_{\pm\alpha}$-points of the torus $\tx{Res}^1_{F_\alpha/F_{\pm\alpha}}\mb{G}_m$, and $F_\alpha^\times/F_{\pm\alpha}^\times$ as the $F_{\pm\alpha}$-points of the torus $(\tx{Res}_{F_\alpha/F_{\pm\alpha}}\mb{G}_m)/\mb{G}_m$. The dual torus of $(\tx{Res}_{F_\alpha/F_{\pm\alpha}}\mb{G}_m)/\mb{G}_m$ is the kernel of the norm homomorphism $\tx{Ind}_{\Gamma_\alpha}^{\Gamma_{\pm\alpha}} \C^\times \to \C^\times$. The dual torus of $\tx{Res}^1_{F_\alpha/F_{\pm\alpha}}\mb{G}_m$ is $\C^\times_{(-1)}$. We have the isomorphism of algebraic tori $(\tx{id}-\tau_\alpha) : (\tx{Res}_{F_\alpha/F_{\pm\alpha}}\mb{G}_m)/\mb{G}_m \to \tx{Res}^1_{F_\alpha/F_{\pm\alpha}}\mb{G}_m$ which on rational points recovers the isomorphism $F_\alpha^\times/F_{\pm\alpha}^\times \to F_\alpha^1$. Its dual is given by the anti-diagonal embedding of $\C^\times_{(-1)}$ into $\tx{Ind}_{\Gamma_\alpha}^{\Gamma_{\pm\alpha}}\C^\times$.

Under the Shapiro map $Z^1(W_\alpha,\C^\times) \to Z^1(W_{\pm\alpha},\tx{Ind}_{W_\alpha}^{W_{\pm\alpha}} \C^\times)$ the character $\zeta_\alpha$ maps to the cocycle whose value at $u \in W_{\pm\alpha}$ is the element of $\tx{Ind}_{W_\alpha}^{W_{\pm\alpha}} \C^\times$ whose value at $v_i$ is $\zeta_\alpha(v_i(u))$. The assumption that $\zeta_\alpha|_{F_{\pm\alpha}^\times}=1$ is equivalent to the claim that this element of $Z^1(W_{\pm\alpha},\tx{Ind}_{W_\alpha}^{W_{\pm\alpha}} \C^\times)$ maps trivially to $Z^1(W_{\pm\alpha},\C^\times)$ under the norm map. In other words, the value of this cocycle at each $u \in W_{\pm\alpha}$ lies in the kernel of the norm map $\tx{Ind}_{\Gamma_\alpha}^{\Gamma_{\pm\alpha}} \C^\times \to \C^\times$, hence in the image of the anti-diagonal embedding $\C^\times_{(-1)} \to \tx{Ind}_{\Gamma_\alpha}^{\Gamma_{\pm\alpha}}\C^\times$. This gives the element of $Z^1(W_{\pm\alpha},\C^\times_{(-1)})$ whose value at $u \in W_{\pm\alpha}$ is $\zeta_\alpha(v_0(u))$. Now $\hat J_O=\tx{Ind}_{\Gamma_{\pm\alpha}}^\Gamma \C^\times_{(-1)}$. Under the Shapiro map $Z^1(W_{\pm\alpha},\C^\times_{(-1)}) \to Z^1(W_F,\hat J_O)$ we obtain the 1-cocycle whose value at $w \in W$ is the element of $\tx{Ind}_{\Gamma_{\pm\alpha}}^\Gamma \C^\times_{(-1)}$ whose value at $w_i$ is $\zeta_\alpha(v_0(u_i(w)))$.

It remains to see that, in both cases, the image of the parameter for $\zeta_O$ under the map $\hat J_O \to \hat S$ is given by the product over $i$ in the statement of the lemma. This is immediate from the description of the map $S \to J_O$ given in \S\ref{sub:const}, which implies that its dual, $\tx{Ind}_{\Gamma_\alpha}^\Gamma \C^\times \to \hat S$ when $\alpha$ is asymmetric and $\tx{Ind}_{\Gamma_{\pm\alpha}}^\Gamma \C_{(-1)}^\times \to \hat S$ when $\alpha$ is symmetric, is given by the formula $f \mapsto \prod_{\tau \in \Gamma_{\pm\alpha}\lmod \Gamma} f(\tau)^{\tau^{-1}\bar\alpha}$.
\end{proof}

This Lemma motivates the following definition. Let $(\chi_\alpha)$ be a set of $\chi$-data for $R$ and let $\chi_S : S(F)_\pm \to \C^\times$ be the character of Definition \ref{dfn:chis}.

\begin{dfn} \label{dfn:rp}
The \emph{Langlands parameter of $\chi_S$} is the 1-cochain
\[ r_p(w) = r_{p,\chi}(w) = \prod_{\alpha \in R/\Sigma} \prod_{i=1}^n \chi_\alpha(v_0(u_i(w)))^{w_i^{-1}\bar\alpha}, \]
where the notation is as in Lemma \ref{lem:zsp} and $p : R \to \{\pm 1\}$ is the gauge specified by $p(w_i^{-1}\alpha)=+1$. For an arbitrary gauge $q$ the 1-cochain $r_q$ is defined by $s_{q/p}r_p$.
\end{dfn}

\begin{rem} This is the formula occurring before the statement of Lemma 2.5.A in \cite{LS87} and we have kept the name $r_p$ of loc. cit. to emphasize that. For a different expression, in terms of a section $s : W_{\pm\alpha} \lmod W \to W$, see \eqref{eq:rpalt}. For a further motivation about this formula, see Appendix \ref{app:glt}, especially \S\ref{sub:gentori}.
\end{rem}

\begin{lem} \label{lem:lparpm}
The collection $(r_p)$ is indeed a valid Langlands parameter for $S(F)_\pm$.
\end{lem}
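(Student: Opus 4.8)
The statement to prove is Lemma~\ref{lem:lparpm}: the collection $(r_p)_p$, with $r_p$ as in Definition~\ref{dfn:rp} and $r_q = s_{q/p} r_p$ for an arbitrary gauge $q$, is a valid Langlands parameter for $S(F)_\pm$ in the sense of Remark~\ref{rem:lp}. By that remark, there are two things to check: first, that for each gauge $q$ the differential $\partial r_q$ equals the Tits cocycle $t_q$; and second, that the collection is compatible with the transition maps, i.e. $r_p \cdot s_{q/p} = r_q$ as cohomology classes (in fact, by the defining convention, as actual cochains for the pair we used to define things, and as classes in general by Lemma 2.4.B of \cite{LS87}). The second point is essentially built into Definition~\ref{dfn:rp}: we set $r_q := s_{q/p} r_p$, and one only needs to see that this is consistent, i.e. independent of the auxiliary gauge $p$ used in the definition — but this follows from the cocycle relation $s_{q/p} \cdot s_{p/p'}$ being cohomologous to $s_{q/p'}$ (\cite[Lemma 2.4.B]{LS87}), combined with $\partial s_{p/p'} = t_p/t_{p'}$. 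So the real content is the first point: $\partial r_p = t_p$ for the specific gauge $p$ attached to a choice of coset representatives $w_1,\dots,w_n$.

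The plan is to recognize that this is exactly the computation performed in \cite{LS87} immediately before Lemma~2.5.A. Concretely, I would argue as follows. The formula for $r_p$ is a product over $\Sigma$-orbits $O \subset R$ of local contributions, and just as in the proof of Lemma~\ref{lem:zsp}, it suffices to analyze each orbit separately and then push forward along $\hat J_O \to \hat S$. For an asymmetric orbit, $\chi_\alpha$ is an honest character of $F_\alpha^\times = J_\alpha(F)$, so its Shapiro pushforward to $Z^1(W_F, \hat J_O)$ is a genuine $1$-cocycle; its contribution to $\partial r_p$ is trivial, matching the fact that asymmetric orbits contribute nothing to $\Lambda_p(\sigma,\tau)$ (the set defining $t_p$). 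For a symmetric orbit, $\chi_\alpha$ restricted to $F_{\pm\alpha}^\times$ equals $\kappa_\alpha$, which is nontrivial; this is precisely the failure of $\chi_\alpha$ to descend through the norm, and it is this failure that produces the sign cocycle $t_p$. Tracing the Shapiro maps as in Lemma~\ref{lem:zsp} — first $Z^1(W_\alpha,\C^\times) \to Z^1(W_{\pm\alpha}, \mathrm{Ind}_{W_\alpha}^{W_{\pm\alpha}}\C^\times)$, then the identification with $\C^\times_{(-1)}$ plus a correction term coming from the nontriviality of the norm, then $Z^1(W_{\pm\alpha},\C^\times_{(-1)}) \to Z^1(W_F,\hat J_O)$ — the correction term is exactly a $\pm 1$-valued cochain whose differential records the combinatorics of when the chosen coset representatives get permuted with a sign; and under $\hat J_O \to \hat S$, $f \mapsto \prod_\tau f(\tau)^{\tau^{-1}\bar\alpha}$, this becomes $(-1)^{\sum \bar\alpha}$ over precisely the set $\Lambda_p(\sigma,\tau)$. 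Assembling over all orbits gives $\partial r_p = t_p$.

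I expect the main obstacle to be bookkeeping rather than conceptual: one must carefully match the indexing conventions of \cite{LS87} (the sets $\Lambda_p$, the double-coset representatives $w_i$, the inner representatives $v_0,v_1$) against the formula in Definition~\ref{dfn:rp}, and verify that the gauge $p(w_i^{-1}\alpha) = +1$ is exactly the one for which the Shapiro-map correction term has differential $t_p$. The cleanest route is probably not to redo the Shapiro-map chase at all, but to invoke \cite[Lemma 2.5.A]{LS87} directly: loc.\ cit.\ establishes that the cochain there denoted $r_p$ satisfies $\partial r_p = t_p$, and our $r_p$ is literally that cochain (this is the point of keeping the name, as noted in the remark after Definition~\ref{dfn:rp}). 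Then the only remaining task is the consistency of the definition $r_q = s_{q/p}r_p$ under change of $p$, which is a one-line consequence of \cite[Lemmas 2.4.A, 2.4.B]{LS87}. So the proof reduces to: (i) cite \cite[Lemma 2.5.A]{LS87} for $\partial r_p = t_p$; (ii) use $\partial s_{q/p} = t_q/t_p$ to get $\partial r_q = t_q$ for all $q$; (iii) use the cocycle compatibility of the $s_{\bullet/\bullet}$ to check well-definedness and the transition-map condition $r_p \cdot s_{q/p} = r_q$.
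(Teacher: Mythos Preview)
Your approach is essentially the paper's: both reduce the claim to citations from \cite{LS87}, in particular Lemma~2.5.A for $\partial r_p = t_p$, together with the relations among the $s_{\bullet/\bullet}$ from \cite[\S2.4]{LS87}.

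There is, however, one verification you overlook and the paper makes explicit. The formula for $r_p$ in Definition~\ref{dfn:rp} depends not only on the gauge $p$ but on the auxiliary choices of coset representatives $w_i$ and the elements $v_0,v_1$. Different choices of $w_i$ change the gauge $p$, while different choices of $v_0,v_1$ leave $p$ fixed yet may still alter the cochain $r_p$. One must check that all such choices produce the same \emph{equivalence class} (i.e.\ the same class modulo $B^1(W_F,\hat S)$). Your step (iii) establishes consistency across gauges \emph{given} a fixed base cochain $r_p$, but does not verify that this base is itself well-defined up to coboundary. The paper lists this as its first point and cites the discussion in \cite{LS87} immediately following the proof of Lemma~2.5.A; you should add that citation.
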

\begin{proof}
Following the discussion in Remark \ref{rem:lp} we must show the following: 
\begin{enumerate}
	\item Up to equivalence the collection $(r_p)$ does not depend on the choices of $w_i$ and $v_i$ made in Lemma \ref{lem:zsp};
	\item The differential of $r_p$ is $t_p$.
\end{enumerate}
The second point is \cite[Lemma 2.5.A]{LS87} and the first point is discussed right after the proof of that lemma in loc. cit.
\end{proof}

\begin{lem} \label{lem:rpz}
Let $(\zeta_\alpha)$ be a set of $\zeta$-data for $R$ and let $\zeta_S : S(F) \to \C^\times$ be the associated character. The Langlands parameter for $(\chi\zeta)_S$ is the product of the Langlands parameters for $\chi_S$ and $\zeta_S$.
\end{lem}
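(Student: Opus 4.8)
The plan is to reduce the statement to the multiplicativity of the explicit formulas already in hand, together with Fact \ref{fct:charmult} and parts (2)--(3) of Theorem \ref{thm:lldc}. First, by Fact \ref{fct:charmult}, the product $(\chi_\alpha\cdot\zeta_\alpha)$ is again a set of $\chi$-data and the character of $S(F)_\pm$ it produces is $(\chi\zeta)_S=\chi_S\cdot\zeta_S$, where here $\zeta_S$ is viewed as a character of $S(F)$ pulled back to $S(F)_\pm$. So the assertion of the lemma is exactly the statement that the parameter attached to $\chi_S\cdot\zeta_S$ is the product of the parameter of $\chi_S$ (a collection $(r_{p,\chi})$ of $1$-cochains, one per gauge) with the parameter of $\zeta_S$ (the honest $1$-cocycle $z_\zeta$ of Lemma \ref{lem:zsp}). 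By part (2) of Theorem \ref{thm:lldc} this is automatic once we know that the bijection of part (1) sends $\chi_S$ to $(r_{p,\chi})$ and $\zeta_S$ to $[z_\zeta]$; but it is cleaner, and avoids invoking the full force of Theorem \ref{thm:lldc}, to verify the multiplicativity directly at the level of the defining formulas.

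Concretely, fix $\alpha\in R$ together with the auxiliary data $w_1,\dots,w_n$, $v_0$, $v_1$ as in Lemma \ref{lem:zsp}, and let $p$ be the gauge with $p(w_i^{-1}\alpha)=+1$ as in Definition \ref{dfn:rp}. For any character $\psi_\alpha:F_\alpha^\times\to\C^\times$ write
\[ \Phi_\psi(w) = \prod_{\alpha\in R/\Sigma}\prod_{i=1}^n \psi_\alpha(v_0(u_i(w)))^{w_i^{-1}\bar\alpha}. \]
Since each $v_0(u_i(w))\in F_\alpha^\times$ depends only on $w$ and the auxiliary data and not on $\psi_\alpha$, and since characters multiply, we have $\Phi_{\chi\zeta}=\Phi_\chi\cdot\Phi_\zeta$ pointwise in $w$. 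Now $\Phi_\chi=r_{p,\chi}$ and $\Phi_\zeta=z_\zeta$ by Definition \ref{dfn:rp} and Lemma \ref{lem:zsp} respectively, and $\Phi_{\chi\zeta}=r_{p,\chi\zeta}$ because $\chi\zeta$ has the same restriction conventions (the auxiliary data and the gauge $p$ are unchanged). Hence $r_{p,\chi\zeta}=r_{p,\chi}\cdot z_\zeta$ for this distinguished gauge $p$. For an arbitrary gauge $q$ one passes to $r_{q,\bullet}=s_{q/p}\cdot r_{p,\bullet}$; multiplying by the gauge-independent cocycle $z_\zeta$ commutes with this transition, so $r_{q,\chi\zeta}=s_{q/p}\cdot r_{p,\chi}\cdot z_\zeta=r_{q,\chi}\cdot z_\zeta$. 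Thus the collections satisfy $(r_{q,\chi\zeta})=(r_{q,\chi})\cdot z_\zeta$, which is precisely the assertion that the parameter of $(\chi\zeta)_S$ is the product of the parameters of $\chi_S$ and $\zeta_S$ (cf. the last paragraph of Remark \ref{rem:lp}).

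There is essentially no obstacle here beyond bookkeeping: the one point requiring a line of care is that the auxiliary choices ($w_i$, $v_0$, $v_1$) and the distinguished gauge $p$ entering the formula for $r_{p,\chi}$ depend only on $R$ (and on $\alpha$), not on the $\chi$-data, so that the three cochains $r_{p,\chi}$, $r_{p,\zeta}:=z_\zeta$, $r_{p,\chi\zeta}$ are built from the very same combinatorial input and the identity $\Phi_{\chi\zeta}=\Phi_\chi\Phi_\zeta$ is literally termwise. A secondary, equally routine point is compatibility with the change-of-gauge cochains $s_{q/p}$, which is immediate since these do not involve the $\chi$-data at all. One may optionally remark that this lemma also follows formally from Fact \ref{fct:charmult} together with parts (1) and (2) of Theorem \ref{thm:lldc}, once the latter is proved; we have preferred the direct verification since it is what the computation of \cite[\S2.5]{LS87} yields in any case.
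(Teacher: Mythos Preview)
Your direct verification is correct and is exactly what the paper intends: the paper itself leaves this proof to the reader (citing \cite[(2.6.3)]{LS87}), and your termwise identity $\Phi_{\chi\zeta}=\Phi_\chi\cdot\Phi_\zeta$, together with the observation that the auxiliary choices $w_i,v_0,v_1$, the gauge $p$, and the transition cochains $s_{q/p}$ are all independent of the $\chi$-data, is the obvious and intended argument.

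One caution: your framing remarks that invoke Theorem \ref{thm:lldc} as an alternative route are circular in this paper's logical order, since the proof of Theorem \ref{thm:lldc} (specifically the independence of the bijection from the choice of $\chi$-data) \emph{uses} Lemma \ref{lem:rpz}. You do hedge this appropriately and make the direct computation your primary argument, so there is no actual gap; just be aware that the ``optional remark'' at the end should be dropped or rephrased to avoid suggesting a self-contained alternative.
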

\begin{proof}
This is stated without proof in \cite[(2.6.3)]{LS87} and here we also leave the proof to the reader.
\end{proof}

\begin{proof}[Proof of Theorem \ref{thm:lldc}]
Choose a set of $\chi$-data $(\chi_\alpha)$. Let $\chi_S$ be the corresponding genuine character of Definition \ref{dfn:chis} and let $(r_p)$ be its parameter as in Definition \ref{dfn:rp}. Every other genuine character is of the form $\theta \cdot \chi_S$ for a character $\theta : S(F) \to \C^\times$. Letting $t : W_F \to \hat S$ be the parameter for $\theta$ we assign to $\theta \cdot \chi_S$ the parameter $(t \cdot r_p)$. This gives the desired bijection.

If $(\chi'_\alpha)$ is another set of $\chi$-data with corresponding character $\chi'_S$ and parameter $r'_p$ we define $\zeta$-data $(\zeta_\alpha)$ by $\zeta_\alpha=\chi'_\alpha\chi_\alpha^{-1}$. Let $z : W \to \hat S$ be the parameter of $\zeta_S$ as in Lemma \ref{lem:zsp}. Then Fact \ref{fct:charmult} and Lemma \ref{lem:rpz} show that $\chi'_S=\chi_S \cdot \zeta_S$ and $r'_p = r_p \cdot z$, proving that the bijection is independent of the choice of $(\chi_\alpha)$.

The multiplicativity statements are immediate.
\end{proof}

We can reinterpet Theorem  \ref{thm:lldc} by considering the Weil form of $^LS_\pm$. As we remarked earlier, \cite[Lemma 4]{Lan79} implies that this is a split extension of $W$ by $\hat S$ but without a distinguished splitting. That is, the Weil forms of $^LS_\pm$ and $^LS$ are non-canonically isomorphic.

\begin{pro} Let $(\chi_\alpha)$ be a set of $\chi$-data. The map
\[ \hat S \rtimes W \to \hat S \boxtimes_{t_p} W,\qquad s \rtimes w \mapsto s \cdot r_{p,\chi}(w) \boxtimes w \]
is an isomorphism of extensions whose $\hat S$-conjugacy class depends only on $(\chi_\alpha)$. If $\theta : S(F) \to \C^\times$ is a character with parameter $t$, the composition of $t$ with this isomorphism is the parameter for $\theta \cdot \chi_S$.
\end{pro}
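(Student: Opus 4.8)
The plan is to verify the three claims directly from the definitions, since all the hard work has already been done in establishing Definition \ref{dfn:rp}, Lemma \ref{lem:lparpm}, and Theorem \ref{thm:lldc}. First I would check that $s \rtimes w \mapsto s \cdot r_{p,\chi}(w) \boxtimes w$ is a homomorphism of extensions: the key point is that $r_{p,\chi}$ is a $1$-cochain whose differential is $t_p$ (Lemma \ref{lem:lparpm}(2)), which is exactly what makes the twisted multiplication $(s_1 \boxtimes w_1)(s_2 \boxtimes w_2) = s_1 \cdot {}^{w_1}s_2 \cdot t_p(w_1,w_2) \boxtimes w_1 w_2$ on the target match the semidirect product multiplication on $\hat S \rtimes W$ after applying the map; that it is compatible with the projections to $W$ and restricts to the identity on $\hat S$ is immediate. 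Hence the map is an isomorphism of extensions.

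Next I would address the dependence on $(\chi_\alpha)$. The ambiguity in the construction is twofold: the choice of gauge $p$ (which is tied to the auxiliary choices of coset representatives $w_i, v_i$ in Lemma \ref{lem:zsp}) and any genuine freedom once $(\chi_\alpha)$ is fixed. For a different gauge $q$, Definition \ref{dfn:rp} sets $r_q = s_{q/p} r_p$, and under the transition isomorphism $\xi_{p,q}$ of Definition \ref{dfn:lgrp} (which sends $s \boxtimes \sigma$ to $s \cdot s_{p/q} \boxtimes \sigma$) the two maps $s \rtimes w \mapsto s \cdot r_{p,\chi}(w) \boxtimes w$ and $s \rtimes w \mapsto s \cdot r_{q,\chi}(w) \boxtimes w$ are identified up to the cohomological ambiguity already built into the inverse-limit description of $^LS_\pm$; concretely, $s_{p/q} \cdot s_{q/p}$ is a coboundary by \cite[Lemma 2.4.B]{LS87}, so the two maps agree up to $\hat S$-conjugacy. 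The remaining choices of $w_i, v_i$ for a fixed gauge change $r_p$ only within its equivalence class, as noted in Lemma \ref{lem:lparpm}(1), hence change the isomorphism only by $\hat S$-conjugacy. Therefore the $\hat S$-conjugacy class of the isomorphism depends only on $(\chi_\alpha)$.

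Finally, for the statement about $\theta$: if $\theta : S(F) \to \C^\times$ has Langlands parameter $t \in Z^1(W,\hat S)$, then $\theta$ pulls back to a genuine character of $S(F)_\pm$ via $S(F)_\pm \to S(F)$, and by the multiplicativity in Theorem \ref{thm:lldc}(2) the parameter of $\theta \cdot \chi_S$ is $(t \cdot r_p)$. On the other hand, composing the $1$-cocycle $t : W \to \hat S \subset \hat S \rtimes W$ (viewed as the graph $w \mapsto t(w) \rtimes w$, which is the parameter valued in $^LS$) with the isomorphism above yields $w \mapsto t(w) \cdot r_{p,\chi}(w) \boxtimes w$, which is precisely the collection $(t \cdot r_p)$ realized in the Weil form of $^LS_\pm$. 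So the two agree, as claimed.

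I expect the only genuinely delicate point to be bookkeeping the cocycle/cochain conventions in the first paragraph — getting the direction of the twist in $\hat S \boxtimes_{t_p} W$ and the sign/ordering in $\partial r_p = t_p$ to line up so that the map is literally a homomorphism rather than an anti-homomorphism. Everything else is a direct consequence of results already in place: Lemma \ref{lem:lparpm}, Definition \ref{dfn:lgrp}, \cite[Lemma 2.4.B]{LS87}, and Theorem \ref{thm:lldc}(2).
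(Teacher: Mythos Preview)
Your proposal is correct and follows essentially the same approach as the paper: the paper's own proof is a single sentence, ``This is just a reformulation of Theorem \ref{thm:lldc},'' and what you have written is exactly the unpacking of that reformulation --- verifying the homomorphism property via $\partial r_{p,\chi}=t_p$, handling the gauge and coset-representative choices via Lemma \ref{lem:lparpm} and Definition \ref{dfn:lgrp}, and deducing the parameter statement from the multiplicativity in Theorem \ref{thm:lldc}(2).
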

\begin{proof}
This is just a reformulation of Theorem \ref{thm:lldc}. 
\end{proof}

\begin{rem} \label{rem:dsplituf}
When $F$ is non-archimedean and all symmetric elements of $R$ are unramified, the cover $S(F)_\pm$ splits canonically. This is not true for the $L$-group when viewed as an extension of $\Gamma_{E/F}$ by $\hat S$. Indeed, in the most basic case where $E/F$ is an unramified quadratic extension and $S$ is the 1-dimensional anisotropic torus defined over $F$ and split over $E$ and $R \subset X^*(S)$ consists of the two generators of this free $\Z$-module of rank $1$, then choosing a gauge on $R$ is equivalent to choosing a generator, and then $t_p$ is the unique non-trivial element of $H^2(\Gamma_{E/F},\C^\times_{(-1)})$, and so $^LS_\pm$ is not a split extension. 

However, there is a canonical choice of $\chi$-data, namely $\chi_\alpha$ is the unramified quadratic character for every symmetric $\alpha$. The corresponding Langlands parameter $r_{p,\chi}$ of Definition \ref{dfn:rp} extends to $\Gamma$ and provides a 1-cochain splitting $t_p$. Thus $^LS_\pm$, when viewed as an extension of $\Gamma$ by $\hat S$, splits canonically.

More generally, when $F$ is non-archimedean, the absolute Galois form of the $L$-group $^LS_\pm$ always splits, but not canonically in general. The reason for the splitting is that there always exist $\chi$-data of finite order, and then $r_{p,\chi}$ extends to $\Gamma$.
\end{rem}

\subsection{$a$-data as a geniune function on the double cover}

We continue with a torus $S$ defined over $F$, a finite admissible $\Sigma$-set $R$, and a $\Sigma$-equivariant map $R \to X^*(S)$. Recall from \cite[\S2.2]{LS87} the notion of $a$-data: A set of $a$-data consists of an element $a_\alpha \in F_\alpha^\times$ for each $\alpha \in R$ s.t. $a_{\sigma\alpha}=\sigma(a_\alpha)$ and $a_{-\alpha}=-a_\alpha$ for all $\alpha \in R$ and $\sigma \in \Gamma$. Given such a set we obtain the function 
\begin{equation} \label{eq:adata}
a_S : S(F)_\pm \to \{1\}\qquad (\gamma,(\delta_\alpha)_\alpha) \mapsto \prod_{\substack{\alpha \in R_\tx{sym}/\Gamma\\ \bar\alpha(\gamma)\neq 1}}\kappa_\alpha\left(\frac{\delta_\alpha-\delta_{-\alpha}}{a_\alpha}\right).
\end{equation}
Here elements of $S(F)_\pm$ are represented as in Remark \ref{rem:elements}. Both the numerator and the denominator of the argument of $\kappa_\alpha$ are non-zero elements of $F_\alpha$ of trace zero, so their quotient is a non-zero element of $F_{\pm\alpha}$. The values $a_\alpha$ for $\alpha \in R_\tx{asym}$ are irrelevant to this function. 

\begin{fct} \label{fct:asgen}
The function $a_S$ is genuine: For $\tilde\gamma \in S(F)_\pm$ and $\epsilon=-1 \in S(F)_\pm$ one has $a_S(\epsilon\tilde\gamma)=-a_S(\tilde\gamma)$.
\end{fct}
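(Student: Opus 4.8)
The plan is a short computation with the economical presentation of elements of $S(F)_\pm$ from Remark~\ref{rem:elements}. First I would write $\tilde\gamma = (\gamma,(\delta_\alpha)_{\alpha \in R_\mathrm{sym}})$, so that $\delta_\alpha \in F_\alpha^\times$, $\delta_{\sigma\alpha} = \sigma(\delta_\alpha)$, $\delta_\alpha/\delta_{-\alpha} = \bar\alpha(\gamma)$ and $\delta_{-\alpha} = \tau_\alpha(\delta_\alpha)$. By the same remark, $\epsilon = -1 \in S(F)_\pm$ is represented by a tuple $(1,(\eta_\alpha)_{\alpha \in R_\mathrm{sym}})$ with $\eta_\alpha \in F_{\pm\alpha}^\times$, $\eta_{\sigma\alpha} = \sigma(\eta_\alpha)$ and $\prod_{\alpha \in R_\mathrm{sym}/\Gamma}\kappa_\alpha(\eta_\alpha) = -1$, and then $\epsilon\tilde\gamma$ is represented by the componentwise product $(\gamma,(\eta_\alpha\delta_\alpha)_{\alpha \in R_\mathrm{sym}})$.

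Next I would evaluate \eqref{eq:adata} on this representative of $\epsilon\tilde\gamma$. Its $(-\alpha)$-component is $\eta_{-\alpha}\delta_{-\alpha}$, and the key observation is that $\eta_{-\alpha} = \eta_\alpha$: since $-\alpha = \tau_\alpha\alpha$, equivariance gives $\eta_{-\alpha} = \tau_\alpha(\eta_\alpha)$, and $\eta_\alpha \in F_{\pm\alpha}^\times$ is fixed by $\tau_\alpha$. Hence
\[ \frac{\eta_\alpha\delta_\alpha - \eta_{-\alpha}\delta_{-\alpha}}{a_\alpha} \;=\; \eta_\alpha\cdot\frac{\delta_\alpha - \delta_{-\alpha}}{a_\alpha}, \]
a product of two elements of $F_{\pm\alpha}^\times$; applying the homomorphism $\kappa_\alpha$, the $\alpha$-factor of $a_S(\epsilon\tilde\gamma)$ becomes $\kappa_\alpha(\eta_\alpha)\cdot\kappa_\alpha\bigl((\delta_\alpha-\delta_{-\alpha})/a_\alpha\bigr)$. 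Taking the product over the relevant symmetric orbits yields $a_S(\epsilon\tilde\gamma) = \bigl(\prod_\alpha\kappa_\alpha(\eta_\alpha)\bigr)\,a_S(\tilde\gamma)$, so it remains only to check that this product of signs equals $-1$.

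The point to watch — the only place where care is needed — is the range of that product: \eqref{eq:adata} runs over the symmetric $\Gamma$-orbits $O$ with $\bar\alpha(\gamma)\neq1$ for $\alpha\in O$, while the relation characterizing $\epsilon$ gives $\prod_{\alpha\in R_\mathrm{sym}/\Gamma}\kappa_\alpha(\eta_\alpha)=-1$ over all symmetric orbits. For $\gamma$ in the regular locus, which is the setting in which $a_S$ is used, one has $\bar\alpha(\gamma)\neq1$ for every symmetric $\alpha$, so the two ranges agree and $a_S(\epsilon\tilde\gamma)=-a_S(\tilde\gamma)$ follows immediately; in general one can instead choose the representative $(1,(\eta_\alpha))$ of $\epsilon$ with $\eta_\alpha=1$ on every orbit where $\bar\alpha(\gamma)=1$ and place the sign on a single remaining symmetric orbit, which is possible because each $\kappa_\alpha$ is a nontrivial character of $F_{\pm\alpha}^\times$. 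I do not expect a real obstacle here: the essential thing is simply to use the economical presentation $(\gamma,(\delta_\alpha))$ rather than $(\gamma,\epsilon,(\delta_\alpha))$, since in the latter \eqref{eq:adata} does not visibly involve the sign $\epsilon$, and it is exactly the constraint $\prod_\alpha\kappa_\alpha(\eta_\alpha)=-1$ on a representative of $\epsilon$ that produces the sign change — the same computation with $+1$ in place of $-1$ shows in addition that $a_S$ does not depend on the chosen representative of $\tilde\gamma$.
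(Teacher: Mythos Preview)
The paper states this as a Fact without proof, so there is no approach to compare against; your direct computation using the economical presentation from Remark~\ref{rem:elements} is exactly the intended argument, and your identification $\eta_{-\alpha}=\tau_\alpha(\eta_\alpha)=\eta_\alpha$ (because $\eta_\alpha\in F_{\pm\alpha}^\times$) is the one nontrivial step.

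Your caution about the index set is well placed and in fact slightly sharper than you state: the same computation shows that well-definedness of $a_S$ (not only genuineness) hinges on the product $\prod_\alpha \kappa_\alpha(\eta_\alpha)$ over \emph{all} symmetric orbits agreeing with its restriction to those with $\bar\alpha(\gamma)\neq 1$. So $a_S$ is really only defined on the preimage of the locus where every symmetric $\bar\alpha$ is nontrivial on $\gamma$, which is exactly where the paper uses it (the regular locus in \S\ref{sub:char}). Your treatment of this point is adequate; you might simply say up front that the formula \eqref{eq:adata} defines a function on the preimage of $\{\gamma : \bar\alpha(\gamma)\neq 1\ \forall\,\alpha\in R_{\mathrm{sym}}\}$, and then both well-definedness and genuineness follow at once from your computation.
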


\begin{rem} \label{rem:moda}
Let $F$ be non-archimedean. Recall from \cite[Definition 4.6.8]{KalRSP} the concept of mod-$a$-data: It consists of a real number $r_\alpha$ and a non-zero element $\bar a_\alpha \in [F_\alpha]_{r_\alpha}/[F_\alpha]_{r_\alpha+}$ for each $\alpha \in R$ s.t. $r_{\sigma\alpha}=r_\alpha=r_{-\alpha}$, $\bar a_{\sigma\alpha}=\sigma(\bar a_\alpha)$, and $\bar a_{-\alpha}=-\bar a_\alpha$, for all $\alpha \in R$ and $\sigma \in \Gamma$. 

If $p\neq 2$, the function $a_S$ can be defined by the same formula with mod-$a$-data in place of $a$-data. Indeed, note first that mod-$a$-data can be lifted to $a$-data. For this it is enough to lift $\bar a_\alpha$ to an element $a_\alpha \in F_\alpha$ of trace zero. An arbitrary lift $\dot a_\alpha$ satisfies $z_\alpha := \dot a_\alpha+\tau_\alpha(\dot a_\alpha) \in [F_{\pm\alpha}]_{r_\alpha+}$ and $a_\alpha=\dot a_\alpha-z_\alpha/2$ works. Next, two lifts $a_\alpha$ and $a'_\alpha$ of $\bar a_\alpha$ to $F_\alpha$ are related by $a'_\alpha=a_\alpha \cdot u_\alpha$ for some $u_\alpha \in [F_{\pm\alpha}^\times]_{0+}$, but $\kappa_\alpha$ is trivial on $[F_{\pm\alpha}^\times]_{0+}$ when $p \neq 2$.
\end{rem}

\section{Applications}

\subsection{The canonical $L$-embedding} \label{sub:canemb}

Let now $G$ be a connected reductive group defined over $F$ with complex dual $\hat G$. Let $S \subset G$ be a maximal torus defined over $F$. It is well known that there exists a canonical $\hat G$-conjugacy class of embeddings $\hat S \to \hat G$, see e.g. \cite[\S5.1]{KalRSP}. We will show that this conjugacy class extends to a canonical $\hat G$-conjugacy class of $L$-embeddings $^LS_\pm \to {^LG}$, where $S(F)_\pm$ is constructed with respect to the root system $R \subset X^*(S)$ of $G$.

Choose a $\Gamma$-invariant pinning $(\hat T,\hat B,\{X_{\alpha}\})$ of $\hat G$ and let $\Omega=N(\hat T,\hat G)/\hat T$. Recall the Tits section
\[ n : \Omega \to N(\hat T,\hat G). \]
It sends a simple reflection $s_\alpha \in \Omega$ to the image of the standard Weyl element of $\tx{SL}_2$ under the unique morphism $\tx{SL}_2 \to \hat G$ mapping the standard nilpotent element to the element $X_\alpha$ of the pinning. Explicitly, $s_\alpha$ is sent to $\exp(X_\alpha)\exp(-X_{-\alpha})\exp(X_\alpha)$, where $X_{-\alpha}$ is the unique vector in the root space for the root $-\alpha$ that satisfies $[X_\alpha,X_{-\alpha}]=H_\alpha=d\alpha^\vee(1)$. It sends a general $\omega \in \Omega$ to $n(s_1)\dots n(s_k)$, where $\omega=s_1\dots s_k$ is any reduced expression. It is easy to see that $n$ is $\Gamma$-equivariant, hence extends to a section
\[ n : \Omega \rtimes \Gamma \to N(\hat T,\hat G) \rtimes \Gamma, \qquad \omega \rtimes \sigma \mapsto n(\omega) \rtimes \sigma. \]

Choose a member of the conjugacy class of embeddings $\hat S \to \hat G$ with image $\hat T$; it is an isomorphism $\hat j : \hat S \to \hat T$. This isomorphism is usually not $\Gamma$-equivariant, but since its $\hat G$-conjugacy class is, the difference between $\hat j$ and $\sigma \circ \hat j \circ \sigma^{-1}$ is realized by an element $\sigma_{S,G} \in \Omega$ for any $\sigma \in \Gamma$. Thus the transport of the $\Gamma$-action on $\hat S$ via $\hat j$ is given by the group homomorphism $\Gamma \to \Omega \rtimes \Gamma$ sending $\sigma$ to $\sigma_{S,G} \rtimes \sigma$. Pulling back along this homomorphism the extension with splitting

\[ \xymatrix{
1\ar[r]&\hat T\ar[r]&N(\hat T,\hat G)\rtimes\Gamma\ar[r]&\Omega \rtimes\Gamma\ar[r]\ar@/_1pc/[l]_n&1\\
}
\]
and using $\hat j$ to identify $\hat S$ with $\hat T$ we obtain an extension of $\Gamma$ by $\hat S$ with a splitting. According to \cite[Lemma 2.1.A]{LS87} the 2-cocycle associated to this splitting is the Tits cocycle $t_p$ of Definition \ref{dfn:tp}, where $p$ is the gauge that assigns $+1$ to the $\hat B$-positive roots, and $R \subset X^*(S)$ is the root system of $G$. In other words, we have just extended $\hat j : \hat S \to \hat T$ to an $L$-embedding $^LS_\pm \to {^LG}$ taking image in $N(\hat T,\hat G) \rtimes \Gamma$. We record explicitly that it is given by 
\begin{equation} \label{eq:lembt}
\hat S \boxtimes_{t_p} \Gamma \to \hat G \rtimes \Gamma, s \boxtimes \sigma \mapsto \hat j(s) \cdot n(\sigma_{S,G}) \rtimes \sigma.
\end{equation}
The only choice involved in the construction of this $L$-embedding $^LS_\pm \to {^LG}$ was the $\Gamma$-invariant pinning. Since any two such are conjugate under $\hat G$, in fact even under $\hat G^\Gamma$ \cite[Corollary 1.7]{Kot84}, it is immediate to check that the two resulting $L$-embeddings are also conjugate.

\subsection{Factorization of Langlands parameters} \label{sub:par}

Let $S$ be an algebraic torus defined over $F$ and equipped with a stable class of embeddings $S \to G$ defined over $F$ whose images are maximal tori. This endows $S$ with various structures \cite[\S5.1]{KalRSP}, in particular a finite $\Sigma$-invariant subset $0 \notin R \subset X^*(S)$. Therefore we have the double cover $S(F)_\pm$. 

Consider two data consisting of an algebraic torus $S_i$, a stable class of embeddings $S_i \to G$ as above, and a genuine character $\theta_i$ of $S_i(F)_\pm$. We say that they are equivalent if there exists an isomorphism $S_1 \to S_2$ that identifies the two stable classes of embeddings and whose natural lift $S_1(F)_\pm \to S_2(F)_\pm$ identifies the two genuine characters.

\begin{pro} \label{pro:parfac}
There is a canonical bijection between the set of equivalence classes of Langlands parameters $W \to {^LG}$ whose image normalizes a maximal torus, and the set of equivalence classes of data consising of an algebraic torus $S$ defined over $F$, a stable class of embeddings $S \to G$ defined over $F$ whose images are maximal tori, and a genuine character $\theta$ of $S(F)_\pm$. Given such a datum, the bijection sends it to the composition of the Langlands parameter of $\theta$ with the canonical embedding $^LS_\pm \to {^LG}$ of \S\ref{sub:canemb}.
\end{pro}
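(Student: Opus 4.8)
My plan is to construct the inverse map explicitly and then verify that the two constructions are mutually inverse and compatible with the two equivalence relations. Given a Langlands parameter $\phi : W \to {^LG}$ whose image normalizes a maximal torus of $\hat G$, I would first conjugate by $\hat G$ so that $\phi(W)$ normalizes the torus $\hat T$ of the fixed $\Gamma$-invariant pinning $(\hat T,\hat B,\{X_\alpha\})$; then $\phi$ takes values in $N(\hat T,\hat G)\rtimes\Gamma$. Composing with $N(\hat T,\hat G)\rtimes\Gamma \to \Omega\rtimes\Gamma$ yields a continuous homomorphism $\bar\phi : W \to \Omega\rtimes\Gamma$ lying over $W \to \Gamma$, and since $\Omega$ is finite it factors through $\Gamma$, i.e.\ $\bar\phi = \psi\circ(W\to\Gamma)$ for a section $\psi : \Gamma \to \Omega\rtimes\Gamma$ of the projection (factoring through $\Gamma_{E/F}$ after enlarging $E$ if necessary). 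Transporting the Galois action on $\hat T$ through $\psi$, i.e.\ letting $\sigma$ act via $\psi(\sigma)\in\Omega\rtimes\Gamma$, produces a complex torus with $\Gamma$-action, hence the dual $\hat S$ of an $F$-torus $S$, and the identity map regarded as $\hat j : \hat S \to \hat T \subset \hat G$ has $\Gamma$-stable $\hat G$-conjugacy class. By the dictionary of \cite[\S5.1]{KalRSP} this is precisely a stable class of embeddings $S \to G$ with maximal image; moreover, with this $S$ the element $\sigma_{S,G}$ of \S\ref{sub:canemb} is exactly the $\Omega$-component of $\psi(\sigma)$, so the canonical $L$-embedding $\iota\colon {^LS_\pm} \to {^LG}$ of \eqref{eq:lembt} has image in $N(\hat T,\hat G)\rtimes\Gamma$ with the same projection $\psi$ to $\Omega\rtimes\Gamma$ as $\phi$.

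Next I would extract the character. Writing $n : \Omega\rtimes\Gamma \to N(\hat T,\hat G)\rtimes\Gamma$ for the Tits section, I set $z(w) = \phi(w)\,n(\bar\phi(w))^{-1}$; this lies in $\hat T$ because $\phi(w)$ and $n(\bar\phi(w))$ have equal image in $\Omega\rtimes\Gamma$. Using multiplicativity of $\phi$, the identity $n(x)n(y) = c(x,y)\,n(xy)$ defining the failure cochain $c$ of $n$, and the fact --- which is exactly the content of \cite[Lemma 2.1.A]{LS87} as invoked in \S\ref{sub:canemb} --- that $c(\psi(\sigma),\psi(\tau))$, transported to $\hat S$ via $\hat j$, equals the Tits cocycle $t_p(\sigma,\tau)$ for $p$ the $\hat B$-positive gauge, a short computation shows that $z$, viewed as valued in $\hat S$, is a continuous $1$-cochain $W \to \hat S$ with $\partial z = t_p$. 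By Remark \ref{rem:lp} the family $(s_{q/p}\cdot z)_q$ is then a Langlands parameter for $S(F)_\pm$, and Theorem \ref{thm:lldc}(1) attaches to it a genuine character $\theta$ of $S(F)_\pm$. The inverse map sends $\phi$ to the datum $(S,\ S\to G,\ \theta)$.

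I would then check that the two constructions are mutually inverse. Starting from $\phi$ and forming $(S, S\to G, \theta)$, formula \eqref{eq:lembt} gives, for $\sigma_w$ the image of $w$ in $\Gamma$, the identity $\iota(z(w)\boxtimes\sigma_w) = \hat j(z(w))\,n(\psi(\sigma_w)) = z(w)\,n(\bar\phi(w)) = \phi(w)$, so composing the parameter of $\theta$ with $\iota$ returns $\phi$. Conversely, starting from a datum $(S, S\to G, \theta)$, its image $\iota\circ(\text{parameter of }\theta)$ lands in $N(\hat T,\hat G)\rtimes\Gamma$ with $\Omega$-projection the section $\psi$ attached to the stable class, so the inverse construction recovers $\psi$ --- hence $(S, S\to G)$ up to the natural equivalence --- and recovers the parameter of $\theta$, hence $\theta$ itself by Theorem \ref{thm:lldc}.

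Finally, one must verify well-definedness and bijectivity on equivalence classes: replacing $\phi$ by a $\hat G$-conjugate, or changing the maximal torus of $\hat G$ normalized by its image, should yield data related by an isomorphism $S_1 \to S_2$ of $F$-tori identifying the two stable classes, whose natural lift $S_1(F)_\pm \to S_2(F)_\pm$ then identifies $\theta_1$ with $\theta_2$ because (by Theorem \ref{thm:lldc}) it identifies their parameters, which correspond under the conjugation; conversely an equivalence of data visibly produces $\hat G$-conjugate parameters after composition with the canonical embeddings. The genuinely delicate point I anticipate is the independence of the extracted torus and stable class from the choice of maximal torus of $\hat G$ normalized by $\phi(W)$ --- equivalently, the compatibility of the admissible-embedding correspondence of \cite[\S5.1]{KalRSP} with $\hat G$-conjugacy --- whereas the cochain computation $\partial z = t_p$ and the two inversion identities are routine once \S\ref{sub:canemb} and Theorem \ref{thm:lldc} are in hand.
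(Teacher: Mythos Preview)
Your proposal is correct and follows essentially the same approach as the paper. The paper's own proof is explicitly a ``brief sketch'' that refers to \cite{She83}, \cite{AV16}, and \cite{KalRSP} for details; you have simply made explicit the factorization $\phi(w)=z(w)\,n(\bar\phi(w))$ and the verification $\partial z=t_p$ that the paper leaves implicit in the sentence ``the parameter $\varphi$ factors through this embedding,'' and you have spelled out the mutual-inverse and well-definedness checks that the paper omits entirely.
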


\begin{rem}
It is well known that discrete series parameters when $F=\R$ have the above property, and it is proved in \cite[\S5.2]{KalRSP} that this is also the case for regular supercuspidal parameters. 	
\end{rem}

\begin{proof}
Such constructions have been explained in detail in various places. In the real case we refer to \cite[\S3.4]{She83} for the statement without covers and to \cite[\S6]{AV16} for the statment with covers. In the $p$-adic case we refer to \cite[\S5.2]{KalRSP} for the statement without covers. We limit ourselves to a brief sketch here, in order to illustrate the simplicity of the situation once the double cover has been introduced and $\chi$-data is no longer required. 

Let $\varphi : W \to {^LG}$ be a Langlands parameter whose image is contained in the normalizer of a maximal torus. Let $\hat S$ denote this torus together with the action of $W$ by conjugation by $\varphi$. It is immediate that this action extends to $\Gamma$. Let $S$ be the torus over $F$ with dual $\hat S$. Then $S$ comes equipped with a stable class of embeddings $S \to G$ \cite[\S5.1]{KalRSP}. We have the canonical $\hat G$-conjugacy class of $L$-embeddings $^LS_\pm \to {^LG}$ of \S\ref{sub:canemb}. Since $\hat S$ is already (tautologically) embedded into $\hat G$, inside of this $\hat G$-conjugacy class there is a unique $\hat S$-conjugacy class of $L$-embeddings $^LS_\pm \to {^LG}$ that extends the tautological embedding $\hat S \to \hat G$. The parameter $\varphi$ factors through this embedding and leads in this way to a canonical parameter $\varphi_\pm$ for $S(F)_\pm$, hence to a canonical genuine character $\theta_\pm$ of $S(F)_\pm$ via Theorem \ref{thm:lldc}.
\end{proof}

\subsection{The character formula associated to a genuine character} \label{sub:char}

In this section we assume that in the case of a non-archimedean base field $F$ the residual characteristic is not $2$.

Let $S \subset G$ be an elliptic maximal torus and $\theta : S(F)_\pm \to \C^\times$ a genuine character. Fix a non-trivial character $\Lambda : F \to \C^\times$. Then $\theta$ and $\Lambda$ determine a set of $a$-data in the archimedean case, and a set of mod-$a$-data in the non-archimedean case, that may happen to be degenerate (i.e. $a_\alpha=0$), as follows. The character $\theta^2$ descends to $S(F)$. In the archimedean case we define $a_\alpha$ by the equation
\[ \theta^2(N_{F_\alpha/F}(\alpha^\vee(\exp(z)))) = \Lambda(\tx{tr}_{F_\alpha/F}(2a_\alpha z)), \]
where $z$ is a variable  in $F_\alpha$. In the non-archimedean case we use the equation
\[ \theta^2(N_{F_\alpha/F}(\alpha^\vee(z+1))) = \Lambda(\tx{tr}_{F_\alpha/F}(2\bar a_\alpha z)), \]
where $z$ is a variable in $[F_\alpha]_{r_{\theta,\alpha}}/[F_\alpha]_{r_{\theta,\alpha}+}$ and $r_{\theta,\alpha}$ is the largest real number for which the left hand side is non-zero for such $z$. Let $r_{\Lambda,\alpha}$ be the depth of $\Lambda \circ \tx{tr}_{F_\alpha/F}$ and let $r_\alpha=r_{\Lambda,\alpha}-r_{\theta,\alpha}$.

It is quite obvious that $a_{-\alpha}=-a_\alpha$ and $a_{\sigma\alpha}=\sigma(a_\alpha)$ for $\alpha \in R$ and $\sigma \in \Gamma$. But it may well happen that $a_\alpha=0$. We'll be particularly interested in characters $\theta$ for which this does not happen. For $F=\R$ one checks easily that $a_\alpha = \<\alpha^\vee,d\theta\>/(2\pi i)$, where $d\theta \in [X^*(S) \otimes \C]^\Gamma$ is the differential of $\theta$. Thus $a_\alpha \neq 0$ for all $\alpha$ is equivalent to $d\theta$ being regular. This is the case for $\theta$ that come, via Proposition \ref{pro:parfac}, from a discrete series parameter. In that case, let $(\chi_\alpha)$ be the based $\chi$-data specified by the Weyl chamber determined by $d\theta$. Then $\theta \cdot \chi_S^{-1}$ is the character denoted by $\theta$ in \cite[\S4.11]{KalRSP} and the $a$-data computed here coincides with the one computed there.

For a non-archimedean $F$ we will only apply this construction when the depth of $\theta^2 \circ N_{F_\alpha/F} \circ \alpha^\vee$ is positive, which guarantees $a_\alpha \neq 0$. For the remaining $\alpha$ we shall take $a_\alpha$ to be units, chosen arbitrarily. When $\theta$ comes, via Proposition \ref{pro:parfac}, from a regular supercuspidal parameter, let $(\chi_\alpha)$ be arbitrary minimally ramified $\chi$-data \cite[Definition 4.6.1]{KalRSP}. Then $\theta \cdot \chi_S^{-1}$ satisfies \cite[Definition 3.7.5]{KalRSP} and the $a$-data computed here coincides with the one computed from $\theta\cdot\chi_S^{-1}$ by \cite[(4.10.1)]{KalRSP}. Indeed, it is easy to see that the double cover $S(F)_\pm$ splits canonically over $S(F)_{0+}$, and $\chi_S^{-1}$ restricts trivially to $S(F)_{0+}$. This relies on our assumption $p \neq 2$.

This set of $a$-data, respectively mod-$a$-data, leads to the genuine function $a_S$ of \eqref{eq:adata}. The product $a_S \cdot \theta$ is a function on $S(F)_\pm$ that descends to $S(F)$. Let $T$ be the minimal Levi subgroup in the quasi-split inner form of $G$. Consider the function $S(F) \to \C$ sending $s \in S(F)$ to
\begin{equation} \label{eq:char}
 e(G)\epsilon(X^*(T)_\C-X^*(S)_\C,\Lambda) \sum_{w \in N(S,G)(F)/S(F)} [a_S \cdot \theta](ws).
\end{equation}

This function does not depend on the choice of $\Lambda$. When $F=\R$ there is a unique irreducible discrete series representation of $G(F)$ whose Harish-Chandra character restricted to $S(F)_\tx{reg}$ is given by this function \cite[\S4.11]{KalRSP}. When $F$ is non-archimedean, $G$ splits over a tame extension of $F$, and $p$ is not a bad prime for $G$, there exists a natural regular supercuspidal representation whose Harish-Chandra character restricted to the set of regular topologically semi-simple elements of $S(F)$ is given by this function. This representation is obtained by letting $(\chi_\alpha)$ be the $\chi$-data obtained from $(a_\alpha)$ by \cite[(4.7.2)]{KalRSP} and applying \cite[Proposition 3.7.14]{KalRSP} to the character $\theta\cdot\chi_S^{-1}\cdot \epsilon_{f,\tx{ram}} \cdot \epsilon^\tx{ram}$ and the torus $S$. Here $\epsilon_{f,\tx{ram}}$ and $\epsilon^\tx{ram}$ are defined in \cite[Definition 4.7.3 and Equation (4.3.3)]{KalRSP}. We have had to replace ``unique'' by ``natural'' as there may not be sufficiently many such elements in order to determine the representation uniquely.

\subsection{The stable character of a supercuspidal Langlands parameter} \label{sub:stabchar}

Let $F$ be non-archimedean. Assume that $G$ splits over a tame extension of $F$. Assume further that $p$ is sufficiently large so that it does not divide the order of the Weyl group and the exponential function $\tx{Lie}(G(F)) \to G(F)$ converges on the set of topologically nilpotent elements. Fix a non-trivial additive character $\Lambda : F \to \C^\times$.

We will now combine the discussions of \S\S\ref{sub:canemb},\ref{sub:par},\ref{sub:char} to obtain a spectral characterization of the local Langlands correspondence for supercuspidal parameters, i.e. discrete Langlands parameters $\varphi : W_F \to {^LG}$. For this, we will present a formula for the stable character $S\Theta_\varphi$ of the $L$-packet associated to $\varphi$. The theory of endoscopy produces from this stable character, and the stable characters on all endoscopic groups, the characters of the individual members of the $L$-packet, and in this way determines the $L$-packet completely.

It is shown in \cite[Lemma 4.1.3]{KalSLP} and \cite[Lemma 5.2.2]{KalRSP} that for a supercuspidal parameter $\varphi$, $\hat S = \tx{Cent}(\tx{Cent}(\varphi(I_F),\hat G)^\circ,\hat G)$ is a maximal torus of $\hat G$. It is clearly normalized by $\varphi$. Proposition \ref{pro:parfac} produces the pair $(S,\theta)$ of a torus defined over $F$ and equipped with a stable class of embeddings $S \to G$ whose images are elliptic maximal tori, and a genuine character $\theta$ of $S(F)_\pm$.

The prime $p$ is odd, which implies that the double cover $S(F)_\pm$ splits canonically over the pro-p-Sylow subgroup $S(F)_{0+} \subset S(F)$. The restriction of $\theta$ to $S(F)_{0+}$ is given by $\Lambda\<X^*,\log(-)\>$ for an element $X^* \in \tx{Lie}^*(S)(F)$. We also obtain $a$-data as in \S\ref{sub:char}.

\begin{cnj} \label{cnj:stabchar}
For any strongly regular semi-simple $\gamma \in G(F)$ with topological Jordan decomposition $\gamma=\gamma_0 \cdot \gamma_{>0}$,
\[ S\Theta_\varphi^G(\gamma) = e(J)\epsilon(T_G-T_J)\sum_{j : S \to J}[a_S \cdot \theta](\gamma_0) \cdot \widehat{SO}^J_{^jX^*}(\log(\gamma_{>0})). \]
Here $J$ is the connected centralizer of $\gamma_0$ in $G$, $e(J)$ is the Kottwitz sign of $J$, $\epsilon(T_G-T_J)$ is the root number of the virtual Galois representation $X^*(T_G)_\C-X^*(T_J)_\C$ for $T_G$ and $T_J$ the minimal Levi subgroups in the quasi-split inner forms of $G$ and $J$, and the sum runs over the set of stable classes of admissible embeddings $S \to J$.
\end{cnj}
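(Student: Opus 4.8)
Since $\varphi$ is a supercuspidal parameter, Proposition~\ref{pro:parfac} together with the discussion preceding the conjecture produces the elliptic pair $(S,\theta)$, a set of $a$-data $(a_\alpha)$, and the element $X^*\in\tx{Lie}^*(S)(F)$ recording $\theta|_{S(F)_{0+}}$. The plan is to reduce the statement to the Harish-Chandra character formula and its stabilization from \cite{KalRSP}, \cite{KalSLP}, using the translation recalled in \S\ref{sub:char}. Let $(\chi_\alpha)$ be the $\chi$-data attached to $(a_\alpha)$ by \cite[(4.7.2)]{KalRSP} and set $\theta'=\theta\cdot\chi_S^{-1}\cdot\epsilon_{f,\tx{ram}}\cdot\epsilon^\tx{ram}$; by construction $\theta'$ satisfies \cite[Definition~3.7.5]{KalRSP}, so \cite[Proposition~3.7.14]{KalRSP} attaches to $(S,\theta')$ a regular supercuspidal representation, and, by compatibility of the two constructions of the correspondence (via the factorization of \S\ref{sub:par} on one side and \cite{KalRSP} on the other), the packet $\Pi_\varphi$ is the $L$-packet generated by $(S,\theta')$ in the sense of \cite{KalRSP}. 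It then remains to: (i) write the known character formula for the members of $\Pi_\varphi$; (ii) sum it over $\Pi_\varphi$ to obtain $S\Theta_\varphi$; and (iii) perform the sign computation showing that $[a_S\cdot\theta]$ is exactly the function that repackages all the root-by-root signs appearing in (i)--(ii).

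\textbf{Character formula and stabilization.} For (i): for $\gamma=\gamma_0\gamma_{>0}$ with $\gamma_0$ topologically semisimple and $\gamma_{>0}$ topologically unipotent, the character of $\pi_{(S,\theta')}$ at $\gamma$ is, by the character formula of \cite[\S4.10--\S4.11]{KalRSP}, a sum over $J(F)$-conjugacy classes of $F$-rational admissible embeddings $j : S\to J$ (with $J=C_G(\gamma_0)^\circ$) of a product of a $\gamma$-dependent sign factor, the value $\theta'(j^{-1}\gamma_0)$, and $\widehat{\mu}^J_{{}^jX^*}(\log\gamma_{>0})$, the Fourier transform of the orbital integral of the image of $X^*$ in $\tx{Lie}^*(J)(F)$. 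For (ii): the members of $\Pi_\varphi$ are indexed by characters of the component group attached to $\varphi$, equivalently by a suitable set of Galois-cohomology classes of $S$; summing the individual formulas over $\Pi_\varphi$, the cohomological sum merges with the sum over $J(F)$-conjugacy of embeddings into a single sum over \emph{stable} classes of admissible embeddings $S\to J$, and the individual orbital integrals merge into the stable orbital integrals $\widehat{SO}^J_{{}^jX^*}$. The prefactor $e(J)\epsilon(T_G-T_J)$ --- Kottwitz signs together with the root number of $X^*(T_G)_\C-X^*(T_J)_\C$ --- is precisely what this descent from $G$ to $J$ produces, as in \cite{KalSLP}; I would cite \cite{KalSLP} for the stabilization.

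\textbf{The sign identity.} Step (iii) is the crux: one must show that $\theta'(\gamma_0)$ times the $\gamma$-dependent sign factors of (i) collapses to $[a_S\cdot\theta](\gamma_0)$. Unwinding, the character formula evaluates at $\gamma_0$ to $\theta(\gamma_0)$ times a product, over symmetric roots $\alpha$ with $\bar\alpha(\gamma_0)\neq1$, of a Gauss-sum sign depending on whether $F_\alpha/F_{\pm\alpha}$ is ramified; these are exactly the contributions of $\chi_S^{-1}$, $\epsilon_{f,\tx{ram}}$ and $\epsilon^\tx{ram}$ that were absorbed into $\theta'$. On the other hand, by \eqref{eq:adata}, $a_S(\gamma_0)$ is the product over the same set of roots of $\kappa_\alpha\!\left((\delta_\alpha-\delta_{-\alpha})/a_\alpha\right)$. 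The required identity is root-by-root and reduces to a computation in the quadratic extension $F_\alpha/F_{\pm\alpha}$: express $\delta_\alpha-\delta_{-\alpha}$ through $\bar\alpha(\gamma_0)$, use that $\gamma_0$ is topologically semisimple, and compare $\kappa_\alpha$ of the result with the explicit value of $\chi_\alpha$ on $N_{F_\alpha/F}\circ\alpha^\vee$ given by \cite[(4.7.2)]{KalRSP} and with the toral invariants of \cite[\S4.1,\S4.3]{KalRSP}. The assumption $p\neq2$ --- whence the double cover splits canonically over $S(F)_{0+}$ and $\kappa_\alpha$ kills $[F_{\pm\alpha}^\times]_{0+}$ --- makes this tractable, ensures $a_S\cdot\theta$ and $\theta'$ agree on the topologically unipotent part, and makes the arbitrary choices (the units $a_\alpha$ at shallow roots, the character $\Lambda$) wash out, giving the asserted independence of these choices.

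\textbf{The main obstacle.} The genuine difficulty is the convention-consistent matching in step (iii): the intricate $\gamma$-dependent signs $\epsilon_f$, $\epsilon^\tx{ram}$, $\epsilon_{f,\tx{ram}}$ of \cite{KalRSP}, together with the toral invariants and the Kottwitz/root-number prefactors, must be checked against the quadratic-extension evaluation of $\kappa_\alpha\!\left((\delta_\alpha-\delta_{-\alpha})/a_\alpha\right)$ --- exactly the place where sign errors hide, which is why the statement is posed as a conjecture. In addition, the argument as sketched covers only \emph{regular} supercuspidal parameters, for which \cite{KalRSP} supplies both the packet and the character formula; a complete proof would also require constructing the packet for non-regular supercuspidal parameters and verifying that it is normalized compatibly with the factorization of \S\ref{sub:par}.
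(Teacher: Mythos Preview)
The paper does not contain a proof of this statement: it is explicitly labelled a \emph{Conjecture}, and the only justification offered is the sentence ``This conjecture is proved in \cite[\S4.4]{FKS} for the construction of \cite{KalSLP}. We have stated it here as a conjecture, because the notion of `the' local Langlands correspondence is somewhat ambiguous.'' There is thus no in-paper argument to compare your proposal against.

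That said, your sketch is a reasonable outline of the strategy one would expect such a proof to follow, and is broadly consistent with what the cited reference does: translate the genuine character $\theta$ back to a character $\theta'$ on $S(F)$ via $\chi$-data, invoke the Adler--DeBacker--Spice / \cite{KalRSP} character formula for the individual members of the packet, stabilize by summing over the packet, and then verify that the accumulated root-by-root signs repackage as $a_S$. You are also right to flag the two genuine obstacles: (a) the sign bookkeeping in step~(iii) is delicate and is exactly where \cite{FKS} does the hard work; and (b) the argument as written only handles \emph{regular} supercuspidal parameters, whereas the conjecture is stated for all supercuspidal parameters, so the non-regular case (handled in \cite{KalSLP}) requires separate treatment. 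One small inaccuracy: you write that the character formula involves $\theta'(j^{-1}\gamma_0)$, but $j$ is an embedding $S\to J$, so the pullback goes the other way; what appears is $\theta'$ evaluated at the preimage of $\gamma_0$ under $j$, which only makes sense when $\gamma_0$ lies in the image of $j$.
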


This conjecture, together with the endoscopic character identities of \cite[\S5.4]{KalRI}, characterize completely the local Langlands correspondence for supercuspidal parameters. Indeed, given a supercuspidal parameter $\varphi$, this conjecture characterizes the stable characters of the associated $L$-packet for $G$, as well as all endoscopic groups of $G$ through which $\varphi$ factors. The endoscopic character identities then give a formula for the Harish-Chandra character of each constituent of the $L$-packet for $G$.

This conjecture is proved in \cite[\S4.4]{FKS} for the construction of \cite{KalSLP}. We have stated it here as a conjecture, because the notion of ``the'' local Langlands correspondence is somewhat ambiguous.

\section{Functoriality of the local correspondence} \label{sec:func}

Consider two tori $S$ and $T$ defined over $F$, two finite admissible $\Sigma$-sets $R_S$ and $R_T$, and two $\Sigma$-equivariant maps $R_S \to X^*(S)$ and $R_T \to X^*(T)$. Let $f : S \to T$ be a morphism defined over $F$ and let $f^* : X^*(T) \to X^*(S)$ be the map induced from $f$. We would like to study what data induces a lift of $f$ to a homomorphism $S(F)_\pm \to T(F)_\pm$ of covers, as well as an extension of $\hat f : \hat T \to \hat S$ to an $L$-homomorphism $^LT_\pm \to {^LS_\pm}$, so that the local correspondences for the two double covers match. The most naive guess would be a map $R_T \to R_S$ commuting with $f^*$, but it turns out that this is not correct, and the right condition is slightly different. More precisely, we will see in Example \ref{exa:non-surj} that, when the map $R_T \to R_S$ is not surjective, functoriality cannot hold in general. It will turn out that requiring $R_T \to R_S$ to be surjective, as well as satisfy a compatibility condition with respect to $f^*$, which is \emph{not} the commutativity of the obvious square, is enough to produce the desired map between covers and their $L$-groups. This will be discussed in \S\ref{sub:func}.

\subsection{Initial observations} \label{sub:initial}

\begin{dfn}
A \emph{lift} of $f : S \to T$ is a homomorphism $S(F)_\pm \to T(F)_\pm$ whose restriction to $\{\pm1\}$ is the identity, and whose composition with the natural projection $T(F)_\pm \to T(F)$ equals the composition of the natural projection $S(F)_\pm \to S(F)$ with $f$.
\end{dfn}

\begin{fct} \label{fct:pullbackfunc}
\begin{enumerate}
	\item For any lift $f_\pm : S(F)_\pm \to T(F)_\pm$ of $f$, the commutative diagram
	\[ \xymatrix{
	S(F)_\pm\ar[d]\ar[r]^{f_\pm}&T(F)_\pm\ar[d]\\
	S(F)\ar[r]^f&T(F)
	}
	\]
	is Cartesian.
	\item The set of lifts of $f$ is a torsor for the abelian group of sign characters $S(F) \to \{\pm 1\}$.
\end{enumerate}
\end{fct}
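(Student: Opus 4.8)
The plan is to establish both parts by directly unwinding the definition of $S(F)_\pm$ as an iterated pull-back and push-out, using the concrete description of elements from Remark \ref{rem:elements}. For part (1), I would first recall the general fact that if $1 \to A \to E \to B \to 1$ is a central extension and $f_\pm : E' \to E$ is a homomorphism of central extensions of $B'$ and $B$ over a map $f : B' \to B$ that restricts to the identity on the common kernel $A$, then the square relating $E', E, B', B$ is automatically Cartesian: indeed an element of the fibre product $E \times_B B'$ is a pair $(e,b')$ with image of $e$ in $B$ equal to $f(b')$, and the map $E' \to E \times_B B'$, $e' \mapsto (f_\pm(e'),\pi'(e'))$, is a bijection because its kernel is trivial (if $f_\pm(e')=1$ and $\pi'(e')=1$ then $e' \in A$ maps to $1$ in $E$, so $e'=1$ since $f_\pm|_A=\mathrm{id}$) and it is surjective by a counting/diagram-chase argument on fibres over each $b' \in B'$, both fibres being torsors under $A=\{\pm 1\}$. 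So part (1) is essentially formal once one notes that the definition of a lift forces $f_\pm|_{\{\pm1\}}$ to be the identity.

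For part (2), the key point is that the difference of two lifts is a genuine-trivial homomorphism. Concretely, if $f_\pm$ and $f'_\pm$ are two lifts of $f$, then for each $s \in S(F)_\pm$ the elements $f_\pm(s)$ and $f'_\pm(s)$ have the same image in $T(F)$, hence differ by an element $\chi(s) \in \{\pm 1\}$; one checks $\chi : S(F)_\pm \to \{\pm 1\}$ is a homomorphism (since $f_\pm, f'_\pm$ are and $\{\pm1\}$ is central), and $\chi$ is trivial on the kernel $\{\pm 1\} \subset S(F)_\pm$ because both lifts restrict to the identity there; therefore $\chi$ descends to a sign character $\bar\chi : S(F) \to \{\pm 1\}$, and $f'_\pm = \bar\chi \cdot f_\pm$ in the obvious sense. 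Conversely, given any lift $f_\pm$ and any sign character $\bar\chi : S(F) \to \{\pm 1\}$, the pointwise product $s \mapsto \bar\chi(\pi(s)) \cdot f_\pm(s)$ is again a lift, so the action of the group of sign characters on the set of lifts is simple and transitive. The only remaining thing is to show the set of lifts is non-empty; but this is automatic here because after the push-out construction $S(F)_\pm$ is by definition a pull-back along $f$ of $T(F)_\pm$ in the relevant cases — more robustly, one can exhibit a lift explicitly via the tuple presentation of Remark \ref{rem:elements}, though for the torsor statement to be meaningful one only needs that lifts exist, which will be supplied by the constructions in \S\ref{sub:func}; alternatively, if one wants Fact \ref{fct:pullbackfunc} to be purely about the abstract situation, part (2) can be read as: \emph{if} lifts exist, they form a torsor.

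The main obstacle, such as it is, is bookkeeping: making sure that ``restriction to $\{\pm 1\}$ is the identity'' is used in exactly the right places (it is what forces $\chi$ to descend in part (2) and what forces injectivity in part (1)), and being careful that in part (1) the Cartesian claim is about the outer commutative square in the definition of a lift, not about any of the intermediate pull-back/push-out squares used to build $S(F)_\pm$. I expect no serious difficulty; both statements are formal consequences of the definition of a central extension together with the definition of a lift, and the proof in the paper is presumably one or two lines with details ``left to the reader''.
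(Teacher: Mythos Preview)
Your proposal is correct, and in fact the paper gives no proof at all: the statement is labeled a ``Fact'' and is immediately followed by the next construction, so your expectation that details are left to the reader is exactly right. Your observation about non-emptiness in part (2) is well taken---the examples that follow in the paper (Examples \ref{exa:non-surj} and the one after it) explicitly exhibit situations where no lift exists, so the torsor statement should indeed be read as ``if non-empty, then a torsor,'' and your handling of this point is appropriate.
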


We now introduce a basic construction that reduces the question of functoriality to the case of a single torus.

\begin{cns} \label{cns:func_t}
Assume that $R_T = R_S$ and denote this set by $R$. Assume further that the composition of the given $\Sigma$-equivariant map $R \to X^*(T)$ with $f^*$ equals the given $\Sigma$-equivariant map $R \to X^*(S)$.

For every $\Sigma$-orbit $O \subset R$ we have the torus $J_O$ and the double cover $J_O(F)_\pm$ of $J_O(F)$. The diagram $S(F) \to \prod J_O(F) \from \prod J_O(F)_\pm$ maps the the diagram $T(F) \to \prod J_O(F) \from \prod J_O(F)_\pm$, and hence the pull-back of the first diagram maps to the pull-back of the second. In this way we obtain a lift $S(F)_\pm \to T(F)_\pm$ of $f : S(F) \to T(F)$.

Dually, the map $\hat f : \hat T \to \hat S$ identifies the two Tits cocycles and hence extends canonically to an $L$-homomorphism $^LT_\pm \to {^LS_\pm}$. The fact that the local correspondences for $S(F)_\pm$ and $T(F)_\pm$ are compatible with these maps is immediate and left to the reader.
\end{cns}

Note that the flexibility of considering a map $R \to X^*(S)$ rather than a subset $R \subset X^*(S)$ is useful here: even if $R$ is a subset of $X^*(T)$, the restriction of $f^*$ to $R$ may not be injective.

This construction reduces the question of functoriality to the case when one considers a single torus $S$ and two finite admissible $\Sigma$-sets $R$ and $R'$ with $\Sigma$-equivariant maps $R \to X^*(S)$ and $R' \to X^*(S)$. We will now give two examples showing that having a map $R \to R'$ that makes the obvious triangle commute does not imply that the identity of $S(F)$ lifts to a homomorphism $S(F)_R \to S(F)_{R'}$.

\begin{exa} \label{exa:non-surj}
Let $S=\tx{Res}^1_{E/F}\mb{G}_m$ for a separable quadratic extension $E/F$. Then $X^*(S)=\Z_{(-1)}$. Let $R=\varnothing \subset X^*(S)$ and $R'=\{1,-1\} \subset X^*(S)$. The tautological injective map $R \to R'$ commutes with $R \to X^*(S)$ and $R' \to X^*(S)$. But $S(F)_R$ is the split extension $S(F) \times \{\pm1\}$, while $S(F)_{R'}$ may be non-split, cf. Remark \ref{rem:sosplit}. Therefore, there is no lift $S(F)_R \to S(F)_{R'}$ of the identity of $S(F)$.

The reader may object that $R=\varnothing$ is not a valid choice of a finite admissible $\Sigma$-set. One can easily take a product of the above example with, say $\mb{G}_m$, and add to $R$ an element of $\Z=X^*(\mb{G}_m)$.
\end{exa}

\begin{exa} Let $S=\tx{Res}_{E/F}^1\mb{G}_m$, $R'=\{1,-1\} \subset \Z_{(-1)}=X^*(S)$, $R=\{1_a,1_b,-1_a,-1_b\}$ and $R \to R'$ is the surjective map that sends $1_a,1_b$ to $1$ and $-1_a,-1_b$ to $-1$. The action of $\Sigma$ on $R$ descends to $\Gamma_{E/F} \times \{\pm 1\}$ and both the non-trivial element of $\Gamma_{E/F}$ and $-1 \in \{\pm 1\}$ send $1_a \mapsto -1_a$ and $1_b \mapsto -1_b$. As in the previous example, we know that $S(F)_{R'}$ may be non-split. On the other hand, we claim that $S(F)_R$ is canonically split. Indeed, this extension of $S(F)=E^1$ is obtained by taking the product of two copies of the extension 
\[ 1 \to F^\times/N_{E/F}(E^\times) \to E^\times/N_{E/F}(E^\times) \to E^1 \to 1 \]
pushing out via the product map $(F^\times/N_{E/F}(E^\times))^2=(\{\pm 1\})^2 \to \{\pm 1\}$, and then pulling back along the diagonal inclusion $E^1 \to (E^1)^2$. Therefore, there is no lift $S(F)_R \to S(F)_{R'}$ of the identity of $S(F)$.

The reader may now object that this pathological behavior comes from us allowing maps $R \to X^*(S)$ that are not injective. This is not the case. Indeed, we can consider a second torus $T=S \times S$ and let $f : S \to T$ be the the diagonal embedding. Thus $f^* : X^*(T)=\Z_{(-1)} \oplus \Z_{(-1)} \to \Z_{(-1)}=X^*(S)$ is the addition map. If we let $R_T=\{(1,0),(0,1),(-1,0),(0,-1)\} \subset X^*(T)$, then we now have a version of this example where $R_T$ is a subset of $X^*(T)$ and $R_S=R'$ is a subset of $X^*(S)$. This version reduces to the previous version by Construction \ref{cns:func_t}.
\end{exa}

We now consider a second special case of functoriality. It will be used in \S\ref{sub:av} to compare our double cover construction with that of Adams-Vogan. It will be then substantially generalized in \S\ref{sub:func}.

\begin{cns} \label{cns:simple}
Consider a single torus $S$ and a finite $\Sigma$-invariant subset $0 \notin R \subset X^*(S)$. Let $\alpha,\beta \in R_\tx{sym}$ be such that $\Sigma\cdot\alpha \neq \Sigma\cdot\beta$, $\Gamma_\alpha=\Gamma_\beta$, and $\Gamma_{\pm\alpha}=\Gamma_{\pm\beta}$. Set $\gamma=\alpha+\beta$. Let $R'=\Sigma\cdot\gamma\cup R \sm (\Sigma\cdot\alpha \cup \Sigma \cdot\beta)$. We will construct canonical isomorphisms $S(F)_{R} \to S(F)_{R'}$ and $^LS_{R'} \to {^LS_{R}}$.

We have $J_\alpha=J_\beta=J_\gamma$ canonically. The multiplication map $J_\alpha \times J_\beta \to J_\gamma$ fits into the commutative diagram
\[ \xymatrix{
	[J_\alpha(F)_{\pm} \times J_\beta(F)_\pm]/\{\pm 1\}\ar[r]\ar[d]&J_\gamma(F)_\pm\ar[d]\\
	J_\alpha(F) \times J_\beta(F)\ar[r]&J_\gamma(F)
}
\]
and this diagram is cartesian. The transitivity of pull-back now gives the isomorphism $S(F)_{R} \to S(F)_{R'}$.	

On the dual side, let $p$ be a gauge on $R$ s.t. $p(\sigma\alpha)=p(\sigma\beta)$ for all $\sigma\in\Sigma$. Define a gauge $p'$ on $R'$ by setting $p'(\sigma\gamma)=p(\sigma\alpha)=p(\sigma\beta)$ for all $\sigma \in \Sigma$ and $p'(\delta)=p(\delta)$ for $\delta \in R' \sm \Sigma\cdot\gamma$. Then the Tits cocycle $t_p$ with respect to $R$ is equal to the Tits cocycle $t_{p'}$ with respect to $R'$, so the $L$-groups $^LS_{R}$ and $^LS_{R'}$ are canonically identified.

Finally we observe that the identifications $S(F)_{R}=S(F)_{R'}$ and $^LS_{R}={^LS_{R'}}$ are compatible with the Langlands correspondence. For this, take a set of $\chi$-data for $R'$. Define $\chi_\alpha=\chi_\beta=\chi_\gamma$ and obtain in this way a set of $\chi$-data for $R$. The characters of $S(F)_{R}$ and $S(F)_{R'}$ obtained from these sets of $\chi$-data are readily seen to be equal under the identification $S(F)_{R}=S(F)_{R'}$. The same holds for the 1-cochains $r_p$.
\end{cns}

\subsection{Comparison with the $\rho_i$-cover of Adams-Vogan} \label{sub:av}

Let $F=\R$, $S$ an algebraic torus over $F$, and $\gamma \in \frac{1}{2}X^*(S)$. Adams and Vogan \cite[\S3]{AV16} define the diagonalizable group $S_\gamma=\{(s,z) \in S \times \mb{G}_m|[2\gamma](s)=z^2\}$. Thus $X^*(S_\gamma)$ is the push-out of 
\[ \Z \stackrel{2}{\llw} \Z \stackrel{2\gamma}{\lrw} X^*(S). \]
The natural map $X^*(S) \to X^*(S_\gamma)$ is injective and the quotient is $\Z/2\Z$, so we have the exact sequence
\[ 1 \to \{\pm 1\} \to S_\gamma \to S \to 1. \]
They define $S(\R)_\gamma$ to be the preimage in $S_\gamma(\C)$ of $S(\R)$. Thus $S(\R)_\gamma$ is a double cover of $S(\R)$. Let $\gamma \in X^*(S_\gamma)$ by the morphism $S_\gamma \to \mb{G}_m$ given by $\gamma(s,z)=z$. Then $\gamma$ is a geniune character of $S_\gamma$ whose square is $2\gamma \in X^*(S)$.

The extension $S_\gamma$ splits if and only if $X^*(S_\gamma)$ has torsion, which is the case if and only if $\gamma \in X^*(S)$. If on the other hand $\gamma \in \frac{1}{2}X^*(S) \sm X^*(S)$, then $S_\gamma$ is a torus, its character module is the submodule of $\frac{1}{2}X^*(S)$ generated by $X^*(S)$ and $\gamma$, and the map $S_\gamma \to S$ is a degree $2$ isogeny.

Adams and Vogan further define an $L$-group for $S(\R)_\gamma$. It is the group generated by $\hat S$ and an element $\hat\delta$ acting on $\hat S$ as the non-trivial element of $\Gamma_{\C/\R}$ and satisfying $\hat\delta^2 = \exp(2\pi i\gamma)$, where $\exp : X_*(\hat S) \otimes \C \to \hat S$. They prove a local Langlands correspondence for their double cover \cite[Lemma 3.3]{AV16}. In fact, these results were first proved in \cite{AV92}, but \cite{AV16} is sometimes a more convenient reference.

\begin{lem} \label{lem:avcmp}
Assume $\gamma$ is symmetric (i.e. imaginary) and let $R=\{2\gamma,-2\gamma\}$. There are natural isomorphisms $S(\R)_\gamma \to S(\R)_{\pm}$ and $\<\hat S,\hat\delta\> \to {^LS_\pm}$. These isomorphisms identify the local correspondence of Theorem \ref{thm:lldc} with the local correspondence of Adams-Vogan.
\end{lem}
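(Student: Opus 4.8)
The plan is to construct the two isomorphisms explicitly and then trace through the identifications on both the spectral and the Galois side. Throughout, $E=\C$, $\Gamma=\Gamma_{\C/\R}=\{1,\tau\}$, $F_\gamma=\C$, $F_{\pm\gamma}=\R$, $\kappa_\gamma$ is the sign character of $\R^\times$, and $\bar\alpha=2\gamma$ for $\alpha=2\gamma\in R$, so that $R_\tx{sym}=R$ consists of a single $\Gamma$-orbit.

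First I would build the isomorphism $S(\R)_\gamma\to S(\R)_\pm$ on the level of $\R$-points. Recall from Remark \ref{rem:elements} that $S(\R)_\pm$ can be presented by tuples $(s,(\delta_\alpha)_{\alpha\in R_\tx{sym}})$ with $s\in S(\R)$, $\delta_\alpha\in\C^\times$, $\delta_{\tau\alpha}=\tau(\delta_\alpha)$ (so $\delta_{-\alpha}=\bar\delta_\alpha$ since $\tau_\alpha=\tau$ here), and $\delta_\alpha/\delta_{-\alpha}=\bar\alpha(s)=[2\gamma](s)$, with the equivalence relation $(s,(\delta_\alpha))\sim(s,(\eta_\alpha\delta_\alpha))$ for $\eta_\alpha\in\R^\times$ with $\eta_{\tau\alpha}=\tau(\eta_\alpha)$ (automatic) and $\kappa_\gamma(\eta_\alpha)=1$, i.e. $\eta_\alpha\in\R^\times_{>0}$. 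On the other hand, by definition $S(\R)_\gamma$ is the set of $(s,z)\in S(\C)\times\C^\times$ with $[2\gamma](s)=z^2$ lying over $S(\R)$. Given $(s,z)\in S(\R)_\gamma$, send it to the tuple $(s,(\delta_\alpha))$ where $\delta_\alpha=z/\bar z=z/\tau(z)$ (note $\delta_\alpha\bar\delta_\alpha=1$ is wrong; rather $\delta_\alpha/\delta_{-\alpha}=\delta_\alpha/\bar\delta_\alpha = (z/\bar z)/(\bar z/z)=z^2/\bar z^2=[2\gamma](s)/\overline{[2\gamma](s)}$, which equals $[2\gamma](s)$ because $s\in S(\R)$ forces $[2\gamma](s)\cdot\overline{[2\gamma](s)}=N_{\C/\R}([2\gamma](s))\in\R^\times$ and in fact $\overline{[2\gamma](s)}=[2\gamma](s)$ when $\gamma$ imaginary — here is where symmetry of $\gamma$ is used). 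Replacing $z$ by $-z$ (the nontrivial deck transformation of $S(\R)_\gamma$) multiplies $\delta_\alpha$ by $1$, which looks wrong, so instead I would use the map coming from the double cover $\C^\times/N\to\C^\times/N$ of Remark \ref{rem:so2} / \S\ref{sub:const}: the point is that $S(\R)_\gamma$ is exactly the pullback along $S(\R)\xrightarrow{[2\gamma]}\C^1$ wait — more carefully, $[2\gamma]:S\to\mb G_m$ need not land in $\tx{Res}^1$; one should use that $S(\R)_\pm$ here is the pullback of $J_O(\R)_\pm\to J_O(\R)$ along $S(\R)\to J_O(\R)$ where $J_O=\tx{Res}^1_{\C/\R}\mb G_m$ and $S\to J_O$ is $s\mapsto [2\gamma](s)/\tau([2\gamma](s)) = [2\gamma](s)/\overline{[2\gamma](s)}$. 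When $\gamma$ is imaginary this is trivial, so I must instead recall that $J_O(\R)_\pm$ is built from $\C^\times/N_{\C/\R}\C^\times\to\C^1$ pushed out by $\kappa_\gamma$, i.e. $J_O(\R)_\pm=\{y\in\C^\times\}/(\text{positive reals})$ mapping onto $\C^1$; and $S_\gamma(\C)=\{(s,z):z^2=[2\gamma](s)\}$, so $(s,z)\mapsto$ (the class of $z$ in $\C^\times/\R^\times_{>0}$, together with $s$) realizes the pullback and defines the isomorphism $S(\R)_\gamma\to S(\R)_\pm$, sending $-z$ to the nontrivial sign. I would write this out carefully as the first step.

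Second, I would construct $\langle\hat S,\hat\delta\rangle\to{}^LS_\pm$. Fix the gauge $p$ on $R=\{2\gamma,-2\gamma\}$ with $p(2\gamma)=+1$. Then ${}^LS_\pm^{(p)}=\hat S\boxtimes_{t_p}\Gamma_{\C/\R}$, and a direct computation of $\Lambda_p(\tau,\tau)$ shows $\lambda_p(\tau,\tau)=\bar\alpha=2\gamma\in X_*(\hat S)$, so $t_p(\tau,\tau)=(-1)^{2\gamma}=\exp(\pi i\cdot 2\gamma)=\exp(2\pi i\gamma)$ — matching exactly the relation $\hat\delta^2=\exp(2\pi i\gamma)$ imposed by Adams–Vogan. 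Hence the map $\hat s\boxtimes 1\mapsto\hat s$, $\hat s\boxtimes\tau\mapsto\hat s\,\hat\delta$ is a well-defined isomorphism of extensions. Independence of the choice of gauge on the ${}^LS_\pm$ side is built into Definition \ref{dfn:lgrp}, and since $|R/\Sigma|=1$ there is essentially only one relevant gauge up to sign anyway.

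Third — and this is the step requiring the most care, hence the main obstacle — I would check compatibility with the two local Langlands correspondences. On our side, the correspondence of Theorem \ref{thm:lldc} is pinned down by choosing $\chi$-data and using Definitions \ref{dfn:chis} and \ref{dfn:rp}; the natural choice here is $\chi_\alpha=\kappa_\gamma$ composed with the norm, i.e. $\chi_{2\gamma}:\C^\times\to\C^\times$ the unique character restricting to $\kappa_\gamma$ on $\R^\times$ with $\chi_{2\gamma}|_{\C^1}$ of order two — concretely $\chi_{2\gamma}(re^{i\phi})=e^{i\phi}$ up to normalization. One then computes $\chi_S:S(\R)_\pm\to\C^\times$ and its parameter $r_{p,\chi}:W_\R\to\hat S$ via the explicit formula of Definition \ref{dfn:rp} with $n=2$ representatives for $W_{\R}\backslash W_{\R}$... (here $W_{\pm\alpha}=W_\R$, so $n=1$ and $v_0,v_1$ live in $W_\C$) and match it against the Adams–Vogan recipe $[\cite{AV16},\text{Lemma 3.3}]$, under which a genuine character of $S(\R)_\gamma$ corresponds to the pair $(\lambda,$ the value of the character on $-z)$ determining a homomorphism $W_\R\to\langle\hat S,\hat\delta\rangle$. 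The key identity to verify is that under $S(\R)_\gamma\cong S(\R)_\pm$ the distinguished genuine character $\gamma\in X^*(S_\gamma)$ (whose square is $2\gamma$) corresponds to $\chi_S$ (whose square is the $\zeta$-datum parameter with $\zeta_{2\gamma}=\chi_{2\gamma}^2$), and that the Adams–Vogan parameter of $\gamma$, namely $\hat\delta$ paired with $\exp(\pi i\gamma)$ on the identity component, matches $r_{p,\chi}(\tau)\boxtimes\tau$ under the isomorphism of the second step. Since everything is multiplicative (parts (2),(3) of Theorem \ref{thm:lldc} and the corresponding multiplicativity in $[\cite{AV16}]$) and every genuine character differs from the distinguished one by a character of $S(\R)$ — where both correspondences reduce to the ordinary, functorial LLC for the real torus $S$, which is canonical — it suffices to check the single distinguished case, and there the computation is the $F=\R$, rank-one specialization of Lemma \ref{lem:zsp} together with the explicit form of the Tits cocycle above. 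I expect the bookkeeping of normalizations of $\chi_{2\gamma}$ versus the square root $\exp(\pi i\gamma)$ (i.e. the choice of $\sqrt{-1}$ implicit in "$(-1)^{2\gamma}$") to be the fiddly point, but no genuine difficulty arises once these conventions are fixed consistently with \S\ref{sub:const} and Definition \ref{dfn:tp}.
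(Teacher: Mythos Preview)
Your overall strategy matches the paper's exactly: identify the covers via the pull-back description, identify the $L$-groups by computing the Tits cocycle $t_p(\tau,\tau)=(-1)^{2\gamma}=\exp(2\pi i\gamma)=\hat\delta^2$, and compare the two correspondences on a single distinguished genuine character using multiplicativity. Step 2 is clean and correct.

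However, Steps 1 and 3 contain genuine errors in the intermediate reasoning that need fixing, not just polishing:

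\textbf{Step 1.} The map $S(\R)\to J_O(\R)=\C^1$ is $s\mapsto[2\gamma](s)$, not $s\mapsto[2\gamma](s)/\overline{[2\gamma](s)}$; it lands in $\C^1$ precisely because $\gamma$ is imaginary (so $\overline{[2\gamma](s)}=[-2\gamma](s)=[2\gamma](s)^{-1}$), and it is certainly \emph{not} trivial. Your final map $(s,z)\mapsto(s,[z]\in\C^\times/\R_{>0})$ is correct, but the reason is the one the paper gives: from $z^2=[2\gamma](s)\in\C^1$ one gets $|z|=1$, hence $z/\bar z=z^2=[2\gamma](s)$, which is exactly the pull-back condition defining $S(\R)_\pm$. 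Equivalently, the squaring map $\mb S^1\to\mb S^1$ becomes $\C^\times/\R_{>0}\to\C^1$, $z\mapsto z/\bar z$, after the isomorphism $\mb S^1\cong\C^\times/\R_{>0}$.

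\textbf{Step 3.} Your description of the $\chi$-data is muddled (``$\kappa_\gamma$ composed with the norm'' is the wrong formula, and ``$\chi_{2\gamma}|_{\C^1}$ of order two'' is false for $re^{i\phi}\mapsto e^{i\phi}$). The correct and simplest choice, which is what the paper uses, is $\chi_{2\gamma}=\tx{sgn}:\C^\times\to\C^\times$, $z\mapsto z/|z|$; this restricts to the sign character on $\R^\times$ as required. Under the Step~1 isomorphism the resulting $\chi_S$ is exactly the Adams--Vogan canonical character $\gamma$. The parameter computation you defer is in fact a one-line check: with $n=1$, $v_0=1$, $v_1=j$ one gets $r_p(z)=\tx{sgn}(z)^{2\gamma}=(z/\bar z)^\gamma$ and $r_p(j)=1$, so $w\mapsto r_p(w)\boxtimes w$ sends $z\mapsto(z/\bar z)^\gamma$ and $j\mapsto\hat\delta$, which is the Adams--Vogan parameter on the nose. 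There is no hidden normalization issue with ``choice of $\sqrt{-1}$'': the identity $(-1)^{2\gamma}=\exp(2\pi i\gamma)$ is a tautology in $X_*(\hat S)\otimes\C^\times$.
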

\begin{proof}
The construction of $S(\R)_\gamma$ can be rephrased as the pull-back diagram
\[ \xymatrix{
	S(\R)_\gamma\ar[r]\ar[d]&S(\R)\ar[d]^{2\gamma}\\
	\C^\times\ar[r]^{(-)^2}&\C^\times
}\]
Since $2\gamma$ is imaginary the image of $S(\R)$ lies in $\mb{S}^1 \subset \C^\times$. The preimage of $\mb{S}^1$ under the squaring map is again $\mb{S}^1$. Therefore we may replace the bottom row in the above diagram by the squaring map $\mb{S}^1 \to \mb{S}^1$. The inclusion $\mb{S}^1 \to \C^\times$ induces an isomorphism $\mb{S}^1 \to \C^\times/\R_{>0}$. Composing its inverse with the squaring map on $\mb{S}^1$ gives the map $\C^\times/\R_{>0} \to \mb{S}^1$ sending $z$ to $z/\bar z$. Again we replace the bottom row of above diagram, now with the map $\C^\times/\R_{>0} \to \mb{S}^1$ sending $z$ to $z/\bar z$. The corresponding pull-back diagram is the definition of $S(\R)_\pm$.

Turning to the dual side we compute the Tits cocycle for $R$. Define a gauge by $p(2\gamma)=+1$. The Tits cocycle $t_p$ sends $(\sigma,\sigma)$ to $(-1)^{2\gamma}=\exp(2\pi i\gamma)$. Therefore the map $\<\hat S,\hat\delta\> \to \hat S \boxtimes_{t_p}\Gamma_{E/F}$ that is the identity on $\hat S$ and sends $\hat\delta$ to $1 \boxtimes \sigma$ is an isomorphism.

It is enough to compare the two versions of the local correspondence on one genuine character of $S(\R)_\pm$ since both are compatible with multiplication by non-genuine characters. We take the one sending $(s,\delta)$ with $\delta/\bar\delta = [2\gamma](s)$ to $\tx{sgn}(\delta)$, where $\tx{sgn} : \C \to \mb{S}^1$ is  the argument function. In terms of the Adams-Vogan cover $S(\R)_\gamma$ this is the canonical genuine character $\gamma$. Present the Weil group $W_{\C/\R}$ as the group generated by $\C^\times$ and $j$ with $jzj^{-1}=\bar z$ and $j^2=-1$. The $L$-parameter of $\gamma$ associated by \cite[Lemma 3.3]{AV16} sends $z$ to $(z/\bar z)^\gamma$ and $j$ to $\hat\delta$. The $L$\-parameter associated by Theorem \ref{thm:lldc} is $r_p(z)=[2\gamma](\tx{sgn}(z))$ and $r_p(zj)=2[\gamma](\tx{sgn}(z))$. These are manifestly the same parameter.
\end{proof}

The construction of Adams-Vogan can be applied in particular when $S$ is a maximal torus of a connected reductive group $G$ and $\gamma$ is either half the sum of all positive roots (with respect to some fixed order), or half the sum of the positive imaginary roots, denoted by $\rho$ and $\rho_i$, respectively.

\begin{cor} Let $R \subset X^*(S)$ be the root system of $G$. There is are natural isomorphisms $S(\R)_\pm \to S(\R)_{\rho_i}$ between the two covers, as well as between their $L$-groups, that identify the two versions of the local correspondence.
\end{cor}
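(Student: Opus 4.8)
The plan is to reduce the corollary to the symmetric-root case of Lemma~\ref{lem:avcmp} together with Construction~\ref{cns:simple}. First I would recall that for $F=\R$ the root system $R \subset X^*(S)$ decomposes into imaginary (symmetric), real (asymmetric with $\Gamma_{\pm\alpha}=\Gamma_\alpha$ and $\tau_\alpha$ acting trivially), and complex (asymmetric) roots. By Remark~\ref{rem:split}(1), the asymmetric part of $R$ contributes a canonically split factor to $S(\R)_\pm$, and dually, by Remark~\ref{rem:dsplit}, one can choose the gauge to be $\Gamma$-invariant on the asymmetric orbits, so the asymmetric roots contribute nothing to $t_p$ and hence nothing to $^LS_\pm$. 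Therefore both $S(\R)_\pm$ and $^LS_\pm$ are canonically determined by $R_\tx{sym} = R_i$, the set of imaginary roots, and likewise the Langlands correspondence of Theorem~\ref{thm:lldc} factors through $R_i$. (Here I would use the economical presentation of Remark~\ref{rem:elements}, or rather its analogue allowing $R$ to have no symmetric elements, to make this precise.)

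Next I would identify $S(\R)_{R_i}$ with $S(\R)_{\rho_i}$. The element $\rho_i = \frac12\sum_{\alpha\in R_i^+}\alpha \in \frac12 X^*(S)$ is $\Gamma$-fixed, and $2\rho_i = \sum_{\alpha\in R_i^+}\alpha \in X^*(S)$ is the ``$2\gamma$'' of the Adams--Vogan construction. The idea is to compare the double cover $S(\R)_{\rho_i}$, built by pulling back the squaring map on $\C^\times$ along $2\rho_i : S(\R)\to\C^\times$, with the $R_i$-cover, which is built orbit-by-orbit from the imaginary roots. Using Construction~\ref{cns:simple} repeatedly, I can pass from the root-system datum $R_i$ (a union of symmetric $\Sigma$-orbits of imaginary roots) to a datum whose underlying set is $\{2\rho_i,-2\rho_i\}$ (a single symmetric orbit, since $2\rho_i$ is $\Gamma$-fixed and imaginary): at each step one replaces a pair $\alpha,\beta$ of symmetric roots with common stabilizers by $\alpha+\beta$. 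The hypotheses of Construction~\ref{cns:simple} ($\Gamma_\alpha = \Gamma_\beta$, $\Gamma_{\pm\alpha}=\Gamma_{\pm\beta}$) hold for any two imaginary roots over $\R$ because $\Gamma_\alpha = \Gamma$ and $\Gamma_{\pm\alpha} = \Gamma_{\C/\R}$'s full preimage, i.e. these stabilizers are the same for every imaginary root; one also needs $\Sigma\cdot\alpha\ne\Sigma\cdot\beta$, which is arranged by pairing off distinct positive roots. This gives canonical isomorphisms $S(\R)_{R_i} \to S(\R)_{\{2\rho_i,-2\rho_i\}}$ and $^LS_{\{2\rho_i,-2\rho_i\}} \to {^LS_{R_i}}$ compatible with the correspondence. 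Then Lemma~\ref{lem:avcmp}, applied with $\gamma = \rho_i$ and $R = \{2\rho_i, -2\rho_i\}$, supplies canonical isomorphisms $S(\R)_\gamma = S(\R)_{\rho_i} \to S(\R)_{\{2\rho_i,-2\rho_i\}}$ and $\langle\hat S,\hat\delta\rangle \to {^LS_{\{2\rho_i,-2\rho_i\}}}$ identifying the two local correspondences. Composing all of these yields the claimed isomorphisms.

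I should be a little careful about one point: Construction~\ref{cns:simple} as stated merges \emph{two} orbits into one, so after the first merge one has a datum with underlying set containing $\alpha_1+\alpha_2$ plus the remaining roots, not all of which are equal to a single vector; iterating, one eventually reaches the singleton orbit $\{2\rho_i,-2\rho_i\}$ only after also checking that at the last stage the element produced is indeed $\pm2\rho_i$ and is symmetric with $\Gamma_{2\rho_i} = \Gamma$. This is clear since a sum of imaginary roots is imaginary (fixed by $\Gamma$, negated by complex conjugation on the level of $\Sigma$) as long as it is nonzero, and $2\rho_i\ne 0$. One also needs the gauge-compatibility hypothesis $p(\sigma\alpha)=p(\sigma\beta)$ at each step, which can be maintained by working with a fixed choice of positive imaginary roots and the corresponding gauge $p(\alpha)=+1$ for $\alpha\in R_i^+$.

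The main obstacle is organizing the inductive reduction via Construction~\ref{cns:simple} cleanly---in particular verifying that the hypotheses of that construction are preserved at each step and that the resulting chain of canonical identifications is independent of the order in which roots are paired up---rather than any deep new computation; once the datum is reduced to $\{2\rho_i,-2\rho_i\}$ the result is exactly Lemma~\ref{lem:avcmp}. An alternative, perhaps cleaner, route that avoids the bookkeeping would be to verify directly that the pull-back diagram defining $S(\R)_{\rho_i}$ (squaring map on $\C^\times$ along $2\rho_i$) agrees, after the reductions of the first paragraph, with the defining pull-back of $S(\R)_{R_i}$; the key input is that for imaginary roots the map $F_\alpha^\times \to F_\alpha^1$, $x\mapsto x/\tau_\alpha(x)$, is (after passing to $\C^\times/\R_{>0}\cong\mb S^1$) precisely the squaring map, exactly as in the proof of Lemma~\ref{lem:avcmp}. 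I would present the proof via the reduction to Lemma~\ref{lem:avcmp}, flagging this direct verification as the underlying reason it works.
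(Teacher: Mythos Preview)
Your proposal is correct and follows essentially the same route as the paper: discard the asymmetric roots (which do not affect the cover or its $L$-group), then apply Construction~\ref{cns:simple} inductively to merge all the imaginary roots—possible because over $\R$ they all share $F_\alpha=\C$ and $F_{\pm\alpha}=\R$—down to the single orbit $\{2\rho_i,-2\rho_i\}$, and finish with Lemma~\ref{lem:avcmp}. The paper's proof is terser and does not dwell on the bookkeeping issues you flag (order-independence of the merging, gauge compatibility at each step), but the argument is the same.
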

\begin{proof}
We may discard all asymmetric elements from $R$, since they do not influence the cover $S(\R)_\pm$. All symmetric elements of $R \subset X^*(S)$ share the same $F_\alpha=\C$ and $F_{\pm\alpha}=\R$, which allows us to apply Construction \ref{cns:simple} inductively and replace $R$ with the sum of representatives for its $\Gamma$-orbits. We choose these representatives to be the positive (imaginary) roots with respect to a given order. Their sum is then $2\rho_i$. The claim follows from Lemma \ref{lem:avcmp}.
\end{proof}
 
\begin{rem} For the comparison of the Adams-Vogan cover and the cover defined in this paper the two essential points were the fact that the inclusion $\C^1 = \mb{S}^1 \to \C^\times/\R_{>0}=\C^\times/N_{\C/\R}(\C^\times)$ is an isomorphism and that all symmetric elements $\alpha \in X^*(S)$ share the same $F_\alpha=\C$ and $F_{\pm\alpha}=\R$. Neither of these statements is true in the $p$-adic case, which explains why a direct translation of their construction to the $p$-adic case was not possible.
\end{rem}

\subsection{Transport of $\chi$-data} \label{sub:chi-trans}

We return to the case of an arbitrary local field $F$. In the next subsection we will discuss a more general case of functoriality for the double cover $S(F)_\pm$ of a torus determined by a $\Sigma$-stable finite set $R \subset X^*(S)$. As a preparation, we will discuss in this section some basic results about $\chi$-data. For this, we consider two admissible sets $R$ and $R'$ with $\Sigma$-action and a surjective $\Sigma$-equivariant map $f : R \to R'$.

\begin{fct} \label{fct:reps}
Let $\alpha' \in R'$. If $\alpha_1,\dots,\alpha_k$ is a set of representatives for the $\Gamma_{\alpha'}$-orbits in the preimage of $\alpha'$, then it is also a set of representatives for the $\Gamma$-orbits in the preimage of $\Gamma \cdot \alpha'$.
\end{fct}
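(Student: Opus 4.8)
The plan is to unwind the definitions of the stabilizer subgroups and the surjectivity of $f$, turning the claim into a routine orbit–counting statement. First I would observe that, by surjectivity and $\Sigma$-equivariance of $f$, every element of the preimage of $\Gamma\cdot\alpha'$ lies in the preimage of some $\Gamma$-translate of $\alpha'$, and since the preimage of $\alpha'$ already surjects onto... wait, more precisely: the preimage of the $\Gamma$-orbit $\Gamma\cdot\alpha'$ is $\bigcup_{\gamma\in\Gamma}\gamma\cdot f^{-1}(\alpha')$, because $f(\gamma\cdot x)=\gamma\cdot f(x)$ shows $x\in f^{-1}(\gamma\alpha')$ iff $\gamma^{-1}x\in f^{-1}(\alpha')$. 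Hence every $\Gamma$-orbit in $f^{-1}(\Gamma\cdot\alpha')$ meets $f^{-1}(\alpha')$.

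Next I would show that two elements $\alpha_i,\alpha_j$ of $f^{-1}(\alpha')$ lie in the same $\Gamma$-orbit if and only if they lie in the same $\Gamma_{\alpha'}$-orbit. The ``if'' direction is trivial. For ``only if'', suppose $\alpha_j=\gamma\alpha_i$ for some $\gamma\in\Gamma$; applying $f$ gives $\alpha'=f(\alpha_j)=\gamma f(\alpha_i)=\gamma\alpha'$, so $\gamma\in\Gamma_{\alpha'}$, and therefore $\alpha_i,\alpha_j$ are already in the same $\Gamma_{\alpha'}$-orbit. Combining this with the previous paragraph: a set of representatives for the $\Gamma_{\alpha'}$-orbits in $f^{-1}(\alpha')$ hits every $\Gamma$-orbit in $f^{-1}(\Gamma\cdot\alpha')$ (since every such orbit meets $f^{-1}(\alpha')$, and within $f^{-1}(\alpha')$ the $\Gamma$-orbit and $\Gamma_{\alpha'}$-orbit decompositions coincide), and hits each one exactly once.

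There is essentially no hard step here; this is a bookkeeping lemma of the sort one quotes rather than proves. The only mild subtlety worth stating explicitly is that one must check the two conditions separately — that the proposed representatives \emph{cover} all $\Gamma$-orbits and that they do so \emph{without repetition} — and both follow from the single observation that $f$ intertwines the $\Gamma$-actions and that the fibre $f^{-1}(\alpha')$ is $\Gamma_{\alpha'}$-stable. I would write the argument in two or three sentences along the lines above; it does not merit a displayed computation.
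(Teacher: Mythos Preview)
Your argument is correct. The paper states this as a Fact without proof, treating it as a routine observation; your two-step argument (every $\Gamma$-orbit in $f^{-1}(\Gamma\cdot\alpha')$ meets $f^{-1}(\alpha')$, and within $f^{-1}(\alpha')$ the $\Gamma$-orbit and $\Gamma_{\alpha'}$-orbit partitions coincide) is exactly the intended justification.
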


\begin{lem} \label{lem:symres}
Let $\alpha \in R_\tx{sym}$ and let $\alpha'=f(\alpha)$.
\begin{enumerate}
\item We have $\alpha' \in R'_\tx{sym}$. The diagram
\[ \xymatrix{
\Gamma_\alpha\ar@{^(->}[d]\ar@{^(->}[r]&\Gamma_{\alpha'}\ar@{^(->}[d]\\
\Gamma_{\pm\alpha}\ar@{^(->}[r]&\Gamma_{\pm\alpha'}
}
\]
is both cartesian and cocartesian. In other words, $\Gamma_\alpha = \Gamma_{\pm\alpha} \cap \Gamma_{\alpha'}$ and $\Gamma_{\pm\alpha'}=\Gamma_{\pm\alpha} \cdot \Gamma_{\alpha'}$.
\item Restricting the quadratic character $\Gamma_{\pm\alpha'}/\Gamma_{\alpha'} \to \{\pm 1\}$ to $\Gamma_{\pm\alpha}$ gives the quadratic character $\Gamma_{\pm\alpha}/\Gamma_\alpha \to \{\pm 1\}$.
\item The following diagram commutes
\[ \xymatrix{
\Gamma_\alpha^\tx{ab}\ar[r]&\Gamma_{\alpha'}^\tx{ab}\\
\Gamma_{\pm\alpha}^\tx{ab}\ar[r]\ar[u]&\Gamma_{\pm\alpha'}^\tx{ab}\ar[u]
}
\]
where the horizontal maps are induced by the inclusions, and
the vertical maps are the transfer homomorphisms.
\end{enumerate}	
\end{lem}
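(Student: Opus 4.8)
The plan is to establish parts (1)--(3) in order, since each builds on the previous. For part (1), I would argue the square is cartesian by a direct set-theoretic computation: $\Gamma_{\pm\alpha} \cap \Gamma_{\alpha'}$ consists of those $\sigma \in \Gamma$ fixing $\alpha'$ and preserving $\{\pm\alpha\}$; since such $\sigma$ fixes $f(\alpha)=\alpha'$ and permutes the two-element fiber-within-$\{\pm\alpha\}$, and since $-\alpha \mapsto -\alpha' \ne \alpha'$ (here admissibility of $R'$ and $R$ is used: $-1$ fixes nothing, and $f$ is $\Sigma$-equivariant so $f(-\alpha) = -f(\alpha)$), such $\sigma$ must actually fix $\alpha$, giving $\Gamma_\alpha$. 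For the cocartesian claim $\Gamma_{\pm\alpha'} = \Gamma_{\pm\alpha}\cdot\Gamma_{\alpha'}$: one inclusion is clear; for the other, take $\sigma \in \Gamma_{\pm\alpha'}$. If $\sigma$ fixes $\alpha'$ we are done (it lies in $\Gamma_{\alpha'}$); if $\sigma\alpha' = -\alpha'$, pick any $\tau \in \Gamma_{\pm\alpha}\setminus\Gamma_\alpha$ (which exists precisely because $\alpha$ is symmetric), so $\tau\alpha = -\alpha$ hence $\tau\alpha' = -\alpha'$, and then $\tau^{-1}\sigma$ fixes $\alpha'$, i.e. lies in $\Gamma_{\alpha'}$, so $\sigma \in \Gamma_{\pm\alpha}\cdot\Gamma_{\alpha'}$. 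This same computation shows $\Gamma_{\pm\alpha}/\Gamma_\alpha$ surjects onto $\Gamma_{\pm\alpha'}/\Gamma_{\alpha'}$, and since both are order $2$ (the target because we have just shown $\alpha' \in R'_\tx{sym}$, using that the nontrivial coset is hit by $\tau$) the surjection is an isomorphism; this is exactly part (2), as the quadratic character is by definition the unique nontrivial homomorphism on each of these groups of order two, and restriction along the isomorphism $\Gamma_{\pm\alpha}/\Gamma_\alpha \xrightarrow{\sim} \Gamma_{\pm\alpha'}/\Gamma_{\alpha'}$ sends the nontrivial character to the nontrivial character.

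For part (3), the commutativity of the transfer square, I would invoke the standard functoriality of the transfer (Verlagerung) homomorphism. The key input is the double-coset formula for transfer: given $\Gamma_\alpha \le \Gamma_{\pm\alpha}$ and $\Gamma_{\alpha'} \le \Gamma_{\pm\alpha'}$ with $\Gamma_{\pm\alpha} \le \Gamma_{\pm\alpha'}$, the transfer $\Gamma_{\pm\alpha'}^\tx{ab} \to \Gamma_{\alpha'}^\tx{ab}$ restricted along $\Gamma_{\pm\alpha} \hookrightarrow \Gamma_{\pm\alpha'}$ can be computed via the $\Gamma_{\alpha'}$-$\Gamma_{\pm\alpha}$ double cosets in $\Gamma_{\pm\alpha'}$. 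Part (1) tells us there is exactly one such double coset, since $\Gamma_{\pm\alpha'} = \Gamma_{\alpha'}\cdot\Gamma_{\pm\alpha}$ (the cocartesian statement), and the intersection $\Gamma_{\alpha'} \cap \Gamma_{\pm\alpha} = \Gamma_\alpha$ (the cartesian statement). The single-double-coset case of the double-coset formula then says precisely that the transfer $\Gamma_{\pm\alpha'}^\tx{ab} \to \Gamma_{\alpha'}^\tx{ab}$, precomposed with $\Gamma_{\pm\alpha}^\tx{ab} \to \Gamma_{\pm\alpha'}^\tx{ab}$, equals the transfer $\Gamma_{\pm\alpha}^\tx{ab} \to \Gamma_\alpha^\tx{ab}$ followed by the inclusion-induced map $\Gamma_\alpha^\tx{ab} \to \Gamma_{\alpha'}^\tx{ab}$. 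That is exactly the claimed commutativity.

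I expect the main obstacle to be purely bookkeeping rather than conceptual: being careful with the direction of the transfer maps and verifying that the indices match so that the "single double coset" hypothesis applies cleanly, and making sure the group-theoretic statement of part (1) is extracted in precisely the form the double-coset formula consumes. One should also double-check that $\Gamma_{\pm\alpha}$ has finite index in $\Gamma$ (so that transfer is defined) — this holds because $R$ is finite, so every stabilizer is open of finite index. Part (2) deserves a brief remark that the statement about $\Gamma$-characters translates, via local class field theory and the norm-residue isomorphism, into the corresponding statement about the characters $\kappa_\alpha$ on $F_{\pm\alpha}^\times/N_{F_\alpha/F_{\pm\alpha}}(F_\alpha^\times)$, which is what will actually be used in the next subsection; but since the lemma is stated purely on the Galois side, no such translation is needed for the proof itself.
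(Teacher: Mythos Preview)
Your argument is correct. Parts (1) and (2) match the paper's approach closely: the paper also reduces both the cartesian and cocartesian claims to the statement that the natural map $\Gamma_{\pm\alpha}/\Gamma_\alpha \to \Gamma_{\pm\alpha'}/\Gamma_{\alpha'}$ is a bijection of groups of order two, using admissibility of $R'$ for injectivity, and then reads off part (2) immediately.

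For part (3) you take a genuinely different route. The paper proceeds by an explicit computation: it fixes $\tau \in \Gamma_{\pm\alpha}\smallsetminus\Gamma_\alpha$, writes down the index-$2$ transfer formula $t(\gamma)=\gamma\cdot\tau\gamma\tau^{-1}$, $t(\gamma\tau)=\gamma\cdot\tau\gamma\tau^{-1}\cdot\tau^2$, observes that by part (1) the \emph{same} $\tau$ serves as a coset representative for $\Gamma_{\alpha'}\backslash\Gamma_{\pm\alpha'}$, and concludes that the two transfer maps are given by literally the same formula, hence the square commutes. Your approach instead invokes the Mackey double-coset formula, noting that part (1) gives exactly the single-double-coset situation ($\Gamma_{\pm\alpha'}=\Gamma_{\alpha'}\cdot\Gamma_{\pm\alpha}$ with $\Gamma_{\alpha'}\cap\Gamma_{\pm\alpha}=\Gamma_\alpha$) in which the formula collapses to the desired commutativity. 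Your argument is more conceptual and shows why the cartesian/cocartesian conditions are precisely what is needed; the paper's argument is more self-contained and avoids appealing to a result the reader may need to look up. Both are short, and the content is the same once unwound: the paper's explicit formula \emph{is} the single-term Mackey decomposition.
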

\begin{proof}
That $\alpha' \in R'_\tx{sym}$ is immediate. Consider the equalities $\Gamma_\alpha = \Gamma_{\pm\alpha} \cap \Gamma_{\alpha'}$ and $\Gamma_{\pm\alpha'}=\Gamma_{\pm\alpha} \cdot \Gamma_{\alpha'}$. For the second, notice that since $\Gamma_{\alpha'}$ is normal in $\Gamma_{\pm\alpha'}$ and $\Gamma_{\pm\alpha} \subset \Gamma_{\pm\alpha'}$, the product is a subgroup. The two equations are equivalent to the injectivity and surjectivity, respectively, of the natural map $\Gamma_{\pm\alpha}/\Gamma_\alpha \to \Gamma_{\pm\alpha'}/\Gamma_{\alpha'}$. These two quotients being of order two, the two statements are equivalent. For the first equality, the inclusion $\Gamma_\alpha \subset \Gamma_{\pm\alpha} \cap \Gamma_{\alpha'}$ is trivial, while the opposite inclusion follows from the admissibility of $R'$.

The second point is now immediate.

For the second diagram we choose $\tau \in \Gamma_{\pm\alpha} \sm \Gamma_\alpha$ and recall that the map $t : \Gamma_{\pm\alpha} \to \Gamma_\alpha$ defined by $t(\gamma)=\gamma \cdot \tau\gamma\tau^{-1}$ and $t(\gamma\tau)=\gamma \cdot \tau\gamma\tau^{-1} \cdot \tau^2$ for $\gamma \in \Gamma_\alpha$ describes the transfer homomorphism. The commutativity follows from the fact that the image of $\tau$ in $\Gamma_{\pm\alpha'}$ avoids $\Gamma_{\alpha'}$, which is implied by the first part of this lemma.
\end{proof}

\begin{cor} \label{cor:symres}
Let $\alpha \in R_\tx{sym}$ and let $\alpha'=f(\alpha)$. Then
\begin{enumerate}
	\item $F_\alpha = F_{\pm\alpha} \cdot F_{\alpha'}$ and $F_{\pm\alpha'}=F_{\pm\alpha} \cap F_{\alpha'}$.
	\item Restriction to $F_{\alpha'}$ provides an isomorphism 
	\[ \tx{Gal}(F_\alpha/F_{\pm\alpha}) \to \tx{Gal}(F_{\alpha'}/F_{\pm\alpha'}). \]
	\item The restriction of $N_{F_\alpha/F_{\alpha'}}$ to $F_{\pm\alpha}$ equals $N_{F_{\pm\alpha}/F_{\pm\alpha'}}$.
	\item $\kappa_{\alpha'}\circ N_{F_{\pm\alpha}/F_{\pm\alpha'}}=\kappa_\alpha$.
\end{enumerate}
\end{cor}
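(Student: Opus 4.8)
The plan is to derive all four assertions of Corollary~\ref{cor:symres} as the fixed-field translation of Lemma~\ref{lem:symres}, using nothing beyond the Galois correspondence and basic properties of the norm and transfer. Throughout, write $L = F^s$ and recall $F_\beta = L^{\Gamma_\beta}$, $F_{\pm\beta} = L^{\Gamma_{\pm\beta}}$ for any $\beta$, so that inclusions of subgroups of $\Gamma$ correspond contravariantly to inclusions of fixed fields, intersections of subgroups to compositums of fields, and compositums of subgroups (when one is normal in a common overgroup, so the product is again a group) to intersections of fields.

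First I would dispatch part (1). By Lemma~\ref{lem:symres}(1) we have $\Gamma_\alpha = \Gamma_{\pm\alpha}\cap\Gamma_{\alpha'}$; taking fixed fields and using $L^{H_1\cap H_2} = L^{H_1}\cdot L^{H_2}$ gives $F_\alpha = F_{\pm\alpha}\cdot F_{\alpha'}$. Similarly $\Gamma_{\pm\alpha'} = \Gamma_{\pm\alpha}\cdot\Gamma_{\alpha'}$ together with $L^{H_1\cdot H_2} = L^{H_1}\cap L^{H_2}$ yields $F_{\pm\alpha'} = F_{\pm\alpha}\cap F_{\alpha'}$. Part (2) is then immediate from Lemma~\ref{lem:symres}(1) as well: the cartesian/cocartesian square of groups says precisely that $\Gamma_{\pm\alpha}/\Gamma_\alpha \to \Gamma_{\pm\alpha'}/\Gamma_{\alpha'}$ is an isomorphism, and dualizing, restriction $\tx{Gal}(F_\alpha/F_{\pm\alpha}) \to \tx{Gal}(F_{\alpha'}/F_{\pm\alpha'})$ is an isomorphism; one should note the square of fields from (1) is the Galois-theoretic reflection of the group square, and that $F_\alpha/F_{\pm\alpha}$ is Galois (quadratic, since $\alpha$ is symmetric) so restriction makes sense and is injective on the degree-two group, hence bijective by counting.

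For part (3), I would argue as follows. Let $x \in F_{\pm\alpha}$. Then $N_{F_\alpha/F_{\alpha'}}(x) = \prod_{\sigma} \sigma(x)$, where $\sigma$ runs over a set of coset representatives for $\Gamma_\alpha \backslash \Gamma_{\alpha'}$ acting on $x$; since $x$ is fixed by $\Gamma_{\pm\alpha}$, and by part~(1) the natural map $\Gamma_{\pm\alpha}/\Gamma_\alpha$-... more precisely, by Lemma~\ref{lem:symres}(1) the inclusion $\Gamma_{\alpha'}\hookrightarrow\Gamma_{\pm\alpha'}$ together with $\Gamma_{\pm\alpha'} = \Gamma_{\pm\alpha}\Gamma_{\alpha'}$ identifies $\Gamma_\alpha\backslash\Gamma_{\alpha'}$ with $\Gamma_{\pm\alpha}\backslash\Gamma_{\pm\alpha'}$; hence the orbit of $x$ under these two norm operations is computed by the same set of field embeddings, giving $N_{F_\alpha/F_{\alpha'}}(x) = N_{F_{\pm\alpha}/F_{\pm\alpha'}}(x)$. (The cleanest phrasing: the square of field extensions in part~(1) is such that $F_\alpha = F_{\pm\alpha}\otimes_{F_{\pm\alpha'}} F_{\alpha'}$, i.e.\ it is a base-change square of étale algebras, and norm is compatible with base change, which is exactly the assertion.) The main obstacle here is purely bookkeeping: one must be careful that the coset spaces genuinely match up, which is guaranteed by the cartesian property, and that $F_\alpha$ is the compositum inside $L$ rather than merely abstractly isomorphic to the tensor product --- but part~(1) gives the compositum statement directly.

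Finally, part~(4). By definition $\kappa_{\alpha'}$ is the nontrivial character of $F_{\pm\alpha'}^\times / N_{F_{\alpha'}/F_{\pm\alpha'}}(F_{\alpha'}^\times)$, equivalently (via local class field theory / the reciprocity map) the character of $\Gamma_{\pm\alpha'}^\tx{ab}$ cutting out $F_{\pm\alpha'} \hookrightarrow F_{\alpha'}$, i.e.\ the image of the transfer of a generator of $\Gamma_{\alpha'}^\tx{ab}$ --- but the most economical route is to use Lemma~\ref{lem:symres}(3): the commuting square of transfer maps there, fed through the Artin reciprocity isomorphisms $\Gamma_{\pm\beta}^\tx{ab}\cong$ (a quotient of) $F_{\pm\beta}^\times$ compatible with transfer $\leftrightarrow$ inclusion of norm groups, shows that $\kappa_\alpha$ and $\kappa_{\alpha'}\circ N_{F_{\pm\alpha}/F_{\pm\alpha'}}$ are both the pull-back of the same quadratic character along compatible maps. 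Concretely: $\kappa_\alpha = \kappa_{\alpha'}\circ N_{F_{\pm\alpha}/F_{\pm\alpha'}}$ as characters of $F_{\pm\alpha}^\times$ because by part~(2) an element $y\in F_{\pm\alpha}^\times$ is a norm from $F_\alpha$ if and only if it is locally a norm, and $N_{F_{\pm\alpha}/F_{\pm\alpha'}}(y)$ is a norm from $F_{\alpha'}$ iff $y$ is a norm from $F_{\pm\alpha}\cdot F_{\alpha'} = F_\alpha$ by part~(1) combined with the projection formula / transitivity of norms --- this is exactly Lemma~\ref{lem:symres}(2)--(3) read through class field theory. I expect part~(4) to be the subtlest to write down cleanly, since it is the only one that genuinely invokes local class field theory rather than pure Galois theory; but given Lemma~\ref{lem:symres} it reduces to the standard fact that the reciprocity maps intertwine transfer of Galois groups with inclusion of norm subgroups, so the diagram in Lemma~\ref{lem:symres}(3) translates directly into the claimed identity of characters. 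I would close by remarking that (4) can alternatively be seen as immediate from (1)–(3): $\kappa_\alpha(y) = +1 \iff y \in N_{F_\alpha/F_{\pm\alpha}}(F_\alpha^\times)$, and by (1) and transitivity of norms $y = N_{F_\alpha/F_{\pm\alpha}}(z) = N_{F_{\alpha'}/F_{\pm\alpha'}}(\cdots)$ after pushing through $N_{F_{\pm\alpha}/F_{\pm\alpha'}}$, so $\kappa_{\alpha'}(N_{F_{\pm\alpha}/F_{\pm\alpha'}}(y)) = +1$, and the converse uses that $[F_\alpha:F_{\pm\alpha}] = [F_{\alpha'}:F_{\pm\alpha'}] = 2$ so both characters are nontrivial and agree on the norm group, hence coincide.
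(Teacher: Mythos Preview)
Your proposal is correct and follows essentially the same approach as the paper: parts (1)--(3) are the Galois-correspondence translation of Lemma~\ref{lem:symres}(1), and part (4) is Lemma~\ref{lem:symres} read through local class field theory. The paper's proof is much terser (it declares (2) and (3) ``immediate'' and for (4) invokes only that the Artin map translates inclusion of Galois groups to the norm), but the content is the same as what you wrote.

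One small streamlining: for part (4) you reach first for Lemma~\ref{lem:symres}(3) (the transfer square), but the more direct input is Lemma~\ref{lem:symres}(2), which says that the Galois-side character $\kappa_{\alpha'} : \Gamma_{\pm\alpha'}/\Gamma_{\alpha'} \to \{\pm 1\}$ restricts along $\Gamma_{\pm\alpha} \hookrightarrow \Gamma_{\pm\alpha'}$ to $\kappa_\alpha$; functoriality of the reciprocity map (restriction on the Galois side corresponds to precomposition with the norm on the field side) then gives $\kappa_{\alpha'}\circ N_{F_{\pm\alpha}/F_{\pm\alpha'}} = \kappa_\alpha$ in one line. Your alternative direct argument via norm subgroups is also fine but longer.
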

\begin{proof}
The composite $F_{\pm\alpha} \cdot F_{\alpha'}$ is the fixed field of $\Gamma_{\pm\alpha} \cap \Gamma_{\alpha'}$, which by Lemma \ref{lem:symres} equals $\Gamma_\alpha$, hence the first equality. The second follows from the fact that $F_{\pm\alpha} \cap F_{\alpha'}$ is the subfield of $F_{\alpha'}$ fixed by the group $\Gamma_{\pm\alpha}$, which according to Lemma \ref{lem:symres} generates $\Gamma_{\pm\alpha'}$ together with $\Gamma_{\alpha'}$. The second and third points are immediate. The fourth point follows again from Lemma \ref{lem:symres} and the fact that the Artin reciprocity map translates inclusion on the Galois side to the norm on the local field side.
\end{proof}

\begin{cor} \label{cor:chi-norm}
Let $\chi'=(\chi_{\alpha'})_{\alpha'}$ be a set of $\chi$-data for $R'$. For each $\alpha \in R$ define $\chi_\alpha = \chi_{\alpha'} \circ N_{F_\alpha/F_{\alpha'}}$, where $\alpha'=f(\alpha)$. Then $\chi=(\chi_\alpha)_\alpha$ is a set of $\chi$-data for $R$. The analogous statement holds for $\zeta$-data.
\end{cor}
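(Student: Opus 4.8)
The plan is to verify directly that the three defining conditions of $\chi$-data hold for the family $(\chi_\alpha)_\alpha$ defined by $\chi_\alpha = \chi_{\alpha'} \circ N_{F_\alpha/F_{\alpha'}}$ with $\alpha' = f(\alpha)$, using Corollary \ref{cor:symres} to handle the symmetric condition. First I would note that each $\chi_\alpha$ is indeed a character of $F_\alpha^\times$, since $N_{F_\alpha/F_{\alpha'}} : F_\alpha^\times \to F_{\alpha'}^\times$ is well-defined (this needs $F_{\alpha'} \subset F_\alpha$, which holds because $\Gamma_\alpha \subset \Gamma_{\alpha'}$, as $\alpha$ mapping to $\alpha'$ under the $\Sigma$-equivariant map means any element of $\Gamma$ fixing $\alpha$ fixes $\alpha'$).

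The Galois-equivariance condition (1), $\chi_{\sigma\alpha} = \chi_\alpha \circ \sigma^{-1}$, follows from the $\Gamma$-equivariance of $f$: we have $f(\sigma\alpha) = \sigma\alpha'$, and $\sigma$ restricts to an isomorphism $F_\alpha \to F_{\sigma\alpha}$ carrying $F_{\alpha'}$ to $F_{\sigma\alpha'}$, so $N_{F_{\sigma\alpha}/F_{\sigma\alpha'}} \circ \sigma = \sigma \circ N_{F_\alpha/F_{\alpha'}}$; combining this with condition (1) for $\chi'$ gives the claim. Condition (2), $\chi_{-\alpha} = \chi_\alpha^{-1}$, is handled the same way using that $f$ is $\Sigma$-equivariant, hence $f(-\alpha) = -\alpha'$, together with condition (2) for $\chi'$ and the fact that the norm map intertwines the identifications $F_\alpha = F_{-\alpha}$ and $F_{\alpha'} = F_{-\alpha'}$.

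The only condition requiring real input is (3): for $\alpha \in R_\tx{sym}$ we must show $\chi_\alpha|_{F_{\pm\alpha}^\times} = \kappa_\alpha$. By Corollary \ref{cor:symres}(3), the restriction of $N_{F_\alpha/F_{\alpha'}}$ to $F_{\pm\alpha}^\times$ is $N_{F_{\pm\alpha}/F_{\pm\alpha'}}$, so
\[
\chi_\alpha|_{F_{\pm\alpha}^\times} = \chi_{\alpha'} \circ N_{F_\alpha/F_{\alpha'}}|_{F_{\pm\alpha}^\times} = \chi_{\alpha'} \circ N_{F_{\pm\alpha}/F_{\pm\alpha'}} = \left(\chi_{\alpha'}|_{F_{\pm\alpha'}^\times}\right) \circ N_{F_{\pm\alpha}/F_{\pm\alpha'}} = \kappa_{\alpha'} \circ N_{F_{\pm\alpha}/F_{\pm\alpha'}} = \kappa_\alpha,
\]
where the penultimate equality is condition (3) for $\chi'$ (valid since $\alpha' \in R'_\tx{sym}$ by Lemma \ref{lem:symres}) and the last equality is Corollary \ref{cor:symres}(4). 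The case of $\zeta$-data is identical except that condition (3) is replaced by triviality on $F_{\pm\alpha}^\times$, and since $N_{F_{\pm\alpha}/F_{\pm\alpha'}}$ maps $F_{\pm\alpha}^\times$ into $F_{\pm\alpha'}^\times$, triviality of $\zeta_{\alpha'}$ there forces triviality of $\zeta_\alpha$ on $F_{\pm\alpha}^\times$. I do not expect any genuine obstacle here: all the arithmetic content has been isolated into Corollary \ref{cor:symres}, and what remains is bookkeeping. The only point deserving care is checking that the various identifications $F_\alpha = F_{-\alpha}$, $F_{\alpha'}=F_{-\alpha'}$ are compatible with the norm maps, so that conditions (1) and (2) transfer cleanly.
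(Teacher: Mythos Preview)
Your proof is correct and follows essentially the same approach as the paper: conditions (1) and (2) are reduced to the $\Sigma$-equivariance of $f$, and condition (3) is verified by the exact chain of equalities you wrote, invoking Corollary \ref{cor:symres}(3) and (4). The paper's proof is terser but identical in substance.
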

\begin{proof}
The $\Sigma$-equivariance of $f$ implies $\chi_{-\alpha}=\chi_\alpha^{-1}$ and $\chi_{\sigma\alpha}=\chi_\alpha\circ\sigma^{-1}$. Corollary \ref{cor:symres} implies
\[ \chi_\alpha|_{F_{\pm\alpha}^\times} = \chi_{\alpha'}\circ N_{F_\alpha/F_{\alpha'}}|_{F_{\pm\alpha}^\times} = \chi_{\alpha'}|_{F_{\pm\alpha'}^\times} \circ N_{F_{\pm\alpha}/F_{\pm\alpha'}} = \kappa_{\alpha'}\circ N_{F_{\pm\alpha}/F_{\pm\alpha'}} = \kappa_\alpha. \]
\end{proof}

\begin{dfn} \label{dfn:inf}
Given $\chi$-data $\chi'=(\chi_{\alpha'})$ for $R'$, we denote by $\tx{inf}\,\chi'$ the $\chi$-data $(\chi_{f(\alpha)} \circ N_{F_\alpha/F_{f(\alpha)}})_\alpha$ for $R$. Analogously we define $\tx{inf}\,\zeta'$ for a collection of $\zeta$-data $\zeta'$ for $R'$.
\end{dfn}

\subsection{A general case of functoriality} \label{sub:func}

We continue with an arbitrary local field $F$. Consider two tori $S$ and $T$ defined over $F$, two finite admissible $\Sigma$-sets $R_S$ and $R_T$, and two $\Sigma$-equivariant maps $R_S \to X^*(S)$ and $R_T \to X^*(S)$. Let $f : S \to T$ be a map of tori and $f^* : X^*(T) \to X^*(S)$ its dual. Let $f : R_T \to R_S$ be a $\Sigma$-equivariant surjective map. We do \emph{not} assume that the obvious square commutes! Rather, we make the following assumption: for any $\alpha_S \in R_S$
\begin{equation} \label{eq:func_st}
\bar\alpha_S = \sum_{\substack{\alpha_T \in R_T\\ f(\alpha_T)=\alpha_S}} f^*(\bar\alpha_T).	
\end{equation}

In this setting we are going to construct a natural homomorphism $S(F)_\pm \to T(F)_\pm$ of extensions lifting $f$, as well as a natural homomorphism $^LT_\pm \to {^LS}_\pm$, and then prove that the Langlands correspondence for the two double covers $S(F)_\pm$ and $T(F)_\pm$ is compatible with these two maps.

Using Construction \ref{cns:func_t} we immediately reduce to the case $S=T$. So from now on we focus on a single torus $S$ and a surjective $\Sigma$-equivariant map $f : R \to R'$ between to finite admissible $\Sigma$-sets equipped with maps to $X^*(S)$. The above assumption takes the form: for any $\alpha' \in R'$
\begin{equation} \label{eq:func}
\bar\alpha' = \sum_{\substack{\alpha \in R\\ f(\alpha)=\alpha'}} \bar\alpha.
\end{equation}

\begin{rem}
Note that Construction \ref{cns:simple} is a special case of this. In that case, the map $f : R \to R'$ is the identity on all $\Sigma$-orbits in $R$ except those of $\alpha$ and $\beta$, and sends both $\alpha$ and $\beta$ to $\gamma$.
\end{rem}

\begin{rem}
The situation discussed here may arise in practice as follows. Consider a torus $S$ defined over $F$, a finite admissible $\Sigma$-set $R$ and a $\Sigma$-equivariant map $R \to X^*(S)$. Consider further a $\Sigma$-equivariant equivalence relation $\sim$ on $R$. That is $\alpha \sim \beta$ implies $\sigma\alpha \sim \sigma\beta$ for all $\sigma \in \Sigma$. Assume that there does not exist $\alpha \in R$ such that $-\alpha \sim \alpha$. Then the quotient $R'=R/\sim$ is again a finite admissible $\Sigma$-set. We equip it with the map $R' \to X^*(S)$ that sends $\alpha' \in R'$ to the element $\bar\alpha'$ that is the sum of $\bar\alpha$ for all $\alpha$ in the equivalence class $\alpha'$. We will see an important instance of this in the discussion of twisted Levi subgroups in \S\ref{sec:tl}.
\end{rem}

\begin{rem}
The flexibility of allowing a map $R \to X^*(S)$ rather than a subset $R \subset X^*(S)$ is essential here. Indeed, even if $R$ were a subset of $X^*(S)$, the map $R' \to X^*(S)$ that we just constructed may fail to be injective, because two different equivalence classes may happen to yield the same sum of elements. This does happen in practice when dealing with Levi subgroups, as we will show in \S\ref{sub:exa}.
\end{rem}

\begin{cns} \label{cns:homcover}
We now construct the homomorphism $S(F)_R \to S(F)_{R'}$ lifting the identity of  $S(F)$. Let $\tilde\gamma = [\gamma,\epsilon,(\delta_\alpha)_{\alpha \in R_\tx{sym}}] \in S(F)_R$ as in Remark \ref{rem:elements}. We map it to the element $\tilde\gamma'=[\gamma,\epsilon,(\delta_{\alpha'})_{\alpha' \in R'_\tx{sym}}] \in S(F)_{R'}$, where $\delta_{\alpha'}$ is defined as follows. Let $\alpha_1,\dots,\alpha_k$ be the representatives for the $\Gamma_{\alpha'}$-orbits in the preimage of $\alpha'$. A given $\alpha_i$ may be symmetric or asymmetric. Assume first that it is symmetric. Write $\Gamma_i$ instead of $\Gamma_\alpha$. Then any $\tau_i \in \Gamma_{\pm i} \sm \Gamma_i$ lies in $\Gamma_{\pm\alpha'} \sm \Gamma_{\alpha'}$ by Lemma \ref{lem:symres}. It normalizes both $\Gamma_i$ and $\Gamma_{\alpha'}$ and hence commutes with $N_i := N_{F_{\alpha_i}/F_{\alpha'}}$. Define $\delta'_i=N_i(\delta_i)$. Then $\delta'_i/\tau_i(\delta'_i)=N_i(\delta_i/\tau_i(\delta_i))=N_i(\bar\alpha_i(\gamma))$. Assume now that $\alpha_i$ is asymmetric. Thus $-\Gamma\alpha_i \neq \Gamma\alpha_i$ so there exists $\tau_i \in \Gamma_{\pm\alpha'} \sm \Gamma_{\alpha'}$ s.t. $-\tau_i\alpha_i = \alpha_j$ for some $j \neq i$. Define $\delta'_i=N_i(\bar\alpha_i(\gamma))$. Then $\delta'_i/\tau_i(\delta'_i)=N_i(\bar\alpha_i(\gamma))N_j(\bar\alpha_j(\gamma))$. Now define $\delta_{\alpha'}$ to be the product of $\delta'_i$, where $i$ runs over those $i$ for which $\alpha_i$ is symmetric, as well as one representative for each pair of indices $i,j$ s.t. $\alpha_i,\alpha_j$ are asymmetric and $-\Gamma\alpha_i=\Gamma\alpha_j$. Then $\delta_{\alpha'}/\tau_{\pm\alpha'}(\delta_{\alpha'})=\prod_{i=1}^k N_i(\bar\alpha_i(\gamma))=(\sum_{i=1}^k\sum_{\sigma \in \Gamma_{\alpha'}/\Gamma_i}\sigma(\bar\alpha_i))(\gamma)=\bar\alpha'(\gamma)$, the last equality due to \eqref{eq:func}. This completes the construction of $[\gamma,\epsilon,(\delta_{\alpha'})_{\alpha' \in R'_\tx{sym}}]$ and shows that it represents an element $\tilde\gamma' \in S(F)_{R'}$. 
\end{cns}

It is immediately checked that $\tilde\gamma \mapsto \tilde\gamma'$ is a homomorphism, provided one knows that it is well-defined, which we will verify next.	

\begin{lem} The element $\tilde\gamma'$ just constructed depends only on $\tilde\gamma$ and not on the choice of presentation $[\gamma,\epsilon,(\delta_\alpha)_{\alpha \in R_\tx{sym}}]$.
\end{lem}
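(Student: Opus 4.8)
The plan is to reduce the statement to invariance under a single elementary change of presentation, and then to verify that invariance by a short computation with norm maps, using Corollary~\ref{cor:symres} and Fact~\ref{fct:reps}. First, by Remark~\ref{rem:elements} any two presentations of $\tilde\gamma$ are of the form $(\gamma,\epsilon,(\delta_\alpha)_{\alpha\in R_\tx{sym}})$ and $(\gamma,\eta\epsilon,(\eta_\alpha\delta_\alpha)_{\alpha\in R_\tx{sym}})$, where $\eta_\alpha\in F_{\pm\alpha}^\times$ is a $\Gamma$-equivariant family and $\eta=\prod_{\alpha\in R_\tx{sym}/\Gamma}\kappa_\alpha(\eta_\alpha)$. (One should also check that $\delta_{\alpha'}$ does not depend on the choice of $\Gamma_{\alpha'}$-orbit representatives $\alpha_1,\dots,\alpha_k$ used in Construction~\ref{cns:homcover}; this is straightforward, since replacing a representative $\alpha_i$ by $\tau\alpha_i$ with $\tau\in\Gamma_{\alpha'}$ pre-composes $N_{F_{\alpha_i}/F_{\alpha'}}$ with $\tau$, which fixes $F_{\alpha'}$ and merely permutes the cosets occurring in the norm, while simultaneously $\tau$-translating its argument.) So it suffices to show that applying Construction~\ref{cns:homcover} to $(\gamma,\eta\epsilon,(\eta_\alpha\delta_\alpha))$ yields a tuple presenting the same element $\tilde\gamma'$ as the one obtained from $(\gamma,\epsilon,(\delta_\alpha))$.

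Next I would track the effect of this move on the components $\delta_{\alpha'}$. Fix a symmetric $\alpha'\in R'$ with $\Gamma_{\alpha'}$-orbit representatives $\alpha_1,\dots,\alpha_k$ in its fibre. In the formula $\delta_{\alpha'}=\prod_i\delta'_i$, the factors coming from asymmetric $\alpha_i$ equal $N_{F_{\alpha_i}/F_{\alpha'}}(\bar\alpha_i(\gamma))$ and depend only on $\gamma$, hence are unaffected by the move. The factor coming from a symmetric $\alpha_i$ is $\delta'_i=N_{F_{\alpha_i}/F_{\alpha'}}(\delta_{\alpha_i})$, which is multiplied by $N_{F_{\alpha_i}/F_{\alpha'}}(\eta_{\alpha_i})$; by Corollary~\ref{cor:symres}(3) this equals $N_{F_{\pm\alpha_i}/F_{\pm\alpha'}}(\eta_{\alpha_i})\in F_{\pm\alpha'}^\times$. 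Thus $\delta_{\alpha'}$ is multiplied by
\[ \eta_{\alpha'} \;:=\; \prod_{i\,:\,\alpha_i\in R_\tx{sym}} N_{F_{\pm\alpha_i}/F_{\pm\alpha'}}(\eta_{\alpha_i}) \;\in\; F_{\pm\alpha'}^\times, \]
and $(\eta_{\alpha'})_{\alpha'}$ is $\Gamma$-equivariant by the same argument that gives $\Gamma$-equivariance of $(\delta_{\alpha'})_{\alpha'}$ in Construction~\ref{cns:homcover}. So the tuple produced from the second presentation is $(\gamma,\eta\epsilon,(\eta_{\alpha'}\delta_{\alpha'})_{\alpha'\in R'_\tx{sym}})$.

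Finally I would establish the sign identity $\prod_{\alpha'\in R'_\tx{sym}/\Gamma}\kappa_{\alpha'}(\eta_{\alpha'})=\eta$. Granting it, Remark~\ref{rem:elements} shows that $(\gamma,\eta\epsilon,(\eta_{\alpha'}\delta_{\alpha'}))$ and $(\gamma,\epsilon,(\delta_{\alpha'}))$ present the same element, which is exactly the assertion. To prove the identity, apply Corollary~\ref{cor:symres}(4) to each symmetric $\alpha_i$ in the fibre over $\alpha'$, obtaining $\kappa_{\alpha'}(N_{F_{\pm\alpha_i}/F_{\pm\alpha'}}(\eta_{\alpha_i}))=\kappa_{\alpha_i}(\eta_{\alpha_i})$, hence $\kappa_{\alpha'}(\eta_{\alpha'})=\prod_{i\,:\,\alpha_i\in R_\tx{sym}}\kappa_{\alpha_i}(\eta_{\alpha_i})$. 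Now let $\alpha'$ run over a set of $\Gamma$-orbit representatives in $R'_\tx{sym}$. By Lemma~\ref{lem:symres}(1) every symmetric element of $R$ maps to a symmetric element of $R'$, and by Fact~\ref{fct:reps} the symmetric $\alpha_i$'s obtained as $\alpha'$ ranges over these representatives and $\alpha_i$ over the $\Gamma_{\alpha'}$-orbit representatives in each fibre form precisely a set of $\Gamma$-orbit representatives for $R_\tx{sym}$. Therefore $\prod_{\alpha'/\Gamma}\kappa_{\alpha'}(\eta_{\alpha'})=\prod_{\alpha\in R_\tx{sym}/\Gamma}\kappa_\alpha(\eta_\alpha)=\eta$, as wanted.

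The only nonroutine step is the last one: one must carefully reassemble the iterated product over $\Gamma$-orbits of $R'$ and over $\Gamma_{\alpha'}$-orbits inside each fibre into a single product over $\Gamma$-orbits of $R$, and confirm that the asymmetric members of symmetric fibres drop out of the sign computation — which they do precisely because their contributions to $\delta_{\alpha'}$ depend only on $\gamma$. Everything else is formal manipulation of norm maps, justified by the Galois-theoretic facts collected in Corollary~\ref{cor:symres}.
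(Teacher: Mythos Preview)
Your proof is correct and follows essentially the same approach as the paper: track how the change of presentation alters each $\delta_{\alpha'}$ via the norm maps, invoke Corollary~\ref{cor:symres} to see the perturbation lies in $F_{\pm\alpha'}^\times$ and satisfies $\kappa_{\alpha'}\circ N_{F_{\pm\alpha}/F_{\pm\alpha'}}=\kappa_\alpha$, and then use Fact~\ref{fct:reps} to reassemble the sign product over $R'_\tx{sym}/\Gamma$ into the original one over $R_\tx{sym}/\Gamma$. Your parenthetical on independence of the choice of orbit representatives is a welcome addition that the paper leaves implicit.
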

\begin{proof}
Let $(\eta_\alpha)_{\alpha \in R_\tx{sym}}$, define $\eta=\prod_{\alpha \in R_\tx{sym}/\Gamma} \kappa_\alpha(\eta_\alpha)$, and consider the new presentation $[\gamma,\epsilon\eta,(\delta_\alpha\eta_\alpha)]$ of $\tilde\gamma$. This will affect $\delta'_i$ for those $i$ for which $\alpha_i$ is symmetric, and replace it by $\delta'_i \cdot N_i(\eta_{\alpha_i})$. From Corollary \ref{cor:symres} we know $N_i|_{F_{\pm\alpha}^\times} = N_{F_{\pm\alpha}/F_{\pm\alpha'}}$ and so $N_i(\eta_{\alpha_i}) \in F_{\pm\alpha'}^\times$. The element $\delta_{\alpha'}$ is then replaced by $\delta_{\alpha'} \cdot \eta_{\alpha'}$, where $\eta_{\alpha'}=\prod_i N_i(\eta_{\alpha_i})$ and the product running over those $i$ for which $\alpha_i$ is symmetric. Thus we obtain the presentation $[\gamma,\epsilon\eta,(\delta_{\alpha'}\eta_{\alpha'})_{\alpha'\in R'_\tx{sym}})]$ and need to show that it represents the same element of $S(F)_{R'}$ as the presentation $[\gamma,\epsilon,(\delta_{\alpha'})_{\alpha'\in R'_\tx{sym}})]$. This amounts to showing
\[ \prod_{\alpha' \in R'_\tx{sym}/\Gamma} \kappa_{\alpha'}(\eta_{\alpha'}) = \prod_{\alpha \in R_\tx{sym}/\Gamma} \kappa_\alpha(\eta_\alpha).\]
Composing the quadratic character $\kappa_{\alpha'} : F_{\pm\alpha'}^\times/N_{F_{\alpha'}/F_{\pm\alpha'}}(F_{\alpha'}^\times) \to \{\pm 1\}$ with $N_i$ gives the quadratic character $\kappa_i : F_{\pm i}^\times/N_{F_i/F_{\pm i}}(F_i^\times) \to \{\pm 1\}$, according to Lemma \ref{lem:symres}. Therefore $\kappa_{\alpha'}(\eta_{\alpha'})=\prod_i \kappa_i(\eta_{\alpha_i})$. By Fact \ref{fct:reps} this product runs over representatives for the symmetric $\Gamma$-orbits in $R$ mapping to the $\Gamma$-orbit of $\alpha'$ in $R'$. Letting $\alpha'$ run over $R'_{\tx{sym}}/\Gamma$ and taking the product of the left-hand side of this equality has the effect of taking the product over all symmetric $\Gamma$-orbits $\alpha$ in $R$, as claimed.
\end{proof}

\begin{cns} \label{cns:homlcover}
Next we construct the isomorphism $^LS_{R'} \to {^LS_R}$. Fix a gauge $p'$ on $R'$ and define a gauge $p$ on $R$ by $p(\alpha)=p'(f(\alpha))$. That this is a gauge follows from the equivariance of $f$ with respect to $\{\pm1\} \subset \Sigma$. The value $t_p(\sigma,\tau)$ of the Tits cocycle $t_p \in Z^2(\Gamma,\hat S)$ relative to $R$ and $p$ is given by $\lambda_{p,\sigma,\tau}(-1)$, where $\lambda_{p,\sigma,\tau}$ is the sum of $\bar\alpha$ for $\alpha$ in the following subset of $R$
\[ \Lambda_{p,\sigma,\tau}=\{\alpha>0,\sigma^{-1}\alpha<0,(\sigma\tau)^{-1}\alpha>0\}, \]
where $\alpha>0$ stands for $p(\alpha)>0$. Equation \eqref{eq:func}, the $\Sigma$-equivariance of $f$, and the definition of $p$, imply that this sum equals the sum of $\bar\alpha'$ for $\alpha'$ in the following subset of $R'$
\[ \Lambda_{p',\sigma,\tau}=\{\alpha'>0,\sigma^{-1}\alpha'<0,(\sigma\tau)^{-1}\alpha'>0\}, \]
where now $\alpha'>0$ stands for $p'(\alpha')>0$. We conclude that the Tits cocycle $t_{p'}$  relative to $R'$ and $p'$ equals $t_p$. A similar argument shows that the 1-cochain $s_{p'/q'}$ equals $s_{p/q}$, so the isomorphisms corresponding to different choices of $p'$ splice together to an isomorphism $^LS_{R'} \to {^LS_R}$.	
\end{cns}

We now come to the desired compatibility of the Langlands correspondence. Before we state it, we give an alternative expression for the cochain $r_p$ of Definition \ref{dfn:rp}. For each $\alpha  \in R_\tx{sym}$ fix a section $s : W_{\pm\alpha} \lmod W \to W$ and let $r : W \to W_{\pm\alpha}$ be defined by $w=r(w)s(w)$. Then
\begin{equation} \label{eq:rpalt}
r_p(w) = \prod_{\alpha \in R/\Sigma} \prod_{u \in W_{\pm\alpha} \lmod W} \chi_\alpha(v_0(r(u)^{-1}r(uw)))^{s(u)^{-1}\bar\alpha}.
\end{equation}

\begin{pro} \label{pro:llcfunc}
Let $\chi'$ be a set of $\chi$-data for $R'$ and let $\chi=\tx{inf}\,\chi'$ as in Definition \ref{dfn:inf}. Then
\begin{enumerate}
	\item The pull-back along $S(F)_R \to S(F)_{R'}$ of the character $\chi'_S$ is equal to the character $\chi_S$.
	\item Let $q'$ be a gauge on $R'$ and let $q=q' \circ f$. The 1-cochains $r_{q',\chi'},r_{q,\chi} \in C^1(W_F,\hat S)$ are equal up to 1-coboundary.
\end{enumerate}
\end{pro}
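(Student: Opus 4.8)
The plan is to work throughout with the explicit presentation of elements of $S(F)_R$ and $S(F)_{R'}$ from Remark~\ref{rem:elements}, with the explicit homomorphism $S(F)_R\to S(F)_{R'}$ of Construction~\ref{cns:homcover}, and with the norm compatibilities of Corollary~\ref{cor:symres} (transitivity of the norm, $N_{F_\alpha/F_{\alpha'}}|_{F_{\pm\alpha}^\times}=N_{F_{\pm\alpha}/F_{\pm\alpha'}}$, and $\kappa_{\alpha'}\circ N_{F_{\pm\alpha}/F_{\pm\alpha'}}=\kappa_\alpha$) together with Fact~\ref{fct:reps}. Having reduced to $S=T$ via Construction~\ref{cns:func_t}, we are in the situation of \eqref{eq:func}.

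For part (1) I would unwind Definition~\ref{dfn:chis}: the value of $\chi_S$ on $[\gamma,\epsilon,(\delta_\alpha)_{\alpha\in R_\tx{sym}}]$ is $\epsilon$ times the product of $\chi_{\alpha_O}(\delta_{\alpha_O})$ over the symmetric $\Sigma$-orbits $O\subset R$ (a choice $\alpha_O\in O$ being harmless since $\chi_{\alpha_O}|_{F_{\pm\alpha_O}^\times}=\kappa_{\alpha_O}$ kills norms) times the product of $\chi_{\alpha_O}(\bar\alpha_O(\gamma))$ over the asymmetric $\Sigma$-orbits, and likewise for $\chi'_S$. By Construction~\ref{cns:homcover} the image has symmetric components $\delta_{\alpha'}=\prod_i\delta'_i$, where $\alpha_1,\dots,\alpha_k$ represent the $\Gamma_{\alpha'}$-orbits in $f^{-1}(\alpha')$ and $\delta'_i$ is $N_{F_{\alpha_i}/F_{\alpha'}}(\delta_{\alpha_i})$ or $N_{F_{\alpha_i}/F_{\alpha'}}(\bar\alpha_i(\gamma))$ according as $\alpha_i$ is symmetric or asymmetric. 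Substituting and using $\chi_{\alpha_i}=\chi'_{\alpha'}\circ N_{F_{\alpha_i}/F_{\alpha'}}$ from Definition~\ref{dfn:inf}, each factor $\chi'_{\alpha'}(\delta'_i)$ becomes $\chi_{\alpha_i}(\delta_{\alpha_i})$ or $\chi_{\alpha_i}(\bar\alpha_i(\gamma))$; by Fact~\ref{fct:reps} the resulting index set is exactly the set of $\Sigma$-orbits of $R$ lying over $\Sigma\cdot\alpha'$, so the contribution of $\alpha'$ matches the corresponding block in the defining product for $\chi_S$ (the two members of an asymmetric $\pm$-pair give the same factor, by $\Sigma$-equivariance of $f$ and of the bar map, and here one uses $\bar\alpha'=\sum_{f(\alpha)=\alpha'}\bar\alpha$ via \eqref{eq:func}). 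The sign $\epsilon$ is unchanged, and the asymmetric $\Sigma$-orbits of $R'$, over which both covers are canonically split, are handled in the same way but without the covers.

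For part (2), first note that with $q=q'\circ f$ one has $q(\alpha)=q'(f(\alpha))$, so Construction~\ref{cns:homlcover} gives $t_q=t_{q'}$; by Lemma~\ref{lem:lparpm} both $r_{q,\chi}$ and $r_{q',\chi'}$ have this as differential, hence $c:=r_{q,\chi}\cdot r_{q',\chi'}^{-1}\in Z^1(W_F,\hat S)$, and we must show $c$ is a coboundary. I would reduce to a single choice of $\chi$-data: fixing $\chi'_0$ on $R'$ and writing an arbitrary $\chi'$ as $\chi'_0\cdot\zeta'$ for a set of $\zeta$-data $\zeta'$, we get $\chi:=\tx{inf}\,\chi'=\tx{inf}\,\chi'_0\cdot\tx{inf}\,\zeta'$ since $\tx{inf}$ is multiplicative and preserves $\zeta$-data (Corollary~\ref{cor:chi-norm}), so by Lemma~\ref{lem:rpz} the cocycle $c$ for $\chi'$ differs from the one for $\chi'_0$ by $z_{\tx{inf}\,\zeta'}\cdot z_{\zeta'}^{-1}$. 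By Lemma~\ref{lem:zsp} these two cocycles represent the $H^1(W_F,\hat S)$-classes of the characters $(\tx{inf}\,\zeta')_S$ and $\zeta'_S$ of $S(F)$; but these two characters coincide — this is the analogue of part (1) for $\zeta$-data, proved by the same norm computation without the double-cover bookkeeping — and since the local Langlands correspondence for the torus $S$ itself is a bijection, $z_{\tx{inf}\,\zeta'}$ and $z_{\zeta'}$ are cohomologous. It therefore remains to establish part (2) for the single datum $\chi'_0$, which I would do by a direct computation with the formula \eqref{eq:rpalt}: choosing a section $s$ of $W_{\pm\alpha'}\lmod W$ for each $\alpha'\in R'_\tx{sym}$ and refining it, by induction in stages along $W_{\pm\alpha}\subset W_{\pm\alpha'}$ and $W_\alpha\subset W_{\alpha'}$ for $\alpha\in f^{-1}(\alpha')$, to sections for the $\alpha$'s, the product over $R/\Sigma$ regroups as a product over $R'/\Sigma$; the identity $\chi_\alpha=\chi'_{\alpha'}\circ N_{F_\alpha/F_{\alpha'}}$, which under local class field theory says that $\chi_\alpha$ is the restriction of $\chi'_{\alpha'}$ from $W_{\alpha'}$ to $W_\alpha$, matches the inner $\chi$-factors at the two induction levels, while \eqref{eq:func} matches the exponents, and the mismatch between the two systems of section choices contributes precisely a $1$-coboundary. (One could instead observe that both $S(F)_R\to S(F)_{R'}$ and ${}^LS_{R'}\to{}^LS_R$ restrict to the identity on $S(F)$ and on $\hat S$; using Theorem~\ref{thm:lldc}(2) the sought compatibility is then an identity between two maps of torsors under $\tx{Hom}(S(F),\C^\times)\cong H^1(W_F,\hat S)$, so it follows from part (1) together with part (2) for that single $\chi'_0$.)

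The main obstacle is the bookkeeping in part (2): tracking consistently, across $R$ and $R'$, the section and coset-representative data through the two layers $W_{\pm\alpha}/W_\alpha$ and $W_{\pm\alpha'}/W_{\alpha'}$, and verifying that the field norm $N_{F_\alpha/F_{\alpha'}}$ translates on the Galois side into precisely the inclusion making the $v_0(-)$-terms at the two levels compatible — so that the discrepancy is genuinely a coboundary and not merely a cocycle. Once this is set up, the remaining manipulations are of the kind already carried out in \cite[\S2.5]{LS87}.
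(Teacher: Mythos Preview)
Your approach to Part (1) is essentially the paper's: unwind Definition~\ref{dfn:chis}, apply Construction~\ref{cns:homcover}, and match orbit-by-orbit using $\chi_{\alpha_i}=\chi'_{\alpha'}\circ N_{F_{\alpha_i}/F_{\alpha'}}$ and Fact~\ref{fct:reps}. That is correct.

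For Part (2), your reduction to a single $\chi'_0$ via the $\zeta$-data computation is logically valid but buys nothing: you still must carry out the full direct computation for that $\chi'_0$, and this is where all the work lies. The paper simply does the direct computation for arbitrary $\chi'$ without the reduction.

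Your sketch of that direct computation has the right architecture (nested sections $s_1,s_2,s_3$ for the tower $W_{\pm\alpha}\subset W_{\pm\alpha'}\subset W$, regrouping the product, identifying discrepancies as coboundaries), but it glosses over the one genuinely delicate point. When $\alpha'\in R'_{\tx{sym}}$ but some $\alpha_i$ in its fiber is \emph{asymmetric}, the gauge $p$ on $R$ determined by your section choices declares the entire $\Gamma$-orbit of $\alpha_i$ positive, whereas $p_0:=p'\circ f$ declares only half of it positive. The leftover term in the comparison --- coming from $\chi'_{\alpha'}(v_1^{-2})=-1$ --- is \emph{not} a coboundary, contrary to your assertion that ``the mismatch \dots\ contributes precisely a $1$-coboundary.'' Rather, the paper identifies this term with the gauge-change cochain $s_{p/p_0}$. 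What the computation actually yields is $r_{p',\chi'}\equiv r_{p_0,\chi}$ modulo coboundaries, not $r_{p',\chi'}\equiv r_{p,\chi}$; the claim for $r_{q',\chi'}$ and $r_{q,\chi}$ with $q=q'\circ f$ then follows because $s_{q'/p'}=s_{q/p_0}$. This identification of the residual sign with a gauge-change term is the crux of the argument and your proposal does not engage with it.
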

\begin{proof}
To compare the pull-back of the character $\chi'_S$ with the character $\chi_S$, consider a $\Sigma$-orbit $O'$ in $R'$ and choose $\alpha' \in O'$. Let $\alpha_1,\dots,\alpha_k$ be a set of representatives for the $\Gamma_{\alpha'}$-orbits in the preimage of $\alpha'$. Let $\tilde\gamma = [\gamma,\epsilon,(\delta_\alpha)_{\alpha \in R_\tx{res}}]$ be an element of $S(F)_\pm$ with image $\tilde\gamma'=[\gamma,\epsilon,(\delta_{\alpha'})_{\alpha' \in R'_\tx{sym}}]$ as in Construction \ref{cns:homcover}. It is enough to show that the contribution of $O'$ to $\chi'_S(\tilde\gamma')$ equals the product of the contributions to $\chi_S(\tilde\gamma)$ of the $\Sigma$-orbits in $R$ lying in the preimage of the $\Sigma$-orbit of $\alpha'$.

Assume first that $\alpha'$ is asymmetric. Then each $\alpha_i$ is also asymmetric. The contribution of $O'$ to $\chi'_S(\tilde\gamma')$ is $\chi_{\alpha'}(\bar\alpha'(\gamma))$. By \eqref{eq:func} we have
\begin{eqnarray*}
\chi_{\alpha'}(\bar\alpha'(\gamma))&=&\chi_{\alpha'}\left(\Big[\sum_{i=1}^k\sum_{\sigma \in \Gamma_{\alpha'}/\Gamma_i}\sigma(\bar\alpha_i)\Big](\gamma)\right)\\
&=&\chi_{\alpha'}\left(\prod_{i=1}^k\prod_{\sigma \in \Gamma_{\alpha'}/\Gamma_i} \sigma(\bar\alpha_i(\gamma))\right)\\
&=&\prod_{i=1}^k\chi_{\alpha'}(N_i(\bar\alpha_i(\gamma))).
\end{eqnarray*}
Recalling that $\chi_{\alpha_i}=\chi_{\alpha'}\circ N_i$  and using Fact \ref{fct:reps} we see that this is indeed the product of the contributions to $\chi_S(\tilde\gamma)$ of the $\Sigma$-orbits in $R$ that are in the preimage of the $\Sigma$-orbit of $\alpha'$.

Assume now that $\alpha'$ is symmetric. Then the contribution of $O'$ to $\chi'_S(\tilde\gamma')$ is $\chi_{\alpha'}(\delta_{\alpha'})$. Partition $\{1,\dots,k\}$ as $I \sqcup J \sqcup J'$, where $I$ is the set of $i$ s.t. $\alpha_i$ is symmetric, and $J \cup J'$ is the set of $i$ s.t. $\alpha_i$ is asymmetric, and for $i \in J$ there exists $j \in J'$ s.t. $-\Gamma\alpha_i=\Gamma\alpha_j$. By Construction \ref{cns:homcover}
\[ \delta_{\alpha'} = \prod_{i \in I} N_i(\delta_{\alpha_i}) \cdot \prod_{j \in J} N_j(\bar\alpha_j(\gamma)). \]
For $i \in I$ we have $\chi_{\alpha'}(N_i(\delta_{\alpha_i}))=\chi_{\alpha_i}(\delta_{\alpha_i})$, which is the contribution of the $\Sigma$-orbit of $\alpha_i$ to $\chi_S(\tilde\gamma)$. For $j \in J$ we have $\chi_{\alpha'}(N_j(\bar\alpha_j(\gamma)))=\chi_{\alpha_j}(\bar\alpha_j(\gamma))$, which is the contribution of the $\Sigma$-orbit of $\alpha_j$ ot $\chi_S(\tilde\gamma)$. By Fact \ref{fct:reps} the set $\{\alpha_i|i \in I \cup J\}$ represents the $\Sigma$-orbits inside the preimage of the $\Sigma$-orbit of $\alpha'$. The comparison of $\chi'_S$ and $\chi_S$ is thus complete.

We now come to the comparison of $r_{q',\chi'}$ and $r_{q,\chi}$. Again we begin our consideration with an asymmetric $\Sigma$-orbit $O'$ in $R'$ and a representative $\alpha' \in O'$, as well as representatives $\alpha_1,\dots,\alpha_k$ for the $\Gamma_{\alpha'}$-orbits in the preimage of $\alpha'$. To ease notation, we shall write $W_i$ and $W'$ in place of $W_{\alpha_i}$ and $W_{\alpha'}$, and then later in the symmetric case also $W_{\pm i}$ and $W'_{\pm}$ analogously. The same notational conventions will apply to $\Gamma$ as well, so that for example $\Gamma'=\Gamma_{\alpha'}$. Consider the subgroups $W_i \subset W' \subset W$. We choose sections $s_1 : W' \lmod W \to W$ and $s_2 : W_i \lmod W' \to W'$. These determine $r_1 : W \to W'$ and $r_2 : W' \to W_i$ as in \eqref{eq:rpalt}. Then $s_3 : W_i \lmod W \to W$ defined by $s_3(w)=s_2(r_1(w))s_1(w)$ is a section and the corresponding $r_3 : W \to W_i$ is defined by $r_3(w)=r_2(r_1(w))$. Of course $s_2,s_3,r_2,r_3$ depend on $i$.

As in Definition \ref{dfn:rp} these choices determine a gauge $p'$ on $O'$ and a gauge $p$ on $O_1 \cup \dots \cup O_k$, where $O_i$ is the $\Sigma$-orbit of $\alpha_i$. Since all these $\Sigma$-orbits are asymmetric these gauges can be described more simply as follows: $p'(\beta')>0$ if and only if $\beta' \in \Gamma\alpha'$, and $p(\beta)>0$ if and only if $\beta \in \Gamma\alpha_i$ for some $1 \leq i \leq k$. In particular, these gauges are related by $p(\beta)=p'(f(\beta))$ and it follows that $s_{q',p'}=s_{q,p}$. It is thus enough to compare the contribution of $O'$ to $r_{p',\chi'}$ with the contribution of $O_1 \cup \dots \cup O_k$ to $r_{p,\chi}$. 

The contribution of $O'$ to $r_{p',\chi'}$ is
\[ w \mapsto \prod_{u \in W'\lmod W} u^{-1}\bar\alpha'(\chi_{\alpha'}(r_1(u)^{-1}r_1(uw))). \]
According to \eqref{eq:func} we have $\bar\alpha'=\sum_i \sum_{\sigma \in \Gamma_i\lmod \Gamma'} \sigma^{-1}\bar\alpha_i$, the above equals
\begin{equation} \label{eq:rpcomp1} w \mapsto \prod_i \prod_{u \in W_{i}\lmod W} u^{-1}\bar\alpha_i(\chi_{\alpha'}(r_1(u)^{-1}r_1(uw))). \end{equation}
On the other hand, by Fact \ref{fct:reps} the set $\alpha_1,\dots,\alpha_k$ represents the $\Sigma$-orbits in $R$ that constitute the preimage of the $\Sigma$-orbit of $\alpha'$. Their combined contribution to $r_{p,\chi}$ is
\begin{equation} \label{eq:rpcomp2} w \mapsto \prod_i \prod_{u \in W_{i}\lmod W} u^{-1}\bar\alpha_i(\chi_{\alpha_i}(r_3(u)^{-1}r_3(uw))). \end{equation}
Now $\chi_{\alpha_i}$ is simply the restriction of $\chi_{\alpha'}$ to $W_i$. By construction of $r_3$ the argument of $\chi_{\alpha_i}$ is $r_2(r_1(w))^{-1}r_2(r_1(uw))$. We write
\begin{eqnarray*}
r_1(u)^{-1}r_1(uw)&=&(r_2(r_1(u))s_2(r_1(u)))^{-1}(r_2(r_1(uw))s_2(r_1(uw)))\\
&=&s_2(r_1(u))^{-1}(r_2(r_1(u))^{-1}r_2(r_1(uw)))s_2(r_1(uw)).
\end{eqnarray*}
Comparing the contributions of a fixed $i$ to \eqref{eq:rpcomp1} and \eqref{eq:rpcomp2} we see that they differ by
\[ \prod_u u^{-1}\bar\alpha_i\chi_{\alpha_i}\Big( s_2(r_1(u))^{-1} s_2(r_1(uw))\Big). \]
Splitting the product and reindexing this becomes
\[ \prod_u u^{-1}\bar\alpha_i\chi_{\alpha_i}\Big( s_2(r_1(u))\Big)^{-1}  \cdot w\prod_u u^{-1}\bar\alpha_i\chi_{\alpha_i}\Big( s_2(r_1(u))\Big), \]
visibly a coboundary. Thus the contribution to $r_{p',\chi'}$ of $O'$ equals up to coboundary the contribution to $r_{p,\chi}$ of the preimage in $R$ of $O'$.

Consider next a symmetric $\alpha'$. We have $W' \subset W'_{\pm} \subset W$. Choose $s_1 : W'_{\pm} \lmod W \to W$ and obtain $r_1 : W \to W'_{\pm}$. Choose also $v_0 \in W'$ and $v_1 \in W'_{\pm} \sm W'$. The resulting gauge $p'$ on $R'$ declares $s_1(u)^{-1}\alpha'$ positive for all $u$. The contribution of $\alpha'$ to $r_{p',\chi'}$ is
\[ w \mapsto \prod_{u \in W'_{\pm}\lmod W} s_1(u)^{-1}\bar\alpha'(\chi_{\alpha'}(v_0(r_1(u)^{-1}r_1(uw)))). \]
For simplicity we take $v_0=1$.

Again we decompose the preimage of $\alpha'$ into $\Gamma'$-orbits. The map $\alpha \mapsto -v_1\alpha$ acts as an involution on the set of these orbits. An orbit is fixed by this involution if and only if it consists of symmetric elements. We choose representatives $\alpha_1,\dots,\alpha_l$ for the symmetric orbits, as well as representatives $\alpha_{l+1},\dots,\alpha_k$, one for each pair of asymmetric orbits. Then we set $\alpha_{k+j+1}=-v_1\alpha_{k-j}$, so that $\alpha_1,\dots,\alpha_{2k-l}$ are representatives for all orbits. Then \eqref{eq:func} implies that $\bar\alpha' = \sum_{i=1}^{2k-l} \sum_{\sigma \in \Gamma_i \lmod \Gamma'} \sigma^{-1}\bar\alpha_i$. We plug that into the above formula to get
\begin{equation} \label{eq:rpcomp10}
w \mapsto \prod_{i=1}^{2k-l} \prod_{u \in W'_{\pm}\lmod W} \prod_{\sigma \in \Gamma_i\lmod \Gamma'} s_1(u)^{-1}\sigma^{-1}\bar\alpha_i(\chi_{\alpha'}(v_0(r_1(u)^{-1}r_1(uw)))).
\end{equation}

We now consider the contribution of an individual $i$. We begin with a symmetric $\alpha_i$. Choose $s_2 : W_i
\lmod W' \to W'$ and view it, using the identity $\Gamma'_\pm/\Gamma_{\pm i}=\Gamma'/\Gamma_i$ of Lemma \ref{lem:symres}, as $W_{\pm i} \lmod W'_{\pm} \to W'_{\pm}$. Declare as before $s_3 : W_{\pm i} \lmod W \to W$ by $s_3(w)=s_2(r_1(w))s_1(w)$. The $i$-th factor in the above product equals
\begin{equation} \label{eq:rpcomp11}
w \mapsto \prod_{u \in W_{\pm i}\lmod W} s_3(u)^{-1}\bar\alpha_i(\chi_{\alpha'}(v_0(r_1(u)^{-1}r_1(uw)))).	
\end{equation}
We use again
\[r_1(u)^{-1}r_1(uw) = s_2(r_1(u))^{-1}(r_3(u)^{-1}r_3(uw))s_2(r_1(uw)) \]
and write it as
\[ s_2(r_1(u))^{-1} \cdot \tx{Ad}[r_3(u)^{-1}r_3(uw)](s_2(r_1(uw))) \cdot r_3(u)^{-1}r_3(uw). \]
Since the image of $s_2$ lies in $W'$ we have that $v_0(r_1(u)^{-1}r_1(uw))$ is equal to
\[ s_2(r_1(u))^{-1} \cdot \tx{Ad}[r_3(u)^{-1}r_3(uw)](s_2(r_1(uw))) \cdot v_0(r_3(u)^{-1}r_3(uw)). \]
We claim that the first two terms contribute a coboundary. Indeed, the first term contributes
\[ w \mapsto \prod_{u \in W_{\pm i}\lmod W} s_3(u)^{-1}\bar\alpha_i(\chi_{\alpha'}(s_2(r_1(u))))^{-1}, \]
while the contribution of the second is, upon making the substitution $u \mapsto uw^{-1}$, equal to
\[ w \mapsto \prod_{u \in W_{\pm i}\lmod W} s_3(uw^{-1})^{-1}\bar\alpha_i(\chi_{\alpha'}(\tx{Ad}[r_3(uw^{-1})^{-1}r_3(u)]s_2(r_1(u)))). \]
Now $s_3(uw^{-1})=r_3(uw^{-1})^{-1}r_3(u)s_3(u)w^{-1}$, which makes the above equal to
\[ w \mapsto w\sprod{u \in W_{\pm i}\lmod W}s_3(u)^{-1}r_3(u)^{-1}r_3(uw^{-1})\bar\alpha_i(\chi_{\alpha'}(\tx{Ad}[r_3(uw^{-1})^{-1}r_3(u)]s_2(r_1(u)))). \]
But $(r_3(u)^{-1}r_3(uw^{-1})\bar\alpha_i)\circ \chi_{\alpha'}\circ \tx{Ad}[r_3(uw^{-1})^{-1}r_3(u)]=\bar\alpha_i\circ\chi_{\alpha'}$, and we indeed conclude that the first two terms combine to a coboundary.

We turn to the third term, namely $v_0(r_3(u)^{-1}r_3(uw))$. Its contribution to \eqref{eq:rpcomp11} is
\begin{equation} \label{eq:rpcomp12}
w \mapsto \prod_{u \in W_{\pm i}\lmod W} s_3(u)^{-1}\bar\alpha_i(\chi_{\alpha'}(v_0(r_3(u)^{-1}r_3(uw)))).	
\end{equation}
Since $\chi_{\alpha'}|_{W_i}=\chi_{\alpha_i}$, this looks exactly like the contribution to $r_{p,\chi}$ of the $\Sigma$-orbit of $\alpha_i$. But we need to be careful here. The chosen $v_1$ may not lie in $W_{\pm i}$. Choose $v_1' \in W_{\pm i} \sm W_{i}$. There exists $a \in W'$ so that $v_1'=av_1$. We denote by $v_0'(-)$ the version of the term $v_0(-)$ with respect to $v_1'$ instead of $v_1$. The contribution to $r_{p,\chi}$ of the $\Sigma$-orbit of $\alpha_i$ is given by
\begin{equation} \label{eq:rpcomp13}
w \mapsto \prod_{u \in W_{\pm i}\lmod W} s_3(u)^{-1}\bar\alpha_i(\chi_{\alpha'}(v_0'(r_3(u)^{-1}r_3(uw)))).	
\end{equation}
Now \eqref{eq:rpcomp12} is the product of \eqref{eq:rpcomp13} and
\[ w \mapsto \begin{cases}
\prod_{u \in W_{\pm i}\lmod W}s_3(u)^{-1}\bar\alpha_i(\chi_{\alpha'}(a^{-1})),& r_3(u)^{-1}r_3(uw) \notin W_{i}\\
1,&r_3(u)^{-1}r_3(uw) \in W_{i}\\
\end{cases}
\]
This is the image under $\tx{cor}\circ\bar\alpha_i : Z^1(W_{\pm i}/W_i,\C^\times_{(-1)}) \to Z^1(W_F,\hat S)$ of
\[
x \mapsto \begin{cases} \chi_{\alpha'}(a^{-1})&,x \notin W_{i}\\ 1&,x \in W_{i}
\end{cases}
\]
Since $H^1(\Gamma_{\pm i}/\Gamma_i,\C^\times_{(-1)})$ vanishes, this is also a coboundary. We conclude that, up to a coboundary, the 1-cochains \eqref{eq:rpcomp11}, \eqref{eq:rpcomp12}, and \eqref{eq:rpcomp13}, are all equal. Thus, the $i$-th factor in \eqref{eq:rpcomp10} equals up to a coboundary the contribution to $r_{p,\chi}$ of the $\Sigma$-orbit $O_i$ of $\alpha_i$. Here the gauge $p$ on the orbit $O_i$ declares each $s_3(u)^{-1}\alpha_i$ positive. Recalling that $s_3(u)=s_2(r_1(u))s_1(u)$ and that $s_2$ takes values in $W'$ we see that, as in the case of asymmetric $\alpha'$, we have $p(\beta)=p'(f(\beta))$ for all $\beta \in O_i$. Therefore, as in the asymmetric case, the equality of these parts of $r_{p',\chi'}$ and $r_{p,\chi}$ implies the equality of the corresponding parts of $r_{q',\chi'}$ and $r_{q,\chi}$.

We turn again to the contribution of an individual $i$ to \eqref{eq:rpcomp10}, but now we consider the case of an asymmetric $\alpha_{k-j}$. We combine this contribution with that of its partner $\alpha_{k+j+1}=-v_1\alpha_{k-j}$. This joint contribution is
\[ \prod_{u \in W'_{\pm}\lmod W} \prod_{\sigma \in \Gamma_{i}\lmod \Gamma_{\tx{eq}}} s_1(u)^{-1}(\sigma^{-1}\bar\alpha_i -v_1\sigma^{-1}\bar\alpha_i)(\chi_{\alpha'}(v_0(r_1(u)^{-1}r_1(uw)))), \]
where we have set $i=k-j$.

We now encode $v_0=1$ and $v_1$ as a section $s^\circ : W' \lmod W'_{\pm} \to W'_{\pm}$ and choose further a section $s_2 : W_{i} \lmod W' \to W'$. We can combine these into a section $s_3 : W_{i} \lmod W \to W$ defined by $s_3(a)=s_2r^\circ r_1(a) \cdot s^\circ r_1(a) \cdot s_1(a)$. The corresponding $r_3 : W \to W_{i}$ is defined by $r_3(a)=r_2r^\circ r_1(a)$.

For any $a \in W$ we can decompose $r_1(a) \in W'_\pm$ as
\[ r_1(a) = \underbrace{r_2r^\circ r_1(a)}_{=r_3(a)\in W_{i}} \cdot \underbrace{s_2r^\circ r_1(a)}_{\in W'} \cdot \underbrace{s^\circ r_1(a)}_{\in W'_{\pm}}. \]
Using this, we compute $v_0(r_1(a)^{-1}r_1(aw))$ for any $a \in W$ by distinguishing four cases, depending on whether $r_1(a)$ and $r_1(aw)$ belong to $W'$ or not, which we record as a sequence of two signs:
\[
\begin{cases}
[r_3(a)^{-1}r_3(aw)]\cdot [s_2r^\circ r_1(a)^{-1}s_2r^\circ r_1(aw)] \cdot [s^\circ r_1(a)^{-1} s^\circ r_1(aw)],& ++	\\
[r_3(a)^{-1}r_3(aw)]\cdot [s_2r^\circ r_1(a)^{-1}s_2r^\circ r_1(aw)] \cdot [s^\circ r_1(a)^{-1} s^\circ r_1(aw)v_1^{-1}],& +-	\\
[r_3(a)^{-1}r_3(aw)]^{c}\cdot [s_2r^\circ r_1(a)^{-1}s_2r^\circ r_1(aw)]^{c} \cdot [s^\circ r_1(a)^{-1} s^\circ r_1(aw)v_1^{-1}],& -+	\\
[r_3(a)^{-1}r_3(aw)]^{c}\cdot [s_2r^\circ r_1(a)^{-1}s_2r^\circ r_1(aw)]^{c} \cdot [s^\circ r_1(a)^{-1} s^\circ r_1(aw)],& --	\\
\end{cases}
\]
Each row has three terms isolated by square brackets. Each term belongs to $W'$. The superscript $c$ denotes the conjugation action of $v_1$ on $W'$. We consider first the contributions of the third term in each row. These four terms are equal to $1$, $1$, $v_1^{-2}$, and $1$, respectively. Since $\chi_{\alpha'}(v_1^2)=-1$, we see that the contributions of the third terms combines to
\begin{equation} \label{eq:rpcomp4} w \mapsto \prod_{\substack{a \in W_i \lmod W\\ r_1(a) \notin W'\\ r_1(aw) \in W'}} s_3(a)^{-1}\bar\alpha_i(-1). \end{equation}

Next we consider the contributions of the first term in each row. Since $\chi_{\alpha'}(x^c)=\chi_{-\alpha'}(x)=\chi_{\alpha'}(x)^{-1}$ they combine to
\[ \prod_{a \in W_{i} \lmod W} s_3(a)^{-1}\bar\alpha_i(\chi_{\alpha'}(r_3(a)^{-1}r_3(aw))), \]
which is the contribution of $\Sigma$-orbit of the asymmetric $\alpha_{k-j}$ (this orbit contains its partner $\alpha_{k+j+1}$) to $r_{p,\chi}$, where $p$ is the gauge on the $\Sigma$-orbit of $\alpha_{k-j}$ declaring that $s_3(a)^{-1}\alpha_i$ is positive.

Finally we consider the contributions of the second term in each row. They combine to
\[ \prod_{a \in W_{i} \lmod W} s_3(a)^{-1}\bar\alpha_i(\chi_{\alpha'}(s_2r^\circ r_1(a)^{-1}s_2r^\circ r_1(aw))). \]
Now both $s_2r^\circ r_1(a)$ and $s_2r^\circ r_1(aw)$ belong to $W'$ and can be individually plugged into $\chi_{\alpha'}$. Moreover $s_3(a)^{-1}\alpha_i=a^{-1}\alpha_i$. Substituting $a \mapsto aw^{-1}$ in the second term reveals the above contribution to be a coboundary.

We have concluded that the contribution of the pair of indices $i=k-j$ and $i=k+j+1$ to \eqref{eq:rpcomp10} differs from the contribution of the $\Sigma$-orbit of $\alpha_{k-j}$ to $r_{p,\chi}$ by the term \eqref{eq:rpcomp4}. We claim that this difference is accounted for by the fact that the gauges $p$ and $p'$ are not related by $p(\beta)=p'(f(\beta))$. To study this, let $p_0$ be the gauge on $\Sigma\alpha_{k-j}$ defined by $p_0(\beta)=p'(f(\beta))$. Reviewing the definitions of $p$ and $p'$ we see that $p$ declares the entire $\Gamma$-orbit of the asymmetric $\alpha_{k-j}$ positive and the entire $\Gamma$-orbit of $\alpha_{k+j+1}$ negative, while $p_0$ declares half of the $\Gamma$-orbit of both $\alpha_{k-j}$ and $\alpha_{k+j+1}$ positive. To state this more precisely, we recall $s_3(a)=s_2r^\circ r_1(a) \cdot s^\circ r_1(a) \cdot s_1(a)$. Letting $i=k-j$ we have
\begin{eqnarray*}
p_0(a^{-1}\alpha_i)=&=&p_0(s_3(a)^{-1}\alpha_i)\\
&=&p_0(s_1(a)^{-1}(s^\circ r_1(a))^{-1}(s_2r^\circ r_1(a))^{-1}\alpha_i)\\
&=&p'(s_1(a)^{-1}(s^\circ r_1(a))^{-1}\alpha'),
\end{eqnarray*}
where we have used that $s_2$ takes values in $W'$. When $r_1(a) \in W'$ then the value $p_0(s_3(a)^{-1}\alpha_i)$ is positive and hence equal to $p(s_3(a)^{-1}\alpha_i)$, and when $r_1(a) \notin W'$ then the value $p_0(s_3(a)^{-1}\alpha_i)$ is negative and hence unequal to $p(s_3(a)^{-1}\alpha_i)$.

Now $r_{p_0,\chi}=s_{p_0/p}r_{p,\chi}$ and up to coboundaries $s_{p_0/p}=s_{p/p_0}$. Reviewing the definition of $s_{p/p_0}$ we see that the product determined by the conditions $p(\lambda)=1$, $p(\sigma^{-1}\lambda)=-1$, $p_0(\lambda)=1$, $p_0(\sigma^{-1}\lambda)=1$ is empty, because $p$ is constant on $\Gamma$-orbits, while the product over $p(\lambda)=1$, $p(\sigma^{-1}\lambda)=1$ $p_0(\lambda)=-1$, $p_0(\sigma^{-1}\lambda)=1$ runs over members $a^{-1}\alpha_i$ of the $\Gamma$-orbit of $\alpha_i$ satisfying $r_1(a) \notin W'$ and $r_1(aw) \in W'$, where $w \in W$ represents $\sigma \in \Gamma$, and hence equals precisely \eqref{eq:rpcomp4}. We conclude that $r_{p',\chi'}=r_{p_0,\chi}$, and hence $r_{q',\chi'}=r_{q,\chi}$.
\end{proof}

\begin{cor} \label{cor:llcfunc1}
If $\zeta'$ is a collection of $\zeta$-data for $R'$ and we define $\zeta=\tx{inf}\,\zeta'$ as in Definition \ref{dfn:inf}, then the pull-back to $S(F)$ of the character $\zeta'_S$ equals the character $\zeta_S$.	
\end{cor}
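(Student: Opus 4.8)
The plan is to deduce this from Proposition~\ref{pro:llcfunc}(1) together with the multiplicativity of the assignments $\chi \mapsto \chi_S$ and $\chi \mapsto \tx{inf}\,\chi$, rather than repeating the orbit-by-orbit bookkeeping of that proof. As there, Construction~\ref{cns:func_t} reduces us to a single torus $S$ with a surjective $\Sigma$-equivariant map $f : R \to R'$ satisfying \eqref{eq:func}. Write $\pi : S(F)_R \to S(F)_{R'}$ for the homomorphism of Construction~\ref{cns:homcover} and $q_R : S(F)_R \to S(F)$, $q_{R'} : S(F)_{R'} \to S(F)$ for the two covering projections, so that $q_{R'}\circ\pi = q_R$; recall $q_R$ is surjective.

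First I would fix an arbitrary set of $\chi$-data $\chi'$ for $R'$ and put $\chi = \tx{inf}\,\chi'$, a set of $\chi$-data for $R$ by Corollary~\ref{cor:chi-norm}. By Fact~\ref{fct:charmult} the collection $\chi'\cdot\zeta'$ is again a set of $\chi$-data for $R'$, and since $\tx{inf}$ is defined by composition with the norm maps $N_{F_\alpha/F_{f(\alpha)}}$ (Definition~\ref{dfn:inf}) it is multiplicative, so $\tx{inf}(\chi'\zeta') = \chi\cdot\zeta$ with $\zeta = \tx{inf}\,\zeta'$. Applying Proposition~\ref{pro:llcfunc}(1) to the $\chi$-data $\chi'$ and to the $\chi$-data $\chi'\zeta'$ gives
\[ \pi^*\chi'_S = \chi_S, \qquad \pi^*(\chi'\zeta')_S = (\chi\zeta)_S. \]

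Next I would expand both products by Fact~\ref{fct:charmult}: $(\chi'\zeta')_S = \chi'_S\cdot q_{R'}^*\zeta'_S$ and $(\chi\zeta)_S = \chi_S\cdot q_R^*\zeta_S$. Pulling the first identity back along $\pi$, using $q_{R'}\circ\pi = q_R$ and $\pi^*\chi'_S = \chi_S$, yields $\pi^*(\chi'\zeta')_S = \chi_S\cdot q_R^*\zeta'_S$. Comparing with $\pi^*(\chi'\zeta')_S = (\chi\zeta)_S = \chi_S\cdot q_R^*\zeta_S$ and dividing by the nowhere-vanishing function $\chi_S$ gives $q_R^*\zeta'_S = q_R^*\zeta_S$; since $q_R$ is surjective, $\zeta'_S = \zeta_S$ as characters of $S(F)$. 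Returning to the general situation via Construction~\ref{cns:func_t}, this is precisely the assertion that the pull-back of $\zeta'_S$ along $f : S(F) \to T(F)$ equals $\zeta_S$.

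There is essentially no obstacle: the statement is a formal consequence of Proposition~\ref{pro:llcfunc}(1), Fact~\ref{fct:charmult}, and the obvious multiplicativity of $\tx{inf}$. The only point requiring a moment's care is that $\chi'\zeta'$ is genuinely a valid set of $\chi$-data on $R'$ — conditions~1 and~2 of \S\ref{sub:chichar} are clearly preserved and condition~3 reads $\kappa_\alpha\cdot 1 = \kappa_\alpha$ — which is exactly the first assertion of Fact~\ref{fct:charmult}. One could instead give a direct proof by mimicking the asymmetric/symmetric orbit analysis in the first half of the proof of Proposition~\ref{pro:llcfunc}, now without the $\delta$-coordinates since no cover is involved, but the argument above avoids that repetition.
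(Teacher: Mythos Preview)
Your proof is correct and is precisely the paper's approach: the paper's proof is the single line ``Apply Proposition \ref{pro:llcfunc} to $(\chi_{\alpha'})$ and $(\chi_{\alpha'}\zeta_{\alpha'})$ for an arbitrary choice of $\chi$-data $(\chi_{\alpha'})$,'' and you have simply unpacked that sentence, making explicit the use of Fact \ref{fct:charmult}, the multiplicativity of $\tx{inf}$, and the cancellation of $\chi_S$.
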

\begin{proof}
Apply Proposition \ref{pro:llcfunc} to $(\chi_{\alpha'})$ and $(\chi_{\alpha'}\zeta_{\alpha'})$ for an arbitrary choice of $\chi$-data $(\chi_{\alpha'})$.
\end{proof}

We now return to the case of two tori $S$ and $T$ and two $\Sigma$-equivariant maps $R_S \to X^*(S)$, $R_T \to X^*(T)$, as well as a surjective $\Sigma$-equivariant map $f : R_T \to R_S$ satisfying \eqref{eq:func_st}.

\begin{cor} \label{cor:llcfunc} If $\varphi : W_F \to {^LT_\pm}$ is a Langlands parameter and $\chi : T_\pm(F) \to \C^\times$ is the associated genuine character, composing $\varphi$ with the natural $L$\-homomorphism ${^LT_\pm} \to {^LS_\pm}$ is a parameter whose corresponding character is the pull-back of $\chi$ along the natural lift $S_\pm(F) \to T_\pm(F)$ of $f$.
\end{cor}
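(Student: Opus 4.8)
The plan is to reduce to the single-torus situation and then combine Proposition \ref{pro:llcfunc} with the (essentially formal) functoriality of the Langlands correspondence for tori. First I would invoke Construction \ref{cns:func_t} exactly as in the opening of \S\ref{sub:func}: it factors the lift $S(F)_\pm \to T(F)_\pm$ through the case $S=T$ and, dually, identifies the $L$-homomorphism ${^LT_\pm} \to {^LS_\pm}$ as the composite of the identification furnished by Construction \ref{cns:func_t} with the isomorphism ${^LS_{R'}} \to {^LS_R}$ of Construction \ref{cns:homlcover}. So it suffices to treat a single torus $S$ together with a surjective $\Sigma$-equivariant map $f : R \to R'$ satisfying \eqref{eq:func}, the cover homomorphism $S(F)_R \to S(F)_{R'}$ of Construction \ref{cns:homcover}, and the $L$-group isomorphism of Construction \ref{cns:homlcover}. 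I would also note here that under the reduction of Construction \ref{cns:func_t} the operation $\tx{inf}$ on $\chi$-data corresponds to the norm operation of Definition \ref{dfn:inf}, via Corollary \ref{cor:chi-norm}.

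Next, fix a set of $\chi$-data $\chi'$ for $R'$, with associated genuine character $\chi'_S$ of $S(F)_{R'}$ (Definition \ref{dfn:chis}) and parameter $(r_{q',\chi'})$ (Definition \ref{dfn:rp}). By Theorem \ref{thm:lldc}(1) every genuine character of $S(F)_{R'}$ is uniquely of the form $\theta \cdot \chi'_S$ with $\theta : S(F) \to \C^\times$ an honest character, and by Theorem \ref{thm:lldc}(2) its parameter is $t \cdot r_{q',\chi'}$, where $t : W_F \to \hat S$ is the parameter of $\theta$. Hence it is enough to verify the assertion separately on the two factors. For the factor $\chi'_S$ this is precisely Proposition \ref{pro:llcfunc}: part (1) says the pull-back of $\chi'_S$ along $S(F)_R \to S(F)_{R'}$ equals $\chi_S$ for $\chi = \tx{inf}\,\chi'$, and part (2) says $r_{q,\chi}$ and $r_{q',\chi'}$ agree up to $1$-coboundary, i.e. they define the same equivalence class of parameter once transported through ${^LS_{R'}} \to {^LS_R}$ (whose effect on cochains is trivial, since Construction \ref{cns:homlcover} showed $t_{p'}=t_p$ and $s_{p'/q'}=s_{p/q}$). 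For the factor $\theta$ the claim is the functoriality of the local Langlands correspondence for tori in its simplest form: the cover homomorphism is a lift of $\mathrm{id}_{S(F)}$ and the map ${^LS_{R'}} \to {^LS_R}$ restricts to the identity on $\hat S$ (Construction \ref{cns:homlcover}), so pulling back $\theta$ leaves it and its parameter $t$ unchanged, consistently with the image of $t$ under the $L$-homomorphism.

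Finally I would assemble the two pieces. Combining Fact \ref{fct:charmult} with the multiplicativity in Theorem \ref{thm:lldc}(2), the pull-back of $\theta \cdot \chi'_S$ along $S(F)_R \to S(F)_{R'}$ is $\theta \cdot \chi_S$ with $\chi = \tx{inf}\,\chi'$, whose parameter is $t \cdot r_{q,\chi}$; and $t \cdot r_{q,\chi}$ is exactly the image of the parameter $t \cdot r_{q',\chi'}$ of $\theta \cdot \chi'_S$ under the $L$-homomorphism ${^LS_{R'}} \to {^LS_R}$. Undoing the reduction of Construction \ref{cns:func_t} yields the statement for $S$, $T$, and $f : R_T \to R_S$. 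The genuine computational content has already been absorbed into Proposition \ref{pro:llcfunc}; the only real work left is bookkeeping — checking that the reduction of Construction \ref{cns:func_t} is compatible with the formation of Langlands parameters and with $\tx{inf}$ of $\chi$-data, and that the $L$-homomorphism of the statement is indeed the advertised composite. This bookkeeping, rather than any new estimate or cohomological computation, is the main (and fairly mild) obstacle.
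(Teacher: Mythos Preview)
Your proposal is correct and follows essentially the same approach as the paper, which simply states that the result is immediate from Proposition \ref{pro:llcfunc} and Construction \ref{cns:func_t}. You have faithfully unpacked what ``immediate'' means here: reduce to a single torus via Construction \ref{cns:func_t}, write an arbitrary genuine character as $\theta\cdot\chi'_S$, handle the $\chi'_S$-factor by Proposition \ref{pro:llcfunc} and the $\theta$-factor trivially, and reassemble using the multiplicativity in Theorem \ref{thm:lldc}.
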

\begin{proof}
This is immediate from Proposition \ref{pro:llcfunc} and Construction \ref{cns:func_t}.
\end{proof}

\subsection{A comparison of minimally ramified $\chi$-data for $R$ and $R'$}

Consider a non-archimedean local field $F$ whose residual characteristic $p$ is not equal to $2$, a torus $S$ defined over $F$, two finite admissible $\Sigma$-sets $R$ and $R'$, a $\Sigma$-equivariant $R \to X^*(S)$, and a surjective $\Sigma$-equivariant map $f : R \to R'$. We do not assume the existence of a $\Sigma$-equivariant map $R' \to X^*(S)$ or the validity of \eqref{eq:func}.

Consider minimally ramified $\chi$-data for $R$ in the sense of \cite[Definition 4.6.1]{KalRSP}, which we recall means that, for $\alpha \in R$,
\begin{enumerate}
	\item $\chi_\alpha=1$ if $\alpha$ is asymmetric;
	\item $\chi_\alpha$ is unramified if $\alpha$ is symmetric unramified;
	\item $\chi_\alpha$ is tamely ramified if $\alpha$ is symmetric ramified.
\end{enumerate}

Given tame $\chi$-data that is not necessarily minimally ramified we can associate canonically minimally ramified $\chi$-data in the following straightforward way:

\begin{dfn} \label{dfn:min}
Define $\tx{min}\,\chi$ by
\begin{enumerate}
	\item $(\tx{min}\,\chi)_\alpha=1$ if $\alpha$ is asymmetric;
	\item $(\tx{min}\,\chi)_\alpha$ is the unique unramified extension of $\kappa_\alpha$ if $\alpha$ is symmetric unramified;
	\item $(\tx{min}\,\chi)_\alpha=\chi_\alpha$ if $\alpha$ is symmetric ramified.
\end{enumerate}	
\end{dfn}

If $\chi'$ is minimally ramified $\chi$-data for $R'$, then the $\chi$-data $\tx{inf}\,\chi'$ of Definition \ref{dfn:inf} is tame, but need not be minimal. Indeed, this happens when $\alpha$ is asymmetric but $\alpha'$ is symmetric, or when $\alpha$ is symmetric unramified but $\alpha'$ is symmetric ramified. Both of these situations do occur, as we shall see in the examples provided in \S\ref{sub:exa}.

Consider the torus $T$ whose character module is the $\Z$-span of $R$ in $X^*(S)$. We have the natural map $S \to T$ dual to the inclusion $X^*(T) \to X^*(S)$. Let $S(F)_c$ be the preimage of the bounded subgroup $T(F)_b$ of $T(F)$. If $R$ is elliptic, in the sense that the $\Z$-span of $R$ in $X^*(S)$ does not have non-zero $\Gamma$-fixed elements, then $T$ is anisotropic and hence $S(F)_c=S(F)$.

Recall the character $\zeta_S : S(F) \to \C^\times$ associated to a set of $\zeta$-data in \S\ref{sub:chichar}.

\begin{pro} \label{pro:mindiff}
Let $\chi'$ be minimally ramified and define $\zeta_\alpha = (\tx{min}\,\tx{inf}\,\chi')_\alpha \cdot (\tx{inf}\,\chi')_\alpha^{-1}$. The restriction of $\zeta_S : S(F) \to \C^\times$ to $S(F)_c$ is the following sign character
\[ \gamma \mapsto \prod_{\substack{\alpha \in R_\tx{asym}/\Sigma\\ \alpha' \in R'_\tx{sym,ram}}} \tx{sgn}_{k_\alpha^\times}(\bar\alpha(\gamma))^{e(\alpha/\alpha')} \cdot \prod_{\substack{\alpha \in R_\tx{sym,unram}/\Gamma\\ \alpha' \in R'_\tx{sym,ram}}} \tx{sgn}_{k_\alpha^1}(\bar\alpha(\gamma))^{e(\alpha/\alpha')}. \]
In particular, this character is independent of $\chi'$.
\end{pro}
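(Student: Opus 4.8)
The plan is to compute $\zeta_S$ directly from its definition as a product over $\Sigma$-orbits $O' \subset R'$ of the characters $\zeta_O$ pulled back along $S(F) \to J_O(F)$, combined with the observation that $\zeta_\alpha = (\mathrm{min}\,\mathrm{inf}\,\chi')_\alpha \cdot (\mathrm{inf}\,\chi')_\alpha^{-1}$ is a character of $F_\alpha^\times$ that is nontrivial only when $\alpha$ is asymmetric but $\alpha' = f(\alpha)$ is symmetric ramified, or when $\alpha$ is symmetric unramified but $\alpha'$ is symmetric ramified; and in both of those cases $\zeta_\alpha$ is the quadratic unramified character of $F_\alpha^\times$ (the unique unramified extension of $\kappa_\alpha$ divided by a ramified character restricting to $\kappa_\alpha$, which is unramified quadratic, or $1$ divided by a ramified extension of the trivial-on-$F_{\pm\alpha}^\times$-condition -- here one must check $\mathrm{inf}\,\chi'$ restricted to these asymmetric $\alpha$ is unramified quadratic). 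So first I would pin down exactly which $\zeta_\alpha$ are nontrivial and show each such $\zeta_\alpha$ equals the unramified character of order $2$ of $F_\alpha^\times$, i.e. $\mathrm{sgn}_{k_\alpha^\times}$ composed with the reduction $\mathcal{O}_{F_\alpha}^\times \to k_\alpha^\times$ on units and sending a uniformizer to $-1$. This is the Definition~\ref{dfn:min}/\ref{dfn:inf} bookkeeping together with the fact that the inflation $\chi_{\alpha'} \circ N_{F_\alpha/F_{\alpha'}}$ of a tamely ramified character along a ramified extension has a controlled ramification, and that $e(\alpha/\alpha') = e(F_\alpha/F_{\alpha'})$ enters through the norm.

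Next I would feed these $\zeta_\alpha$ into the construction of $\zeta_O$ from \S\ref{sub:chichar}. For an asymmetric orbit $O' = \Sigma\alpha'$ with $\alpha'$ symmetric ramified -- wait, asymmetric $\alpha$ lies over symmetric $\alpha'$, so one must instead fix $\Sigma$-orbits in $R$, not $R'$. So: for each $\Sigma$-orbit $O$ in $R$ on which $\zeta$ is nontrivial, $\zeta_O$ is $\zeta_\alpha$ (asymmetric case) or $\zeta_\alpha$ precomposed with $F_\alpha^\times/F_{\pm\alpha}^\times \cong F_\alpha^1$ (symmetric unramified case). Pulling $\prod_O \zeta_O$ back to $S(F)$ along $S(F) \to \prod_O J_O(F)$, whose component at $O$ is $\gamma \mapsto$ (an element of $F_\alpha^\times$ in the asymmetric case, of $F_\alpha^1$ in the symmetric case) built from $\bar\alpha(\gamma)$, gives $\zeta_S(\gamma) = \prod \zeta_\alpha(\text{stuff})$. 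Restricting to $S(F)_c$, where all relevant coordinates are bounded, the unramified quadratic character $\zeta_\alpha$ on $F_\alpha^\times$ becomes $\mathrm{sgn}_{k_\alpha^\times}$ applied to the reduction, and on $F_\alpha^1$ it becomes $\mathrm{sgn}_{k_\alpha^1}$; raising to the power $e(\alpha/\alpha')$ accounts for the norm $N_{F_\alpha/F_{\alpha'}}$ appearing in $\mathrm{inf}$. Assembling over the two families of orbits gives the displayed formula.

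The main obstacle I anticipate is the first step: verifying precisely that $(\mathrm{inf}\,\chi')_\alpha$, for $\alpha$ asymmetric over a symmetric ramified $\alpha'$, is unramified quadratic (not merely tamely ramified of some order), and likewise that for $\alpha$ symmetric unramified over $\alpha'$ symmetric ramified, $(\mathrm{min}\,\mathrm{inf}\,\chi')_\alpha \cdot (\mathrm{inf}\,\chi')_\alpha^{-1}$ is unramified quadratic. The point is that $\chi_{\alpha'}$ is a \emph{tamely} ramified character of $F_{\alpha'}^\times$ restricting to $\kappa_{\alpha'}$ on $F_{\pm\alpha'}^\times$, the extension $F_\alpha/F_{\alpha'}$ is totally ramified of degree $e(\alpha/\alpha')$ (since $\alpha$ unramified or asymmetric over $\alpha'$ ramified forces $F_\alpha/F_{\alpha'}$ to absorb exactly the ramification; I would deduce this from Corollary~\ref{cor:symres}(2), comparing the quadratic ramified extension $F_{\alpha'}/F_{\pm\alpha'}$ with the unramified-or-split situation $F_\alpha/F_{\pm\alpha}$), and $\chi_{\alpha'}\circ N_{F_\alpha/F_{\alpha'}}$ has conductor controlled by that of $\chi_{\alpha'}$ composed with the norm. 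Tame ramification plus a totally ramified extension of degree prime to $p$: the composite with the norm is again tame, and its restriction to units factors through $k_{\alpha}^\times \xrightarrow{N} k_{\alpha'}^\times$ -- but $k_\alpha = k_{\alpha'}$ since the extension is totally ramified, so this norm on residue fields is the identity, hence $(\mathrm{inf}\,\chi')_\alpha$ is unramified on units, i.e. at worst unramified quadratic, and it is nontrivial precisely because $\kappa_{\alpha'}$ is (being the ramified quadratic character). This residue-field computation, together with matching the exponent $e(\alpha/\alpha')$ to the ramification index, is the crux; once it is in hand the rest is the routine unwinding described above, and I would relegate the explicit signs to the reader with a one-line reference to the definitions of $\mathrm{sgn}_{k_\alpha^\times}$ and $\mathrm{sgn}_{k_\alpha^1}$.
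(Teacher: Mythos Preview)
Your overall strategy matches the paper's: enumerate the $\Sigma$-orbits in $R$ according to the types of $\alpha$ and $\alpha'=f(\alpha)$, identify for each case the character $\zeta_\alpha$, and plug into the construction of $\zeta_S$ from \S\ref{sub:chichar}. But the crucial computation of $\zeta_\alpha$ is wrong, and the error propagates.

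You claim repeatedly that $\zeta_\alpha$ is the \emph{unramified} quadratic character of $F_\alpha^\times$, and then describe it as ``$\mathrm{sgn}_{k_\alpha^\times}$ composed with the reduction $\mathcal{O}_{F_\alpha}^\times \to k_\alpha^\times$ on units and sending a uniformizer to $-1$.'' These two descriptions are contradictory: the unramified quadratic character is \emph{trivial} on units, while the character you describe is visibly ramified. In fact neither description is what actually occurs. In the case $(\alpha\ \text{asymmetric},\ \alpha'\ \text{symmetric ramified})$, the character $(\mathrm{inf}\,\chi')_\alpha=\chi_{\alpha'}\circ N_{F_\alpha/F_{\alpha'}}$ restricted to $O_{F_\alpha}^\times$ is computed as follows: since $F_{\alpha'}/F_{\pm\alpha'}$ is ramified quadratic and $p\neq 2$, the residue fields $k_{\alpha'}$ and $k_{\pm\alpha'}$ coincide, and the tamely ramified $\chi_{\alpha'}$ restricted to $O_{F_{\alpha'}}^\times$ factors through $\kappa_{\alpha'}|_{O_{F_{\pm\alpha'}}^\times}$, which is the sign character of $k_{\alpha'}^\times$. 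The norm $N_{F_\alpha/F_{\alpha'}}$ on units induces $x\mapsto N_{k_\alpha/k_{\alpha'}}(x)^{e(\alpha/\alpha')}$ on residue fields (with no assumption that $F_\alpha/F_{\alpha'}$ is totally ramified; your appeal to Corollary~\ref{cor:symres} does not apply to asymmetric $\alpha$, and the claim is in any case unnecessary). Composing gives $\mathrm{sgn}_{k_\alpha^\times}^{e(\alpha/\alpha')}$ on units, which is how the exponent $e(\alpha/\alpha')$ genuinely enters the formula. Your sentence ``norm on residue fields is the identity, hence $(\mathrm{inf}\,\chi')_\alpha$ is unramified on units'' is exactly backwards: if the residue norm were the identity you would get $\mathrm{sgn}^{e}$, not $1$.

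You also omit the case $(\alpha\ \text{asymmetric},\ \alpha'\ \text{symmetric unramified})$, where $\zeta_\alpha$ \emph{is} genuinely unramified and therefore contributes trivially on $S(F)_c$; this must be noted to justify its absence from the displayed product. The paper's proof runs through all three cases $(a,u)$, $(a,r)$, $(u,r)$ and handles the symmetric unramified case by choosing $\delta_\alpha\in O_{F_\alpha}^\times$ and observing that $\chi^m_\alpha$ is trivial on it while $\chi_\alpha$ contributes $\mathrm{sgn}_{k_\alpha^\times}(\delta_\alpha)^{e(\alpha/\alpha')}$, which then descends to $\mathrm{sgn}_{k_\alpha^1}(\bar\alpha(\gamma))^{e(\alpha/\alpha')}$ via the map $k_\alpha^\times\to k_\alpha^1$.
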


Let us explain the notation. The first product runs over asymmetric $\Sigma$-orbits in $R$ whose images in $R'$ are symmetric ramified, while the second product runs over symmetric unramified orbits in $R$ whose images in $R'$ are symmetric ramified. Since $\gamma$ maps to the bounded subgroup of $T(F)_b$, $\bar\alpha(\gamma)$ belongs to $O_{F_\alpha}^\times$ and we can project to $k_\alpha^\times$. When $\alpha$ is symmetric unramified, the image of $\bar\alpha(\gamma)$ in $k_\alpha^\times$ belongs to the subgroup $k_\alpha^1$ of elements whose norm to $k_{\pm\alpha}^\times$ is $1$. Both groups $k_\alpha^\times$ and $k_\alpha^1$ are cyclic of even order, hence have a canonical sign character, which we denote by $\tx{sgn}$. Finally, $e(\alpha/\alpha')$ is the ramification degree of the field extension $F_\alpha/F_{\alpha'}$.

\begin{proof}
Write $\chi=\tx{inf}\,\chi'$ and $\chi^m=\tx{min}\,\tx{inf}\,\chi'$. Given $\alpha \in R$, we have the following cases in which $\zeta_\alpha$ is automatically trivial, because $\chi_\alpha=\chi^m_\alpha$:
\begin{enumerate}
	\item $\alpha$ is ramified symmetric (then so is also $\alpha'$).
	\item $\alpha$ and $\alpha'$ are both unramified symmetric. 
	\item $\alpha$ and $\alpha'$ are both asymmetric. 
\end{enumerate}

This leaves the following cases, in wich $\zeta_\alpha$ is non-trivial:
\begin{enumerate}
	\item $\alpha$ is unramified symmetric, but $\alpha'$ is ramified symmetric.
	\item $\alpha$ is asymmetric, but $\alpha'$ is unramified symmetric.
	\item $\alpha$ is asymmetric, but $\alpha'$ is ramified symmetric.
\end{enumerate}

Writing a,u,r, for asymmetric, unramified symmetric, ramified symetric, the three cases we need to consider can be abbreviated as (u,r), (a,u), (a,r).

In the settings (a,u) and (a,r) we have $\zeta_\alpha=\chi_\alpha=\chi_{\alpha'}\circ N_{F_\alpha/F_{\alpha'}}$ and the contribution of the $\Sigma$-orbit of $\alpha$ is $\zeta_\alpha \circ \alpha$. In the setting (a,u) the character $\zeta_\alpha$ is unramified and $\bar\alpha(\gamma) \in O_{\alpha}^\times$ implies that $\zeta_\alpha \circ \alpha$ is trivial. In the setting (a,r) the restriction of $\chi_{\alpha'}$ to $O_{\alpha'}^\times$ factors to the sign character of $k_{\alpha'}^\times$, and the norm map $N_{F_\alpha/F_{\alpha'}}$ factors as $[N_{k_\alpha/k_{\alpha'}}]^{e(\alpha/\alpha')}$. Composing the sign character of $k_{\alpha'}^\times$  with $N_{k_\alpha/k_{\alpha'}}$ gives the sign character of $k_\alpha^\times$. So the contribution to $\zeta : S(F) \to \C^\times$ of $\alpha$ of type (a,r) is $\tx{sgn}_{k_\alpha^\times}(\bar\alpha(\gamma))^{e(\alpha/\alpha')}$.

In the setting (u,r) we have $\zeta_\alpha = \chi_\alpha \cdot (\chi^m_\alpha)^{-1}$ and the contribution of $\alpha$ is $\chi_\alpha(\delta_\alpha) \cdot \chi^m_\alpha(\delta_\alpha)^{-1}$, where $\delta_\alpha \in F_\alpha^\times$ is chosen so that $\delta_\alpha/\tau_\alpha(\delta_\alpha)=\bar\alpha(\gamma)$. Since $F_\alpha/F_{\pm\alpha}$ is unramified, $\delta_\alpha$ can be chosen in $O_\alpha^\times$. Now $\chi^m_\alpha$ is the unramified quadratic character of $F_\alpha^\times$, so its restriction to $O_\alpha^\times$ is trivial, while $\chi_\alpha$ is as the same as in type (a,r), namely its restriction to $O_\alpha^\times$ factors through the quadratic character of $k_\alpha^\times$ taken to the power $e(\alpha/\alpha')$.  Thus the contribution of $\alpha$ to $\zeta_S$ becomes $\tx{sgn}_{k_\alpha^\times}(\delta_\alpha)^{e(\alpha/\alpha')}$. The quadratic character of $k_\alpha^\times$ kills $k_{\pm\alpha}^\times$ and thus factors through the map $k_\alpha^\times \to k_\alpha^1$ and gives on $k_\alpha^1$ the quadratic character there. Since the image of $\delta_\alpha$ under this map is $\bar\alpha(\gamma)$, the contribution to $\zeta_S$ of $\alpha$ is $\tx{sgn}_{k_\alpha^1}(\bar\alpha(\gamma))^{e(\alpha/\alpha')}$.
\end{proof}

We now consider the setting of Corollary \ref{cor:llcfunc1}. In particular, we now assume given a $\Sigma$-equivariant map $R' \to X^*(S)$ and the validity of \eqref{eq:func}.

\begin{cor} \label{cor:llcfunc1b}
Let $\chi'_1$ and $\chi'_2$ be two minimal $\chi$-data for $R'$ and let $\zeta'=\chi'_2 \cdot (\chi'_1)^{-1}$. Let $\zeta = \tx{min}\ \tx{inf}\chi'_2 \cdot (\tx{min}\ \tx{inf}\chi'_1)^{-1}$. Then $\zeta_S$ is the pull-back of $\zeta'_S$.
\end{cor}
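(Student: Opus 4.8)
The plan is to rewrite both sides multiplicatively and then reduce to an identity that Proposition~\ref{pro:mindiff} almost supplies for free.

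First I would record the elementary fact that, for a set of (tame) $\chi$-data $\psi$ on $R$ and a set of $\zeta$-data $\xi$ on $R$, one has $\tx{min}(\psi\xi)=\tx{min}(\psi)\cdot\xi^{\mathrm{rs}}$, where $\xi^{\mathrm{rs}}$ is the $\zeta$-data that agrees with $\xi$ on the symmetric ramified elements of $R$ and is trivial elsewhere; this is immediate from Definition~\ref{dfn:min}. Since $\tx{inf}$ is visibly multiplicative and $\zeta'=\chi'_2(\chi'_1)^{-1}$ is a set of $\zeta$-data for $R'$ (conditions~1,2 are multiplicative and condition~3 turns into $\zeta'_{\alpha'}|_{F_{\pm\alpha'}^\times}=1$), applying this with $\psi=\tx{inf}\,\chi'_1$ and $\xi=\tx{inf}\,\zeta'$ gives $\tx{min}\,\tx{inf}\,\chi'_2=\tx{min}\,\tx{inf}\,\chi'_1\cdot(\tx{inf}\,\zeta')^{\mathrm{rs}}$, hence $\zeta=(\tx{inf}\,\zeta')^{\mathrm{rs}}$. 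By Corollary~\ref{cor:llcfunc1} the pull-back of $\zeta'_S$ equals $(\tx{inf}\,\zeta')_S$, so, writing $\xi_0$ for the $\zeta$-data $(\tx{inf}\,\zeta')\cdot\big((\tx{inf}\,\zeta')^{\mathrm{rs}}\big)^{-1}$ — which is supported on the non-(symmetric ramified) elements of $R$ — Fact~\ref{fct:charmult} reduces the statement to the assertion $(\xi_0)_S=1$.

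Second I would pin down $\xi_0$ concretely. The key input is that a tamely ramified character of $F_{\alpha'}^\times$ whose restriction to $F_{\pm\alpha'}^\times$ is trivial is necessarily unramified and of order dividing $2$: indeed $\mathcal O_{F_{\pm\alpha'}}^\times$ already surjects onto the residue field of $F_{\alpha'}$ (the extension $F_{\alpha'}/F_{\pm\alpha'}$ being ramified), forcing triviality on units, and a uniformizer of $F_{\pm\alpha'}$ is a unit times the square of a uniformizer of $F_{\alpha'}$, forcing the value at the latter to be $\pm1$. Since minimally ramified $\chi$-data is forced on asymmetric and symmetric unramified elements of $R'$, each $\zeta'_{\alpha'}=\chi'_{2,\alpha'}/\chi'_{1,\alpha'}$ lies in $\{1,\text{unramified quadratic}\}$; hence every entry $\xi_{0,\alpha}=\zeta'_{f(\alpha)}\circ N_{F_\alpha/F_{f(\alpha)}}$ is an unramified character of $F_\alpha^\times$. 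In particular, decomposing $\zeta'$ into pieces supported on single ramified symmetric $\Gamma$-orbits of $R'$ and using Fact~\ref{fct:charmult} once more, one may assume $\xi_0$ is supported on the preimages of one such orbit.

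Third — and this is where I expect the real work to be — one must deduce $(\xi_0)_S=1$ on all of $S(F)$. The contributions of the symmetric unramified orbits in the support of $\xi_0$ vanish automatically: the character of $J_O(F)=F_\alpha^1$ attached to an unramified character of $F_\alpha^\times$ only sees the valuation of $F_\alpha$ modulo $e(F_\alpha/F_{\pm\alpha})=1$, hence is trivial. The contributions of the asymmetric orbits are sign characters of the shape $\gamma\mapsto \zeta'_{f(\alpha)}\!\big(N_{F_\alpha/F_{f(\alpha)}}(\bar\alpha(\gamma))\big)$, and here one has to feed in \eqref{eq:func}: expressing $\bar\alpha'(\gamma)$ as the product of the $\bar\alpha(\gamma)$ over the whole fibre of $f$ above $\alpha'$ and tracking the resulting class in $F_{\alpha'}^1$, the asymmetric preimages' contributions must be matched against those of the symmetric ramified preimages — which appear identically in $\zeta_S=\big((\tx{inf}\,\zeta')^{\mathrm{rs}}\big)_S$ and in $(\tx{inf}\,\zeta')_S$ — so that the net contribution of $\alpha'$ to $(\xi_0)_S$ cancels. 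This final parity bookkeeping, comparing the residue degrees of the various extensions $F_\alpha/F_{\alpha'}$ with the valuation class of $\bar\alpha'(\gamma)$ in $F_{\alpha'}^1$ and carrying it out uniformly in $\gamma$, is the step I expect to be the main obstacle; Proposition~\ref{pro:mindiff} already gives the equality on $S(F)_c$, so the genuine content lies in the behaviour on $S(F)/S(F)_c$.
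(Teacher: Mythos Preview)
Your first two paragraphs are correct and amount to exactly the paper's reduction, just packaged differently. The paper writes
\[
\zeta \;=\; \bigl[\tx{min}\,\tx{inf}\,\chi'_2 \cdot (\tx{inf}\,\chi'_2)^{-1}\bigr]\;\cdot\;\tx{inf}\,\zeta'\;\cdot\;\bigl[\tx{min}\,\tx{inf}\,\chi'_1 \cdot (\tx{inf}\,\chi'_1)^{-1}\bigr]^{-1}
\]
and then observes that the two bracketed correction factors give the same character of $S(F)_c$ by the final clause of Proposition~\ref{pro:mindiff} (``In particular, this character is independent of $\chi'$''), so they cancel and Corollary~\ref{cor:llcfunc1} handles the middle term. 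A one-line computation from your identity $\tx{min}\,\tx{inf}\,\chi'_2=\tx{min}\,\tx{inf}\,\chi'_1\cdot(\tx{inf}\,\zeta')^{\mathrm{rs}}$ shows that your $\xi_0$ is precisely the ratio of those two bracketed factors, so your condition $(\xi_0)_S=1$ is literally the same cancellation.

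Where you diverge is paragraph~3: you attempt to verify $(\xi_0)_S=1$ by a direct orbit-by-orbit computation, and only at the end remark that Proposition~\ref{pro:mindiff} already gives it on $S(F)_c$. That last remark \emph{is} the paper's entire argument --- there is no further work in the paper beyond citing that independence clause. Your direct attempt is therefore unnecessary for matching the paper, and the part you yourself flag as ``the main obstacle'' (the asymmetric orbits, whose contribution is governed by the unbounded valuations $v_{F_\alpha}(\bar\alpha(\gamma))$ off $S(F)_c$) is left as a sketch without a mechanism for the claimed cancellation. You are right that Proposition~\ref{pro:mindiff} as stated only controls the restriction to $S(F)_c$; the paper's own proof rests on the same citation and does not address the extension to all of $S(F)$ either, so on that point you are being more scrupulous than the source rather than less.
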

Notice that the difference between Corollaries \ref{cor:llcfunc1} and \ref{cor:llcfunc1b} is that in the latter case we have passed to minimal $\chi$-data after inflation.
\begin{proof}
We have $\zeta=[\tx{min}\ \tx{inf}\chi'_2 \cdot (\tx{inf}\chi'_2)^{-1}] \cdot \tx{inf}\zeta' \cdot [(\tx{inf}\chi'_1) \cdot (\tx{min}\ \tx{inf}\chi'_1)^{-1}]^{-1}$
and hence $\zeta_S = [\tx{min}\ \tx{inf}\chi'_2 \cdot (\tx{inf}\chi'_2)^{-1}]_S \cdot [\tx{inf}\zeta']_S \cdot [(\tx{inf}\chi'_1) \cdot (\tx{min}\ \tx{inf}\chi'_1)^{-1}]_S^{-1}$. By Proposition \ref{pro:mindiff} the first and third terms in square brackets are the same and thus cancel out, while Corollary \ref{cor:llcfunc1} tells us that the middle term is the pull-back of $\zeta'_S$.
\end{proof}

\section{Twisted Levi subgroups} \label{sec:tl}

Let $G$ be a connected reductive group defined over $F$. A twisted Levi subgroup of $G$ is a connected reductive subgroup $M \subset G$ defined over $F$ that is the Levi factor of a parabolic subgroup defined over a field extension $E/F$. When $E=F$ then $M$ is called a (actual) Levi subgroup. In the latter case it is well known that there is a canonical $\hat G$-conjugacy class of $L$-embeddings $^LM \to {^LG}$. For a twisted Levi subgroup this is no longer the case. Indeed, any maximal torus is a twisted Levi subgroup. We will show presently that the Langlands-Shelstad construction lends itself easily to the construction of $L$-embeddings $^LM \to {^LG}$ when $M$ is a twisted Levi subgroup. We will then apply the results of \S\ref{sub:func} to discuss compositions of the form $^LT \to {^LM} \to {^LG}$ for a maximal torus $T \subset M \subset G$. We will also reinterpret these matters in terms of double covers, and in particular introduce a double cover $M(F)_\pm$ of $M(F)$ and an associate $L$-group.

\subsection{Embedding the $L$-group of a twisted Levi subgroup} \label{sub:tlem}

Let $G$ be a connected reductive group defined over $F$ and $M \subset G$ a twisted Levi subgroup. Let $M_\tx{ab}=M/M_\tx{der}$ be the maximal abelian quotient of $M$. It is a torus defined over $F$. We begin by defining a finite admissible $\Sigma$-set $R(M_\tx{ab},G)$ with a $\Sigma$-equivariant map $R(M_\tx{ab},G) \to X^*(M_\tx{ab})$.

\begin{lem} \label{lem:rmab}
Let $S \subset M$ be a maximal torus. Let $R_S$ be the set of $\Omega(S,M)$-orbits in $R(S,G) \sm R(S,M)$.
\begin{enumerate}
	\item The set $R_S$ is a finite admissible $\Sigma$-set.
	\item The map $R_S \to X^*(S)$ that sends an $\Omega(S,M)$-orbit in $R(S,G) \sm R(S,M)$ to the sum of its elements takes image in $X^*(M_\tx{ab})$ and is $\Sigma$-equivariant.
	\item If $S_1,S_2 \subset M$ are two maximal tori, then any element $m \in M$ such that $S_2=mS_1m^{-1}$ induces the same bijection $R_{S_2} \to R_{S_1}$. In particular, this bijection is $\Sigma$-equivariant. Moreover, it respects the maps $R_{S_1} \to X^*(M_\tx{ab})$ and $R_{S_2} \to X^*(M_\tx{ab})$.
	\item Let $\Delta(S,G)$ be a set of simple roots for $G$ that contains a set of simple roots for $M$. The gauge $p : R(S,G) \sm R(S,M) \to \{\pm 1\}$ that sends a positive root to $+1$ is constant on $\Omega(S,M)$-orbits and hence induces a gauge on $R_S$.
\end{enumerate} 
\end{lem}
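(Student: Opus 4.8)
The plan is to prove the four assertions of Lemma~\ref{lem:rmab} in order, using standard facts about root systems, Weyl groups, and the $\Gamma$-action.

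\textbf{Part (1).} First I would observe that $R(S,G)\sm R(S,M)$ is a finite set stable under both $\Omega(S,M)$ and $\Gamma$. These two actions commute: $\Omega(S,M)$ is defined over $F$ because $M$ is, so $\Gamma$ normalizes $\Omega(S,M)$ inside $\Omega(S,G)$, hence permutes its orbits. Similarly $-1\in\Sigma$ acts by negation on $X^*(S)$, which commutes with the $\Omega(S,M)$-action (Weyl group elements are linear), so the $\Sigma$-action descends to $R_S$. Finiteness is clear. For admissibility I must show no $\Omega(S,M)$-orbit $O\subset R(S,G)\sm R(S,M)$ satisfies $-O=O$ with the negation realized by $-1\in\Sigma$ alone (i.e. with trivial Galois part); more precisely, admissibility says $-1\in\Sigma$ fixes no element of $R_S$, i.e. $-O\neq O$ as $\Omega(S,M)$-orbits for every such $O$. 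The key point: if $-O=O$ then some $\omega\in\Omega(S,M)$ sends some root $\alpha$ to $-\alpha$; but then $\alpha$ lies in the root subsystem spanned by $R(S,M)$ — indeed $s_\alpha\omega$ or a suitable power shows $\alpha^\vee$ is in the coroot lattice of $M$, forcing $\alpha\in R(S,M)$ since $M$ is a Levi subgroup of a parabolic and $R(S,M)$ is closed and "saturated" in $R(S,G)$ — contradiction. I should double-check this closedness argument; this is the one spot requiring care.

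\textbf{Parts (2) and (3).} For (2): the sum of an $\Omega(S,M)$-orbit of characters is $\Omega(S,M)$-invariant, hence lies in $X^*(S)^{\Omega(S,M)}$. Now $X^*(S)^{\Omega(S,M)}\otimes\Q = X^*(M_\tx{ab})\otimes\Q$, and I need the integral statement: the sum of an orbit of roots is not merely rationally but integrally in $X^*(M_\tx{ab})$, i.e. pairs to $0$ with every coroot $\beta^\vee$ for $\beta\in R(S,M)$. This follows because for each simple coroot $\beta^\vee$ of $M$, the reflection $s_\beta\in\Omega(S,M)$ permutes the orbit $O$, and $\sum_{\alpha\in O}\langle\alpha,\beta^\vee\rangle = \sum_{\alpha\in O}\langle s_\beta\alpha,\beta^\vee\rangle = -\sum_{\alpha\in O}\langle\alpha,\beta^\vee\rangle$, whence the sum is $0$; since $X^*(M_\tx{ab})$ is exactly the subgroup of $X^*(S)$ killed by all coroots of $M$, we are done. $\Sigma$-equivariance of the map is the commutativity noted in Part (1). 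For (3): any $m\in M$ with $S_2=mS_1m^{-1}$ induces an isomorphism $R(S_2,G)\to R(S_1,G)$ carrying $R(S_2,M)$ to $R(S_1,M)$ and $\Omega(S_2,M)$-orbits to $\Omega(S_1,M)$-orbits; two such $m,m'$ differ by an element of $N_M(S_1)$, which acts on $R(S_1,G)/\Omega(S_1,M)$ trivially since $N_M(S_1)/S_1=\Omega(S_1,M)$. $\Sigma$-equivariance and compatibility with the maps to $X^*(M_\tx{ab})$ (noting the two copies of $X^*(M_\tx{ab})$ are canonically identified, being quotients of the two copies of $X^*(S)$ compatibly) then follow, because $m$ can be taken $F$-rational up to an element acting trivially, or more cleanly because the induced bijection is $\Gamma$-equivariant as it is independent of the choice of $m$ and $\Gamma$ acts on the set of such $m$.

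\textbf{Part (4).} Here I would use the fact that a choice of simple roots $\Delta(S,G)\supset\Delta(S,M)$ determines a parabolic $P\supset M$ with $R(S,P)=R(S,M)\cup(R(S,G)^+\sm R(S,M)^+)$, where the second piece is the set of roots of the unipotent radical, i.e. exactly the positive roots in $R(S,G)\sm R(S,M)$. Since $\Omega(S,M)$ stabilizes $R(S,M)$ and hence the set $R(S,M)^+$ relative to $\Delta(S,M)$ only up to... — rather, the precise statement is that $\Omega(S,M)$ preserves the decomposition of $R(S,G)\sm R(S,M)$ into roots of $\mathrm{Rad}_u(P)$ and roots of $\mathrm{Rad}_u(P^-)$, because conjugation by $N_M(S)$ preserves $M$ and preserves the two $M$-stable subsets $\mathrm{Lie}(\mathrm{Rad}_u P)$ and $\mathrm{Lie}(\mathrm{Rad}_u P^-)$ of $\mathrm{Lie}(G)$ on which $S$ acts. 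Hence $p$, which assigns $+1$ to roots of $\mathrm{Rad}_u P$ and $-1$ to roots of $\mathrm{Rad}_u P^-$, is $\Omega(S,M)$-invariant and descends to $R_S$; and since $p(-\alpha)=-p(\alpha)$ on $R(S,G)\sm R(S,M)$, the descended function is still a gauge on $R_S$.

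\textbf{Main obstacle.} I expect the genuinely delicate point to be the admissibility claim in Part (1): ruling out an $\Omega(S,M)$-orbit $O$ in $R(S,G)\sm R(S,M)$ with $-O=O$. One must invoke that $R(S,M)$ is a \emph{Levi} (equivalently, a closed and symmetric, "parabolic-type") subsystem of $R(S,G)$ — it is the set of roots vanishing on a subtorus — so that $\Omega(S,M)=W(R(S,M))$ acts on $R(S,G)$ preserving the complement, and no element of $W(R(S,M))$ can negate a root outside $R(S,M)$ (the subsystem $R(S,M)$ being "relatively closed"). Formulating this cleanly is the crux; everything else is linear-algebraic bookkeeping about Weyl-orbit sums and $\Gamma$-equivariance.
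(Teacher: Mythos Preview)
Your proposal is essentially correct and follows the same overall structure as the paper's proof. The one substantive difference is how you handle admissibility in Part~(1), and this is precisely the spot you flag as the ``main obstacle.''

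The paper does not attempt a direct argument for admissibility. Instead it proves Part~(4) first and then observes that the existence of a gauge on $R_S$ immediately implies admissibility: if a gauge $p$ descends to $R_S$, then for every orbit $O$ we have $p(-O)=-p(O)\neq p(O)$, so $-O\neq O$ and no element of $R_S$ is fixed by $-1\in\Sigma$. Your own Part~(4) argument (that $\Omega(S,M)$ preserves the set of positive roots in $R(S,G)\sm R(S,M)$, equivalently preserves the root spaces of $\mathrm{Rad}_u(P)$) is exactly what is needed, and it resolves your ``main obstacle'' for free. The direct argument you sketch in Part~(1)---that $w\alpha=-\alpha$ with $w\in\Omega(S,M)$ forces $\alpha\in R(S,M)$---is true, but making it precise amounts to redoing the positivity argument of Part~(4) anyway. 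So you can simply drop the vague passage about ``coroot lattice'' and ``saturated'' and instead say: admissibility will follow from Part~(4).

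For Part~(2), the paper just invokes the equality $X^*(S)^{\Omega(S,M)}=X^*(M_\tx{ab})$ (characters of a connected reductive group are the Weyl-invariant characters of a maximal torus), whereas you compute the pairing with coroots explicitly. Both are fine; your argument is a direct verification of that equality on the relevant elements. Part~(3) matches the paper's argument; the paper phrases the compatibility with $X^*(M_\tx{ab})$ slightly more cleanly by noting that $\tx{Ad}(m)$ is literally the identity on the embedded copy of $X^*(M_\tx{ab})$ inside both $X^*(S_1)$ and $X^*(S_2)$.
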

\begin{proof}
It is clear that $R_S$ is finite. The action of $\Sigma$ on $R(S,G)$ preserves $R(S,M)$ and respects the orbits of $\Omega(S,M)$, since $M$ is defined over $F$. Therefore $R_S$ is a finite $\Sigma$-set. Its admissibility will follow from 4.

The map $R(S,G) \sm R(S,M) \to X^*(S)$ is obviously $\Sigma$-equivariant and descends to $R_S$, and its image lies in $X^*(S)^{\Omega(S,M)}=X^*(M_\tx{ab})$.

The isomorphism $\tx{Ad}(m) : X^*(S_2) \to X^*(S_1)$ identifies $R(S_2,G) \sm R(S_2,M)$ with $R(S_1,G) \sm R(S_1,M)$ and is $\Omega(S_2,M)-\Omega(S_1,M)$ equivariant. Moreover, it is the identity on the embedded copy of $X^*(M_\tx{ab})$ into $X^*(S_2)$ and $X^*(S_1)$. Therefore it induces a bijection $R_{S_2} \to R_{S_1}$ that respects the maps to $X^*(M_\tx{ab})$. Another choice of $m$ is of the form $nm$ with $n \in N(S_1,M)$, so induces the same bijection $R_{S_2} \to R_{S_1}$. 

Consider a set of simple roots $\Delta(S,G)$ for $G$ that contains a set of simple roots for $M$. Let $\eta \in X_*(S/Z(G))\otimes\R$ denote a regular element in the corresponding acute Weyl chamber. Then the gauge $p$ is given by $p(\alpha)=\tx{sgn}(\<\eta,\alpha\>)$. The action of $\Omega(S,M)$ preserves the set of positive roots in $R(S,G) \sm R(S,M)$ and therefore for any $\alpha \in R(S,G) \sm R(S,M)$ the sign of the non-zero real number $\<\eta,w\alpha\>$ is independent of $w \in \Omega(S,M)$. Thus $p$ induces a gauge on $R_S$. This gauge shows that no element of $R_S$ is fixed by $-1 \in \Sigma$, so $R_S$ is indeed admissible as claimed by 1. 
\end{proof}

\begin{dfn} \label{dfn:rmab}
We define the set $R(M_\tx{ab},G)$ to be the limit over all maximal tori $S \subset M$ of the set $R_S$ of Lemma \ref{lem:rmab}, with transition maps given by $\tx{Ad}(m)$ for $m \in M$. This set comes equipped with a map to $X^*(M_\tx{ab})$.
\end{dfn}

\begin{rem} The map $R(M_\tx{ab},G) \to X^*(M_\tx{ab})$ need not be injectve, as we will see in \S\ref{sub:exa}.
\end{rem}

\begin{lem} \label{lem:tlqs}
Let $\xi : G^* \to G$ be an inner twist with $G^*$ quasi-split and let $M \subset G$ be a twisted Levi subgroup. After conjugation by $G^*(F^s)$ we can arrange that the subgroup $M^* = \xi^{-1}(M)$ is defined over $F$ (hence is a twisted Levi subgroup of $G^*$) and quasi-split. The restriction $\xi : M^* \to M$ is an inner twist.
\end{lem}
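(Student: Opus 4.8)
The plan is to reduce the statement to a standard cohomological fact about forms of quasi-split groups together with the theory of parabolic subgroups. First I would record the general principle, due to Kottwitz, that any inner twist $\xi : G^* \to G$ with $G^*$ quasi-split can be composed with an inner automorphism of $G^*$ so as to be ``aligned'' with a given structure; concretely, the obstruction to making $M^* := \xi^{-1}(M)$ defined over $F$ lies in the vanishing of an appropriate $H^1$, and the first task is to identify that obstruction correctly. Recall that $M$, being a twisted Levi subgroup, is the Levi factor of a parabolic $P \subset G$ defined over some finite extension $E/F$; equivalently, $M = \tx{Cent}(A, G)$ for a (not necessarily $F$-split) subtorus $A$, but it is cleaner to work with the conjugacy class. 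Over $F^s$, the subgroup $\xi^{-1}(M) \subset G^*$ is a twisted Levi of $G^*$, i.e. a Levi factor of some parabolic $P^*_{F^s} \subset G^*_{F^s}$. The $G^*(F^s)$-conjugacy class of $\xi^{-1}(M)$ is $\Gamma$-stable, because $M$ is defined over $F$ and $\xi$ is an inner twist (so $\sigma(\xi^{-1}(M))$ and $\xi^{-1}(M)$ differ by an inner automorphism of $G^*$ over $F^s$).

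The key step is then: in a quasi-split group $G^*$, every $\Gamma$-stable $G^*(F^s)$-conjugacy class of parabolic subgroups contains an $F$-rational member, and moreover one that is ``standard'' with respect to a fixed $F$-Borel and $F$-pinning; similarly every $\Gamma$-stable conjugacy class of Levi subgroups contains an $F$-rational, quasi-split member. This is classical — it follows from the fact that the variety of parabolics of a fixed type is a projective homogeneous $G^*$-space with an $F$-point (the standard parabolic), so $H^1(\Gamma, G^*(F^s))$ issues do not arise, and Levi factors of an $F$-parabolic that contain a fixed maximal $F$-split torus are $F$-rational and quasi-split. So I would: (i) choose $P^*$ in the $\Gamma$-stable class of $\xi^{-1}(P)$ that is $F$-rational and standard; (ii) choose an $F$-rational Levi factor $M'$ of $P^*$ in the $\Gamma$-stable class of $\xi^{-1}(M)$; (iii) since $M'$ and $\xi^{-1}(M)$ are $G^*(F^s)$-conjugate, pick $g \in G^*(F^s)$ with $gM'g^{-1} = \xi^{-1}(M)$, i.e. $(\tx{Int}(g^{-1}) \circ \xi^{-1})(M) = M'$; replace $\xi$ by $\xi \circ \tx{Int}(g)$. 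After this replacement $M^* := \xi^{-1}(M) = M'$ is defined over $F$ and quasi-split, and is a Levi of the $F$-parabolic $P^*$, hence a twisted Levi of $G^*$.

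It remains to check that the modified $\xi$ is still an inner twist (it is — precomposing with an inner automorphism $\tx{Int}(g)$, $g \in G^*(F^s)$, preserves the class of inner twists, since the cocycle $\sigma \mapsto \xi^{-1}\sigma(\xi)$ only changes by a coboundary valued in $G^*_{\tx{ad}}(F^s)$) and that the restriction $\xi|_{M^*} : M^* \to M$ is an inner twist of $M$. For the latter: the cocycle $\sigma \mapsto \xi^{-1} \circ \sigma(\xi)$ now takes values in automorphisms of $G^*$ that are inner and that preserve $M^*$ (because both $M^*$ and $M = \xi(M^*)$ are $F$-rational, so $\sigma(\xi)(M^*) = \sigma(\xi(M^*)) = \sigma(M) = M = \xi(M^*)$, whence $(\xi^{-1}\sigma(\xi))(M^*) = M^*$); an inner automorphism of $G^*$ preserving $M^*$ restricts to an inner automorphism of $M^*$ (for a connected reductive group, $N_{G^*}(M^*) = M^* \cdot Z(G^*)$ when $M^*$ is a Levi-type subgroup, so conjugation by such an element acts on $M^*$ through $M^*_{\tx{ad}}$). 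Hence the restricted cocycle lands in $M^*_{\tx{ad}}(F^s)$, which is exactly the statement that $\xi|_{M^*}$ is an inner twist. The main obstacle is really just bookkeeping: making sure the chosen conjugating element $g$ simultaneously handles the parabolic and its Levi, and that one invokes the correct rationality statement (existence of an $F$-rational member in a $\Gamma$-stable conjugacy class) for parabolics rather than for arbitrary subgroups — the latter would be false, but for parabolics it holds because the flag variety has an $F$-point in the quasi-split case. I expect no serious difficulty beyond citing these standard facts (e.g. from Borel–Tits or Springer) carefully.
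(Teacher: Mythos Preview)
Your argument has a genuine gap at step (i). You assume the $G^*(F^s)$-conjugacy class of $\xi^{-1}(P)$ is $\Gamma$-stable, but this fails precisely when $M$ is a twisted Levi that is not an actual Levi. The parabolic $P$ is only defined over an extension $E$; for $\sigma \in \Gamma_F \sm \Gamma_E$ the parabolic $\sigma(P)$ still has Levi $\sigma(M)=M$, but may lie in a different conjugacy class of parabolics. Concretely, take $G=G^*=U_3$ quasi-split for a quadratic extension $E/F$, and $M=U_2 \times U_1$ (the stabilizer of an orthogonal splitting of the Hermitian space). Over $E$ one has $M_E \cong GL_2 \times GL_1 \subset GL_3$; the two parabolics with this Levi have types $\{\alpha_1\}$ and $\{\alpha_2\}$, and the non-trivial element of $\Gamma_{E/F}$ swaps them. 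There is then no $F$-rational parabolic in either class, and your flag-variety argument does not apply. The $\Gamma$-stable object is the conjugacy class of the \emph{Levi}, but for that class the existence of an $F$-rational member is exactly the non-trivial content of the lemma, not a formality.

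There is a second gap in the last paragraph: the assertion $N_{G^*}(M^*)=M^* \cdot Z(G^*)$ is false for Levi-type subgroups in general. For $G^*=GL_4$ and $M^*=GL_2 \times GL_2$ the block-swap normalizes $M^*$ and acts by the outer automorphism exchanging the two factors. So even granting $F$-rationality of $M^*$, you have not shown that $\xi^{-1}\sigma(\xi)$ restricts to an \emph{inner} automorphism of $M^*$.

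The paper avoids both problems by first transferring a maximal torus $T_M \subset M$ to $G^*$ (possible because $G^*$ is quasi-split). This forces the cocycle $\xi^{-1}\sigma(\xi)$ into the transferred torus $T_M^1 \subset M^1 := \xi^{-1}(M)$, giving $F$-rationality of $M^1$ and the inner-twist property simultaneously, with no appeal to parabolics. Quasi-splitness of a suitable stable conjugate $M^*$ is then obtained by passing to a regular element of $Z_{M^1}(F)$ and invoking \cite{Kot82}.
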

\begin{proof}
Let $T_M \subset M$ be a maximal torus, and let $Z_M \subset T_M$ be the connected center of $M$. Then $Z_M$ is a subtorus of $T_M$ defined over $F$ and $M$ is the centralizer of $Z_M$. The torus $T_M$ transfers to $G^*$, i.e. after conjugating $\xi$ by an element of $G^*(F^s)$ the preimage $T_M^1 = \xi^{-1}(T_M)$ is defined over $F$ and $\xi$ restricts to an isomorphism $T_M^1 \to T_M$ defined over $F$. In particular $\xi^{-1}\sigma(\xi)$ is realized by conjugation by an element of $T_M^1$. Transport $Z_M$ by $\xi$ and take its centralizer in $G^*$, calling it $M^1$. Then $M^1$ is defined over $F$ and $\xi : M^1 \to M$ is an inner twist. Let $t^1 \in Z_{M^1}(F)$ be a regular element, so that $M^1$ is the centralizer of $t^1$. According to \cite[Lemma 3.3]{Kot82} there exists a stable conjugate $t^*$ of $t^1$ whose centralizer in $G^*$ is quasi-split. Let $M^*$ be the centralizer of $t^*$ and apply \cite[Lemma 3.2]{Kot82}.
\end{proof}

\begin{fct} In the setting of above lemma, $\xi$ induces an isomorphism $M^*_\tx{ab} \to M_\tx{ab}$ defined over $F$ that identifies $R(M^*_\tx{ab},G^*)$ with $R(M_\tx{ab},G)$. 
\end{fct}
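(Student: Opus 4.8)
The plan is to unwind the definitions on both sides and check that the inner twist $\xi : M^* \to M$ of Lemma \ref{lem:tlqs} is compatible with the construction of $R(M_\tx{ab},G)$ in Definition \ref{dfn:rmab}. First I would recall that $\xi : M^* \to M$ is an inner twist, so for any maximal torus $S^* \subset M^*$ defined over $F$ that transfers to a maximal torus $S \subset M$ via $\xi$, the isomorphism $\xi : S^* \to S$ is defined over $F$ (after the usual conjugation, which is already built into the statement of Lemma \ref{lem:tlqs}), and $\xi^{-1}\sigma(\xi)$ is realized by conjugation by an element of $S^*(F^s)$. Passing to cocharacter/character modules, $\xi$ induces a $\Gamma$-equivariant isomorphism $X^*(S) \to X^*(S^*)$ that carries $R(S,G)$ onto $R(S^*,G^*)$ and $R(S,M)$ onto $R(S^*,M^*)$, since $\xi$ restricts to an inner twist $M^* \to M$ and an inner twisting of a group defined over $F$ preserves the Galois-module structure on roots; moreover it is $\Omega(S,M)$--$\Omega(S^*,M^*)$-equivariant because $\Omega(S,M)$ is the Weyl group of $M$ relative to $S$ and $\xi$ identifies these Weyl groups compatibly with the torus identification.

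Next I would observe that $\xi : S^* \to S$ restricts to the identity on the common subtorus $Z_M = Z_{M^*}$ (the connected center, identified via $\xi$) and hence induces the canonical isomorphism $M^*_\tx{ab} \to M_\tx{ab}$ over $F$; under the inclusions $X^*(M_\tx{ab}) \hookrightarrow X^*(S)$ and $X^*(M^*_\tx{ab}) \hookrightarrow X^*(S^*)$ of Lemma \ref{lem:rmab}(2), the map $X^*(M_\tx{ab}) \to X^*(M^*_\tx{ab})$ induced by $\xi$ is the restriction of the $\Gamma$-equivariant isomorphism $X^*(S) \to X^*(S^*)$. Combining this with the previous paragraph: the bijection $R(S,G)\sm R(S,M) \to R(S^*,G^*)\sm R(S^*,M^*)$ is $\Sigma$-equivariant (it commutes with $\Gamma$ by the inner-twist property and with $-1 \in \Sigma$ because $\xi$ is linear on character modules), descends to a $\Sigma$-equivariant bijection $R_S \to R_{S^*}$ on $\Omega$-orbits by the equivariance just noted, and respects the sum-of-elements maps to $X^*(M_\tx{ab}) = X^*(M^*_\tx{ab})$ because $\xi$ is additive. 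Finally, passing to the limit over maximal tori $S \subset M$ (equivalently $S^* \subset M^*$, which correspond bijectively via $\xi$ up to $M(F^s)$-conjugacy, compatibly with the transition maps $\tx{Ad}(m)$), these bijections splice to the claimed $\Sigma$-equivariant bijection $R(M_\tx{ab},G) \to R(M^*_\tx{ab},G^*)$ identifying the maps to $X^*(M_\tx{ab})$ and $X^*(M^*_\tx{ab})$.

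The only mildly delicate point — which I would flag as the main obstacle, though it is really just bookkeeping — is checking that the system of tori used to define $R(M_\tx{ab},G)$ as an inverse limit matches up under $\xi$ with the corresponding system for $M^*$, i.e. that $\xi$ sends the transition maps $\tx{Ad}(m)$, $m \in M$, to the transition maps $\tx{Ad}(m^*)$, $m^* \in M^*$, in a way compatible with the $\Sigma$-structure. This follows because $\xi$ is an inner twist of $M$: any two maximal tori of $M$ are conjugate under $M(F^s)$, $\xi$ transforms such a conjugation into an $M^*(F^s)$-conjugation of the corresponding tori of $M^*$, and by Lemma \ref{lem:rmab}(3) the resulting bijection on the $R_S$'s is independent of the chosen conjugating element; hence the diagrams of transition maps commute up to the ambiguity already quotiented out in Definition \ref{dfn:rmab}. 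I would then simply remark that all compatibilities are immediate and leave the routine diagram-chasing to the reader, as is done for similarly mechanical facts elsewhere in the paper.
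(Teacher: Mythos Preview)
Your argument is correct. The paper states this as a Fact without proof, treating it as a routine consequence of the inner-twist property of $\xi : M^* \to M$ together with Definition \ref{dfn:rmab}; your detailed verification is precisely the unwinding the reader is implicitly expected to supply. One small remark: the phrase ``already built into the statement of Lemma \ref{lem:tlqs}'' is slightly imprecise, since that lemma only arranges $M^*$ to be defined over $F$ and quasi-split, not that a given maximal torus transfers---but since $M^*$ is quasi-split, every $F$-rational maximal torus of $M$ does transfer to $M^*$ after a further $M^*(F^s)$-conjugation of $\xi$, which does not affect the induced map on abelianizations, so your setup is justified.
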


We now proceed to study $L$-embeddings $^LM \to {^LG}$. Lemma \ref{lem:tlqs} allows us to assume that $G$ and $M$ are quasi-split. Let $(T_M,B_M)$ and $(T_G,B_G)$ be Borel pairs in $M$ and $G$ respectively. Choose $g \in G$ s.t. $T_M = gT_Gg^{-1}$ and $B_M \subset gB_Gg^{-1}$. Then the simple roots $\Delta(T_M,B_M)$ are a subset of $g\Delta(T_G,B_G)g^{-1}$. Let $\sigma_{M,G} \in \Omega(T_G,G)$ be the element $g^{-1}\sigma(g)$.

Consider a pinned dual group $(\hat G,\hat T,\hat B,\{X_{\alpha^\vee}\})$ for $G$. There exists a unique standard Levi subgroup $\hat M$ of $\hat G$ in the conjugacy class determined by $M$. Its set of simple roots $\Delta(\hat T,\hat M)$ is the subset dual to $g^{-1}\Delta(T_M,B_M)g \subset \Delta(T_G,G)=\Delta(\hat T,\hat G)^\vee$. For $\sigma \in \Gamma$ let $\sigma_G$ denote the pinned action of $\sigma$ on $\hat G$. The set $\Delta(\hat T,\hat M)$, and hence also the group $\hat M$, will in general not be $\sigma_G$-stable. Using the identification $\Omega(\hat T,\hat G)=\Omega(T_G,G)$ we obtain the element $\sigma_{M,G} \in \Omega(\hat T,\hat G)$. The set of simple roots $\Delta(\hat T,\hat G)$ is no longer stable under the twisted action $\sigma_{M,G} \rtimes \sigma_G$, but its subset $\Delta(\hat T,\hat M)$ is. Thus $\hat M$ is stable under the twisted action $n(\sigma_{M,G}) \rtimes \sigma_G$, where $n(\dots)$ is the Tits lift relative to the chosen pinning. By \cite[11.2.11]{Spr81} this action preserves the pinning of $\hat M$ inherited from that of $\hat G$ and hence would give an $L$-action if the map $\sigma \mapsto n(\sigma_{M,G}) \rtimes \sigma_G$ were multiplicative. Thus we are looking for a 1-cochain $q$ of $\Gamma$ or $W$, valued in $Z(\hat M)$, so that $q(\sigma)n(\sigma_{M,G}) \rtimes \sigma$ is multiplicative. Since $q$ takes values in $Z(\hat M)$ it acts trivially on the pinning of $\hat M$, so $q(\sigma)n(\sigma_{M,G}) \rtimes \sigma$ still preserves that pinning. Being in addition multiplicative, we see that
\begin{equation} \label{eq:tlemb}
\hat M \rtimes W_F \to \hat G \rtimes W_F,\qquad (m,w) \mapsto m \cdot q(w)n(\sigma_{M,G}) \rtimes \sigma_G 	
\end{equation}
is an $L$-embedding, where $\sigma \in \Gamma$ is the image of $w$. 

We will now show that $q$ can be constructed explicitly by choosing $\chi$-data for $R(M_\tx{ab},G)$. The torus dual to $M_\tx{ab}$ is the complex torus $Z(\hat M)^\circ$ equipped with the action of $\Gamma$ where $\sigma \in \Gamma$ acts as $\sigma_{M,G} \rtimes \sigma_G$. Via the isomorphism dual to $\tx{Ad}(g) : T_G \to T_M$ and the identification $\hat T_G = \hat T$, the set $R(M_\tx{ab},G)$ is identified with the set of $\Omega(\hat T,\hat M)$-orbits in $R^\vee(\hat T,\hat G) \sm R^\vee(\hat T,\hat M)$.

The failure of multiplicativity of $\sigma \mapsto n(\sigma_{M,G}) \rtimes \sigma_G$ is measured by the Tits cocycle $t_p(\sigma,\tau) \in \hat T$:
\[ n(\sigma_{M,G}) \rtimes \sigma_G \cdot n(\tau_{M,G}) \rtimes \tau_G = t_p(\sigma,\tau)\cdot n((\sigma\tau)_{M,G}) \rtimes (\sigma\tau)_G, \] 
given by $t_p(\sigma,\tau) = \lambda_{\sigma,\tau}(-1)$, where $\lambda_{\sigma,\tau}$ is the sum of elements of  $R^\vee(\hat T,\hat G)=R(T_G,G)$ in the set
\[ \Lambda_{\sigma,\tau} = \{ \alpha>0,(\sigma_{M,G}\sigma_G)^{-1}\alpha<0,(\sigma_{M,G}\sigma_G\tau_{M,G}\tau_G)^{-1}\alpha>0\}, \]
where $\alpha>0$ is taken with respect to the Borel subgroup $\hat B$. Let $p : R^\vee(\hat T,\hat G) \sm R^\vee(\hat T,\hat M) \to \{\pm 1\}$ denote this gauge, and let $p' : R(M_\tx{ab},G) \to \{\pm 1\}$ be the induced gauge as in Lemma \ref{lem:rmab}, part 4.

\begin{lem} \label{lem:lambdalevi}
$\lambda_{\sigma,\tau}$ is equal to the sum of $\bar\alpha'$ for all $\alpha'$ belonging to the following subset
\[ \Lambda'_{\sigma,\mu} = \{ \alpha'>0,(\sigma_{M,G}\sigma_G)^{-1}\alpha'<0,(\sigma_{M,G}\sigma_G\tau_{M,G}\tau_G)^{-1}\alpha'>0\} \]
of $R(M_\tx{ab},G)$, where $\alpha'>0$ means $p'(\alpha')>0$, and $\bar\alpha' \in X^*(M_\tx{ab})=X_*(Z(\hat M)^\circ)$ denotes the image of $\alpha'$ under the map $R(M_\tx{ab},G) \to X^*(M_\tx{ab})$ of Definition \ref{dfn:rmab}.
\end{lem}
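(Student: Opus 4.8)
The plan is to reduce the statement over $R(M_\tx{ab},G)$ to the analogous statement over the finer set $R^\vee(\hat T,\hat G)\sm R^\vee(\hat T,\hat M)$, where the Tits cocycle formula of Definition \ref{dfn:tp} already tells us that $\lambda_{\sigma,\tau}$ is the sum of the coroots $\alpha$ in $\Lambda_{\sigma,\tau}$. The key geometric input is that $p$ is constant on $\Omega(\hat T,\hat M)$-orbits (Lemma \ref{lem:rmab}, part 4), so $p'$ is well-defined, and moreover for a fixed $\alpha'\in R(M_\tx{ab},G)$ with underlying $\Omega(\hat T,\hat M)$-orbit $\{\alpha_1,\dots,\alpha_m\}\subset R^\vee(\hat T,\hat G)\sm R^\vee(\hat T,\hat M)$, each of the three sign conditions defining membership in $\Lambda_{\sigma,\tau}$ — namely $p(\beta)>0$, $p((\sigma_{M,G}\sigma_G)^{-1}\beta)<0$, $p((\sigma_{M,G}\sigma_G\tau_{M,G}\tau_G)^{-1}\beta)>0$ — is satisfied by \emph{all} $\alpha_i$ simultaneously or by \emph{none}. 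The first condition is immediate from part 4 of Lemma \ref{lem:rmab}. For the other two, the point is that the elements $\sigma_{M,G}\sigma_G$ and $\sigma_{M,G}\sigma_G\tau_{M,G}\tau_G$ of $\Omega(\hat T,\hat G)\rtimes\Gamma$ normalize $\Omega(\hat T,\hat M)$ (because $\hat M$ is stable under the twisted Galois action $n(\sigma_{M,G})\rtimes\sigma_G$, as noted just before Lemma \ref{lem:lambdalevi} using \cite[11.2.11]{Spr81}), so applying $(\sigma_{M,G}\sigma_G)^{-1}$ or $(\sigma_{M,G}\sigma_G\tau_{M,G}\tau_G)^{-1}$ to the $\Omega(\hat T,\hat M)$-orbit $\{\alpha_1,\dots,\alpha_m\}$ yields another $\Omega(\hat T,\hat M)$-orbit in $R^\vee(\hat T,\hat G)\sm R^\vee(\hat T,\hat M)$, on which $p$ is again constant.

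With this established, I would argue as follows. Group the coroots appearing in $\Lambda_{\sigma,\tau}$ by their $\Omega(\hat T,\hat M)$-orbit. By the paragraph above, each orbit $O$ lies entirely inside $\Lambda_{\sigma,\tau}$ or entirely outside it, and $O$ lies inside exactly when the corresponding element $\alpha'_O\in R(M_\tx{ab},G)$ satisfies the three conditions defining $\Lambda'_{\sigma,\tau}$ (interpreting the signs via $p'$, which matches $p$ on each orbit). Hence
\[
\lambda_{\sigma,\tau} \;=\; \sum_{\alpha\in\Lambda_{\sigma,\tau}}\alpha \;=\; \sum_{\alpha'\in\Lambda'_{\sigma,\tau}}\ \sum_{\alpha\in O_{\alpha'}}\alpha \;=\; \sum_{\alpha'\in\Lambda'_{\sigma,\tau}}\bar\alpha',
\]
where the last equality is precisely the definition of the map $R(M_\tx{ab},G)\to X^*(M_\tx{ab})$ in Definition \ref{dfn:rmab} (the image of $\alpha'$ is the sum of the coroots in its $\Omega(\hat T,\hat M)$-orbit), together with the identification $X^*(M_\tx{ab})=X_*(Z(\hat M)^\circ)$ and $R^\vee(\hat T,\hat G)=R(T_G,G)$ recalled in the text. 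This is the asserted identity.

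The main obstacle is the orbit-stability claim for the two twisted elements, i.e. that $(\sigma_{M,G}\sigma_G)^{-1}$ and $(\sigma_{M,G}\sigma_G\tau_{M,G}\tau_G)^{-1}$ carry $\Omega(\hat T,\hat M)$-orbits in $R^\vee(\hat T,\hat G)\sm R^\vee(\hat T,\hat M)$ to $\Omega(\hat T,\hat M)$-orbits of the same kind; everything else is bookkeeping. This comes down to checking that these elements normalize $\Omega(\hat T,\hat M)$ inside $\Omega(\hat T,\hat G)\rtimes\Gamma$ and preserve the subset $R^\vee(\hat T,\hat G)\sm R^\vee(\hat T,\hat M)$, which in turn follows from the stability of $\hat M$ (equivalently of $\Delta(\hat T,\hat M)$, equivalently of $R^\vee(\hat T,\hat M)$) under the twisted action, already used to set up \eqref{eq:tlemb}. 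Once that is in hand, one should also note that the gauge condition $p'(-\alpha')=-p'(\alpha')$ is inherited from $p$, so $\Lambda'_{\sigma,\tau}$ is a well-posed subset of $R(M_\tx{ab},G)$ and the formula makes sense.
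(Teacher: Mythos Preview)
Your argument is correct and follows essentially the same route as the paper's proof: both observe that the twisted elements $\sigma_{M,G}\sigma_G$ (and their product) stabilize $\hat M$, hence normalize $\Omega(\hat T,\hat M)$ and preserve $R^\vee(\hat T,\hat M)$, so each of the three sign conditions is constant on $\Omega(\hat T,\hat M)$-orbits and $\Lambda_{\sigma,\tau}$ is a union of such orbits, whence the sum can be taken orbit-by-orbit. The one point you leave implicit, which the paper states, is that $\Lambda_{\sigma,\tau}$ is actually contained in $R^\vee(\hat T,\hat G)\sm R^\vee(\hat T,\hat M)$: this follows because $\sigma_{M,G}\sigma_G$ preserves not just $R^\vee(\hat T,\hat M)$ but the set of \emph{positive} roots in it (since $\Delta(\hat T,\hat M)$ is stable under the twisted action), so no $\alpha\in R^\vee(\hat T,\hat M)$ can satisfy both $\alpha>0$ and $(\sigma_{M,G}\sigma_G)^{-1}\alpha<0$.
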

\begin{proof}
The action of $\sigma_{M,G}\sigma_G$ preserves the root system $R(\hat T,\hat M)$ and the set of positive roots in it. Therefore the set $\Lambda_{\sigma,\tau}$ lies in $R(\hat T,\hat G) \rm R(\hat T,\hat M)$ and is invariant under $\Omega(\hat T,\hat M)$. This means that the sum of the elements of $\Lambda_{\sigma,\tau}$ can be computed by first summing the elements of each individual $\Omega(\hat T,\hat M)$-orbit, and then summing the contributions of those orbits.
\end{proof}

\begin{cor} \label{cor:lambdalevi}
The Tits cocycle $t_p$ takes values in $Z(\hat M)^\circ$. Given $\chi$-data $\chi'$ for $R(M_\tx{ab},G)$, the differential of the 1-cochain $r_{p',\chi'}$ equals $t_p$. Therefore, $q=r_{p',\chi'}$ makes \eqref{eq:tlemb} into an $L$-embedding.
\end{cor}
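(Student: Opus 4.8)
The plan is to deduce Corollary \ref{cor:lambdalevi} directly from Lemma \ref{lem:lambdalevi} together with the machinery already developed for the double cover and its $L$-group. First I would observe that the content of Lemma \ref{lem:lambdalevi} is precisely the identity $\lambda_{\sigma,\tau} = \lambda_{p',\sigma,\tau}$, where $\lambda_{p',\sigma,\tau} \in X^*(M_\tx{ab}) = X_*(Z(\hat M)^\circ)$ is the element appearing in Definition \ref{dfn:tp} of the Tits cocycle attached to the gauge $p'$ on the finite admissible $\Sigma$-set $R(M_\tx{ab},G)$ with its $\Sigma$-equivariant map to $X^*(M_\tx{ab})$. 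Consequently $t_p(\sigma,\tau) = \lambda_{\sigma,\tau}(-1) = \lambda_{p',\sigma,\tau}(-1) = t_{p'}(\sigma,\tau)$, where the last cocycle is the Tits cocycle of Definition \ref{dfn:tp} for the datum $(M_\tx{ab}, R(M_\tx{ab},G))$. In particular $t_p$ factors through the inclusion $X_*(Z(\hat M)^\circ) \hookrightarrow X_*(\hat T)$, i.e. it takes values in $Z(\hat M)^\circ \subset \hat T$; this is the first assertion.

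Next I would invoke the theory of $\chi$-data from \S\ref{sub:chichar} and \S\ref{subsec} as reflected in Lemma \ref{lem:lparpm}: given $\chi$-data $\chi'$ for $R(M_\tx{ab},G)$, the $1$-cochain $r_{p',\chi'} \in C^1(W_F, \widehat{M_\tx{ab}}) = C^1(W_F, Z(\hat M)^\circ)$ of Definition \ref{dfn:rp} has differential equal to $t_{p'}$ (this is exactly part 2 of Lemma \ref{lem:lparpm}, which cites \cite[Lemma 2.5.A]{LS87}). Composing with the inclusion $Z(\hat M)^\circ \hookrightarrow \hat T$ we get a $Z(\hat M)^\circ$-valued, hence in particular $Z(\hat M)$-valued, $1$-cochain $q = r_{p',\chi'}$ on $W_F$ whose differential is $t_p$. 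Since $t_p$ is precisely the $2$-cocycle measuring the failure of multiplicativity of $\sigma \mapsto n(\sigma_{M,G}) \rtimes \sigma_G$, as recorded in the displayed identity just before Lemma \ref{lem:lambdalevi}, the corrected assignment $w \mapsto q(w) n(\sigma_{M,G}) \rtimes \sigma_G$ is multiplicative. As noted in the paragraph preceding \eqref{eq:tlemb}, $q$ being $Z(\hat M)$-valued means it acts trivially on the pinning of $\hat M$ inherited from $\hat G$, so this corrected map still preserves that pinning of $\hat M$; being in addition multiplicative, it defines an $L$-embedding $\hat M \rtimes W_F \to \hat G \rtimes W_F$ as in \eqref{eq:tlemb}. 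This is the final assertion.

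I do not anticipate a serious obstacle here: the corollary is essentially an assembly of Lemma \ref{lem:lambdalevi} (which does the combinatorial work of passing from roots of $G$ to $\Omega(\hat T,\hat M)$-orbits) with the already-established facts that $r_{p',\chi'}$ is a valid cochain splitting $t_{p'}$ and that $Z(\hat M)$-valued cochains do not disturb the pinning. The one point that warrants a sentence of care is matching conventions: one must check that the gauge $p'$ on $R(M_\tx{ab},G)$ induced (via Lemma \ref{lem:rmab}, part 4) from $p$ is the same gauge implicitly used in the definition of $\Lambda'_{\sigma,\tau}$ in Lemma \ref{lem:lambdalevi}, and that the map $R(M_\tx{ab},G) \to X^*(M_\tx{ab})$ of Definition \ref{dfn:rmab} — sending an $\Omega(\hat T,\hat M)$-orbit to the sum of its elements — is exactly the map $\alpha' \mapsto \bar\alpha'$ entering $\lambda_{p',\sigma,\tau}$. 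Both are immediate from the statements of Lemma \ref{lem:rmab} and Lemma \ref{lem:lambdalevi}, so the proof is short.
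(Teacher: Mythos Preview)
Your proposal is correct and follows exactly the paper's approach: the paper's proof reads simply ``Immediate from Lemmas \ref{lem:lambdalevi} and \ref{lem:lparpm},'' and you have faithfully unpacked those two ingredients. The only blemish is the broken reference \verb|\S\ref{subsec}|, which you should remove or correct.
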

\begin{proof}
Immediate from Lemmas \ref{lem:lambdalevi} and \ref{lem:lparpm}.
\end{proof}

\begin{rem} \label{rem:inf1}
Let $R(Z(M)^\circ,G) \subset X^*(Z(M)^\circ)$ be the set of non-zero weights for the action of $Z(M)^\circ$ on the Lie-algebra of $G$. It is a finite admissible $\Sigma$-set and equals, for any maximal torus $S \subset M$, the image of $R(S,G) \sm R(S,M) \subset X^*(S)$ under the restriction map $X^*(S) \to X^*(Z(M)^\circ)$. This restriction map is $\Omega(S,M)$-invariant and therefore we obtain a surjective $\Sigma$-equivariant map $R(M_\tx{ab},G) \to R(Z(M)^\circ,G)$. This map need not be injective, as we will see in \S\ref{sub:exa}. A set of $\chi$-data for $R(Z(M)^\circ,G)$ can be inflated to a set of $\chi$-data for $R(M_\tx{ab},G)$ as in Definition \ref{dfn:inf}. In this way, a set of $\chi$-data for $R(Z(M)^\circ,G)$ leads, via Corollary \ref{cor:lambdalevi}, to an $L$-embedding $^LM \to {^LG}$.
\end{rem}

\subsection{Factoring the $L$-embedding of a maximal torus through a twisted Levi subgroup}

We continue with a twisted Levi subgroup $M \subset G$ and a maximal torus $S \subset M$. Consider given a set of $\chi$-data $\chi'$ for $R(M_\tx{ab},G)$. We inflate it, as in Definition \ref{dfn:inf}, to a set of $\chi$-data $\chi=\tx{inf}\,\chi'$ for $R(S,G) \sm R(S,M)$ via the natural surjection $R(S,G) \sm R(S,M) \to R(M_\tx{ab},G)$. Choose a set of $\chi$-data for $R(S,M)$, thereby obtaining a set of $\chi$-data for $R(S,G)$. Let $^Lj_{S,G} : {^LS} \to {^LG}$, $^Lj_{S,M} : {^LS} \to {^LM}$, and $^Lj_{M,G} : {^LM} \to {^LG}$ be the $L$-embeddings obtained from these sets of $\chi$-data. Each $L$-embedding is well-defined up to conjugation (by $\hat M$ or $\hat G$ respectively, depending on its target).

\begin{pro} \label{pro:lembfact}
Up to conjugation by $\hat G$ we have
\[ {^Lj_{M,G}} \circ {^Lj_{S,M}} = {^Lj_{S,G}}. \]
\end{pro}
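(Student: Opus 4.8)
The plan is to compute all three $L$-embeddings explicitly using the formulas established in \S\ref{sub:canemb} and \S\ref{sub:tlem}, and then chase the composition. First I would fix compatible combinatorial data: a pinning $(\hat G,\hat T,\hat B,\{X_{\alpha^\vee}\})$ of $\hat G$, the standard Levi $\hat M$ determined by $M$, and an element $g$ as in \S\ref{sub:tlem} with $T_M = gT_Gg^{-1}$, $B_M \subset gB_Gg^{-1}$. This gives the cocycle $\sigma \mapsto \sigma_{M,G} \in \Omega(\hat T,\hat G)$ measuring the failure of $\hat M$ to be $\Gamma$-stable. Likewise, choosing an embedding $\hat S \to \hat G$ with image $\hat T$ lying inside $\hat M$ gives a cocycle $\sigma \mapsto \sigma_{S,G}$, and the corresponding embedding $\hat S \to \hat M$ gives $\sigma \mapsto \sigma_{S,M}$; by construction these satisfy $\sigma_{S,G} = \sigma_{M,G} \cdot \sigma_{S,M}$ inside $\Omega(\hat T,\hat G)$ (the first factor landing in $\Omega(\hat T,\hat G)$ via the ``outer'' motion, the second inside $\Omega(\hat T,\hat M)$). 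Using the Tits section $n$ and its multiplicativity up to the relevant Tits cocycles, ${^Lj_{S,G}}$ is $s \boxtimes \sigma \mapsto \hat j(s) n(\sigma_{S,G}) \rtimes \sigma$ times the $\chi$-data correction $r_{p,\chi^{SG}}$ for the full root set $R(S,G)$; similarly ${^Lj_{S,M}}$ and ${^Lj_{M,G}}$, the latter being \eqref{eq:tlemb} with $q = r_{p',\chi'}$ for $R(M_\tx{ab},G)$.

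Next I would carry out the composition ${^Lj_{M,G}} \circ {^Lj_{S,M}}$. Plugging the image of ${^Lj_{S,M}}$, namely $\hat j(s) \cdot r_{p_M,\chi_M}(w)\, n(\sigma_{S,M}) \rtimes \sigma \in \hat M \rtimes W$, into \eqref{eq:tlemb} yields
\[
s \boxtimes \sigma \;\mapsto\; \hat j(s)\cdot r_{p_M,\chi_M}(w)\cdot n(\sigma_{S,M}) \cdot {}^{\sigma}\!\big(r_{p',\chi'}(w) n(\sigma_{M,G})\big)^{?} \rtimes \sigma,
\]
but more carefully: one feeds the $\hat M$-part $m = \hat j(s) r_{p_M,\chi_M}(w)$ and the $W$-part through, getting $m \cdot q(w) n(\sigma_{M,G}) \rtimes \sigma$ composed appropriately, so that the total $\hat T$-valued cochain in front of $n(\sigma_{M,G})n(\sigma_{S,M}) \rtimes \sigma$ is $r_{p_M,\chi_M}\cdot (\text{twist of } r_{p',\chi'})$. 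The two key identities to establish are: (i) $n(\sigma_{M,G}) n(\sigma_{S,M}) = c(\sigma)\, n(\sigma_{S,G})$ for an explicit $\hat T$-valued 1-cochain $c$ arising from the Tits section's cocycle behavior $n(\omega_1)n(\omega_2) = t(\omega_1,\omega_2) n(\omega_1\omega_2)$, and (ii) that $r_{p_M,\chi_M}$ (parameter for $R(S,M)$), the inflation of $r_{p',\chi'}$ to $R(S,G)\setminus R(S,M)$, and the correction $c$ together combine, up to a $\hat T$-coboundary, into $r_{p,\chi^{SG}}$ for the full root system $R(S,G)$ with $\chi^{SG}$ the concatenated $\chi$-data. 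Identity (ii) is exactly where Proposition \ref{pro:llcfunc} enters: the inflation ${\rm inf}\,\chi'$ of $\chi$-data along $R(S,G)\setminus R(S,M) \to R(M_\tx{ab},G)$ has parameter cohomologous to $r_{p',\chi'}$, by part 2 of that proposition (using that the surjection satisfies \eqref{eq:func} by construction of the map $R(M_\tx{ab},G)\to X^*(M_\tx{ab})$ as a sum over orbits).

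The remaining bookkeeping is the additivity of the Langlands--Shelstad cochains over a decomposition of the root system into two $\Sigma$-stable pieces $R(S,M)$ and $R(S,G)\setminus R(S,M)$: the Tits cocycle $t_p$ for $R(S,G)$ is the product of those for the two pieces (immediate from Definition \ref{dfn:tp}, since $\Lambda_p$ splits as a disjoint union), and correspondingly $r_{p,\chi}$ for the union is the product of the $r_p$'s for the pieces (from the product formula in Definition \ref{dfn:rp}, each $\alpha$ contributing independently). Combined with the gauge-change cochains $s_{p/q}$ to pass between the gauge $p'$ of Lemma \ref{lem:rmab}(4) and whatever gauge is implicit in the $r_p$ normalization, and with the standard fact that two $L$-embeddings agreeing up to a $\hat T$-valued coboundary (equivalently, a $\hat G$-conjugacy, here even a $\hat T$-conjugacy) are equivalent, this finishes the proof.

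The main obstacle will be the careful matching of cochains in identity (ii): keeping track of which gauge each $r_p$ is normalized against, absorbing the Tits-section correction $c(\sigma)$ from identity (i) into a coboundary, and invoking Proposition \ref{pro:llcfunc} with precisely the right surjection and the right compatibility \eqref{eq:func} — in particular checking that the map $R(S,G)\setminus R(S,M) \to R(M_\tx{ab},G)$ together with $R(M_\tx{ab},G)\to X^*(M_\tx{ab}) \hookrightarrow X^*(S)$ does satisfy \eqref{eq:func} rather than the naive commuting square. Everything else is a direct, if somewhat lengthy, substitution into the explicit formulas \eqref{eq:lembt} and \eqref{eq:tlemb}.
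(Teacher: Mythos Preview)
Your approach is essentially the same as the paper's, and the identification of Proposition \ref{pro:llcfunc} as the key input is exactly right. However, you are making the computation harder than it needs to be in two places, and you have the order of factors slightly off.

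First, the composition yields $\hat j(s)\,r_{S,M}(w)\,n(\sigma_{S,M})\cdot r_{M,G}(w)\,n(\sigma_{M,G})\rtimes\sigma$, with $n(\sigma_{S,M})$ to the \emph{left} of $n(\sigma_{M,G})$, not the other way around. The paper commutes $r_{M,G}(w)$ past $n(\sigma_{S,M})$ for free, because $r_{M,G}$ takes values in $Z(\hat M)^\circ$ (this is Corollary \ref{cor:lambdalevi}) and $n(\sigma_{S,M}) \in \hat M$. So there is no ``twist of $r_{p',\chi'}$'' to keep track of.

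Second, and more importantly, your identity (i) is unnecessary: the correction cochain $c(\sigma)$ is identically $1$. The point is that $\sigma_{S,M}$ preserves the set of positive roots in $R(T_G,G)\smallsetminus R(T_G,M^g)$, while $\sigma_{M,G}\sigma_G$ preserves the positive roots in $R(T_G,M^g)$; by \cite[Lemma 2.1.A]{LS87} this forces $n(\sigma_{S,M})n(\sigma_{M,G})=n(\sigma_{S,M}\sigma_{M,G})=n(\sigma_{S,G})$ on the nose. With this in hand, the two $L$-embeddings differ exactly by the $1$-cocycle $r_{S,M}\,r_{M,G}\,r_{S,G}^{-1}$, and your identity (ii) via Proposition \ref{pro:llcfunc} (together with the observation that $r_{S,G}r_{S,M}^{-1}$ is literally the sub-product over $\Sigma$-orbits in $R(S,G)\smallsetminus R(S,M)$) finishes the argument with no gauge-change cochains or Tits-section corrections to absorb. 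You should also begin by invoking Lemma \ref{lem:tlqs} to reduce to $G$ and $M$ quasi-split, so that the Borel pairs $(T_G,B_G)$ and $(T_M,B_M)$ are available over $F$.
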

\begin{proof}
Since $S$ transfers to the quasi-split form of $M$ we reduce by Lemma \ref{lem:tlqs} to the case that both $G$ and $M$ are quasi-split. Choose Borel pairs $(T_M,B_M)$ and $(T_G,B_G)$, an element $g \in G$ with $\tx{Ad}(g)T_G = T_M$ and $\tx{Ad}(g)B_G \supset B_M$, and an element $m \in M$ with $\tx{Ad}(m)T_M = S$. This gives elements $\sigma_{S,G}=g^{-1}m^{-1}\sigma(m)\sigma(g) \in \Omega(T_G,G)$,  $\sigma_{S,M}=m^{-1}\sigma(m) \in \Omega(T_M,M)$, and $\sigma_{M,G}=g^{-1}\sigma(g) \in \Omega(T_G,G)$. Thus $\sigma_{S,G}=g^{-1}\sigma_{S,M}g \cdot \sigma_{M,G}$. We identify $\sigma_{S,M}$ with $g^{-1}\sigma_{S,M}g \in \Omega(T_G,G)$ and then have $\sigma_{S,G}=\sigma_{S,M}\cdot \sigma_{M,G}$. As in \S\ref{sub:tlem} we fix a pinning $(\hat T,\hat B,\{X_{\alpha^\vee}\})$ for $\hat G$ and let $\hat M$ be the unique standard Levi subgroup in the $\hat G$-conjugacy class determined by $M$.

Write ${^Lj_{M,G}}(w)=r_{M,G}(w)n(\sigma_{M,G}) \rtimes \sigma_G$, ${^Lj_{S,G}}(w)=r_{S,G}(w)n(\sigma_{S,G}) \rtimes \sigma_G$, and ${^Lj_{S,M}}(w)=r_{S,M}(w)n(\sigma_{S,M}) \rtimes \sigma_M$. Here $r_{S,G}$ and $r_{M,G}$ are the 1-cochains $r_{p,\chi}$ of Lemma \ref{lem:lparpm} for the gauge $p$ given by $\hat B$ and the fixed $\chi$-data for $R(S,G)$ and $R(S,M)$, respectively, while $r_{M,G}$ is given by the same Lemma, but for the $\chi$-data for $R(M_\tx{ab},G)$, the map $R(M_\tx{ab},G) \to X^*(M_\tx{ab})$, and the gauge $p'$ on $R(M_\tx{ab},G)$ induced by $p$ as in Lemma \ref{lem:rmab}.

We have ${^Lj_{M,G}}\circ {^Lj_{S,M}}(w)=r_{S,M}(w)n(\sigma_{S,M})r_{M,G}(w)n(\sigma_{M,G})\rtimes\sigma_G$. Recalling that $r_{M,G}$ takes values in $Z(\hat M)^\circ$ we can move it past $n(\sigma_{S,M})$. Furthermore, since $\sigma_{S,M}$ preserves the set of positive roots in $R(T_G,G) \sm R(T_G,M^g)$, while $\sigma_{M,G}\sigma_G$ preserves the set of positive roots in $R(T_G,M^g)$, with $M^g=g^{-1}Mg$, \cite[Lemma 2.1.A]{LS87} implies that $n(\sigma_{S,M})n(\sigma_{M,G})=n(\sigma_{S,M}\sigma_{M,G})=n(\sigma_{S,G})$. We conclude ${^Lj_{M,G}}\circ {^Lj_{S,M}}(w)=r_{S,M}(w)r_{M,G}(w)n(\sigma_{S,G})\rtimes\sigma_G$. The two $L$-embeddings ${^Lj_{M,G}}\circ {^Lj_{S,M}}(w)$ and $^Lj_{S,G}$ thus differ by the 1-cocycle $r_{S,M}(w)r_{M,G}(w)r_{S,G}(w)^{-1}$ and our goal is to show that it is cohomologically trivial.

Recall that $r_{S,G}(w)$ is a product taken over the $\Sigma$-orbits in $R(S,G)$. The 1-cochain $r_{S,M}(w)$ is a sub-product, taken over those $\Sigma$-orbits that lie in $R(S,M)$. Therefore $r_{S,G}(w)r_{S,M}(w)^{-1}$ equals the 1-cochain of Lemma \ref{lem:lparpm} for the torus $S$, the finite admissible $\Sigma$-set $R(S,G) \sm R(S,M) \subset X^*(S)$, and the $\chi$-data that by construction is inflated from the $\chi$-data for $R(M_\tx{ab},G)$. On the other hand, $r_{M,G}(w)$ is the analogous 1-cochain for the torus $M_\tx{ab}$, the finite admissible $\Sigma$-set $R(M_\tx{ab},G)$, the $\Sigma$-equivariant map $R(M_\tx{ab},G) \to X^*(M_\tx{ab})$, and the chosen $\chi$-data for $R(M_\tx{ab},G)$. The gauges on $R(M_\tx{ab},G)$ and $R(S,G) \sm R(S,M)$ are compatible with the natural projection map $f : R(S,G) \sm R(S,M) \to R(M_\tx{ab},G)$. This natural projection map satisfies \eqref{eq:func_st}. Proposition \ref{pro:llcfunc} applied to the epimorphism $S \to M_\tx{ab}$ asserts that $r_{M,G}$ is cohomologous to $r_{S,G}\cdot r_{S,M}^{-1}$.
\end{proof}

\subsection{The canonical double cover of a twisted Levi subgroup}

Let $M \subset G$ be a twisted Levi subgroup. The finite admissible $\Sigma$-set $R(M_\tx{ab},G)$ and the $\Sigma$-equivariant map $R(M_\tx{ab},G) \to X^*(M_\tx{ab})$ of Definition \ref{dfn:rmab} lead to the double cover $M_\tx{ab}(F)_\pm$ of $M_\tx{ab}(F)$.

\begin{dfn}
\begin{enumerate}
	\item The double cover $M(F)_\pm$ is the pull-back of $M(F) \to M_\tx{ab}(F) \from M_\tx{ab}(F)_\pm$.
	\item The $L$-group $^LM_\pm$ of $M(F)_\pm$ is the push-out of $\hat M \from Z(\hat M)^\circ \to {^LM}_{\tx{ab},\pm}$.
\end{enumerate}
\end{dfn}

\begin{lem} Let $S \subset M$ be a maximal torus. Let $S(F)_\pm$ be the double cover for the finite set $R(S,G) \sm R(S,M) \subset X^*(S)$. The inclusion $S(F) \to M(F)$ lifts naturally to a homomorphism $S(F)_\pm \to M(F)_\pm$.
\end{lem}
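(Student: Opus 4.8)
The plan is to compare the two double covers fibre-by-fibre over the torus $J$-constructions and invoke transitivity of pull-back, exactly as in Construction \ref{cns:func_t} and Construction \ref{cns:homcover}. Write $R_S = R(S,G)\sm R(S,M) \subset X^*(S)$ and recall that, by definition, $M(F)_\pm$ is the pull-back of $M(F) \to M_\tx{ab}(F) \from M_\tx{ab}(F)_\pm$, where $M_\tx{ab}(F)_\pm$ is the double cover attached to the finite admissible $\Sigma$-set $R(M_\tx{ab},G)$ together with the map $R(M_\tx{ab},G) \to X^*(M_\tx{ab})$ of Definition \ref{dfn:rmab}. The natural map of tori to use is the composition $r : S \to M_\tx{ab}$, dual to the inclusion $X^*(M_\tx{ab}) = X^*(S)^{\Omega(S,M)} \hookrightarrow X^*(S)$. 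On character modules, $r^* : X^*(M_\tx{ab}) \to X^*(S)$ is this inclusion, so in particular the map $f : R_S \to R(M_\tx{ab},G)$ sending a root to its $\Omega(S,M)$-orbit is a surjective $\Sigma$-equivariant map, and the relation \eqref{eq:func_st} holds on the nose: for each $\Omega(S,M)$-orbit $\alpha'$, the element $\bar\alpha' \in X^*(M_\tx{ab})$ is by Definition \ref{dfn:rmab} exactly the sum of the roots in that orbit, i.e. $\bar\alpha' = \sum_{f(\alpha)=\alpha'} r^*(\bar\alpha)$ since $r^*$ is the inclusion.

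First I would record that this places us precisely in the setting of \S\ref{sub:func}: two tori $S$ and $M_\tx{ab}$, the map $r : S \to M_\tx{ab}$, the two finite admissible $\Sigma$-sets $R_S$ and $R(M_\tx{ab},G)$ with their maps to the respective character modules, and the surjective $\Sigma$-equivariant $f : R_S \to R(M_\tx{ab},G)$ satisfying \eqref{eq:func_st}. Then Construction \ref{cns:func_t} followed by Construction \ref{cns:homcover} produces a canonical lift $S(F)_{R_S} \to M_\tx{ab}(F)_\pm$ of $r : S(F) \to M_\tx{ab}(F)$; concretely, Construction \ref{cns:func_t} reduces to a single torus by interposing the torus $M_\tx{ab}$ with $R_{M_\tx{ab}} = R(M_\tx{ab},G)$ pulled back along $r^*$, and then Construction \ref{cns:homcover} gives the cover map. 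Now by definition $S(F)_\pm = S(F)_{R_S}$ and $M(F)_\pm$ is the fibre product $M(F) \times_{M_\tx{ab}(F)} M_\tx{ab}(F)_\pm$, so the pair of maps $S(F)_\pm \to S(F) \to M(F)$ and $S(F)_\pm \to M_\tx{ab}(F)_\pm$ — which agree after projecting to $M_\tx{ab}(F)$, since $r$ is the composition $S \to M \to M_\tx{ab}$ — induce by the universal property of the fibre product a homomorphism $S(F)_\pm \to M(F)_\pm$. This map restricts to the identity on $\{\pm 1\}$ and covers $S(F) \to M(F)$, so it is the desired lift.

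The only real content beyond quoting the earlier constructions is verifying the hypothesis \eqref{eq:func_st}, and this is immediate from the way the map $R(M_\tx{ab},G) \to X^*(M_\tx{ab})$ was defined (sum over the orbit) together with the fact that $X^*(M_\tx{ab}) \hookrightarrow X^*(S)$ is literally the inclusion of the $\Omega(S,M)$-invariants, so no adjustment of gauges or of the roots is needed. I would also note, for the word ``natural'', that the lift is independent of auxiliary choices: Lemma \ref{lem:rmab}(3) shows $R_S$ and its map to $X^*(M_\tx{ab})$ are canonical (independent of the conjugating element), and Construction \ref{cns:homcover} is choice-free, so the composite $S(F)_\pm \to M_\tx{ab}(F)_\pm$ and hence $S(F)_\pm \to M(F)_\pm$ is canonical. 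I do not anticipate a genuine obstacle here — the potential pitfall, already flagged in \S\ref{sub:func}, is the temptation to use the obvious square (a map $R(M_\tx{ab},G) \to X^*(S)$ commuting with the inclusions), which is \emph{not} what is needed; the correct statement is \eqref{eq:func_st}, and once one uses that, everything follows formally from Constructions \ref{cns:func_t} and \ref{cns:homcover}.
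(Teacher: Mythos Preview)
Your proof is correct and follows essentially the same approach as the paper: verify that the surjection $R(S,G)\sm R(S,M) \to R(M_\tx{ab},G)$ together with $S \to M_\tx{ab}$ satisfies \eqref{eq:func_st}, invoke Constructions~\ref{cns:func_t} and~\ref{cns:homcover} to obtain the lift $S(F)_\pm \to M_\tx{ab}(F)_\pm$, and then use the fibre-product definition of $M(F)_\pm$ to get the desired map. One cosmetic slip: in your displayed relation you write $\bar\alpha' = \sum_{f(\alpha)=\alpha'} r^*(\bar\alpha)$, but $r^* : X^*(M_\tx{ab}) \to X^*(S)$ should be applied to $\bar\alpha'$ rather than to $\bar\alpha$ (which already lives in $X^*(S)$); since $r^*$ is the inclusion this is harmless.
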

\begin{proof}
By definition the diagram
\[ \xymatrix{
	R(M_\tx{ab},G)\ar[r]\ar[d]&R(S,G) \sm R(S,M)\ar[d]\\
	X^*(M_\tx{ab})\ar[r]&X^*(S)
	}
\]
satisfies \eqref{eq:func_st}. Constructions \ref{cns:func_t} and \ref{cns:homcover} produces a lifting $S(F)_\pm \to M_\tx{ab}(F)_\pm$ of the natural map $S(F) \to M_\tx{ab}(F)$. The map $S(F)_\pm \to M_\tx{ab}(F)_\pm \to M_\tx{ab}(F)$ is equal to the map $S(F)_\pm \to S(F) \to M(F) \to M_\tx{ab}(F)$, so these two maps induce the desired map $S(F)_\pm \to M(F)_\pm$.
\end{proof}

\begin{lem} \label{lem:lembfact}
Let $S \subset M$ be a maximal torus. Let $S(F)_\pm$ be the double cover for the finite set $R(S,G) \subset X^*(S)$.
\begin{enumerate}
	\item The canonical $\hat G$-conjugacy class of embeddings $\hat M \to \hat G$ extends canonically to $\hat G$-conjugacy class of $L$-embeddings $^LM_\pm \to {^LG}$. 
	\item The canonical $\hat M$-conjugacy class of embeddings $\hat S \to \hat M$ extends canonically to a $\hat M$-conjugacy class of $L$-embeddings $^LS_\pm \to {^LM_\pm}$. 
	\item The $\hat G$-conjugacy class of $L$-embeddings $^LS_\pm \to {^LG}$ factors as the composition $^LS_\pm \to {^LM_\pm} \to {^LG}$ of the two $L$-embeddings described in the preceding two points.
\end{enumerate}
\end{lem}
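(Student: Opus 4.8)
The proof of Lemma \ref{lem:lembfact} will be an assembly of the constructions and results already in place, and the main work is bookkeeping to make sure the three $L$-embeddings are constructed from compatible choices. First I would fix a $\Gamma$-invariant pinning $(\hat G, \hat T, \hat B, \{X_{\alpha^\vee}\})$ of $\hat G$, let $\hat M$ be the standard Levi in the $\hat G$-conjugacy class determined by $M$, and note that $\hat M$ inherits a pinning $(\hat M, \hat T, \hat B \cap \hat M, \{X_{\alpha^\vee}\}_{\alpha^\vee \in \Delta(\hat T, \hat M)})$. For part (1): the $L$-group $^LM_\pm$ is by definition the push-out of $\hat M \gets Z(\hat M)^\circ \to {^LM_{\mathrm{ab},\pm}}$, and $^LM_{\mathrm{ab},\pm} = \hat{M}_{\mathrm{ab}} \boxtimes_{t_{p'}} \Gamma$ for the gauge $p'$ on $R(M_\tx{ab},G)$ induced by $\hat B$ as in Lemma \ref{lem:rmab}(4). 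By Lemma \ref{lem:lambdalevi} and Corollary \ref{cor:lambdalevi}, the Tits cocycle $t_{p'}$ for $R(M_\tx{ab},G)$ agrees, under $X^*(M_\tx{ab}) = X_*(Z(\hat M)^\circ) \hookrightarrow X_*(\hat T)$, with the cocycle measuring the failure of multiplicativity of $\sigma \mapsto n(\sigma_{M,G}) \rtimes \sigma_G$; hence, exactly as in \S\ref{sub:canemb}, pulling back the extension $N(\hat T, \hat G) \rtimes \Gamma$ along $\sigma \mapsto \sigma_{M,G} \rtimes \sigma$ and restricting to $\hat M$ identifies $^LM_\pm$ with an extension of $\Gamma$ by $\hat M$ lying inside $N(\hat T,\hat G) \rtimes \Gamma$, giving the canonical $\hat G$-conjugacy class of $L$-embeddings $^LM_\pm \to {^LG}$ sending $m \boxtimes \sigma \mapsto m \cdot n(\sigma_{M,G}) \rtimes \sigma$. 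Canonicity follows as in \S\ref{sub:canemb}: any two $\Gamma$-invariant pinnings are $\hat G^\Gamma$-conjugate by \cite[Corollary 1.7]{Kot84}.

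\textbf{Parts (2) and (3).} Part (2) is just part (1) of this lemma applied to the pair $S \subset M$ in place of $M \subset G$ (any maximal torus is a twisted Levi subgroup of $M$, and $R_S = R(S,M)$, the root system of $M$, while $S_\tx{ab}=S$): this yields a canonical $\hat M$-conjugacy class of $L$-embeddings $^LS_\pm \to {^LM_\pm}$, where now $S(F)_\pm$ is built from $R(S,M) \subset X^*(S)$ — but since $^LM_\pm$ and $^LM$ have the same restriction to $\hat M$ and the construction of \S\ref{sub:canemb} applied to $S \subset M$ already uses the pinning of $\hat M$ inherited from $\hat G$, one must check the resulting $L$-embedding has image normalizing $\hat T$ and equals the one built in \S\ref{sub:canemb} for $S \subset M$ up to $\hat M$-conjugacy; this is immediate from \eqref{eq:lembt} with $G$ replaced by $M$. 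For part (3) I would combine the three explicit formulas. With $g \in G$, $m \in M$ chosen so that $\tx{Ad}(g)T_G = T_M$, $\tx{Ad}(g)B_G \supset B_M$, $\tx{Ad}(m)T_M = S$, one has $\sigma_{S,G} = \sigma_{S,M}\cdot\sigma_{M,G}$ in $\Omega(\hat T,\hat G)$ (identifying $\sigma_{S,M}$ with its conjugate $g^{-1}\sigma_{S,M}g$, exactly as in the proof of Proposition \ref{pro:lembfact}). The composition $^LM_\pm \to {^LG}$ of $^LS_\pm \to {^LM_\pm}$ sends $s$ to $n(\sigma_{S,M})\,n(\sigma_{M,G}) \rtimes \sigma_G$, and \cite[Lemma 2.1.A]{LS87} gives $n(\sigma_{S,M})n(\sigma_{M,G}) = n(\sigma_{S,M}\sigma_{M,G}) = n(\sigma_{S,G})$ because $\sigma_{S,M}$ preserves the positive roots in $R(T_G,G) \setminus R(T_G, M^g)$ while $\sigma_{M,G}\sigma_G$ preserves the positive roots in $R(T_G, M^g)$. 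Thus the composite $L$-embedding agrees on the nose with the one from \S\ref{sub:canemb} for $S \subset G$, once we check the Tits cocycles match, which is Construction \ref{cns:func_t} applied to the natural surjection $f : R(S,G) \to R(S,G) \setminus R(S,M) \sqcup R(S,M)$ — or more precisely, the observation that $R(S,G)$ with its gauge $p$ from $\hat B$ decomposes as the $\Sigma$-sets $R(S,M)$ and $R(S,G)\setminus R(S,M)$, the latter of which maps via $f$ to $R(M_\tx{ab},G)$ satisfying \eqref{eq:func_st}, and the Tits cocycle is additive over such a decomposition.

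\textbf{Main obstacle.} The conceptual content is light; the obstacle is purely one of reconciling the two descriptions of $^LS_\pm$ in the statement. In parts (1)--(2) of the lemma, $S(F)_\pm$ is the cover for $R(S,G) \setminus R(S,M)$ (it appears implicitly through $M(F)_\pm$ and $^LM_\pm$), whereas in the displayed factorization and in part (3) it must be the cover for the \emph{full} root system $R(S,G) \subset X^*(S)$. One must therefore verify that the cover for $R(S,G)$ is canonically the fibre product over $S(F)$ of the covers for $R(S,M)$ and for $R(S,G) \setminus R(S,M)$ — which is immediate from the very definition \eqref{eq:dc}, since $S(F)_\pm$ is the push-out along $\prod\{\pm1\} \to \{\pm1\}$ of the pull-back of $\prod_O J_O(F)_\pm$, and the set of $\Sigma$-orbits of $R(S,G)$ is the disjoint union of those of $R(S,M)$ and those of $R(S,G)\setminus R(S,M)$ — and dually that $^LS_\pm$ for $R(S,G)$ is the Baer sum of the two corresponding extensions of $\Gamma$ by $\hat S$, which again follows because $t_p$ for $R(S,G)$ is the product of $t_p$ for $R(S,M)$ and $t_p$ for $R(S,G)\setminus R(S,M)$ for a gauge $p$ compatible with the decomposition. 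Granting this, the map $^LS_\pm^{(R(S,G))} \to {^LM_\pm}$ is the one induced on Baer summands, and the verification that the Langlands correspondences are compatible is exactly Proposition \ref{pro:llcfunc} (via Corollary \ref{cor:llcfunc}) applied to $S \to M_\tx{ab}$ together with the real computation already carried out in Proposition \ref{pro:lembfact}; I would simply cite those.
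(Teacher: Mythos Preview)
Your proposal is essentially correct and uses the same ingredients as the paper: the explicit formulas via the Tits section, the identity $n(\sigma_{S,M})\,n(\sigma_{M,G})=n(\sigma_{S,G})$ from \cite[Lemma 2.1.A]{LS87}, and the multiplicativity of the Tits cocycle $t_{S,G}=t_{S,M}\cdot t_{M,G}$.

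One point of organization is worth noting. For part (2) you begin by saying ``part (2) is just part (1) applied to the pair $S\subset M$,'' but taken literally that produces a map $^LS_\pm^{(R(S,M))}\to{^LM}$, with both source and target wrong: the lemma asks for $^LS_\pm^{(R(S,G))}\to{^LM_\pm}$. You patch this in your ``Main obstacle'' paragraph via a Baer-sum/fibre-product description of the two covers and their $L$-groups, which is correct but circuitous. The paper is more direct: it simply writes down
\[
\hat S\boxtimes_{t_{S,G}}\Gamma\ \longrightarrow\ \hat M\boxtimes_{t_{M,G}}\Gamma,\qquad s\boxtimes\sigma\ \longmapsto\ s\cdot n(\sigma_{S,M})\boxtimes\sigma,
\]
and verifies that this is a group homomorphism using $t_{S,G}=t_{S,M}\cdot t_{M,G}$, which in turn is read off from $n(\sigma_{S,M})\,n(\sigma_{M,G})=n(\sigma_{S,G})$. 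That single equation then also gives part (3) immediately: the composite sends $s\boxtimes\sigma$ to $s\cdot n(\sigma_{S,M})\,n(\sigma_{M,G})\rtimes\sigma_G=s\cdot n(\sigma_{S,G})\rtimes\sigma_G$. So the same key identity you invoke does all the work at once, and no separate Baer-sum argument or appeal to Proposition \ref{pro:llcfunc} is needed here. (Proposition \ref{pro:llcfunc} was needed in Proposition \ref{pro:lembfact} because there one is comparing \emph{$r_p$-cochains} built from $\chi$-data; here there are no $\chi$-data, only Tits sections, so the comparison is on the nose.)
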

\begin{proof}
In this proof we will use the same set-up as in the proof of Proposition \ref{pro:lembfact}, so we refer the reader to the first paragraph of that proof rather than repeating it.

Let $t_{M,G}$, $t_{S,M}$, and $t_{S,G}$ be the Tits cocycles for the maps sending $\sigma$ to $n(\sigma_{M,G})$, $n(\sigma_{S,M})$, and $n(\sigma_{S,G})$, respectively. The embeddings $^LM_\pm \to {^LG}$, $^LS_\pm \to {^LM_\pm}$, and $^LS_\pm \to {^LG}$ are then given by 
\[ \hat M \boxtimes_{t_{M,G}} \Gamma \to \hat G \rtimes \Gamma, m \boxtimes \sigma \mapsto m \cdot n(\sigma_{M,G}) \rtimes \sigma_G, \]
\[ \hat S \boxtimes_{t_{S,G}} \Gamma \to \hat M \boxtimes_{t_{M,G}} \Gamma, s \boxtimes \sigma \mapsto s \cdot n(\sigma_{S,M}) \boxtimes \sigma_G, \]
\[ \hat S \boxtimes_{t_{S,G}} \Gamma \to \hat G \rtimes \Gamma, s \boxtimes \sigma \mapsto s \cdot n(\sigma_{S,G}) \rtimes \sigma_G. \]
The first formula is the same as \eqref{eq:tlemb}, but with the 1-cochain $q$ removed. The other two are as in \eqref{eq:lembt}. 

The fact that the first of these is indeed a group homomorphism follows from the computation $t_{M,G}(\sigma,\tau)=(-1)^{\lambda_{\sigma,\tau}}$ with $\lambda_{\sigma,\tau}$ as in \S\ref{sub:tlem} and Lemma \ref{lem:lambdalevi}. This embedding depends only on the choice of pinning of $\hat G$, and since two pinnings are $\hat G$-conjugate, so are the resulting $L$-embeddings. The fact that the third is a group homomorphism was already discussed in \S\ref{sub:canemb}. The fact that the second is a group homomorphism follows from the same argument combined with the observation $t_{S,G}=t_{S,M} \cdot t_{M,G}$. This observation follows from the equation $n(\sigma_{S,M}) \cdot n(\sigma_{M,G}) = n(\sigma_{S,G})$ also recorded in the proof of Proposition \ref{pro:lembfact}. That latter equation also implies that the composition of the first two embeddings equals the third embedding. 
\end{proof}

\begin{fct} \label{fct:levisp}
The Weil forms of $^LM_\pm$ and $^LM$ are non-canonically isomorphic. More precisely, let $(\chi_{\alpha'})$ be a set of $\chi$-data for $R(M_\tx{ab},G)$ and let $r_{M,G}$ be the corresponding $1$-cochain as in \S\ref{sub:tlem}. Then
\[ \hat M \rtimes W \to \hat M \boxtimes_{t_{M,G}} W,\qquad m \rtimes w \mapsto m \cdot r_{M,G}(w) \boxtimes w \]
is an $L$-isomorphism.
\end{fct}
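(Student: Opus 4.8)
The plan is to follow the same template used throughout \S\ref{sub:lgrp} and \S\ref{sub:tlem}: exhibit the stated map, check it is a well-defined homomorphism of extensions, and then check it is an isomorphism because it is the identity on $\hat M$ and covers the identity on $W$. First I would recall from Corollary \ref{cor:lambdalevi} that the Tits cocycle $t_{M,G} = t_p$ associated (via Lemma \ref{lem:lambdalevi}) to the gauge $p'$ on $R(M_\tx{ab},G)$ takes values in $Z(\hat M)^\circ$, so that the twisted product $\hat M \boxtimes_{t_{M,G}} W$ genuinely makes sense as an extension of $W$ by $\hat M$ with $Z(\hat M)^\circ$ the subtorus where the cocycle lives; in particular this is exactly the Weil form of $^LM_\pm$ by the push-out definition $\hat M \gets Z(\hat M)^\circ \to {^LM_{\tx{ab},\pm}}$, since pushing out the extension $\hat S \boxtimes_{t_{p'}} W$ of $W$ by $Z(\hat M)^\circ$ along $Z(\hat M)^\circ \to \hat M$ produces precisely $\hat M \boxtimes_{t_{M,G}} W$.

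Next I would verify that $m \rtimes w \mapsto m \cdot r_{M,G}(w) \boxtimes w$ is a homomorphism. The content here is the cocycle identity: for $v, w \in W$ mapping to $\sigma, \tau \in \Gamma$, the product of the images is $m \cdot r_{M,G}(v) \cdot {}^{\sigma}(n \cdot r_{M,G}(w)) \boxtimes vw$ (using that conjugation by the class of $v$ in the twisted product acts on $\hat M$ through $\sigma_{M,G} \rtimes \sigma_G$, and $r_{M,G}$ is $Z(\hat M)^\circ$-valued hence commutes with everything in $\hat M$), which equals $mn \cdot r_{M,G}(v) \cdot {}^{\sigma} r_{M,G}(w) \boxtimes vw$, and this matches the image of $mn \rtimes vw$ precisely when $r_{M,G}(v) \cdot {}^{\sigma}r_{M,G}(w) = t_{M,G}(\sigma,\tau)^{-1} r_{M,G}(vw)$ — i.e. when the differential of $r_{M,G}$ equals $t_{M,G}$. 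But that is exactly the statement of Corollary \ref{cor:lambdalevi} (via Lemma \ref{lem:lparpm}, applied to the torus $M_\tx{ab}$, the set $R(M_\tx{ab},G)$, the map to $X^*(M_\tx{ab})$, and the gauge $p'$). One also checks the map restricts to the identity on $\hat M$ (take $w=1$, where one may normalize $r_{M,G}(1)=1$) and is compatible with the projections to $W$, so it is a morphism of extensions; a morphism of extensions inducing the identity on both kernel and quotient is automatically an isomorphism.

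I do not expect a serious obstacle here — this is a reformulation of Corollary \ref{cor:lambdalevi} in the same way that the Proposition after Remark \ref{rem:dsplituf} reformulates Theorem \ref{thm:lldc}, and that the Weil-form discussion in \S\ref{sub:lgrp} handles the case of tori. The only point requiring mild care is bookkeeping: confirming that the $Z(\hat M)^\circ$-valued cochain $r_{M,G}$ — which a priori is constructed as a cochain valued in the dual torus of $M_\tx{ab}$, namely $Z(\hat M)^\circ$ — does indeed land in the correct subtorus so that the formula $m \cdot r_{M,G}(w)$ is literally multiplication inside $\hat M$, and that the resulting extension, under the push-out identification, is the one denoted $^LM_\pm$. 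Both are immediate from the definitions in \S\ref{sub:tlem}. I would phrase the proof in two or three sentences: cite Corollary \ref{cor:lambdalevi} for $\partial r_{M,G} = t_{M,G}$ and for $Z(\hat M)^\circ$-valuedness, observe the displayed map is then a morphism of extensions restricting to the identity on $\hat M$, and conclude it is an $L$-isomorphism; the non-canonical dependence on the choice of $\chi$-data is visible from the formula.
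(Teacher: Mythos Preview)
Your proposal is correct and is precisely the argument the paper has in mind; indeed the paper states this as a Fact without proof, treating it as an immediate consequence of Corollary \ref{cor:lambdalevi}, and your write-up simply unpacks that implication (in exactly the way the Proposition after Remark \ref{rem:dsplituf} unpacks the torus case). One cosmetic point: in your displayed cocycle computation you dropped the $t_{M,G}(\sigma,\tau)$ factor from the twisted-product multiplication and then reinserted it as $t_{M,G}(\sigma,\tau)^{-1}$ on the other side, which is fine but makes the sign convention look inconsistent; when you write it up, fix one convention for the multiplication in $\hat M \boxtimes_{t_{M,G}} W$ and carry it through so that the identity $\partial r_{M,G} = t_{M,G}$ from Corollary \ref{cor:lambdalevi} visibly matches.
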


\begin{rem} Assume given a bijection between $L$\-parameters $\varphi : W' \to {^LM}$ and $L$-packets for $M(F)$. Assume further this bijection is compatible with twists by co-central characters. Then one obtains a bijection between $L$\-parameters $\varphi : W' \to {^LM_\pm}$ and $L$-packets of genuine representations of $M(F)_\pm$ as follows: Choose a set of $\chi$-data for $R(M_\tx{ab},G)$. We obtain a genuine character $\chi_M$ of $M_\tx{ab}(F)_\pm$ which we can pull back to a genuine character of $M(F)_\pm$. Via the isomorphism of Fact \ref{fct:levisp} we obtain from a given $\varphi : W' \to {^LM_\pm}$ an $L$-packet $\Pi_\varphi$ on $M(F)$. Tensoring each member of $\Pi_\varphi$ with $\chi_M$ we obtain a genuine representation of $M(F)_\pm$. The set of these forms the $L$-packet $\Pi_\varphi^\pm$ on $M(F)_\pm$. The compatibility of the assumed correspondence for $M$ with co-central characters implies that $\Pi_\varphi^\pm$ depends only on $\varphi$ and not on the chosen $\chi$-data.
\end{rem}

\begin{rem} Assume that there is a correspondence assigning to a pair $(S,\theta)$ of a maximal torus $S \subset G$ and a genuine character $\theta : S(F)_\pm \to \C^\times$, subject to certain conditions, a representation $\pi$ of $G(F)$, as suggested in \S\ref{sub:char}. Assume further that the same holds for $M$. The factorization of $L$-embeddings of Lemma \ref{lem:lembfact} suggests that there should be a parallel correspondence that assings to a pair $(S,\theta)$, where $S \subset G$ is a maximal torus that lies in $M$ and $\theta$ is a genuine character of $S(F)_\pm$, a genuine representation of the double dover $M(F)_\pm$.

This is easy to see formally. Since we are now dealing with different $\Sigma$-invariant finite sets we will use them as subscripts to distinguish the various double covers. Thus we have a maximal torus $S \subset M$ and a genuine character $\theta : S(F)_{R(S,G)} \to \C^\times$ and want to assign to them a genuine representation $\pi_\pm$ of $M(F)_{R(M_\tx{ab},G)}$. Such representations are classified as follows. Choose a genuine character $\chi : M_\tx{ab}(F)_{R(M_\tx{ab},G)} \to \C^\times$ and inflate it to $M(F)_{R(M_\tx{ab},G)}$. Then $\pi := \pi_\pm \otimes \chi^{-1}$ is a representation inflated from $M(F)$. The pair $(\pi,\chi)$ determines $\pi_\pm$, but is not determined by it. Another pair determining the same $\pi_\pm$ is of the form $(\pi\otimes\delta,\chi\delta^{-1})$ for a character $\delta : M_\tx{ab}(F) \to \C^\times$. Now say that $\pi$ corresponds to $(S,\theta_0)$ with $\theta_0 : S(F)_{R(S,M)} \to \C^\times$ genuine. Then $\pi_\pm$ is determined, and conversely determines, the torus $S$ and a geniune character of the pull-back of
\[ S(F)_{R(S,M)} \to S(F) \to M_\tx{ab}(F) \from M_\tx{ab}(F)_{R(M_\tx{ab},G)}. \]
This pull-back is an extension of $S(F)$ by $\{\pm 1\}^2$ and a genuine character is one that maps both $(-1,1)$ and $(1,-1)$ to $-1$. According to Fact \ref{fct:pullbackfunc} this pull-back is canonically isomorphic to the pull-back of
\[ S(F)_{R(S,M)} \to S(F) \from S(F)_{R(S,G) \sm R(S,M)}, \]
whose quotient by the diagonal copy of $\{\pm 1\}$ equals $S(F)_{R(S,G)}$. 
\end{rem}

\subsection{Examples} \label{sub:exa}

We will now show by example that the maps $R(M_\tx{ab},G) \to X^*(M_\tx{ab})$ and $R(M_\tx{ab},G) \to R(Z(M)^\circ,G)$ need not be injective.

Consider the group $G=\tx{PSp}_4$. Note that $Z(M)$ is connected. The root system $R(S,G) \subset X^*(S)=\Z^2$ consists of $\{e_1,e_2,e_2-e_1,e_2+e_1\}$ and their negatives. The simple roots for this choice of positive roots are $\{e_1,e_2-e_1\}$.

Let first $R(S,M)=\{e_1,-e_1\}$. Then $X^*(Z(M))=\Z$ and the map $X^*(S) \to X^*(Z(M))$ sends $e_1$ to $0$ and $e_2$ to $1$. In particular, $R(Z(M))=\{1,-1\}$. The action of $\Omega(S,M)$ on the positive part of $R(S,G) \sm R(S,M)$ has the orbits $\{e_2\}$ and $\{e_2-e_1,e_2+e_1\}$. These orbits and their negatives are the elements of $R(M_\tx{ab},G)$ and we see that the map $R(M_\tx{ab},G) \to R(Z(M),G)$ sends both positive orbits to $1$, hence is not injective. On the other hand, $X^*(M_\tx{ab})=\<e_2\>$ and the map $R(M_\tx{ab},G) \to X^*(M_\tx{ab})$ sends $\{e_2\}$ to $e_2$ and $\{e_2-e_1,e_2+e_1\}$ to $2e_2$, and is thus injective.

Let now $R(S,M)=\{e_2-e_1,e_1-e_2\}$. Then $X^*(Z(M))=\Z$ and the map $X^*(S) \to X^*(Z(M))$ sends both $e_1$ and $e_2$ to $1$. Now $R(Z(M))=\{1,2,-1,-2\}$. The action of $\Omega(S,M)$ on the positive part of $R(S,G) \sm R(S,M)$ has the orbits $\{e_1,e_2\}$ and $\{e_1+e_2\}$. These orbits and their negatives are the elements of $R(M_\tx{ab},G)$ and we see that the map $R(M_\tx{ab},G) \to R(Z(M),G)$ sends $\{e_1,e_2\}$ to $1$ and $\{e_1+e_2\}$ to $2$, thus it is injective. On the other hand, $X^*(M_\tx{ab})=\<e_1+e_2\>$ and the map $R(M_\tx{ab},G) \to X^*(M_\tx{ab})$ sends both $\{e_1,e_2\}$ and $\{e_1+e_2\}$ to $e_1+e_2$, and is thus not injective.

\begin{appendices}

\section{Frobenius reciprocity} \label{app:frob}

Let $\Delta \subset \Gamma$ be two finite groups, $X$ a $\Z[\Delta]$-module, and $Y$ and $\Z[\Gamma]$-module. We consider
\[ \tx{Ind}_\Delta^\Gamma X = \{f : \Gamma \to X| f(\delta\gamma)=\delta f(\gamma)\}. \]
We have the $\Delta$-equivariant maps
\[ X \to \tx{Ind}_\Delta^\Gamma X \to X, \]
the first sending $x \in X$ to
\[ f_x(\gamma) = \begin{cases}
\gamma x&,\gamma \in\Delta\\
0&,\gamma \notin \Delta
\end{cases}\]
and the second sending $f$ to $f(1)$. Their composition is the identity. In particular, the first is injective and the second surjective.

The first Frobenius reciprocity isomorphism is
\[ \tx{Hom}_\Gamma(Y,\tx{Ind}_\Delta^\Gamma X) \leftrightarrow \tx{Hom}_\Delta(Y,X). \]
An element $\Phi$ on the left is related to an element $\varphi$ on the right by $\varphi(y)=\Phi(y)(1)$ and $\Phi(y)(\gamma)=\varphi(\gamma y)$.

The second Frobenius reciprocity isomorphism is
\[ \tx{Hom}_\Gamma(\tx{Ind}_\Delta^\Gamma X,Y) \leftrightarrow \tx{Hom}_\Delta(X,Y). \]
An element $\Phi$ on the left is related to an element $\varphi$ on the right by $\varphi(x)=\Phi(f_x)$ and $\Phi(f)=\sum_{\tau \in \Delta \lmod \Gamma} \tau^{-1}\varphi(f(\tau))$. Note that each summand depends only on the coset $\tau$, not an individual representative of it, and is thus well-defined.

In checking that the maps of the second Frobenius reciprocity isomorphism are inverse to each other, the following identity is used: $\gamma^{-1}f_{f(\gamma)} = f \cdot \tb{1}_{\Delta  \gamma}$.

There is a different presentation for $\tx{Ind}_\Delta^\Gamma X$. Choose a set of representatives $(\gamma_1,\dots,\gamma_k)$ for $\Delta \lmod \Gamma$. Then
\[ \tx{Ind}_\Delta^\Gamma X \to X^{[\Gamma:\Delta]},\qquad f \mapsto (f(\gamma_1),\dots,f(\gamma_k)) \]
is an isomorphism of $\Z$-modules. To see what $\Gamma$-action on the target makes it equivariant, introduce the function
\[ \{1,\dots,k\} \times \Gamma \to \Delta \times \{1,\dots,k\},\qquad (i,\gamma) \mapsto (\gamma_i(\gamma),i_\gamma),\qquad \gamma_i \cdot \gamma = \gamma_i(\gamma)\cdot \gamma_{i_{\gamma}} \]
and then see that the action of $\Gamma$ on the right is by
\[ \gamma \cdot (x_1,\dots,x_k) = (\gamma_1(\gamma) \cdot x_{1_\gamma},\dots,\gamma_k(\gamma) \cdot x_{k_\gamma}).\]

\section{Comparison with groups of type $L$} \label{app:glt}

In this appendix we review and reinterpret material from a set of informal notes called ``groups of type $L$'' by Gross, that were created in 2008 and circulated privately. The aim is to generalize the construction of Adams-Vogan of the ``universal extension'' of a real torus, its various $E$-groups and the associated local Langlands correspondences \cite[\S5]{AV92}. We will see that this works under a rather strict assumption on the torus and that some important special cases which satisfy this assumption can be used to shed new light on the computations of Langlands-Shelstad that we have used in this paper.

\subsection{Groups of type $L$} \label{sub:gtl}

We begin by reviewing the material from Gross' notes. This material has also been reviewed in \cite[\S5]{AdRo16}. 

Consider an algebraic torus $S$ defined over $F$ with complex dual torus $\hat S$. Let $E/F$ be the splitting extension of $S$. In this subsection we make the following 
\begin{asm} \label{asm:norm}
The norm map $S(E) \to S(F)$ is surjective.	
\end{asm}
This leads to the exact sequence
\begin{equation} \label{eq:gtlcover}
1 \to \hat H^{-1}(\Gamma_{E/F},S(E)) \to \frac{S(E)}{IS(E)} \to S(F) \to 1, 	
\end{equation}
where $IS(E)=\<x^{-1}\sigma(x)|x \in S(E),\sigma \in \Gamma_{E/F}\>$ is the augmentation submodule of $S(E)$ for the action of $\Gamma_{E/F}$. 

Gross' observation is that characters of $S(E)/IS(E)$ are parameterized by ``Langlands parameters'' valued in the various extensions of $\Gamma_{E/F}$ by $\hat S$. More precisely, we have the following

\begin{pro} \label{pro:gtl}
\begin{enumerate}
	\item There is a natural bijection between $H^2(\Gamma_{E/F},\hat S)$ and the Pontryagin dual of the finite abelian group $\hat H^{-1}(\Gamma_{E/F},S(E))$.
	\item Let $c : \hat H^{-1}(\Gamma_{E/F},S(E)) \to \C^\times$ be a character corresponding to an element of $H^2(\Gamma_{E/F},\hat S)$ and let 
	\begin{equation} \label{eq:gtl}
	1 \to \hat S \to \mc{S} \to \Gamma_{E/F} \to 1.
	\end{equation}
	be an extension corresponding to that element. There is a natural bijection between the set of $\hat S$-conjugacy classes of continuous $L$\-homomorphisms $W \to \mc{S}$ and the set of $c$-genuine continuous characters of $S(E)/IS(E)$.
\end{enumerate}
\end{pro}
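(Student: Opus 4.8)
The plan is to prove the two parts of Proposition \ref{pro:gtl} in sequence, using class field theory and the Tate--Nakayama framework. Both statements live over the finite Galois extension $E/F$, so the cohomology groups involved are those of the finite group $\Gamma_{E/F}$ with coefficients in the (finite, by Assumption \ref{asm:norm} and finiteness of $\hat H^{-1}$) module $\hat H^{-1}(\Gamma_{E/F},S(E))$ and the complex torus $\hat S$.

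\textbf{Part (1): the pairing.} First I would exhibit a perfect pairing between $H^2(\Gamma_{E/F},\hat S)$ and $\hat H^{-1}(\Gamma_{E/F},S(E))=\hat H^{-1}(\Gamma_{E/F},X_*(S)\otimes E^\times)$. Since $\hat S=X^*(S)\otimes\C^\times=X_*(\hat S)\otimes\C^\times$ and $\C^\times$ is divisible, $H^2(\Gamma_{E/F},\hat S)$ is torsion and one has $H^2(\Gamma_{E/F},X_*(\hat S)\otimes\C^\times)\cong H^3(\Gamma_{E/F},X_*(\hat S)\otimes\Z)=H^3(\Gamma_{E/F},X^*(S))$ via the exponential sequence $0\to\Z\to\C\to\C^\times\to1$ (the $\C$-coefficient groups vanish). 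On the other side, cup product with the fundamental class $u_{E/F}\in H^2(\Gamma_{E/F},E^\times)$ of local class field theory gives the Tate--Nakayama isomorphism $\hat H^{q}(\Gamma_{E/F},X_*(S))\xrightarrow{\sim}\hat H^{q+2}(\Gamma_{E/F},X_*(S)\otimes E^\times)$ for all $q$; taking $q=-3$ identifies $\hat H^{-1}(\Gamma_{E/F},S(E))$ with $\hat H^{-3}(\Gamma_{E/F},X_*(S))$. Finally Tate (or Poincaré) duality for the finite group $\Gamma_{E/F}$ pairs $\hat H^{3}(\Gamma_{E/F},X^*(S))$ perfectly with $\hat H^{-3}(\Gamma_{E/F},X_*(S))$ using the duality $X^*(S)\times X_*(S)\to\Z$ and the fact that $\hat H^0(\Gamma_{E/F},\Z)\cong\Z/|\Gamma_{E/F}|\Z$. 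Composing these three canonical isomorphisms yields the asserted natural bijection. I would take care that ``natural'' here means functorial in $S$ and compatible with change of $E$, which follows because each ingredient is.

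\textbf{Part (2): the Langlands correspondence.} Fix $c$ and a corresponding extension $\mathcal S$ as in \eqref{eq:gtl}. An $L$-homomorphism $\phi:W\to\mathcal S$ over $\Gamma$ factors through $W_{E/F}$ (since $\hat S$ is connected and $W_{E^s/E}=[W_E,W_E]^{\mathrm{cl}}\cdots$, more precisely $\phi$ is trivial on a subgroup of finite index cutting out $E$ — one checks the restriction to $W_E$ lands in $\hat S$ and, being a continuous hom of $W_E^{\mathrm{ab}}=E^\times$ to a complex torus, determines everything); so I would first reduce to studying $\hat S$-conjugacy classes of splittings-up-to-$\hat S$ of $1\to\hat S\to\mathcal S_{W}\to W_{E/F}\to1$, where $\mathcal S_W$ is the pullback along $W_{E/F}\to\Gamma_{E/F}$. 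Such $\phi$ exist exactly when the class of $\mathcal S_W$ in $H^2(W_{E/F},\hat S)$ vanishes — which it does, because $W_{E/F}$ has cohomological dimension making $H^2(W_{E/F},\hat S)\hookrightarrow$ something killed by the relevant fundamental-class computation; and the set of $\hat S$-conjugacy classes of such $\phi$ is then a torsor under $H^1(W_{E/F},\hat S)$. By Langlands' duality for tori (the case $R=\varnothing$, or directly class field theory), $H^1(W_{E/F},\hat S)$ is canonically the character group of $S(F)$. So the set of $\hat S$-conjugacy classes of $L$-homs $W\to\mathcal S$ is a torsor under $\widehat{S(F)}$. Dually, the set of $c$-genuine characters of $S(E)/IS(E)$ is, by \eqref{eq:gtlcover}, a torsor under the characters of $S(F)$ (restriction to $\hat H^{-1}$ must equal $c$; two such differ by a character of $S(F)$). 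To upgrade the torsor-isomorphism to a canonical bijection, I would pin down base points compatibly: choose an auxiliary finite-order set of data (e.g. a splitting of $\mathcal S_W$ after enlarging $\hat S$ to a torus with torsion-free cocharacters, mirroring Remark \ref{rem:dsplituf} and the existence of finite-order $\chi$-data) giving a base $L$-parameter, and match it with the genuine character built from the same data via the explicit cocycle formula of Lemma \ref{lem:zsp}/Definition \ref{dfn:rp}; then transport along the two torsor structures. The compatibility of the two torsor actions under $\widehat{S(F)}$ is exactly the multiplicativity in Theorem \ref{thm:lldc}(2).

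\textbf{Main obstacle.} The routine parts are the Tate--Nakayama and duality identifications in (1); the genuinely delicate step is part (2)'s canonicity — showing the constructed bijection is independent of the auxiliary trivializing data and agrees with the pairing of (1) in the sense that the class $c$ on $\hat H^{-1}$ is precisely what selects which extension $\mathcal S$ the parameters live in. Concretely I expect the hard work to be a diagram chase relating the connecting map in $\mathrm{Hom}(\hat H^{-1}(\Gamma_{E/F},S(E)),\C^\times)$ coming from \eqref{eq:gtlcover} to the obstruction class of $\mathcal S_W$ in $H^2(W_{E/F},\hat S)$, i.e. showing the ``restriction--inflation'' square commutes with signs. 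This is the analogue, in the $R=\varnothing$/Assumption \ref{asm:norm} setting, of the computation \cite[Lemma 2.5.A]{LS87} that identifies $\partial r_p$ with $t_p$; here it should come out more cleanly because no gauges or $\chi$-data are needed, only the fundamental class $u_{E/F}$.
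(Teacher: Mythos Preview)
Your approach to Part (1) via the exponential sequence, Tate--Nakayama, and cup-product duality is valid but different from the paper's. The paper instead establishes the short exact sequence
\[ 0 \to H^1(W_F,\hat S) \to H^1(W_E,\hat S)^{\Gamma_{E/F}} \to H^2(\Gamma_{E/F},\hat S) \to 0 \]
as a piece of inflation--restriction (right-exactness is \cite[Lemma 4]{Lan79}, left-exactness is dual to Assumption \ref{asm:norm}), and then observes that the first two terms are the Pontryagin duals of $S(F)$ and $S(E)/IS(E)$; this exhibits the sequence as the dual of \eqref{eq:gtlcover} and gives Part (1) immediately. The paper in fact records your Tate--Nakayama route in a remark following the proof and leaves the comparison of the two resulting isomorphisms as an exercise --- so be warned that your pairing may differ from the one the paper calls ``natural'' by a normalization.

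For Part (2), your torsor argument is correct in spirit, but the base-point problem you flag as the main obstacle simply does not arise in the paper's approach. A class $\varphi_E \in H^1(W_E,\hat S)^{\Gamma_{E/F}}$ \emph{is} a character of $S(E)/IS(E)$; extending $\varphi_E$ to a cochain $\varphi_F$ on $W_F$ and taking its differential produces a $2$-cocycle $z$ on $\Gamma_{E/F}$ whose class is the transgression of $\varphi_E$, and $w \mapsto \varphi_F(w)\boxtimes w$ is then an $L$-homomorphism into $\hat S \boxtimes_z \Gamma_{E/F}$. So the bijection is direct and canonical, with no need for auxiliary $\chi$-data, finite-order splittings, or any appeal to Lemma \ref{lem:zsp} or Definition \ref{dfn:rp}. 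What you identified as a delicate diagram chase is, in this framework, the tautology that the transgression of $\varphi_E$ is the class of the extension through which its extension to $W_F$ factors.
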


\begin{rem} The extension \eqref{eq:gtl} is is what Gross calls a \emph{group of type $L$}, and what Adams-Vogan call an $E$-group in \cite[Definition 5.9]{AV92}. 
\end{rem}

\begin{proof}
We first claim that the sequence
\begin{equation} \label{eq:gtlir}
0 \to H^1(W_F,\hat S) \to H^1(W_E,\hat S)^{\Gamma_{E/F}} \to H^2(\Gamma_{E/F},\hat S) \to 0	
\end{equation}
is exact, where the first map is restriction, and the second is transgression. Indeed, this is a piece of the inflation\-restriction exact sequence for $W_E \subset W_F$, which gives exactness in the middle. Exactness on the right is due to \cite[Lemma 4]{Lan79}. Exactness on the left is due inflation being dual to the norm $S(E) \to S(F)$, which is assumed surjective.

Since the first map of \eqref{eq:gtlir} is dual to the second map in \eqref{eq:gtlcover}, we conclude that the third term in \eqref{eq:gtlir} is dual to the first term in \eqref{eq:gtlcover}. This proves the first part of the proposition.

The second part of the proposition follows immediately from the fact that the fiber of the second map in \eqref{eq:gtlir} over a given class in $H^2(\Gamma_{E/F},\hat S)$ is in natural bijection with the set of $L$\-homomorphisms $W_F \to \mc{S}$, where $\mc{S}$ is the extension \eqref{eq:gtl} corresponding to that class. We can be more explicit: Given a continuous $\Gamma_{E/F}$-equivariant homomorphism $\varphi_E : W_E \to \hat S$ choose arbitrarily an extension of $\varphi_E$ to a continuous map $\varphi_F : W_F \to \hat S$ s.t. $\varphi_F(ww')=\varphi_E(w)\varphi_F(w')$ for $w \in W_E$ and $w' \in W_F$.  Let $z \in Z^2(\Gamma_{E/F},\hat S)$ be its differential. It represents the image of $\varphi_E$ under the transgression map. Then $w \mapsto \varphi_F(w) \boxtimes w$ is a homomorphism from $W_F$ to the twisted product $\hat S \boxtimes_z \Gamma_{E/F}$, which is an extension of $\Gamma_{E/F}$ by $\hat S$ in the isomorphism class specified by the cohomology class of $z$. Regarding well-definedness, please note the following remark.
\end{proof}

\begin{rem} A subtle but important point emphasized in \cite{AV92} is that the $L$-group of a torus $S$ is not simply a split extension of $\hat S$ by $\Gamma$, but rather such an extension equipped with an $\hat S$-conjugacy class of splittings. Similarly, an $E$-group in their terminology is not just an extension of $\hat S$ by $\Gamma$, but one equipped with a $\hat S$-conjugacy class of set-theoretic splittings. Our assumption on the surjectivity of the norm $S(E) \to S(F)$ is equivalent to the vanishing of $\hat H^0(\Gamma_{E/F},S(E))$, which by \cite{Tate66} is isomorphic to $\hat H^{-2}(\Gamma_{E/F},X_*(S))$, which is dual to $H^2(\Gamma_{E/F},X_*(\hat S))$, which is isomorphic to $H^1(\Gamma_{E/F},\hat S)$. Therefore, there is a unique $\hat S$-conjugacy class of set-theoretic splittings for any extension of $\Gamma_{E/F}$ by $\hat S$. Equivalently, specifying an isomorphism class of an extension is the same as specifying the extension itself up to $\hat S$-conjugacy.
\end{rem}

\begin{rem}
There is an alternative way to specify an isomorphism between $H^2(\Gamma_{E/F},\hat S)$ and the dual of $\hat H^{-1}(\Gamma_{E/F},S(E))$, namely by composing the isomorphism $H^2(\Gamma_{E/F},\hat S) \to H^3(\Gamma_{E/F},X_*(\hat S))$ coming from the exponential sequence with the duality between $H^3(\Gamma_{E/F},X_*(\hat S))$ and $\hat H^{-3}(\Gamma_{E/F},X_*(S))$ and the isomorphism $\hat H^{-3}(\Gamma_{E/F},X_*(S)) \to \hat H^{-1}(\Gamma_{E/F},S(E))$ due to \cite{Tate66}. We leave it to the reader to compare this identification with the one of Proposition \ref{pro:gtl}.
\end{rem}

\begin{rem} Consider the case $F=\R$. Given a torus $S$ we have the exponential exact sequence
\[ 0 \to X_*(S) \to X_*(S)\otimes_\Z\C \to X_*(S) \otimes_\Z \C^\times \to 1 \]
which realizes $X_*(S) \otimes_\Z \C=\tx{Lie}(S)$ as the universal cover of $S(\C)$ and $X_*(S)$ as the fundamental group. Adams and Vogan consider \cite[(5.3b)]{AV92}
\[ S(\R)^\sim = \{x \in X_*(S) \otimes_\Z\C| \exp(2\pi i x) \in S(\R) \} / (1-\sigma)X_*(S) \]
which fits into the exact sequence
\[ 0 \to \frac{X_*(S)}{(1-\sigma)X_*(S)} \to S(\R)^\sim \to S(\R) \to 1 \]
that they call ``the universal extension'' of $S(\R)$. Note that we are using the map $\exp(2\pi ix)$, while they are using the map $\exp(x)$, which accounts for the slight disagreement of notation. They show that an extension  
\[ 1 \to \hat S \to \mc{S} \to \Gamma \to 1 \]
equipped with a set-theoretic splitting $s$ gives a character $c_s$ of $X_*(S)/(1-\sigma)X_*(S)$, and $c_s$-genuine characters of $S(\R)^\sim$ are in 1-1 correspondence with $L$-parameters valued in $\mc{S}$.

If $S$ is an anisotropic torus then the norm map $S(\C) \to S(\R)$ is surjective, so Proposition \ref{pro:gtl} can be applied. Furthermore $\hat H^{-1}(\Gamma,S)=H^1(\Gamma,S)=H^{-1}(\Gamma,X_*(S))=H_0(\Gamma,X_*(S))=X_*(S)/(1-\sigma)X_*(S)$, where we have used the periodicity of cohomology for cyclic groups, the Tate isomorphism, and the anisotropicity of $S$. In this way Proposition \ref{pro:gtl} recovers the construction of Adams-Vogan.

If $S$ is split then the norm map $S(\C) \to S(\R)$ is no longer surjective, so Proposition \ref{pro:gtl} does not apply. We have $H^2(\Gamma,\hat S)=0$, so up to equivalence there is no non-trivial extension of $\Gamma$ by $\hat S$. But if we consider an actual such extension equipped with a set-theoretic splitting we obtain not just an element of $H^2(\Gamma,\hat S)$, but an element of $Z^2(\Gamma,\hat S)$. A very special feature of $F=\R$ is that $\Gamma$ is finite cyclic with canonical generator, so there is a canonical element of $Z^2(\Gamma,\Z)$ and cup product with this element gives an isomorphism $Z^0(\Gamma,\hat S) \to Z^2(\Gamma,\hat S)$. Now $Z^0(\Gamma,\hat S)=H^0(\Gamma,\hat S)$ is dual to $H_0(\Gamma,X_*(S))=X_*(S)$ and hence we obtain a character of $X_*(S)$. The universal extension is
\[ 0 \to X_*(S) \to S(\R)^\sim \to S(\R) \to 1. \]
Thus the construction of Adams-Vogan and the construction presented here diverge. 

It is not clear if the construction of Adams-Vogan can be performed in the $p$-adic case. The crucial isomorphism $Z^2(\Gamma,\hat S) \to Z^0(\Gamma,\hat S)$, which relied on the fact that $\Gamma$ is cyclic with canonical generator, is no longer given.
\end{rem}

\begin{exa} \label{exa:nns}
We give here an example of an anisotropic torus over $F=\Q_p$ for which the norm map $S(E) \to S(F)$ is not surjective, where $E/F$ is the splitting extension. 

Consider $F=\Q_p$ with $4|(p-3)$. The unramified quadratic extension of $F$ is generated by a fourth root of unity $\zeta_4$ . Choose a fourth root $\varpi$ of $p$ and let $E_\pm=\Q_p(\sqrt{p})$, $E_+=\Q_p(\varpi)$ and $E=\Q_p(\varpi,\zeta_4)$. Then $E_\pm/F$ is ramified quadratic, $E_+/E_\pm$ is also ramified quadratic, and $E$ is the normal closure of $E_+$. Let $S$ be the restriction of scalars from $E_\pm$ to $F$ of the one-dimensional anisotropic torus defined over $E_\pm$ and split over $E_+$. Then $S$ is anisotropic and its splitting extension is $E/F$.

The map $x/\bar x$ from $E_+$ to $E_+^1$ is surjective and the preimage of $-1$ consists of elements of $E_+$ of odd valuation. No such element is in the image of the norm map $E \to E_+$. Since $S(E)$ is a product of copies of $E^\times$ and the norm map $S(E) \to S(F)$ is made of compositions of the norm map $E \to E_+$ and the map $E_+ \to E_+^1$, we see that $-1 \in E_+^1=S(F)$ is not in the image of the norm map $S(E) \to S(F)$.
\end{exa}

\subsection{One-dimensional anisotropic tori} \label{sub:1d}

In this subsection we consider the one-dimensional anisotropic torus $S$ defined over $F$ and split over a separable quadratic extension $E/F$, i.e. $S=\tx{Res}^1_{E/F}\mb{G}_m$. We have $\hat S=\C^\times_{(-1)}$. The Tate periodicity isomorphism
\[ H^2(\Gamma_{E/F},\C^\times_{(-1)}) = \hat H^0(\Gamma_{E/F},\C^\times_{(-1)}) = \{\pm 1\} \]
shows that there are exactly two isomorphism classes of extensions of $\Gamma_{E/F}$ -- the split extension, i.e. the $L$-group of $S$, and a second extension. In fact, the second extension has a canonical 2-cocycle, namely $z \in Z^2(\Gamma_{E/F},\C^\times_{(-1)})$ specified by $z(\sigma,\sigma)=-1$, where $\sigma \in \Gamma_{E/F}$ is the non-trivial element. Therefore we have the canonical representative $\C^\times_{(-1)} \boxtimes_z \Gamma_{E/F}$ of that isomorphism class.

The torus $S$ satisfies $\hat H^0(\Gamma_{E/F},S(E))=0$, i.e. Assumption \ref{asm:norm}. At the same time we have $\hat H^{-1}(\Gamma_{E/F},S(E))=F^\times/N_{E/F}(E^\times)$. Therefore the extension \eqref{eq:gtlcover} becomes, via push-out under $\kappa : F^\times/N_{E/F}(E^\times) \to \{\pm 1\}$, the double cover
\begin{equation} \label{eq:dcelem}
1 \to \{\pm 1\} \to E^\times/N_{E/F}(E^\times) \to E^1 \to 1, 	
\end{equation}
where $E^1 = \tx{Ker}(N_{E/F})$ and the map $E^\times \to E^1$ is given by $x \mapsto x/\sigma x$. According to Proposition \ref{pro:gtl} there is a canonical bijection between $L$-parameters valued in $\C^\times_{(-1)} \boxtimes_z \Gamma_{E/F}$ and genuine characters of this double cover.

Let us examine this bijection more carefully. A genuine character of this double cover is the same as a character $\theta$ of $E^\times$ that extends $\kappa$. In order to obtain the corresponding $L$\-parameter we follow the proof of Proposition \ref{pro:gtl}. Choose an element $v_1 \in W_{E/F} \sm E^\times$. Note that $v_1^2 \in F^\times \sm N_{E/F}(E^\times)$. Extend $\theta$ to a map $W_{E/F} \to \C^\times$ by $\theta(ev_1)=\theta(e)$ for $e \in E^\times$. The differential of this map sends $(\sigma,\sigma) \in \Gamma_{E/F}$ to $\theta(v_1^2)^{-1}=\kappa(v_1^2)^{-1}=-1$. Therefore the $L$\-parameter for $\theta$ is the map
\begin{equation} \label{eq:par1}
W_{E/F} \to \C^\times \boxtimes_z \Gamma_{E/F},\quad e \mapsto \theta(e)\boxtimes 1,\quad ev_1 \mapsto \theta(e)\boxtimes \sigma.
\end{equation}

\subsection{Induced one-dimensional ansitropic tori} \label{sub:i1dat}

In this subsection we consider the restriction of scalars of a one-dimensional anisotropic torus. Thus let $E_\pm/F$ be a finite separable extension and let $E_+/E_\pm$ be a quadratic separable extension. Let $S=\tx{Res}_{E_\pm/F}\tx{Res}^1_{E_+/E_\pm}\mb{G}_m$. We have $\hat S = \tx{Ind}_{\Gamma_{E_\pm}}^{\Gamma_F} \C^\times_{(-1)}$ and $X^*(S)=\tx{Ind}_{\Gamma_{E_\pm}}^{\Gamma_F}\Z_{(-1)}$. 

The splitting field of $S$ is the normal closure $E$ of $E_+$. The norm map $S(E) \to S(F)$ need not be surjective any more as we saw in Example \ref{exa:nns} so this situation is not covered by \S\ref{sub:gtl}. But we can extrapolate the results of \S\ref{sub:1d}. First, we still have the canonical double cover of $S(F)=E_+^1$, namely $E_+^\times/N_{E_+/E_\pm}(E_+^\times)$, but it is not related any more to $S(E)$. Second, we can define a suitable $L$-group for that double cover and obtain a local correspondence. This is conceptually rather simple -- it just involves Shapiro's lemma, but since we will be dealing with co-chains rather than cohomology classes we must be more careful.

We begin by noting that we still have a canonical class in $H^2(\Gamma,\hat S)$, becase the latter is canonically isomorphic to $H^2(\Gamma_{E_\pm},\C^\times_{(-1)})$ which contains the class inflated from the unique non-trivial class in $H^2(\Gamma_{E_\pm/E_+},\C^\times_{(-1)})$. But there is no canonical 2-cocycle representing it. Indeed, while the Shapiro isomorphism $H^2(\Gamma_{E_\pm},\C^\times_{(-1)}) \to H^2(\Gamma,\hat S)$ is canonical, a presentation for it $Z^2(\Gamma_{E_\pm},\C^\times_{(-1)}) \to Z^2(\Gamma,\hat S)$ on the level of cocycles is not. It depends on a choice, which we now discuss.

There is a canonical element $e \in X^*(S)$: it is the unique element supported on $\Gamma_{E_\pm}$ and sending the identity in the group to $1 \in \Z$. Its $\Gamma$-orbit $R$ is symmetric. The following choices are equivalent:
\begin{enumerate}
	\item A section $s : \Gamma_{E_\pm} \lmod \Gamma_F \to \Gamma_{E_+} \lmod \Gamma_F$ of the natural projection;
	\item a gauge $p : R \to \{\pm 1\}$.
\end{enumerate}

The equivalence is by $p(s(\sigma)^{-1}e)=+1$. Once this choice is made, we compose $s$ by an arbitrary section $\Gamma_{E_+}\lmod \Gamma_F \to \Gamma_F$, define $r : \Gamma_F \to \Gamma_{E_\pm}$ by $\sigma=r(\sigma)s(\sigma)$, and obtain according to \cite[Lemma B.3]{KalGRI} the Shapiro map $Z^2(\Gamma_{E_\pm},\C^\times_{(-1)}) \to Z^2(\Gamma,\hat S)$ as
\[ \tx{Sh}\,z(\sigma,\tau)(a) = r(a)z(r(a)^{-1}r(a\sigma),r(a\sigma)^{-1}r(a\sigma\tau)). \]
Its restriction to $Z^2(\Gamma_{E_\pm/E_+},\C^\times_{(-1)})$ does not depend on the choice of section $\Gamma_{E_+} \lmod \Gamma_F \to \Gamma_F$.

We are particularly interested in the image of the canonical element $z \in Z^2(\Gamma_{E_\pm/E_+},\C^\times_{(-1)})$. It is given by a function $\Gamma_{E_\pm} \lmod \Gamma_F \to \{\pm 1\}$. Such a function is determined by its values at elements $a$ with $r(a) \in \Gamma_{E_+}$, equivalently $p(a^{-1}e) =+1$. Let us adopt the notation $\alpha>0$ for $p(\alpha)=+1$. From above formula we see that $\tx{Sh}\,z(\sigma,\tau)(a)=-1$ if and only if $r(a\sigma) \in \Gamma_{E_\pm} \sm \Gamma_{E_+}$ and $r(a\sigma\tau) \in \Gamma_{E_+}$, equivalently  $(a\sigma)^{-1}e <0$ and $(a\sigma\tau)^{-1}e>0$.

We can rewrite $\tx{Sh}\,z(\sigma,\tau)$ by using the element $(-1)^e = e \otimes (-1) \in X^*(S) \otimes_\Z \C^\times = \hat S$, i.e. the function $\Gamma_{E_\pm} \lmod \Gamma_F \to \{\pm 1\}$ sending the trivial coset to $-1$ and all other cosets to $1$. In these terms the above formula evaluates to 
\begin{equation} \label{eq:tpi1d}
\tx{Sh}\,z(\sigma,\tau) = \prod_{\substack{a \in \Gamma_{E_\pm}\lmod \Gamma_F\\ \tx{Sh}\,z(\sigma,\tau)(a)=-1}} (-1)^{\delta_a} = \prod_{\substack{a \in \Gamma_{E_\pm}\lmod \Gamma_F\\a^{-1}e>0\\ (a\sigma)^{-1}e<0\\ (a\sigma\tau)^{-1}e >0}} (-1)^{a^{-1}e} = \prod_{\substack{\alpha \in R\\\alpha>0\\ \sigma^{-1}\alpha<0\\ (\sigma\tau)^{-1}\alpha >0}} (-1)^{\alpha}.	
\end{equation}
We now recognize this as the Tits cocycle of Definition \ref{dfn:tp}. 
\begin{rem} \label{rem:spq}
If we change the choice of gauge, the new version of $\tx{Sh}\,z(\sigma,\tau)$ will of course be cohomologous to the old version. But in fact a stronger statement is true -- there is a natural element of $C^1(\Gamma,\hat S)$ whose coboundary relates the two versions, as observed in \cite[Lemma B.4]{KalGRI}. Specifying the formula of that Lemma to the case at hand we obtain the cochain $s_{p/q}$ of \cite[\S2.4]{LS87}.	
\end{rem}

Keeping a gauge $p$ fixed we define the $L$-group of the double cover of $S(F)$ to be $^LS_\pm = \hat S \boxtimes_{t_p} \Gamma_{E/F}$, where $t_p$ is the image of $z$ under the Shapiro map determined by the gauge $p$. Thus $^LS_\pm$ is an extension of $\Gamma_{E/F}$ by $\hat S$ equipped with a set-theoretic splitting, i.e. an $E$-group for $S$ in the sense of \cite{AV92}. Since the group $H^1(\Gamma_{E/F},\hat S)$ need not vanish any more, specifying this splitting gives additional information. A parameter for $^LS_\pm$ is now the same as an element of $C^1(W_F,\hat S)/B^1(W_F,\hat S)$ whose differential is $t_p$. The Shapiro map in degree $1$, again given by the gauge $p$, identifies these with the elements of $C^1(W_{E_\pm},\C^\times_{(-1)})/B^1(W_{E_\pm},\C^\times_{(-1)})$ whose differential is the canonical element $z \in Z^2(\Gamma_{E_\pm/E_+},\C^\times_{(-1)})$. If $\theta$ is a genuine character of $E_+^\times/N_{E_+/E_\pm}(E_+^\times)$ then the image of its parameter \eqref{eq:par1} under the Shapiro map in degree $1$ is given by 
\[ \prod_{a \in \Gamma_{E_\pm} \lmod \Gamma_F} \theta_0(r(a)^{-1}r(a\sigma))^{\delta_{s(a)}}, \]
where $\theta_0 : W_{E_\pm} \to \C^\times$ is an arbitrary extension of $\theta : W_{E_+} \to \C^\times$ satisfying $\theta_0(ww')=\theta(w)\theta_0(w')$ for $w \in W_{E_+}$ and $w' \in W_{E_\pm}$. Of course we have arranged here that $r$ and $s$ take values in $W$ in place of $\Gamma$. 

This formula can be rewritten in another form: If $w_1,\dots,w_n$ are the images of the section $s$, $v_0=1$, and $v_1 \in W_{E_\pm} \sm W_{E_+}$ arbitrary, and we follow the notation of Lemma \ref{lem:zsp}, then we obtain
\begin{equation} \label{eq:par2}
\prod_{i=1}^n \theta(v_0(u_i(w)))^{\delta_{w_i}},	
\end{equation}
which is precisely Definition \ref{dfn:rp} in this case.

\subsection{General tori} \label{sub:gentori}

Let us now collect the preceding observations and see how they, applied to a general torus $S$ equipped with a finite $\Sigma$-invariant subset $0 \notin R \subset X^*(S)$, motivate the definition of the $L$-group of its double cover and the $L$-parameter of the genuine character $\chi_S$ corresponding to a set of $\chi$-data.

The double cover $S(F)_\pm$ of $S(F)$ was obtained as the pull-back of $S(F) \to \prod J_O(F) \from \left(\prod J_O(F)\right)_\pm$ and it makes sense to define its $L$-group as the push\-out of $\hat S \from (\prod \hat J_O) \to {^L(\prod J_O)_\pm}$. Each $J_O$ is an induced one-dimensional anisotropic torus. The $L$-group of its canonical double cover $J_O(F)_\pm$ was discussed in \S\ref{sub:i1dat} -- it is the product of $\hat J_O$ and $\Gamma$ twisted by the 2-cocycle \eqref{eq:tpi1d}. Under the map $\prod \hat J_O \to \hat S$ dual to $S \to \prod J_O$ the image of the product of these cocycles is precisely the Tits cocycle of Definition \ref{dfn:tp}. Thus Definition \ref{dfn:lgrp} of the $L$-group of $S(F)_\pm$ is indeed this pushout.

Consider a set of $\chi$-data $(\chi_\alpha)$. Each $\chi_\alpha$ is a genuine character of the double cover $J_\alpha(F)_\pm$ and its $L$-parameter is given by \eqref{eq:par2}. We now recognize Definition \ref{dfn:rp} as image under the pushout map $^L(\prod J_O)_\pm \to {^LS_\pm}$ of the product over $O$ of the parameters of these characters.

\end{appendices}

\bibliographystyle{amsalpha}
\bibliography{/Users/kaletha/Dropbox/Work/TexMain/bibliography.bib}

\end{document}